\newtheorem{theorem}{Theorem}[section]
\newtheorem{lemma}[theorem]{Lemma}
\newtheorem{proposition}[theorem]{Proposition}
\newtheorem{corollary}[theorem]{Corollary}
\theoremstyle{definition}
\newtheorem{definition}{Definition}[section]
\theoremstyle{remark}
\newtheorem*{remark}{Remark}
\let\temp\phi
\let\phi\varphi
\let\varphi\temp
\begin{document}

\title[The propagation of chaos for hard spheres]{The propagation of chaos for a rarefied gas of hard spheres in the whole space}

\author{Ryan Denlinger}

\begin{abstract}
We discuss old and new results on the mathematical justification of
Boltzmann's equation. The classical result along these lines is a theorem
which was proven by Lanford in the 1970s. This paper is naturally
divided into three parts. \\ 
\emph{I. Classical.}
 We give new proofs of both the uniform bounds required for
Lanford's theorem, as well as the related bounds due to Illner \&
Pulvirenti for a perturbation of vacuum. 
 The proofs use a duality argument
and differential inequalities, instead of a fixed point iteration.\\
\emph{II. Strong chaos.} We introduce a new notion of propagation of
chaos. Our notion of chaos provides for uniform error estimates
on a very precise set of points; this set is closely related to the
notion of strong (one-sided) chaos and the emergence of irreversibility.
\\
\emph{III. Supplemental.}  We
announce and provide a  proof (in Appendix A) of
 propagation of \emph{partial} factorization at some
phase-points where complete factorization is impossible. 
\keywords{Particle models \and Hard spheres \and Lanford's theorem}
\end{abstract}

\maketitle

\section{Introduction}
\label{intro}

We are interested in the system of $N$ identical
 elastic hard spheres of diameter
$\varepsilon > 0$, which move through $d$-dimensional Euclidean space
according to the laws of Newtonian mechanics. This is an important model
in mathematical physics because the rules are relatively simple
and yet they capture in a realistic way the macroscopic
behavior of many physical systems. Usually the number 
of particles is quite large, say $N=10^{23}$, so it seems hopeless
to follow the microscopic dynamics directly.
An alternative strategy, pioneered by Maxwell and Boltzmann, is to
assign probabilities to the possible microscopic configurations of the
system
and study the evolution of these probabilities subject to mechanistic
laws (e.g., conservation of mass, momentum and energy). Given a suitable
choice of spatial and temporal scales, the equation one
formally arrives at through this line of  reasoning is known as Boltzmann's
equation.

Half a century after Boltzmann's work, H. Grad used 
precise physical reasoning in an attempt to give Boltzmann's equation
a firm physical footing. He devised a special scaling limit, known
today as the \emph{Boltzmann-Grad limit}, in which the microscopic
dynamics heuristically reduce to the Boltzmann equation under a
``molecular chaos'' assumption (the mathematical nature of the chaos assumption would
not be clarified fully until the 1970s). \cite{Gr1949} 
 However, this did not resolve the question of deriving Boltzmann's equation
because there was no \emph{mathematical} argument linking the microscopic
Liouville equation to the Boltzmann equation. C. Cercignani gave a
non-rigorous but fairly precise description of the convergence
process. \cite{doi:10.1080/00411457208232538} O.E. Lanford provided the
first rigorous convergence proof for Boltzmann,
by describing the reduced dynamics arising from low-order
correlations, and showing that the high-order correlations
have negligible influence on the behavior of the gas, at least
for a short time. \cite{L1975} More recently, a careful 
\emph{quantitative} analysis of Lanford's theorem has been provided
by I. Gallagher, L. Saint-Raymond and B. Texier. \cite{GSRT2014}

We remark on several related developments. The major limitation in
Lanford's theorem is the short time of validity, which so far has not been
lifted except
in very restrictive perturbative regimes. R. Illner and M. Pulvirenti
were able to overcome the time restriction and prove global convergence
for a highly rarefied gas near vacuum, using inequalities related to the 
dispersive nature of the system. \cite{I1989,IP1986,IP1989}
Different perturbative regimes can be obtained in bounded domains, most
notably a periodic box (equipped with a Gibbs measure which is invariant
under the dynamics). Perturbing only in the initial distribution of a single particle
leads naturally to the (non-conservative) linear Boltzmann equation; perturbing
all the particles in a symmetric way leads to the linearized Boltzmann
equation. Both possibilities have been studied in the literature, most
notably by H. van Beijeren, O. E. Lanford, J. Lebowitz and
H. Spohn, and in a separate contribution by J. Lebowitz and H. Spohn.
\cite{vBLLS1980,LS1982} These perturbative settings have been studied
more recently by T. Bodineau, I. Gallagher and L. Saint-Raymond, who
proved quantitative error estimates on \emph{diverging} timescales
$T_N \approx \left( \log \log N \right)^r $ for some known $r>0$,
leading to hydrodynamic limits (namely Brownian motion, and the
Stokes-Fourier equations). \cite{BGSR2015,BGSR2015III} The perturbation of
equilibrium is an extremely difficult problem (particularly on large time
intervals) and we will not have more to say about it in this work.

There are several other important results which are not directly
related to Lanford's theorem but are nevertheless foundational in
kinetic theory.

\begin{itemize}
\item 
\emph{Stochastic models.} All models we have mentioned so far have been fully
deterministic; this means that randomness is allowed in the choice of initial
data, but the \emph{evolution} for each initial state is fully
determined. However, there is an important class of models in kinetic
theory where the dynamics itself introduces randomness. We specifically
mention the Kac model; in this model, the position coordinates are
treated as hidden variables, and in particular the impact parameter for
each collision is a random variable with some specified law.
When the number of particles tends to infinity, the evolution is seen
to converge to the (nonlinear) space-homogeneous Boltzmann equation with the
appropriate collision kernel. These models were first analyzed in a couple
of influential papers by M. Kac and H. McKean. \cite{Ka1956,MK1967} 
There have been many papers dealing with similar models in the intervening
years, and a very complete treatment has been given by S. Mischler
and C. Mouhot. \cite{MM2013}

\item
\emph{Lorentz gases.} We refer to a class of models first
studied by G. Gallavotti. \cite{Ga1969}
 In these Lorentz gas-type models, the dynamics is indeed deterministic, but 
they differ from  the case of Lanford in that all the particles but one are 
considered \emph{stationary obstacles}, distributed like Poisson scatterers.
The dynamics is much simpler in this case because the background particles 
never move out of place; in the Boltzmann-Grad limit one recovers the linear 
Boltzmann equation for the evolution of the tagged particle. Note that it is
not possible to enforce momentum conservation in a Lorentz gas, so these
models are only physically realistic if the tagged particle is
much lighter than the background particles.

\item
\emph{Vlasov-type mean field limit.} Physical limits in which each particle
feels the influence of the entire gas are generally called mean-field
limits; these models can be fully deterministic, or they can possess
some stochasticity. Mean field limits tend to have a relatively pleasant
mathematical structure because a typical particle's trajectory is
governed by the \emph{average} of the other particles' trajectories; this 
property is very helpful in controlling the correlations generated by the 
dynamics. Whereas the Boltzmann-Grad scaling leads to Boltzmann's kinetic 
equation, the Vlasov-type mean-field models lead to Vlasov-type equations
 in the
limit $N\rightarrow \infty$. The study of Vlasov-type mean field limits 
is a vast field in its own right and we provide only a small sampling of the
relevant literature. 
\cite{D1979,MK1967II,jabin:hal-00609453}
\end{itemize}
Henceforth in this work we will not be concerned with
stochastic models, Lorentz gases, or mean field limits.

\textbf{The goals of the present work are twofold.} First, we shall present a
new proof of the uniform bounds which are central to Lanford's theorem.
We use differential inequalities and a duality argument, instead
of a fixed point argument, to control the growth of correlations in
the BBGKY hierarchy. We will apply this method to prove both the
short-time result of Lanford, as well as the global near-vacuum result
of Illner \& Pulvirenti. \cite{L1975,IP1986,IP1989}
Our second goal is to thoroughly address the issue of uniform convergence
of the marginals in the limit $N\rightarrow \infty$. The motivation
is the notion of \emph{strong (one-sided) chaos} and the appearance of
irreversibility from an underlying reversible dynamics.\footnote{Note that
the rigorous link between irreversibility and strong chaos in general requires
application of the
 Hewitt-Savage theorem;
 see subsection \ref{ssec:nonchaos} below
for some discussion of the connection.}
The issue of irreversibility is
tied to convergence properties along very singular sets in
phase space; for this reason, uniform convergence (on a sufficiently large
set) becomes a central question in the discussion of
irreversibility.\footnote{We would like to thank
L. Saint-Raymond and H. Spohn \emph{(private communications)} for
insightful discussions and comments
 regarding the connection to irreversibility.}

Uniform convergence has been addressed by a number of
authors going back to the 1970s. (See 
\cite{Ki1975,PSS2014,BGSRS2016}, and Appendix A of
\cite{vBLLS1980}.) The label \textbf{strong chaos} is reserved
for any notion of chaos for which the convergence at positive times
is strong enough to allow the re-application of the the convergence
theorem taking the evolved solution as initial data. See
Appendix A of \cite{vBLLS1980}, or \cite{BGSRS2016}, for examples of
strong chaos results (note however that the basic technique yielding
uniformity is actually
due to F. King \cite{Ki1975}). \textbf{By definition}, a strong chaos
result \emph{must} account for the directionality of time due to the
fact that Newton's laws are time-reversible whereas Boltzmann's equation
is irreversible; iteration can only be performed forwards in time,
not backwards, so it is a one-sided notion.
  (The term \emph{strong one-sided chaos} is actually
redundant in the context of Lanford's theorem but some authors use
the term \emph{one-sided} in isolation to emphasize the fact that
convergence is occuring only at ``pre-collisional'' points in phase
space.) 
Unlike previous results (except \cite{BGSRS2016}, which represents
independent concurrent work), our strong chaos result implies uniform error
estimates arbitrarily close to the \emph{boundary} of the reduced phase space, which is
significant because the physical interaction is confined to the
boundary. 
Our error estimates are quantitative, as in
\cite{GSRT2014,PSS2014}, though for simplicity of presentation
we will state our main theorems without explicit rates (the estimates in
the proof itself are also much larger than necessary, again for simplicity of
presentation).

\begin{remark}
A very clear exposition on the topic of strong chaos is found in \cite{BGSRS2016};
we feel that the authors have brought great clarity to the topic and we make
no attempt to replicate their exposition.
\end{remark}

\begin{remark}
Note that in the \textbf{original manuscript} of 
Lanford \cite{L1975} (neglecting his follow-up works) the stated result
is a \textbf{weak chaos} result because the assumptions on the initial data
are much stronger than what is proven at positive times, hence iteration in time
is impossible. Lanford clearly acknowledged this shortcoming and understood
the technical steps required to prove a strong chaos result (the details being
filled in by his own student King at roughly the same time).
\end{remark}

A novel aspect of
our analysis is that, given suitably prepared initial data, we can
propagate \emph{partial} factorization even at phase points where
complete factorization necessarily fails (i.e. ``post-collisional''
configurations with $t>0$). As an application of our result,
one obtains the existence of positive measure sets, parameterized
by $\varepsilon$ in a natural way, with measure tending to zero as
$\varepsilon \rightarrow 0$, upon which
$f_N^{(3)} \approx f_N^{(2)} \otimes f_N^{(1)}$ but further factorization is
impossible.\footnote{Note carefully that $f_N^{(2)} \approx f_N^{(1)} \otimes 
f_N^{(1)}$ at \emph{most} phase points, but the theorem holds even at some points
where $f_N^{(2)}$ does not factorize.}
Partial factorization should be viewed as complementary to
results on correlations, such as \cite{PS2014}. Indeed whereas 
\cite{PS2014} gives remarkably precise estimates on the size of correlations,
there was no characterization of the sets on which correlations were concentrated.
On the flip side, we are able to say something about the structure of correlations
but very little about their size. (The partial factorization
 result holds under the assumption of
 perfect factorization at $t=0$, but this is
a standard assumption in the field.\footnote{We are not aware of any
satisfactory explanation for the physical relevance of a perfectly
factorized initial state. There are well-known arguments based on minimization of entropy,
but there is a problem of topologies: an entropically small perturbation will
not be a uniformly small perturbation in general.}) 
The proof of partial factorization
draws significant inspiration from \cite{BGSR2015III,PS2014}, though our
methods are somewhat different. Partial factorization is easily
generalized to
include \emph{non-chaotic} initial data, in the spirit of the Hewitt-Savage
theorem. \cite{HS1955} We emphasize that non-chaotic initial states have
been discussed in the context of irreversibility; see, e.g.,
\cite{BGSRS2016}.

\textbf{Organization of the paper.} 
In Section \ref{sec:2}, we describe
the ideas behind our proof, and we present our main convergence result.
Section \ref{sec:3} gives the precise physical setting for our problem,
along with a crucial comparison principle. Section \ref{sec:4} briefly
introduces the BBGKY and dual BBGKY hierarchies. Section \ref{sec:6}
\& \ref{sec:7} give proofs of \emph{a priori} bounds on the BBGKY hierarchy
by a duality argument; bounds are proven both locally in time for large
data, and globally in time for data sufficiently close to vacuum.
(These \emph{a priori} estimates are not new, but we use a different
approach for the proofs.) Sections \ref{sec:8}, \ref{sec:9}, \ref{sec:10},
\ref{sec:11} \& \ref{sec:12} introduce a number of important technical
tools and results; our main technical contribution is the stability
result in Section \ref{sec:9}. The detailed convergence proof 
(part \emph{(i)} of Theorem \ref{thm:s2-chaos}) is given
in Section \ref{sec:13}. A proof of part \emph{(ii)} of
Theorem \ref{thm:s2-chaos} is presented in Appendix \ref{sec:AppA}.

\section{Statement of main results}
\label{sec:2}

\subsection{Uniform bounds via duality.} 
We begin by briefly describing the role that duality plays in our proof.
Throughout this work we will rely on the BBGKY hierarchy
(Bogoliubov-Born-Green-Kirkwood-Yvon), which is a sequence of equations
describing the evolution of marginals $f_N^{(s)} (t)$ under the
hard sphere flow. One of the key steps in the proof of Lanford's theorem
is to bound a weighted $L^\infty$ norm of the sequence of marginals,
uniformly in $N$, in terms of the initial data. Lanford proves the
uniform bound by re-writing the BBGKY hierarchy using Duhamel's
formula and then setting up a fixed point argument.\footnote{In fact
Lanford wrote out a series expansion for which he proved $L^\infty$ bounds
uniformly in $N$;
this is effectively equivalent to the fixed point argument, and he
had to prove the same collision operator bounds as in \cite{GSRT2014}.}
\cite{L1975,GSRT2014} We have approached the uniform bounds from a 
somewhat different point of view. Our starting point is the
\emph{dual BBGKY hierarchy}, which is (formally) the semigroup whose
generator is the (formal) adjoint of 
 the semigroup generator for the BBGKY hierarchy.
 We refer to    \cite{Ge2013,CGP1997} for background and
results concerning the dual BBGKY hierarchy.

Physically, the dual
BBGKY hierarchy describes the evolution of \emph{observables}.
We are able to bound the growth of observables in a weighted
$\mathcal{L}^1$ space; then, the classical $L^\infty$
 bounds on the marginals
follow by duality. Using the same strategy, with slight revisions,
we are able to prove uniform bounds \emph{globally in time} in the
physical regime considered by Illner \& Pulvirenti.
\cite{IP1986,IP1989}  We emphasize that all of our results concerning
uniform bounds are classical; the only novelty lies in the method of proof.
Note that certain very special observables, such as the kinetic energy,
exhibit cancellation properties (e.g. conservation). However, our proof
concerns \emph{generic} observables; in particular,
 there seems to be no simple way to account for cancellations. Hence 
we cannot report any improvements beyond the perturbative regime
(small time or large mean free path).

It should be noted that, in the context of Lanford's original theorem
\cite{L1975}, the duality argument does not seem to gain us anything
new. However, there are some technical reasons to prefer the dual point of view.
Most fundamentally, it is always possible to consider weak-$*$ limits
of solutions of the dual BBGKY hierarchy (which will converge to
solutions of the dual Boltzmann hierarchy). This limit process will work for
any observable which is not concentrated on certain submanifolds of
high codimension (see Remark \ref{rem-boltz} below). By contrast, passing to the
limit from the BBGKY hierarchy to the Boltzmann hierarchy is an
incredibly delicate process, which is difficult to characterize using 
standard functional spaces. One would hope to use duality to simplify certain
technical questions concerning the BBGKY hierarchy itself, by characterizing
solutions of BBGKY in terms of their action on well-chosen families of 
observables. Note that the differential inequalities we use to prove
Lanford's uniform bounds (at the level of observables) can be adapted to
give more precise information about observables (this is itself a topic
of ongoing research).

Another advantage of duality
 (and the one which served as the original motivation for
this project) is that it gives a somewhat unique proof of uniform bounds on the BBGKY
hierarchy, globally in
time, for a small perturbation of vacuum. \cite{IP1986,IP1989} 
Note that the correct proof given in \cite{IP1989} (as opposed to the incorrect
proof in \cite{IP1986}) relies on a series expansion; indeed,
the proof of \cite{IP1989} cannot really be viewed as a fixed point argument in the
traditional sense.\footnote{By
contrast the corresponding global-in-time estimate for the Boltzmann hierarchy
\emph{is} a fixed point argument but it requires intertwining the free
transport and collision terms. Intertwining in Lanford's proof is fine on
the short time (see the erratum of \cite{GSRT2014} for instance); unfortunately,
 for the global estimate, intertwining will  ruin the dispersive
property because one of the needed inequalities is \emph{false}.}
In particular, certain dispersive-type bounds for moving Maxwellian 
distributions must
be propagated along an arbitrary sequence of particle creations. Our goal was
to have a proof which could be explained using calculus alone, without
a technical induction process. The duality argument trades one complication for
another since we have to be extremely careful in manipulating weighted norms for
observables. Nevertheless, in our view, the proof presented here is more elegant
than that given in \cite{IP1989} and it seems to us the most novel aspect of
this paper.

\begin{remark}
\label{rem-boltz}
One can ask whether it is possible to treat the Boltzmann hierarchy using
duality, in a manner similar to the BBGKY hierarchy. The answer is
``yes, but...'' The problem is that, whereas the dual BBGKY hierarchy
propagates $\mathcal{L}^1$ regularity, solutions of the dual
Boltzmann hierarchy are \emph{measures} even if the data is
smooth.\footnote{This is related to the fact that the Boltzmann hierarchy
is not well-posed on (weighted) $L^\infty$ (without at least an
 assumption like
factorization or exchangeability) but it \emph{is} well-posed for
\emph{continuous} data.}
Unfortunately, the dual Boltzmann hierarchy isn't well-defined
for measure data due to the possibility of simultaneous collisions of
three or more particles. Most likely it is possible to work with the
dual Boltzmann hierarchy by restricting one's attention to measures that
assign zero weight to manifolds of sufficiently high codimension.
However, we prefer not to confront these technical issues; instead, we
prove uniform bounds for the Boltzmann hierarchy using the standard
fixed-point argument.
\end{remark}

\subsection{Strong convergence.}
We now turn to the content of Theorem \ref{thm:s2-chaos} (especially
part \emph{(i)}),
which is our main new result. Essentially the result states that
if \emph{a priori} bounds are known then chaoticity is propagated forwards
in time; the novelty of the result lies in the strength of the notion of
convergence we employ at positive times. The direction of time is built into
our notion of chaoticity, so the theorem \emph{cannot} be applied to prove
propagation of chaos backwards in time. Our convergence result is a type
of \emph{strong chaos} result; this means that we can take the evolved
state at a time $t>0$ and use this state as \emph{initial data} in order to
iterate the convergence to an even later time. The iteration can 
be continued
as long as uniform bounds are known. We emphasize that strong chaos
results are known in both the classical and very recent literature 
\cite{vBLLS1980,BGSRS2016}; however, to our knowledge, the present
convergence
result is the only one which extends to \emph{all} distance scales
$\left\{ |x_i - x_j | > \varepsilon \right\}$ as long as the backwards
trajectories of all $s$ particles are free (minus a small
set in the \emph{velocity} variables only, but see Remark
\ref{rem:opt} below for a discussion of ways to refine the sets of 
convergence).\footnote{The strong chaos result in
\cite{BGSRS2016} requires $|x_i - x_j| \gtrsim
\varepsilon \log \frac{1}{\varepsilon}$.}
Moreover, as we will
see, our proof extends without too much difficulty to obtain 
a \emph{pointwise} description of two-particle correlations 
(higher-order correlations and better error estimates are topics of
ongoing research).\footnote{By contrast, the authors of 
\cite{PS2014} have provided a
 very precise but \emph{averaged} (not pointwise) description of
 correlations.}

\begin{remark}
The notion of chaos that Lanford originally proved (at positive times)
states that the marginals $f_N^{(s)} (t)$ converge pointwise almost
everywhere to tensor products. It can be shown (see \cite{L1975}) that
this notion of chaos (combined with certain uniform estimates)
 implies that for any box
$\Delta \subset \mathbb{R}^d \times \mathbb{R}^d$, the occupation fraction
\begin{equation}
\frac{1}{N} n_{\Delta} (t) = \frac{1}{N} \sum_{i=1}^N
\mathbf{1}_{(x_i (t),v_i (t)) \in \Delta}
\end{equation}
converges in probability to a constant depending only on $t$ and
$\Delta$ when $\varepsilon \rightarrow 0$. The physical interpretation of
Lanford's result is that fluctuations tend to zero as $\varepsilon 
\rightarrow 0$. We emphasize that it is \emph{not}
true that if the marginals converge pointwise almost everywhere
 (at $t=0$, or
even for all $t\in [0,T]$), then the evolution is governed by
Boltzmann's equation. The classical counter-example uses the 
reversibility of Newton's laws combined with the irreversibility of
Boltzmann's equation. \cite{CIP1994} An even more striking 
counter-example has been constructed by T. Bodineau, I. Gallagher,
L. Saint-Raymond and S. Simonella; these authors found an initial
data such that the marginals converge pointwise almost everywhere to tensor
products at $t=0$ (indeed they obtained \emph{uniform} convergence
off explicit small sets), whereas the evolution is given by
\emph{free transport}. \cite{BGSRS2016}  
\end{remark}

We will need to introduce several sets before we can state our main
result; to this end, we will borrow notation from
Section \ref{sec:3}. We will view $\eta > 0$ as a small velocity
cut-off, and $R>0$ as a large velocity cutoff. The most important
sets we will require are defined as follows:
\begin{equation}
\label{eq:s2-K-s}
\mathcal{K}_s = \left\{ Z_s = \left(X_s,V_s\right)
 \in \overline{\mathcal{D}_s} \left|
\psi_s^{-\tau} Z_s = \left( X_s-V_s \tau,V_s \right) \; \forall
\; \tau > 0 \right. \right\} \subset \mathbb{R}^{2ds}
\end{equation}
\begin{equation}
\label{eq:s2-U-s-eta}
\mathcal{U}_s^\eta = \left\{ Z_s = \left(X_s,V_s\right)\in
\overline{\mathcal{D}_s} \left|
\inf_{1\leq i < j \leq s} \left| v_i - v_j \right| > \eta \right. \right\}
\subset \mathbb{R}^{2ds}
\end{equation}
Our \textbf{main result} will concern uniform convergence on the
set $\mathcal{K}_s \cap \mathcal{U}_s^{\eta(\varepsilon)}$ (with
$\eta(\varepsilon)\rightarrow 0$ as $\varepsilon \rightarrow 0$); it is in
proving uniform convergence on such a precise set that a strong chaos
result is obtained.
The condition $Z_s \in \mathcal{K}_s$ means that particles never
collide under the \emph{backwards} particle flow (but, crucially, they
are allowed to collide under the forwards flow). The condition
$Z_s \in \mathcal{U}_s^\eta$ is a technical condition which forces particles
to disperse at an $\eta$-dependent rate; see Remark \ref{rem:opt} for a
few words on how to relax the definition of $\mathcal{U}_s^\eta$.
The remaining sets $\mathcal{G}_s, \mathcal{V}_s^\eta, 
\hat{\mathcal{U}}_s^\eta$, to be defined next, are required only for stating a partial
factorization result, and can be safely skipped.
\begin{equation}
\label{eq:s2-G-s}
\mathcal{G}_s = \left\{ Z_s = (X_s,V_s) \in \overline{\mathcal{D}_s}
\left| 
\begin{aligned}
& \forall \tau > 0,\; \forall 3 \leq i \leq s,\\
& \qquad \qquad \qquad
\left( \psi_s^{-\tau} Z_s \right)_i = (x_i - v_i \tau, v_i) \\
& \textnormal{and, } \forall \tau > 0, \;
\forall 1 \leq i \leq 2, \; \forall 3 \leq j \leq s, \\
& \qquad \qquad \qquad
|(x_i - x_j) - (v_i - v_j)\tau| > \varepsilon
\end{aligned}
\right. \right\}
\end{equation} 
\begin{equation}
\mathcal{V}_s^\eta = \left\{
(Z_s,Z_s^\prime) \in \overline{\mathcal{D}_s} \times
\overline{\mathcal{D}_s} \left| 
\begin{aligned}
& \inf_{1 \leq i \neq j \leq s} |v_i-v_j^\prime| > \eta  \\
& \qquad \qquad  \textnormal{ and } \\
& \inf_{1\leq i \leq s \; : \; v_i \neq v_i^\prime}
|v_i-v_i^\prime| > \eta
\end{aligned}
\right.
\right\}
\end{equation}
\begin{equation}
\label{eq:s2-U-hat-s-eta}
\hat{\mathcal{U}}_s^\eta = \left\{ Z_s = (X_s,V_s) \in
\mathcal{U}_s^\eta \left| \forall \tau,\tau^\prime > 0,
(\psi_s^{-\tau} Z_s,\psi_s^{-\tau^\prime} Z_s) \in
\mathcal{V}_s^\eta  \right. \right\}
\end{equation}
We write $F_N (t) = \left\{ f_N^{(s)} (t,Z_s)\right\}_{1\leq s \leq N}$
where each $f_N^{(s)}(t,Z_s)$ is a function on 
$[0,\infty)\times\overline{\mathcal{D}_s}$ which is symmetric under
interchange of particles.  

\begin{definition}
\label{def:s2-chaos1}
The sequence of initial data
$\left\{ F_N (0) \left| N \in \mathbb{N}\right.\right\}$ is
\emph{nonuniformly $f_0$-chaotic} if, for some $\kappa\in (0,1)$, 
we have for all $s\in\mathbb{N}$ and all $R>0$ that
\begin{equation}
\label{eq:s2-conv}
 \limsup_{N\rightarrow\infty}
\left\Vert\left( f_N^{(s)} (0,Z_s)-f_0^{\otimes s} (Z_s)\right)
\mathbf{1}_{Z_s \in \mathcal{K}_s \cap \mathcal{U}_s^{\eta(\varepsilon)}}
\mathbf{1}_{E_s (Z_s) \leq R^2} \right\Vert_{L^\infty_{Z_s}} = 0
\end{equation}
where $\eta (\varepsilon) = \varepsilon^{\kappa}$
and $N\varepsilon^{d-1} = \ell^{-1}$.
\end{definition}

\begin{definition}
\label{def:s2-chaos2}
The sequence of initial data
$\left\{ F_N (0) \left| N \in \mathbb{N}\right.\right\}$ is
\emph{2-nonuniformly $f_0$-chaotic} if for some $\kappa \in (0,1)$,
we have for all $s \in \mathbb{N}$ and $R>0$ that
\begin{equation}
 \limsup_{N\rightarrow\infty}
\left\Vert\left( f_N^{(s)} (0,Z_s)-f_0^{\otimes s} (Z_s)\right)
\mathbf{1}_{Z_s \in \mathcal{K}_s \cap \mathcal{U}_s^{\eta(\varepsilon)}}
\mathbf{1}_{E_s (Z_s) \leq R^2} \right\Vert_{L^\infty_{Z_s}} = 0
\end{equation}
\emph{and} we have for all $s\in\mathbb{N}$ with $s\geq 3$ and all $R>0$ that
\begin{equation}
\label{eq:s2-conv2}
\begin{aligned}
& \limsup_{N\rightarrow\infty}
\left\Vert\left( f_N^{(s)} (0,Z_s)-\left(
f_N^{(2)} (0) \otimes f_0^{\otimes (s-2)} \right)(Z_s)\right)
\mathbf{1}_{Z_s \in \mathcal{G}_s \cap 
\hat{\mathcal{U}}_s^{\eta(\varepsilon)}}
\mathbf{1}_{E_s (Z_s) \leq R^2} \right\Vert_{L^\infty_{Z_s}}\\
& \qquad \qquad \qquad \qquad \qquad \qquad \qquad \qquad \qquad
\qquad \qquad \qquad \qquad \qquad 
=0
\end{aligned}
\end{equation}
where $\eta (\varepsilon) = \varepsilon^{\kappa}$
and $N\varepsilon^{d-1} = \ell^{-1}$.
\end{definition}

\begin{remark}
An earlier version of this manuscript contained an incorrect statement
of Definition \ref{def:s2-chaos2} which did not even imply weak
chaoticity; we are indebted to one of the anonymous referees for bringing
this to our attention.
\end{remark}

\begin{remark}
The term \emph{nonuniform chaoticity} is motivated by the fact that the
norm of convergence is \emph{based} on the $L^\infty$ (uniform) norm
in $\mathbb{R}^{2ds}$,
yet crucially the convergence \emph{is not uniform} across the whole
phase space. Indeed, very ``thin'' sets of points are necessarily
excluded via the indicator function $\mathbf{1}_{Z_s \in \mathcal{K}_s \cap
\mathcal{U}_s^{\eta (\varepsilon)}}$.
\end{remark}

\begin{remark}
Observe that the sets $\mathcal{G}_s$ appearing in
Definition \ref{def:s2-chaos2} are not symmetric under particle
interchange. Nevertheless, since we assume that the marginals
$f_N^{(s)}$ are symmetric, the uniform error estimates hold on the
image of the set $\mathcal{G}_s \cap
\hat{\mathcal{U}}_s^{\eta(\varepsilon)}$ under any permutation of
particle labels.
\end{remark}

\begin{remark}
The key difference between Definition \ref{def:s2-chaos1} and
Definition \ref{def:s2-chaos2} is that the set
$\mathcal{K}_s \cap \mathcal{U}_s^{\eta(\varepsilon)}$ is replaced
by $\mathcal{G}_s \cap \hat{\mathcal{U}}_s^{\eta(\varepsilon)}$
in (\ref{eq:s2-conv2}).
Hence, the estimate (\ref{eq:s2-conv}) holds only at phase points
which possibly involve collisions in the \emph{future}, but not
the \emph{past}. On the other hand, the estimate (\ref{eq:s2-conv2})
holds even at points where \emph{at most two} of the particles 
have collisions in the \emph{past}. Also note that the structure
of the set $\hat{\mathcal{U}}_s^{\eta(\varepsilon)}$ is more
complicated than that of $\mathcal{U}_s^{\eta(\varepsilon)}$ due to
its dependence on the particle flow $\psi_s^{-t}$. Let us point out
that the usual formulation of Lanford's theorem says nothing at all
about correlations (except to give a large set where, in a functional
sense, correlations are asymptotically negligible). By contrast,
we can apply part \emph{(ii)} of Theorem \ref{thm:s2-chaos} (stated below) 
to provide
definite information about how the fine structure of the second marginal
(beyond its being asymptotically factorized) affects the third marginal.
\end{remark}

\begin{remark}
It is important to realize that complete factorization is allowed
even at positive times along all of 
$\mathcal{K}_s \cap \mathcal{U}_s^{\eta (\varepsilon)}$, but 
only \emph{partial} factorization is possible at some points of
$\mathcal{G}_s \cap \hat{\mathcal{U}}_s^{\eta(\varepsilon)}$ when
$t>0$; this
is due to the fact that collisions generate correlations. In this
sense, 2-nonuniform chaoticity captures (very crudely) the
fine-scale \emph{structure of correlations} at positive times.
This is of interest even for the almost-perfectly factorized 
data of Section \ref{sec:12} because the dynamics will create correlations
no matter how perfect the initial data happens to be.
There has been some recent interest in precisely characterizing
the size of correlations in the Boltzmann-Grad limit; we refer to
\cite{BGSR2015III,PS2014} for some results along these lines.
Compared to these previous results, the main difference with our
result is that we draw a connection between correlations and strong
chaos.
\end{remark}

Recall the Boltzmann equation
\begin{equation}
\label{eq:s2-boltz}
\left(\partial_t + v \cdot\nabla_x \right) f(t) =
\ell^{-1} Q(f(t),f(t))
\end{equation}
\begin{equation}
\label{eq:s2-Q}
Q(f,f) = \int_{\mathbb{R}^d \times \mathbb{S}^{d-1}}
\left[ \omega \cdot (v_1 - v)\right]_+
\left( f (x,v^*) f (x,v_1^*) - f (x,v) f (x,v_1)\right)
d\omega dv_1
\end{equation}

We are able to show:
\begin{theorem}
\label{thm:s2-chaos}
Suppose that the Boltzmann equation (\ref{eq:s2-boltz})
has a non-negative solution $f(t)$ for $t\in[0,T]$,
with $\int f(t) dx dv = 1$,
and further suppose that there exists $\beta_T > 0$ such that
\begin{equation}
\label{eq:s2-boltz-bound}
\sup_{0\leq t \leq T} \sup_{x,v\in\mathbb{R}^d}
 e^{\frac{1}{2} \beta_T |v|^2}
f (t,x,v) < \infty
\end{equation}
and $ f(t) \in W^{1,\infty} (\mathbb{R}^d \times \mathbb{R}^d)$ for
$t\in [0,T]$. Let $F_N (t)$ solve the hard sphere BBGKY hierarchy, under
the Boltzmann-Grad scaling $N\varepsilon^{d-1}=\ell^{-1}$, and
suppose that there is a $\tilde{\beta}_T > 0$,
$\tilde{\mu}_T \in \mathbb{R}$ such that
\begin{equation}
\label{eq:s2-bbgky-bound}
\sup_{N\in\mathbb{N}} 
\sup_{0\leq t \leq T} \sup_{1\leq s \leq N}
 \sup_{Z_s \in \mathcal{D}_s}
e^{\tilde{\beta}_T E_s (Z_s)} e^{\tilde{\mu}_T s}
 \left| f_N^{(s)} (t,Z_s)\right| < \infty
\end{equation}
Then the following holds:\\
(i) If $\left\{ F_N (0) \right\}_N$ is nonuniformly $f_0$-chaotic, 
then for all $t\in [0,T]$, $\left\{ F_N (t) \right\}_N$ is
nonuniformly $f(t)$-chaotic (with the same $\kappa$).  \\
(ii) If
$\left\{ F_N (0) \right\}_N$ is 2-nonuniformly $f_0$-chaotic, then
for all $t \in [0,T]$, $\left\{ F_N (t) \right\}_N$ is
2-nonuniformly $f(t)$-chaotic (with the same $\kappa$).
\end{theorem}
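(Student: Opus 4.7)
The plan is to expand both $f_N^{(s)}(t)$ and $f(t)^{\otimes s}$ as Duhamel-type series in the number of ``creations,'' truncate at a level $n_0 = n_0(\varepsilon) \to \infty$, and then compare the truncated series term-by-term. More precisely, I would write
\begin{equation*}
f_N^{(s)}(t) = \sum_{n=0}^{n_0-1} \mathcal{I}_n^\varepsilon(t) \, f_N^{(s+n)}(0) + \mathcal{R}_{n_0}^\varepsilon(t),
\end{equation*}
where $\mathcal{I}_n^\varepsilon$ alternates BBGKY transport operators $\psi_{s+k}^{-\tau_k}$ with creation operators $C_{s+k,s+k+1}^\varepsilon$, and an entirely analogous formula for $f(t)^{\otimes s}$ using free transport and Boltzmann creations applied to $f_0^{\otimes(s+n)}$. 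The \emph{a priori} bound (\ref{eq:s2-bbgky-bound}) together with the corresponding weighted-$L^\infty$ bound on the Boltzmann hierarchy (obtained from (\ref{eq:s2-boltz-bound}) by a standard contraction argument) furnishes a geometric-series estimate on each term of the form $(Ct)^n / n!$ after absorbing the Gaussian weight, so the tails $\mathcal{R}_{n_0}^\varepsilon$ and their Boltzmann counterpart vanish as $n_0 \to \infty$ uniformly on $[0,T]$ provided $T$ is subdivided into short subintervals and the argument is iterated (which is precisely the point of \emph{strong} chaos: each propagation step is valid because the output of one step satisfies the hypotheses of the next).

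The comparison on the truncated series splits into two independent ingredients. First, at $n\leq n_0$ fixed, one freezes the integration variables and pseudo-evaluates the BBGKY term at $\psi_{s+n}^{-\tau}Z_{s+n}$ and the Boltzmann term at the free-flow image of the same $Z_{s+n}$. Off an explicitly controlled bad set $\mathcal{B}_n^\varepsilon$ in the creation parameters $(t_k,\omega_k,v_k)_{k\leq n}$, the two pseudo-trajectories are proven identical modulo $O(\varepsilon)$ position shifts; this is the content of the stability lemma of Section~\ref{sec:9}. Second, the comparison of initial data $f_N^{(s+n)}(0)$ versus $f_0^{\otimes(s+n)}$ is invoked via (\ref{eq:s2-conv}), which requires the pseudo-trajectory endpoints to lie in $\mathcal{K}_{s+n} \cap \mathcal{U}_{s+n}^{\eta(\varepsilon)}$. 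Assembling the pieces, Lipschitz regularity of $f(t)$ (from $f \in W^{1,\infty}$) absorbs the $O(\varepsilon)$ position discrepancy and the pointwise $L^\infty$ chaoticity hypothesis closes the estimate.

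The main obstacle is the geometric exclusion of the bad set $\mathcal{B}_n^\varepsilon$ uniformly at the sharp scale $|x_i - x_j| > \varepsilon$, rather than the softer $\varepsilon\log(1/\varepsilon)$ scale used in \cite{BGSRS2016}. Starting from $Z_s \in \mathcal{K}_s \cap \mathcal{U}_s^{\eta(\varepsilon)}$ (so the $s$ base particles disperse at linear rate $\geq \eta(\varepsilon) = \varepsilon^\kappa$ under the backwards free flow), I need to show that, after adding $n$ creations, every pair of pseudo-particles continues to satisfy the $\mathcal{K}_{s+k}$ condition as $k$ grows. This requires bounding the volume of creation parameters for which either (a) a newly created particle experiences a recollision with a pre-existing one within a cylinder of diameter $\varepsilon$, or (b) it fails to acquire a velocity that is $\eta(\varepsilon)$-separated from all existing velocities. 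A careful tube computation shows these bad volumes are of order $\varepsilon^{c(1-\kappa)}$ per creation, and since $n \leq n_0$ grows slowly, the total bad measure is still $o(1)$ as $\varepsilon \to 0$. The exponent $\kappa \in (0,1)$ is exactly what allows this tradeoff to close.

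For part (ii), the same expansion scheme is run but with the initial term $f_N^{(s+n)}(0)$ compared against $f_N^{(2)}(0) \otimes f_0^{\otimes(s+n-2)}$ rather than against $f_0^{\otimes(s+n)}$. The pseudo-trajectory must now preserve the identities of particles $1$ and $2$, which is precisely why $\mathcal{G}_s$ requires backwards-free trajectories for particles $3,\ldots,s$ together with non-interaction of $\{1,2\}$ with $\{3,\ldots,s\}$, and why $\hat{\mathcal{U}}_s^\eta$ enforces velocity separation along two independent backwards flows to accommodate the creation tree. The recollision analysis carries over with the same $\varepsilon^{c(1-\kappa)}$ type bounds, and the pointwise $L^\infty$ control of $f_N^{(2)}(0)$ provided by the a priori bound (\ref{eq:s2-bbgky-bound}) supplies the required regularity to substitute $f_N^{(2)}(t)$ along the pseudo-trajectory, yielding $2$-nonuniform $f(t)$-chaoticity with the same $\kappa$.
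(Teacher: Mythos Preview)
Your outline for part \emph{(i)} is essentially the paper's approach: Duhamel expansion of both hierarchies, truncation, term-by-term comparison via the stability result of Section~\ref{sec:9}, Lipschitz regularity to absorb the $\mathcal{O}(\varepsilon)$ spatial shifts, and iteration on short subintervals. One minor discrepancy: the tail is not controlled by a factorial bound $(Ct)^n/n!$. Instead the paper truncates the \emph{initial data} itself via $\mathbf{1}_{1\leq s \leq n}\,\chi(E_s/R^2)$ and uses linearity of the hierarchies together with the $e^{\mu s}$ weight to get an $e^{-n}+e^{-\beta_0 R^2/2}$ error (see (\ref{eq:s13-conv3})--(\ref{eq:s13-conv4})). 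This is a technical point only; your scheme closes the same way once you make this adjustment.

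For part \emph{(ii)} there is a genuine structural gap. You propose to ``run the same expansion scheme'' and compare $f_N^{(s+n)}(0)$ against $f_N^{(2)}(0)\otimes f_0^{\otimes(s+n-2)}$ at the endpoints of pseudo-trajectories. But to do this you need a Duhamel representation of the \emph{target} object $f_N^{(2)}(t)\otimes f(t)^{\otimes(s-2)}$, and that object solves neither the BBGKY hierarchy nor the Boltzmann hierarchy, so there is no series to write down. The paper resolves this by introducing an intermediate \emph{unsymmetric Boltzmann-Enskog hierarchy} (Appendix~\ref{sec:AppA}, equations (\ref{eq:AppA-UBEH})--(\ref{eq:AppA-UBEH-bc})): the first two particles evolve with genuine hard-sphere collisions between themselves, while all other particles pass through each other and through particles $1,2$; creations happen at distance $\varepsilon$ as in BBGKY. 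The key fact (Proposition~\ref{prop:AppA-factorize}) is that this auxiliary hierarchy exactly propagates the partial factorization $g_\varepsilon^{(s)}(t)=g_\varepsilon^{(2)}(t)\otimes g_\varepsilon(t)^{\otimes(s-2)}$, where $g_\varepsilon$ solves an Enskog-type equation. One then runs the pseudo-trajectory comparison between BBGKY and this Enskog hierarchy on $\mathcal{G}_s\cap\hat{\mathcal{U}}_s^\eta$ via Proposition~\ref{prop:AppA-bad-set}, and separately shows $g_\varepsilon^{(2)}\approx f_N^{(2)}$ and $g_\varepsilon\approx f$. Your sentence about ``substituting $f_N^{(2)}(t)$ along the pseudo-trajectory'' using only the $L^\infty$ bound does not supply this missing hierarchy, and without it there is no mechanism by which the partially factorized form is preserved under the series expansion.
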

We will prove in full detail part \emph{(i)} of
Theorem \ref{thm:s2-chaos}. The proof of
part \emph{(ii)} is similar to the proof of part \emph{(i)}; the
main differences are the use of an intermediate (Boltzmann-Enskog)
hierarchy, as in \cite{PS2014}, combined with a  refined 
analysis of pseudo-trajectories. We supply the necessary ideas and
all of the key
technical estimates for part \emph{(ii)} in Appendix \ref{sec:AppA}.

\begin{remark}
The time $T$ in Theorem \ref{thm:s2-chaos} is not necessarily
the time in Lanford's original theorem. For instance, in the case of
a sufficiently small perturbation of vacuum
\cite{IP1986,IP1989},  it is permissible to take $T$ arbitrarily
large. More generally,
if the \emph{a priori} estimate (\ref{eq:s2-bbgky-bound}) is known for
a \emph{specific (factorized) solution} of the BBGKY hierarchy up to
time $T$, then we can propagate (2-)nonuniform chaoticity up to time $T$.
 Note that $T$ is necessarily smaller than the existence
time for classical solutions of the Boltzmann equation.
\end{remark}

\begin{remark}
\label{rem:opt}
There is a reasonable question to be asked about the optimality
of the sets we have defined. Certainly the set $\mathcal{K}_s$ can be
improved by specifying a ``horizon'' into the past beyond which collisions
between particles are allowed (indeed this trivial refinement would be necessary when working
in a bounded or periodic domain). More significantly, the condition
defined by $\mathcal{U}_s^\eta$ is clearly not optimal
(we thank the anonymous referees for bringing this issue to our
attention). The set $\mathcal{U}_s^\eta$ was specifically
chosen to simplify the inductive arguments, but it turns out
that the same arguments apply while allowing some particles
to have the same velocity, if they are far apart from each
other. For example,  
let us define
\begin{equation}
\iota (x,v) = \inf_{\tau \in \mathbb{R}}
\left| x - v \tau \right|
\end{equation}
and introduce the sets (for $0 < \eta < 1$)
\begin{equation}
\tilde{\mathcal{U}}_s^\eta =
\left\{ Z_s = (X_s,V_s) \in \overline{\mathcal{D}_s}
\left| \inf_{1\leq i < j \leq s}
\left( \frac{|v_i-v_j|}{\eta} +
\frac{\iota (x_i-x_j,v_i-v_j)}{\eta \log \frac{1}{\eta}}
\right) > 1 \right. \right\}
\end{equation}
Then if $Z_s \in \tilde{\mathcal{U}}_s^\eta$ then for any
$i\neq j$ there
are only two possibilities: either
\emph{(i)} $|v_i - v_j| > \frac{1}{2} \eta$, or
\emph{(ii)} $|v_i - v_j| \leq \frac{1}{2} \eta$ 
in which case we have
\begin{equation}
\inf_{\tau \in \mathbb{R}}
|(x_i-x_j) - (v_i-v_j)\tau| >
 \frac{1}{2} \eta \log \frac{1}{\eta}
\end{equation}
which implies that the two particles $i,j$ can never get close enough to
possibly prevent convergence. Creating particles is easy:
when the choice can be made, we always choose to
enforce $|v_i - v_j| > \eta$. To summarize our argument, we find
that if $\mathcal{U}_s^{\eta(\varepsilon)}$ is replaced
by $\tilde{\mathcal{U}}_s^{\eta(\varepsilon)}$ in
Definition \ref{def:s2-chaos1}, then
Theorem \ref{thm:s2-chaos} is still true (specifically
part \emph{(i)} of the theorem). The proof is
unchanged, apart from
replacing $\mathcal{U}_s^\eta$ by
$\tilde{\mathcal{U}}_s^\eta$  throughout and choosing new
constants where necessary. The sets of convergence can be refined
even further (e.g. it is possible to require simply
$\inf_{i\neq j} \left(|v_i-v_j|+
\left( \log \frac{1}{\eta} \right)^{-1} |x_i-x_j|\right) \gg \eta$)
but we would pay a price, both in terms of readability
of the proof and the error estimates themselves (since a loss is required
at each step of induction). A similar situation holds
with part \emph{(ii)} of Theorem \ref{thm:s2-chaos} but
we omit the details.
\end{remark}

\subsection{Non-chaotic data}
\label{ssec:nonchaos}

So far we have drawn a connection between a particular notion of
chaos and irreversibility. However, chaoticity is \emph{not} a
necessary condition for irreversible behavior.
There is no novelty here. The relation between particle systems
and the Hewitt-Savage theorem is a classical observation of
H. Spohn \cite{Sp1981}, which was also implicit for instance in the work of
O. E. Lanford and others through the use of the Boltzmann hierarchy.
We refer to \cite{CIP1994,BGSRS2016,Sp1991} for expository accounts of the
connection between propagation of chaos and the Hewitt-Savage theorem. 

\begin{remark}
The results of this subsection are, in some sense, not really more general
than Theorem \ref{thm:s2-chaos}, due to the Hewitt-Savage theorem and
the linearity of the BBGKY and Boltzmann hierarchies. This purpose of this
short discussion is simply to emphasize that a good notion of chaos leads
naturally to a good notion of convergence, even for a broad class of
non-chaotic initial conditions.
\end{remark}

 Let us suppose that $f_N^{(s)} (t)$
are the marginals of an underlying $N$-particle probability
density $f_N (t)$ which is symmetric under particle interchange.
Assume that as $N\rightarrow \infty$, the marginals $f_N^{(s)} (t)$
converge to functions $f_\infty^{(s)} (t)$ which satisfy the 
properties of
\emph{non-negativity}, \emph{normalization} and
\emph{consistency} (respectively): (these are all true at finite
$N$ in any case)
\begin{equation}
f_\infty^{(s)} (t) \geq 0
\end{equation} 
\begin{equation}
\int_{\mathbb{R}^{2ds}} f_\infty^{(s)} (t) dZ_s = 1
\end{equation}
\begin{equation}
f_\infty^{(s)} (t,Z_s) =
\int_{\mathbb{R}^{2d}} f_\infty^{(s+1)} (t,Z_{s+1})
dz_{s+1}
\end{equation}
If the functions $\left\{ f_\infty^{(s)} (t) \right\}_{s\in \mathbb{N}}$
are non-negative, normalized, and consistent, and symmetric under
particle interchange, then the Hewitt-Savage theorem 
\cite{HS1955} tells us that there
exists a time-dependent probability measure
$\pi_t \in \mathcal{P} \left( \mathcal{P} \left(
\mathbb{R}^{2d}\right) \right)$ 
\footnote{Here
$\mathcal{P}(X)$ is the set of Borel probability measures on the
Polish space
$X$.} such that
\begin{equation}
f_\infty^{(s)}(t) = \int_{\mathcal{P}\left( \mathbb{R}^{2d}\right)}
h^{\otimes s} (Z_s) d\pi_t (h) 
\end{equation}
Hence, in very great generality, we are free to assume that the limiting
distribution is a convex combination of factorized distributions.
If the convergence of $\left\{ f_N^{(s)} (t) \right\}_{1\leq s \leq N}$
to $\left\{ f_\infty^{(s)} (t) \right\}_{s\in \mathbb{N}}$ is 
sufficiently strong, and we have sufficient control on
solutions to Boltzmann's equation, then it is possible to
explicitly characterize the measure $\pi_t$.

It is possible to show the following result through a slight refinement
of the proof of Theorem \ref{thm:s2-chaos}:
\begin{theorem}
\label{thm:s2-nonchaos}
Let $\pi \in \mathcal{P} \left( \mathcal{P} \left(
\mathbb{R}^{2d}\right) \right)$. Suppose that for 
$\pi-a.e.$ $h_0$ there exists a non-negative solution
$h(t)$ of Boltzmann's equation on $[0,T]$ with
$h(0) = h_0$, and with
$\int h(t) dx dv = 1$, and further suppose that there
exist $C_T,\beta_T > 0$ (which are constants on a set
of full $\pi$-measure) such that
\begin{equation}
\sup_{0\leq t \leq T} \sup_{x,v \in \mathbb{R}^d}
e^{\frac{1}{2} \beta_T |v|^2} h(t,x,v) \leq C_T
\end{equation}
\begin{equation}
\sup_{0 \leq t \leq T} 
\left\Vert h(t) \right\Vert_{W^{1,\infty} (\mathbb{R}^d \times
\mathbb{R}^d)} \leq C_T
\end{equation}
Let $F_N (t)=\left\{ f_N^{(s)} (t) \right\}_{1\leq s \leq N}$
 solve the hard sphere BBGKY hierarchy, under the
Boltzmann-Grad scaling $N\varepsilon^{d-1} = \ell^{-1}$.
Assume that there is a $\tilde{\beta}_T > 0$,
$\tilde{\mu}_T \in \mathbb{R}$ such that
\begin{equation}
\sup_{N\in \mathbb{N}} \sup_{0 \leq t \leq T}
\sup_{1\leq s \leq N} \sup_{Z_s \in \mathcal{D}_s}
e^{\tilde{\beta}_T E_s (Z_s)} e^{\tilde{\mu}_T s}
\left| f_N^{(s)} (t,Z_s) \right| < \infty
\end{equation}
Suppose that for some $\kappa \in (0,1)$, we have for all
$s\in \mathbb{N}$ and all $R > 0$ that
\begin{equation}
\begin{aligned}
& \limsup_{N\rightarrow \infty} \left\Vert \left( f_N^{(s)} (0) -
\int_{\mathcal{P}\left( \mathbb{R}^{2d}\right)} h_0^{\otimes s}
d\pi(h_0) \right) \mathbf{1}_{Z_s \in \mathcal{K}_s
\cap \mathcal{U}_s^{\eta(\varepsilon)}} \mathbf{1}_{E_s (Z_s)
\leq R^2} \right\Vert_{L^\infty_{Z_s}} = 0
\end{aligned}
\end{equation}
where $\eta(\varepsilon) = \varepsilon^\kappa$. Then for all
$t \in [0,T]$, all $s\in \mathbb{N}$, and all $R> 0$ we have:
\begin{equation}
\begin{aligned}
& \limsup_{N\rightarrow \infty} \left\Vert \left( f_N^{(s)} (t) -
\int_{\mathcal{P}\left( \mathbb{R}^{2d}\right)} h(t)^{\otimes s}
d\pi(h_0) \right) \mathbf{1}_{Z_s \in \mathcal{K}_s
\cap \mathcal{U}_s^{\eta(\varepsilon)}} \mathbf{1}_{E_s (Z_s)
\leq R^2} \right\Vert_{L^\infty_{Z_s}} = 0
\end{aligned}
\end{equation}
\end{theorem}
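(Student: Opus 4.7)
The plan is to exhibit the claimed target
\[
f_\infty^{(s)}(t,Z_s) := \int_{\mathcal{P}(\mathbb{R}^{2d})}
h(t)^{\otimes s}(Z_s)\,d\pi(h_0)
\]
as a single solution of the \emph{linear} Boltzmann hierarchy, and then replay the proof of Theorem \ref{thm:s2-chaos}(i) with $f_\infty^{(s)}(t)$ in place of $f(t)^{\otimes s}$. The essential observation is that while the Boltzmann equation itself is nonlinear, both the BBGKY hierarchy and the Boltzmann hierarchy are linear, so convex combinations of tensor products of Boltzmann equation solutions are Boltzmann hierarchy solutions.

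First I would verify that $f_\infty^{(s)}$ is an admissible comparison target. The $\pi$-uniform pointwise bound $h(t,x,v) \le C_T e^{-\frac12 \beta_T |v|^2}$ and dominated convergence give
\[
|f_\infty^{(s)}(t,Z_s)| \le C_T^s e^{-\frac12 \beta_T E_s(Z_s)},
\]
matching the Gaussian weighted bounds required throughout the paper; the $\pi$-uniform $W^{1,\infty}$ bound transfers analogously. Since every fiber $h(t)^{\otimes s}$ solves the Boltzmann hierarchy and the collision operators commute with integration against $\pi$ by Tonelli (using the Gaussian domination on the extra velocity at each creation), $\{f_\infty^{(s)}\}$ is itself a Boltzmann hierarchy solution on $[0,T]$ with the same kind of a priori bounds used in Theorem \ref{thm:s2-chaos}(i).

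Next I would retrace the proof of Theorem \ref{thm:s2-chaos}(i) verbatim. That proof compares the BBGKY marginal $f_N^{(s)}(t)$ with a Boltzmann hierarchy solution on the set $\mathcal{K}_s \cap \mathcal{U}_s^{\eta(\varepsilon)} \cap \{E_s \le R^2\}$ through the pseudo-trajectory expansion and the stability result of Section \ref{sec:9}, using only (a) the BBGKY uniform bound (\ref{eq:s2-bbgky-bound}), (b) a Gaussian envelope and $W^{1,\infty}$ control on the target, and (c) closeness of initial data on the same good set. None of these inputs requires the target to be factorized. Substituting $f_\infty^{(s)}(t)$ for $f(t)^{\otimes s}$ and invoking the hypothesis at $t=0$, the same argument delivers
\[
\limsup_{N\to\infty}\bigl\|(f_N^{(s)}(t)-f_\infty^{(s)}(t))\mathbf{1}_{\mathcal{K}_s \cap \mathcal{U}_s^{\eta(\varepsilon)}}\mathbf{1}_{E_s \le R^2}\bigr\|_{L^\infty_{Z_s}} = 0,
\]
which is the desired conclusion.

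The main obstacle is bookkeeping in the previous step: one must verify that every intermediate estimate in Sections \ref{sec:9}--\ref{sec:13} is written in a target-agnostic form that sees the Boltzmann hierarchy solution only through its Gaussian envelope and $W^{1,\infty}$ size. Wherever the original proof invokes factorization pointwise (for instance, in a velocity integral at a created particle), the corresponding bound must be rephrased for a general Boltzmann hierarchy solution, after which Fubini commutes $d\pi(h_0)$ through every level of the pseudo-trajectory tree; the $\pi$-uniformity of $C_T$ and $\beta_T$ dominates the resulting iterated integrals term by term. This is precisely the ``slight refinement'' alluded to in the text, and the value of $\kappa$ is preserved because it enters only through the definition of $\mathcal{U}_s^{\eta(\varepsilon)}$, not through the target.
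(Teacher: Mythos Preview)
Your proposal is correct and follows exactly the approach the paper indicates: the paper's own ``proof'' of Theorem~\ref{thm:s2-nonchaos} is merely the remark immediately following it, which says that the Boltzmann hierarchy is linear so $\int h(t)^{\otimes s}\,d\pi(h_0)$ is again a solution, and the comparison argument behind Theorem~\ref{thm:s2-chaos} (carried out in detail as Theorem~\ref{thm:s13-conv1}) already treats the Boltzmann-hierarchy target generically through its weighted $L^\infty$ and Lipschitz bounds, never through factorization. You have simply spelled out the bookkeeping (transfer of the Gaussian envelope and $W^{1,\infty}$ control under $d\pi$, Fubini through the pseudo-trajectory expansion) that the paper leaves implicit.
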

\begin{remark}
To see why Theorem \ref{thm:s2-nonchaos} is true, it is enough to
realize that the proof of Theorem \ref{thm:s2-chaos} is through
a comparison between the BBGKY and Boltzmann hierarchies (similar to
\cite{L1975,GSRT2014}). The Boltzmann hierarchy is linear, and therefore
convex combinations of solutions are again solutions; uniqueness of
the Boltzmann hierarchy follows from the estimates of \cite{L1975}
which are recalled in the present work.
\end{remark}

Theorem \ref{thm:s2-nonchaos} is a generalization of the propagation of
nonuniform chaoticity when there is some uncertainty in the initial
data $h_0$ for Boltzmann's equation.  It is similarly possible to
generalize the propagation of 2-nonuniform chaoticity to the situation
where $h_0$ is random. However, one must be quite careful when dealing
with 2-nonuniform chaoticity because the representation formula
\begin{equation}
\int_{\mathcal{P} (\mathbb{R}^{2d})} h(t)^{\otimes s} 
d\pi (h_0)
\end{equation}
fails in general (when $t>0$)
at phase points for which a collision has occurred
in the \emph{past}. We have, by slight refinements (the details being
mostly notational in nature)
of the proof of Theorem \ref{thm:s2-chaos}, the following result:
\begin{theorem}
\label{thm:s2-nonchaos-2}
Under the assumptions of Theorem \ref{thm:s2-nonchaos}, let us further
suppose that for $\pi-a.e.$ $h_0$ we have sequences
$H_N(t;h_0) = \left\{ h_N^{(s)} (t;h_0) \right\}_{1\leq s \leq N}$
such that $H_N (t;h_0)$ solves the hard sphere BBGKY hierarchy
for $\pi-a.e.\; h_0$ fixed,
and $\left\{ H_N (t;h_0)\right\}_N$ is 2-nonuniformly
$h(t)$-chaotic (with $\kappa$ fixed once and for all)
 for each $t \in [0,T]$. (The existence of such sequences
$\left\{ H_N (t;h_0)\right\}_N$ can be proven on a short time inverval
using Theorem \ref{thm:s2-chaos} and Lanford's uniform bounds.)
Assume as in the statement of Theorem \ref{thm:s2-nonchaos}
that
\begin{equation}
\sup_{N\in \mathbb{N}} \sup_{0 \leq t \leq T}
\sup_{1\leq s \leq N} \sup_{Z_s \in \mathcal{D}_s}
e^{\tilde{\beta}_T E_s (Z_s)} e^{\tilde{\mu}_T s}
\left| h_N^{(s)} (t,Z_s;h_0) \right| < C
\end{equation}
where $C,\tilde{\beta}_T,\tilde{\mu}_T$ are
constant on a set of full $\pi$-measure.
Assume that
\begin{equation}
\begin{aligned}
& \limsup_{N\rightarrow \infty}
\left\Vert \left( f_N^{(s)} (0) - \int_{\mathcal{P}(\mathbb{R}^{2d})}
h_N^{(s)} (0;h_0) d\pi (h_0)\right)
\mathbf{1}_{Z_s \in \mathcal{G}_s \cap
\hat{\mathcal{U}}_s^{\eta (\varepsilon)}}
\mathbf{1}_{E_s (Z_s) \leq R^2}
\right\Vert_{L^\infty_{Z_s}} \\
& \qquad \qquad \qquad \qquad \qquad \qquad \qquad \qquad \qquad
\qquad \qquad \qquad \qquad \qquad = 0
\end{aligned}
\end{equation}
where $\eta (\varepsilon) = \varepsilon^\kappa$. 
Then for all $t \in [0,T]$ we have
\begin{equation}
\begin{aligned}
& \limsup_{N\rightarrow \infty}
\left\Vert \left( f_N^{(s)} (t) - \int_{\mathcal{P}(\mathbb{R}^{2d})}
h_N^{(s)} (t;h_0) d\pi (h_0)\right)
\mathbf{1}_{Z_s \in \mathcal{G}_s \cap
\hat{\mathcal{U}}_s^{\eta (\varepsilon)}}
\mathbf{1}_{E_s (Z_s) \leq R^2}
\right\Vert_{L^\infty_{Z_s}} \\
& \qquad \qquad \qquad \qquad \qquad \qquad \qquad \qquad \qquad
\qquad \qquad \qquad \qquad \qquad = 0
\end{aligned}
\end{equation}
\end{theorem}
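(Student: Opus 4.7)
The plan is to reduce the conclusion to the BBGKY-to-BBGKY comparison technology underlying part \emph{(ii)} of Theorem~\ref{thm:s2-chaos}. First, define the integrated sequence
\begin{equation*}
g_N^{(s)}(t,Z_s) := \int_{\mathcal{P}(\mathbb{R}^{2d})} h_N^{(s)}(t,Z_s;h_0)\,d\pi(h_0).
\end{equation*}
Since the hard sphere BBGKY hierarchy is linear in the sequence of marginals, and since the uniform bound on $h_N^{(s)}(\cdot;h_0)$ holds with constants that are $\pi$-a.s.\ equal, Fubini gives that $G_N(t) = \{g_N^{(s)}(t)\}_{1\leq s \leq N}$ is itself a BBGKY solution, inheriting a weighted $L^\infty$ bound of the form (\ref{eq:s2-bbgky-bound}). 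Consequently, the difference $d_N^{(s)}(t) := f_N^{(s)}(t) - g_N^{(s)}(t)$ solves BBGKY, is uniformly bounded in the same weighted norm, and the $t=0$ hypothesis of the theorem reads
\begin{equation*}
\limsup_{N\to\infty}\bigl\|\, d_N^{(s)}(0)\,\mathbf{1}_{Z_s \in \mathcal{G}_s \cap \hat{\mathcal{U}}_s^{\eta(\varepsilon)}}\, \mathbf{1}_{E_s(Z_s) \leq R^2}\bigr\|_{L^\infty_{Z_s}} = 0
\end{equation*}
for every $s\in\mathbb{N}$ and $R>0$.

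The second step is to propagate this pointwise smallness to $t\in(0,T]$ by transporting the proof of part \emph{(ii)} of Theorem~\ref{thm:s2-chaos} verbatim to $d_N$. I would expand $d_N^{(s)}(t,Z_s)$ using the BBGKY Duhamel series as an iterated sum of integrals over backward pseudo-trajectories emanating from $(t,Z_s)$ and terminating at $t=0$ in some $d_N^{(s+k)}(0,Z_{s+k}^{\ast})$. Partitioning the pseudo-trajectory space into a \emph{good} part (no recollisions; velocity separation of newly-created particles maintained at a controlled power of $\eta$; the $\mathcal{G}$-type free-past condition preserved for the originally non-colliding particles) and its complement, the bad part has measure $o(1)$ as $\varepsilon\to 0$ and its contribution is absorbed by the uniform bound on $d_N$. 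On the good part, the terminal point at $t=0$ lies in $\mathcal{G}_{s+k}\cap\hat{\mathcal{U}}_{s+k}^{\eta'(\varepsilon)}\cap\{E_{s+k}\leq (R')^2\}$ with $\eta'$ a controlled power of $\eta(\varepsilon)$ and $R'$ slightly larger than $R$, so the $t=0$ smallness of $d_N^{(s+k)}(0)$ applies; summing over $k$, with the combinatorial growth absorbed by the exponential weights, yields the desired $L^\infty$ convergence at time $t$.

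The main obstacle is verifying that the stability estimates of Section~\ref{sec:9} — originally phrased as a comparison between a BBGKY sequence and a factorized Boltzmann-hierarchy-type reference — apply equally well when the reference is itself a BBGKY sequence sharing the same uniform bounds. This is not a genuine difficulty because those estimates track only the BBGKY pseudo-trajectories and never use any internal structure (tensor form, Boltzmann-hierarchy membership, etc.) of the reference sequence. The hypothesis that each $H_N(\cdot;h_0)$ be 2-nonuniformly $h(t;h_0)$-chaotic is used only implicitly, to ensure that the $t=0$ hypothesis on $d_N^{(s)}(0)$ has content and, if one wishes, to provide an explicit chaotic representation of $g_N^{(s)}(t)$ on $\mathcal{G}_s\cap\hat{\mathcal{U}}_s^{\eta(\varepsilon)}$. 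The routine Fubini interchanges of $\pi$-integration with the BBGKY Duhamel series and with the $N\to\infty$ limit are justified throughout by the $\pi$-uniformity of the constants $C,\tilde\beta_T,\tilde\mu_T$ assumed in the theorem.
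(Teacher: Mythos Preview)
Your proposal is correct and matches what the paper intends: the paper does not give a detailed proof, saying only that the result follows by ``slight refinements (the details being mostly notational in nature)'' of the proof of Theorem~\ref{thm:s2-chaos}, and the remark after Theorem~\ref{thm:s2-nonchaos} makes explicit that the key input is linearity of the hierarchy. Your steps---form the $\pi$-average $g_N^{(s)}$, use linearity of BBGKY so that $d_N = f_N - g_N$ is again a BBGKY solution with the same uniform bounds, then run the part-\emph{(ii)} Duhamel/pseudo-trajectory argument with Proposition~\ref{prop:AppA-bad-set} replacing Proposition~\ref{prop:s9-stability}---are precisely those refinements. One minor correction: Proposition~\ref{prop:AppA-bad-set} preserves the parameter $\eta$ exactly at each particle creation, so no degradation to a power $\eta'$ occurs; the same $\eta(\varepsilon)=\varepsilon^\kappa$ is carried through all $k$ steps.
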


\section{Notation and a comparison principle}
\label{sec:3}

We will work in the spatial domain $\mathbb{R}^d$ for some $d\geq 2$. Let
$\varepsilon > 0$ and $N \in \mathbb{N}$ satisfy the Boltzmann-Grad
scaling $N \varepsilon^{d-1} = \ell^{-1}$ for some fixed 
parameter $\ell > 0$\footnote{The parameter $\ell$ is of order the mean
free path length, insofar as the mean free path is well-defined.};
we will henceforth suppress
the implicit dependence on $\varepsilon,\ell$ in our notation, 
though they will be retained in formulas and estimates.
If $1\leq s \leq N$ then we define the phase space  
\begin{equation}
\label{eq:s3-D-s}
\mathcal{D}_s =
\left\{ \left. Z_s = \left(X_s,V_s\right) \in
 \mathbb{R}^{ds} \times \mathbb{R}^{d s} \right| |x_i-x_j| > \varepsilon \;
\forall \; 1 \leq i < j \leq s \right\}
\end{equation}
Suppose $Z_s \in \partial \mathcal{D}_s$, with
$x_j = x_i + \varepsilon \omega$, $\omega \in \mathbb{S}^{d-1}$,
$\omega \cdot (v_j - v_i)\neq 0$, $i<j$, and 
$|x_{j^\prime} - x_{i^\prime}| > \varepsilon$ whenever
$i^\prime < j^\prime$ and $(i^\prime,j^\prime) \neq (i,j)$; then
the image point $Z_s^* = \left( x_1,v_1,\dots,x_i,v_i^*,\dots,
x_j,v_j^*,\dots,x_s,v_s\right)$ is defined by the following rule:
\begin{equation}
\label{eq:s3-collision}
\begin{cases}
v_i^* = v_i + \omega \omega \cdot \left( v_j-v_i\right)\\
v_j^* = v_j - \omega \omega \cdot \left( v_j-v_i\right) 
\end{cases}
\end{equation}
Note that the map $Z_s \mapsto Z_s^*$ is a measurable involution of
$\partial \mathcal{D}_s$; and, in the identity $Z_s^{**} = Z_s$ a.e.
 $Z_s \in \partial \mathcal{D}_s$, we use the same 
$\omega \in \mathbb{S}^{d-1}$ for each transformation.

Let us denote by $\psi_s^t Z_s$ the image of $Z_s$ under the forward
time evolution of $s$ hard spheres at time $t$; that is, if
$Z_s = Z_s (0)$, and the function $Z_s (t) = \left( X_s (t) , V_s (t)\right)$
is piecewise differentiable\footnote{classically differentiable on the
complement of a closed set of isolated points} and has left and right limits
at all points $t\in \mathbb{R}$, and there holds 
\begin{equation}
\label{eq:s3-hsflow}
\begin{cases}
\frac{d}{dt}  Z_s (t)  = \left( V_s (t), 0\right) \;\;
\textnormal{ if } \;\; Z_s (t)
 \notin \partial \mathcal{D}_s \\
Z_s (t^+) = \left(Z_s (t^-) \right)^* \;\; \textnormal{ if }
 Z_s (t) \in \partial \mathcal{D}_s 
\end{cases}
\end{equation}
for all $t\in \mathbb{R}$ then we write $\psi_s^t Z_s = Z_s (t)$.
This ``definition,'' unfortunately, does \emph{not} define $\psi_s^t Z_s$
uniquely in general, since there is no way to continuously extend the
map $Z_s \mapsto Z_s^*$ to all of $\partial \mathcal{D}_s$. Indeed,
discontinuities will be observed whenever one particle
simultaneously collides with at least two other particles.
Nevertheless, up to deletion of a Lebesgue measure zero subset of
initial phase points $Z_s \in \mathcal{D}_s$, we may assume that
$\psi_s^t Z_s$ is defined for all $t\in \mathbb{R}$, that all collisions
are non-grazing, and that all collisions are binary and linearly ordered
in time (i.e. disjoint \emph{pairs} of particles do not simultaneously 
collide). \cite{Al1975} One can then show that, for each 
$t\in\mathbb{R}$, $\psi_s^t$ may be
viewed as a measurable map $\mathcal{D}_s \rightarrow \mathcal{D}_s$
preserving the induced Lebesgue measure. On bounded time intervals, the
map $(t,Z_s)\mapsto \psi_s^t Z_s$ is actually \emph{jointly continuous}
away from certain higher codimension submanifolds of the domain, provided
that one chooses to identify $Z_s \in \partial \mathcal{D}_s$ with its
image $Z_s^*$. However, we will not make such an identification;
instead, we choose to enforce the convention that, for a.e.
$Z_s \in \mathcal{D}_s$, there holds for all $t\in \mathbb{R}$ that
$\psi_s^t Z_s = \psi_s^{t+} Z_s$. We will say that a point 
$Z_s \in \partial \mathcal{D}_s$ is a \emph{pre-collisional configuration}
if $Z_s = \psi_s^{t-} Z_s$; or, we will call it a
\emph{post-collisional configuration} if $Z_s = \psi_s^{t+} Z_s$.
Note in particular that, according to our conventions,
$Z_s \neq \psi_s^0 Z_s$ for a.e. pre-collisional
 $Z_s \in \partial \mathcal{D}_s$.

Suppose $f_N (0)$ is a probability measure supported on 
$\overline{\mathcal{D}_N}$
and absolutely continuous with respect to the Lebesgue measure on
$\mathbb{R}^{2dN}$; by abuse of notation, we call the corresponding density
$f_N (0,Z_N)$. We will denote by $\mathcal{S}_N$ the symmetric group on
$N$ indices; if $Z_N \in \mathcal{D}_N$ then $\sigma \in \mathcal{S}_N$
acts on $Z_N$ by permutation of particle indices:
$\sigma (z_1,\dots,z_N) = (z_{\sigma (1)},\dots,z_{\sigma (N)})$.
We will always assume that $f_N (0)$ is 
\emph{symmetric}, i.e. for any $\sigma \in \mathcal{S}_N$
there holds $f_N (0,\sigma Z_N) = f_N (0,Z_N)$. Then for
$t\in \mathbb{R}$ we will define $f_N (t,Z_N) = f_N (0,\psi_N^{-t} Z_N)$;
equivalently, since $\psi_N^t$ preserves Lebesgue measure on
$\mathbb{R}^{2dN}$, we can say that $f_N (t)$ is the pushforward of
$f_N (0)$ under $\psi_N^t$. We will denote
$Z_{s:s+k} = (z_s,z_{s+1},\dots,z_{s+k})$,
$Z_s^{(i)} = (z_1,\dots,z_{i-1},z_{i+1},\dots,z_s)$, and
similarly $Z_{s:s+k}^{(i)}$ in the case $s \leq i\leq s+k$.
We extend $f_N (t)$ by zero so that it is defined on $\mathbb{R}^{2dN}$;
then the marginals $f_N^{(s)} (t,Z_s)$ are defined by
$f_N^{(s)} (t,Z_s) = \int_{\mathbb{R}^{2d(N-s)}}
f_N (t,Z_N) dZ_{(s+1):N}$. Each $f_N^{(s)} (t)$ is a symmetric probability
density supported on $\overline{\mathcal{D}_s}$; and, since
$f_N^{(s)} (t)$ is the marginal of $f_N^{(s+1)}(t)$ for each
$1 \leq s < N$, we say that the sequence
$\left\{f_N^{(s)} (t)\right\}_{1\leq s \leq N}$ is
\emph{consistent}. We also define the energy
$E_s (Z_s) = \frac{1}{2} \sum_{i=1}^s |v_i|^2$, and we will also let
$I_s (Z_s) = \frac{1}{2} \sum_{i=1}^s |x_i|^2$, and additionally
$\mathcal{Y}_s (Z_s) = \sum_{i=1}^s x_i \cdot v_i$.

\begin{remark}
Sometimes we will want to consider sequences
$\left\{ f_N^{(s)} \right\}_{1\leq s \leq N}$ which are \emph{not}
consistent, and not necessarily normalized nor even non-negative.
We will only point out this distinction when it is important for
the analysis. For the remainder of this section, we will assume that
$\left\{ f_N^{(s)} \right\}_{1\leq s \leq N}$ is a consistent
sequence of symmetric probability densities.
\end{remark}

We now turn to a comparison principle; this result is due to
Illner \& Pulvirenti \cite{IP1986,IP1989,I1989}
 and is specific to the whole space case.

\begin{lemma}
\label{lemma:s3-ip1}
For a.e. $Z_s = (X_s,V_s) \in \mathcal{D}_s$ and all $t \geq 0$,
\begin{equation}
\label{eq:s3-ip2}
\mathcal{Y}_s (\psi_s^t Z_s)  \geq
2 t E_s (Z_s) + \mathcal{Y}_s (Z_s)
\end{equation}
\end{lemma}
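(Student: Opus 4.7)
My plan is to compute $\mathcal{Y}_s(\psi_s^\tau Z_s)$ along a generic trajectory by decomposing $[0,t]$ into free-flight segments separated by isolated binary collisions, then showing that both types of contribution are compatible with the claimed inequality. For a.e.\ $Z_s \in \mathcal{D}_s$ one may assume, as recalled in this section, that the trajectory $\tau \mapsto \psi_s^\tau Z_s$ undergoes only non-grazing binary collisions at a locally finite set of times $\{t_k\}$, and I will use the right-continuous convention $\psi_s^{t} Z_s = \psi_s^{t+} Z_s$ stated just before the lemma.

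On any free-flight interval $(t_k, t_{k+1})$ the velocities are constant, so
\begin{equation*}
\frac{d}{d\tau} \mathcal{Y}_s(\psi_s^\tau Z_s) = \sum_{i=1}^s v_i(\tau) \cdot v_i(\tau) = 2 E_s(\psi_s^\tau Z_s).
\end{equation*}
Since the hard sphere collision rule \eqref{eq:s3-collision} conserves kinetic energy, $E_s(\psi_s^\tau Z_s) = E_s(Z_s)$ is constant in $\tau$; hence the total free-flight contribution to $\mathcal{Y}_s(\psi_s^t Z_s) - \mathcal{Y}_s(Z_s)$ over $[0,t]$ is exactly $2t E_s(Z_s)$.

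The key step is then to verify that the jump at each collision is nonnegative. Fix a collision time $t_k \in (0,t]$ with colliding pair $(i,j)$, $i<j$, and impact parameter $\omega \in \mathbb{S}^{d-1}$ such that $x_j(t_k) = x_i(t_k) + \varepsilon \omega$. Since only the velocities $v_i, v_j$ are modified by the scattering rule, a direct computation from \eqref{eq:s3-collision} yields
\begin{equation*}
\mathcal{Y}_s(\psi_s^{t_k^+} Z_s) - \mathcal{Y}_s(\psi_s^{t_k^-} Z_s) = (x_i - x_j) \cdot \omega \, \bigl(\omega \cdot (v_j(t_k^-) - v_i(t_k^-))\bigr) = -\varepsilon \, \omega \cdot (v_j(t_k^-) - v_i(t_k^-)),
\end{equation*}
and the pre-collisional condition at $t_k^-$ (the particles must be approaching to trigger the scattering map) forces $\omega \cdot (v_j(t_k^-) - v_i(t_k^-)) < 0$, so the jump is in fact strictly positive. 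Summing the free-flight contribution with the nonnegative jumps over the collisions in $(0,t]$ gives $\mathcal{Y}_s(\psi_s^t Z_s) - \mathcal{Y}_s(Z_s) \geq 2t E_s(Z_s)$, which is the claim. The only delicate point is pinning down the sign at collisions; this reduces to bookkeeping about the pre-/post-collisional convention $\psi_s^t = \psi_s^{t+}$ and is not a genuine analytic obstacle.
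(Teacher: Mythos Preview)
Your proof is correct and follows essentially the same approach as the paper: both compute the derivative of $\mathcal{Y}_s$ along free-flight segments (equivalently, the paper tracks $r(t)=\mathcal{Y}_s(\psi_s^t Z_s)-2tE_s(\psi_s^t Z_s)$ and notes $r'=0$ between collisions) and then observe that the jump at each collision is nonnegative, invoking energy conservation to conclude. You are in fact more explicit than the paper in writing out the jump computation and identifying the pre-collisional sign condition $\omega\cdot(v_j^- - v_i^-)<0$; the paper simply asserts that ``$r$ can only increase across collisions.''
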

\begin{proof}
Fix $Z_s \in \mathcal{D}_s$ such that $\psi_s^t Z_s$ is defined for
all $t\in \mathbb{R}$, with all collisions binary and non-grazing. Let
 $r (t) = \mathcal{Y}_s (\psi_s^t Z_s)
- 2 t E_s (\psi_s^t Z_s)$; then $r (0) = \mathcal{Y}_s (Z_s)$. 
Between collisions we have $\frac{d}{dt} r(t) = 0$, and $r$ can only
increase across collisions. We use the energy conservation identity
$E_s (\psi_s^t Z_s) = E_s (Z_s)$ to conclude.
\qed
\end{proof}

\begin{lemma}
\label{lemma:s3-ip3}
For a.e. $Z_s = (X_s,V_s) \in \mathcal{D}_s$ and all $t \in \mathbb{R}$,
\begin{equation}
\label{eq:s3-ip4}
I_s (\psi_s^t Z_s) \geq I_s ((X_s + V_s t,V_s))
\end{equation}
\end{lemma}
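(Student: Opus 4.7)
My plan is to reduce Lemma \ref{lemma:s3-ip3} to the differential inequality of Lemma \ref{lemma:s3-ip1} by integrating in time. The essential observation is that, although velocities jump at collision times, the position coordinates $X_s(t)$ are continuous under both the hard sphere flow $\psi_s^t$ and the straight-line free flow $(X_s, V_s) \mapsto (X_s + V_s t, V_s)$. Consequently $t \mapsto I_s(\psi_s^t Z_s)$ is a continuous function of $t$, piecewise smooth off the (discrete) set of collision times, and one has the absolutely continuous identity
\begin{equation}
\frac{d}{dt} I_s(\psi_s^t Z_s) = \mathcal{Y}_s(\psi_s^t Z_s)
\end{equation}
off the collision times. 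The analogous identity for the free flow reads
\begin{equation}
\frac{d}{d\tau} I_s((X_s + V_s \tau, V_s)) = \mathcal{Y}_s(Z_s) + 2 \tau E_s(Z_s),
\end{equation}
by direct expansion.

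For $t \geq 0$, I would then simply integrate the inequality of Lemma \ref{lemma:s3-ip1} over $[0,t]$: since $\mathcal{Y}_s(\psi_s^\tau Z_s) \geq \mathcal{Y}_s(Z_s) + 2\tau E_s(Z_s)$ for every $\tau \in [0,t]$ (on the full-measure set of generic $Z_s$ on which the hard sphere flow behaves well), and since the two functions of $t$ agree at $\tau=0$, the conclusion $I_s(\psi_s^t Z_s) \geq I_s((X_s + V_s t, V_s))$ follows immediately.

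The $t \leq 0$ case I would handle by the time-reversal symmetry of the hard sphere flow: if $R(x,v) = (x,-v)$, then $R \circ \psi_s^t = \psi_s^{-t} \circ R$ as measurable maps on $\mathcal{D}_s$. Since $I_s$ is invariant under $R$, one has $I_s(\psi_s^t Z_s) = I_s(\psi_s^{-t}(R Z_s))$, and likewise $I_s((X_s + V_s t, V_s)) = I_s((X_s + (-V_s)(-t), -V_s))$. Applying the forward-time case just established to $RZ_s$ at forward time $-t \geq 0$ yields the result. The only real obstacle is verifying that the generic set of full Lebesgue measure on which $\psi_s^t Z_s$ is bi-infinitely well-defined with only non-grazing binary collisions (and on which Lemma \ref{lemma:s3-ip1} holds) is preserved by $R$, which is immediate from the explicit form of the collision map and the $R$-invariance of the set of grazing/simultaneous collision configurations.
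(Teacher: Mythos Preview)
Your proposal is correct and follows essentially the same approach as the paper: the paper defines $b(t) = I_s(\psi_s^t Z_s) - I_s((X_s + V_s t, V_s))$, notes $b(0)=0$ and continuity across collisions, computes $b'(t) = \mathcal{Y}_s(\psi_s^t Z_s) - 2tE_s(Z_s) - \mathcal{Y}_s(Z_s) \geq 0$ via Lemma~\ref{lemma:s3-ip1}, and invokes time reversibility for $t<0$. Your version is the same argument, phrased as integrating the differential inequality rather than bounding the difference function, and you spell out the time-reversal step via $R(x,v)=(x,-v)$ more explicitly than the paper does.
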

\begin{proof}
Due to time reversibility, it suffices to consider the case $t\geq 0$.
Fix $Z_s \in \mathcal{D}_s$ such that $\psi_s^t Z_s$ is defined for all
$t\in \mathbb{R}$, with all collisions binary and non-grazing. Let
$b(t) = I_s (\psi_s^t Z_s) - I_s ((X_s + V_s t,V_s))$; observe that
$b(0)=0$, and $b(t)$ is continuous and piecewise smooth. Between
collisions we have
\begin{equation}
\label{eq:s3-dbdt}
\frac{d}{dt} b(t) = \mathcal{Y}_s (\psi_s^t Z_s)
- 2 t E_s (Z_s) - \mathcal{Y}_s (Z_s) \geq 0 
\end{equation}
where we have used Lemma \ref{lemma:s3-ip1}. Therefore
$b(t)\geq 0$ for all $t>0$, and the result follows.
\qed
\end{proof}

\section{The BBGKY and dual BBGKY hierarchies}
\label{sec:4}

The BBGKY hierarchy is a sequence of equations which describe the
evolution of the marginals $f_N^{(s)} (t)$ of a solution
$f_N (t)$ of Liouville's equation. The BBGKY hierarchy is
one of the classical tools in the mathematical analysis of many-particle
systems. Many derivations of the BBGKY hierarchy have been devised;
we refer to \cite{GSRT2014}, which will be the approach most convenient
for us. We give a slightly generalized version of the weak form of the
BBGKY hierarchy derived in \cite{GSRT2014}, since it will enable us
to easily read off the \emph{dual BBGKY hierarchy}. The dual BBGKY
hierarchy is the sequence of equations whose semigroup generator is
the adjoint of that of the BBGKY hierarchy. We will be using the dual
BBGKY hierarchy in order to derive uniform bounds in Sections
\ref{sec:6} and \ref{sec:7}. The main advantage of the dual BBGKY 
hierarchy is that the semigroup generator makes sense without strong
regularity assumptions; this is useful because the BBGKY hierarchy does
not propagate smoothness of the marginals.

Suppose we are given a sequence of functions
$\left\{ f_N^{(s)} (t,Z_s) \right\}_{1 \leq s \leq N}$, with
$f_N^{(s)}$ defined on
$[0,\infty) \times \overline{\mathcal{D}_s}$ and
$\left(\partial_t + V_s\cdot\nabla_{X_s}\right)f_N^{(s)} \in
L^1 \left( \mathcal{O} \right)$ for any bounded open set
$\mathcal{O} \subset [0,\infty)\times \overline{\mathcal{D}_s}$. Further
suppose the marginals satisfy permutation symmetry and the
boundary condition $f_N^{(s)} (t,Z_s^*) = f_N^{(s)} (t,Z_s)$
for a.e. $(t,Z_s) \in[0,\infty)\times \partial \mathcal{D}_s$. Then we
will say that the sequence 
$\left\{f_N^{(s)} (t,Z_s) \right\}_{1\leq s \leq N}$ \emph{solves the
weak form of the BBGKY hierarchy} provided that for every test function
$\phi_s (t,Z_s)
\in C^1_c ([0 ,\infty)\times\overline{\mathcal{D}_s})$, satisfying
permutation symmetry, there holds:
\begin{equation}
\label{eq:s4-def-bbgky}
\begin{aligned}
& \int_0^\infty \int_{\mathcal{D}_s} \left[\left(
\partial_t + V_s \cdot \nabla_{X_s} \right) \phi_s (t,Z_s) \right]
f_N^{(s)} (t,Z_s) dZ_s dt = \\
& = \int_{\mathcal{D}_s}\phi_s(0,Z_s)f_N^{(s)} (0,Z_s)dZ_s\\
&-\varepsilon^{d-1} \sum_{1 \leq i < j \leq s} \int_0^\infty
\int_{\mathbb{R}^{ds} \times \mathbb{R}^{d(s-1)} \times 
\mathbb{S}^{d-1}} \mathbf{1}_{Z_s \in \partial \mathcal{D}_s}
\omega \cdot (v_j - v_i) \times \\
& \;\;\;\;\;\;\;\;\;\;\;\;\;\;\;\;\;\;\;\; \times
\left( \phi_s f_N^{(s)} \right)
(t,\dots,x_i,v_i,\dots,x_i + \varepsilon \omega,v_j,\dots) 
d\omega dX_s^{(j)} dV_s dt \\
&  - (N-s)\varepsilon^{d-1}  \sum_{1 \leq i \leq s} \int_0^\infty
\int_{\mathbb{R}^{d s} \times \mathbb{R}^{ds} \times \mathbb{R}^d
\times  \mathbb{S}^{d-1}}
\mathbf{1}_{Z_{s+1} \in \partial \mathcal{D}_{s+1}}
\omega \cdot (v_{s+1} - v_i) \times \\
& \;\;\;\;\;\;\;\;\;\;\;\;\;\;\;\;\;\;\;\; \times
\phi_s (t,Z_s)  f_N^{(s+1)} (t,Z_s,x_i + \varepsilon \omega,v_{s+1}) 
 d\omega dv_{s+1} dX_s dV_s dt
\end{aligned}
\end{equation}
If $f_N(0) \in \mathcal{C}_0^\infty (\mathcal{D}_N)$ and $f_N (t)$
satisfies Liouville's equation, then the sequence of marginals
$\left\{f_N^{(s)} (t) \right\}_{1\leq s \leq N}$ solves the weak form
of the BBGKY hierarchy. However, note that it is also possible to have
solutions of the BBGKY hierarchy which are \emph{not} sequences of
marginals. Under suitable re-scalings, such solutions may have
physical interpretations in the \emph{grand canonical ensemble},
where the total number of particles is considered random. In our
treatment, however, we will always be working in the 
\emph{canonical ensemble}, since the total number of particles
is just $N$.

We now turn to the dual BBGKY hierarchy. Given a pair of densities
$F_N =\left\{f_N^{(s)}\right\}_{1 \leq s \leq N}$
and test functions $\Phi_N = \left\{\phi_N^{(s)}\right\}_{1 \leq s \leq N}$,
with each $f_N^{(s)},\phi_N^{(s)}$ symmetric under particle interchange,
we define a duality bracket \cite{Ge2013}:
\begin{equation}
\label{eq:s5-duality1}
\left< \Phi_N, F_N\right> = \sum_{s=1}^N \frac{1}{s!}
\int_{\mathcal{D}_s}
\phi_N^{(s)} (Z_s) f_N^{(s)} (Z_s) dZ_s
\end{equation}
We would like to define the dual BBGKY hierarchy by the following duality
relation:
\begin{equation}
\label{eq:s5-duality2}
\left< \Phi_N (t),F_N (0)\right> = \left<\Phi_N (0),F_N (t)\right>
\end{equation}
which should hold whenever $F_N (t)$ solves the BBGKY hierarchy
and $\Phi_N (t)$  solves the dual BBGKY hierarchy.
Applying (\ref{eq:s5-duality2}) and considering arbitrary weak solutions
$F_N (t)$ of the BBGKY hierarchy, one can show that observables evolve
according to the following hierarchy of equations (this is equivalent
to equation 15 in \cite{Ge2013}, up to trivial re-scaling):
\begin{equation}
\label{eq:s5-def-dual-bbgky}
\left( \partial_t - V_s \cdot \nabla_{X_s} \right) \phi_N^{(s)} (t,Z_s) = 0
\qquad  \left( Z_s \in \mathcal{D}_s, \; s = 1,\dots,N \right)
\end{equation}
\begin{equation}
\label{eq:s5-bc-dual-bbgky}
\begin{aligned}
\frac{\phi_N^{(s)}(t,Z_s^*)}{N-s+1} & + \phi_N^{(s-1)}(t,(Z_s^*)^{(i)})
+ \phi_N^{(s-1)}(t,(Z_s^*)^{(j)}) = \\
& =  \frac{\phi_N^{(s)}(t,Z_s)}{N-s+1}
 + \phi_N^{(s-1)}(t,Z_s^{(i)}) + \phi_N^{(s-1)}(t,Z_s^{(j)})
\end{aligned}
\end{equation}
\begin{equation*}
\quad \quad \quad \quad \quad \quad \quad \quad \quad 
 \left(Z_s\in\left(\Sigma_s (i,j)\times \mathbb{R}^{d s}\right)
\cap \partial \mathcal{D}_s,
\; s = 2,\dots,N\right)
\end{equation*}
where
\begin{equation}
\Sigma_s (i,j) = \left\{
X_s \in \mathbb{R}^{ds} \left| |x_i-x_j|=\varepsilon \right. \right\}
\end{equation}
Given an initial data $\phi_N^{(s)}(0)$, $1 \leq s \leq N$, we can solve
this hierarchy recursively. The nonzero observable of lowest order (at
the initial time, and therefore all time) simply evolves via the backwards
Liouville flow. Once $\phi_N^{(s-1)}(t)$ is known for all $t\geq 0$, it is
possible to determine $\phi_N^{(s)}(t)$ by integrating along characteristics.
One uses the knowledge of $\phi_N^{(s-1)}$ to determine the amount by which
$\phi_N^{(s)}$ ``jumps'' at particle collisions. Let us point out that
as $Z_s$ ranges over an open subset of
$\left(\Sigma_s(i,j) \times \mathbb{R}^{d s}\right)
\cap \partial \mathcal{D}_s$, the coordinates
$Z_s^{(i)},\dots,$ cover an open subset of $\mathcal{D}_{s-1}$.
Thus the source terms arising from $\phi_N^{(s-1)}$ are always well-defined
functions on the set $\partial \mathcal{D}_s$. Note that, by a density
argument involving a Duhamel-type formula, it is possible to use initial
data $\Phi_N (0)$ which does not satisfy the boundary condition
(\ref{eq:s5-bc-dual-bbgky}).

\section{Local \emph{a priori} bounds on observables}
\label{sec:6}

We will prove weighted $\mathcal{L}^1$ bounds on observables which are
independent of $N$; the stylized $\mathcal{L}$ is intended to distinguish
the spaces in which we bound observables. The proof is a dualization of the 
classical proof of \emph{a priori} bounds on the marginals $f_N^{(s)}$ in
weighted $L^\infty$ spaces, originally due to Lanford. 
\cite{L1975,GSRT2014} As in the case of Lanford's theorem, the 
\emph{a priori} bounds will only hold on a short time interval. Let us fix
weight parameters $\beta > 0, \mu \in \mathbb{R}$, and define the norms
\begin{equation}
\label{eq:s6-norm1}
\left\Vert \Phi_N \right\Vert_{\mathcal{L}^1_{\beta,\mu}} =
\sum_{s=1}^N \frac{1}{s!} \int_{\mathcal{D}_s}
\left| \phi_N^{(s)} (Z_s) \right| e^{-\beta E_s(Z_s)} e^{-\mu s} dZ_s
\end{equation}
\begin{equation}
\label{eq:s6-norm2}
\left| F_N \right|_{L^\infty_{\beta,\mu}} =
\sup_{1\leq s \leq N} \sup_{Z_s \in \mathcal{D}_s}
\left| f_N^{(s)} (Z_s) \right| e^{\beta E_s (Z_s)} e^{\mu s}
\end{equation}
where $E_s (Z_s) = \frac{1}{2} \sum_{i=1}^s |v_i|^2$. Then we have
\begin{equation}
\label{eq:s6-duality1}
\left< \Phi_N, F_N \right> \leq
\left\Vert \Phi_N \right\Vert_{\mathcal{L}^1_{\beta,\mu}}
\left| F_N \right|_{L^\infty_{\beta,\mu}}
\end{equation}
Since $\phi_N^{(s)}$ is transported along characteristics within
$\mathcal{D}_s$, $\left| \phi_N^{(s)} (t,Z_s) \right|$
is transported as well. Therefore we have
\begin{equation*}
\begin{aligned}
& \frac{\partial}{\partial t} \int_{\mathcal{D}_s}
\left| \phi_N^{(s)} (t,Z_s)\right| e^{-\beta E_s (Z_s)} e^{-\mu s}dZ_s=\\
& = \int_{\mathcal{D}_s} V_s \cdot \nabla_{X_s} \left|
\phi_N^{(s)} (t,Z_s) \right| e^{-\beta E_s (Z_s)} e^{-\mu s} dZ_s \\
& = \sum_{1 \leq i < j \leq s}\int_{\mathbb{R}^{ds}}
\int_{\Sigma_s (i,j)} n^{i,j} \cdot V_s
\left| \phi_N^{(s)} (t,Z_s) \right| e^{-\beta E_s (Z_s)}
e^{-\mu s} d\sigma^{i,j} dV_s \\
& = \frac{1}{2} \sum_{1 \leq i < j \leq s}
\int_{\mathbb{R}^{ds}} \int_{\Sigma_s (i,j)}
n^{i,j} \cdot V_s \times \\
&  \qquad \qquad \times \left( \left| \phi_N^{(s)} (t,Z_s) \right| -
\left| \phi_N^{(s)} (t,Z_s^*) \right|\right)
 e^{-\beta E_s (Z_s)} e^{-\mu s} d\sigma^{i,j} dV_s \\
& \leq \frac{1}{2} \sum_{1 \leq i < j \leq s}
\int_{\mathbb{R}^{ds}} \int_{\Sigma_s (i,j)}
\left| n^{i,j} \cdot V_s \right| \times \\
& \qquad \qquad
 \times \left| \phi_N^{(s)} (t,Z_s) - \phi_N^{(s)} (t,Z_s^*) \right|
 e^{-\beta E_s (Z_s)} e^{-\mu s} d\sigma^{i,j} dV_s \\
\end{aligned}
\end{equation*}
Now we employ the boundary condition to write
\begin{equation*}
\begin{aligned}
& \frac{\partial}{\partial t} \int_{\mathcal{D}_s}\left| 
\phi_N^{(s)} (t,Z_s)\right| e^{-\beta E_s(Z_s)} e^{-\mu s} dZ_s \leq \\
& \leq \frac{N}{2} \sum_{1 \leq i < j \leq s}
\int_{\mathbb{R}^{ds}} \int_{\Sigma_s (i,j)}
\left| n^{i,j} \cdot V_s \right| \times \\
&  \times
\left| \phi_N^{(s-1)} (t,(Z_s^*)^{(i)}) + \phi_N^{(s-1)}(t,(Z_s^*)^{(j)})
- \phi_N^{(s-1)}(t,Z_s^{(i)}) - \phi_N^{(s-1)}(t,Z_s^{(j)}) \right|\times\\
& \times e^{-\beta E_s (Z_s)} e^{-\mu s} d\sigma^{i,j}dV_s\\
& = \frac{N}{4} \sum_{i\neq j = 1}^s
\int_{\mathbb{R}^{ds}} \int_{\Sigma_s (i,j)}
\left| n^{i,j} \cdot V_s \right| \times \\ 
& \times
\left| \phi_N^{(s-1)} (t,(Z_s^*)^{(i)}) + \phi_N^{(s-1)}(t,(Z_s^*)^{(j)})
- \phi_N^{(s-1)}(t,Z_s^{(i)}) - \phi_N^{(s-1)}(t,Z_s^{(j)}) \right|\times\\
& \times e^{-\beta E_s (Z_s)} e^{-\mu s} d\sigma^{i,j} dV_s\\
& \leq \frac{N}{4} \sum_{i\neq j = 1}^s
\int_{\mathbb{R}^{ds}} \int_{\Sigma_s (i,j)}
\left| n^{i,j} \cdot V_s \right|
\left(\left| \phi_N^{(s-1)} (t,(Z_s^*)^{(i)}) \right| + \left|
\phi_N^{(s-1)}(t,(Z_s^*)^{(j)})\right| + \right. \\
& \qquad \qquad \left. + \left|
 \phi_N^{(s-1)}(t,Z_s^{(i)})\right| + 
\left|\phi_N^{(s-1)}(t,Z_s^{(j)})\right|\right)
 e^{-\beta E_s (Z_s)} e^{-\mu s}d\sigma^{i,j}dV_s\\
&= N \sum_{i \neq j = 1}^s \int_{\mathbb{R}^{ds}}
\int_{\Sigma_s (i,j)}
\left| n^{i,j} \cdot V_s \right|
\left| \phi_N^{(s-1)} (t,Z_s^{(i)})\right|
e^{-\beta E_s (Z_s)} e^{-\mu s}d\sigma^{i,j}dV_s
\end{aligned}
\end{equation*}
We can generalize this inequality to the case of time-dependent weights.
\begin{equation}
\label{eq:s6-bound1}
\begin{aligned}
& \frac{\partial}{\partial t} \int_{\mathcal{D}_s}\left| 
\phi_N^{(s)} (t,Z_s)\right| e^{-\beta(t) E_s (Z_s)} e^{-\mu(t) s} dZ_s
\leq \\
& \leq N \sum_{i \neq j = 1}^s \int_{\mathbb{R}^{ds}}
\int_{\Sigma_s (i,j)}
\left| n^{i,j} \cdot V_s\right| \times \\
& \qquad \qquad \times 
\left| \phi_N^{(s-1)} (t,Z_s^{(i)})\right|
 e^{-\beta(t) E_s (Z_s)} e^{-\mu(t) s}
d\sigma^{i,j}dV_s + \\
& \;\;\;\;\; +\int_{\mathcal{D}_s}
\left|\phi_N^{(s)} (t,Z_s)\right|
\left\{ -\beta^\prime (t) E_s (Z_s) - \mu^\prime (t) s\right\}
e^{-\beta(t) E_s (Z_s)} e^{-\mu(t) s} dZ_s
\end{aligned}
\end{equation}
Note that in the case $s=1$ the first term on the RHS vanishes (there are
no source terms at the boundary).

Let us estimate just the first term. The integral over the hypersurface
$\Sigma_s (i,j) = \left\{ X_s \in \mathbb{R}^{d s} \left|
|x_i - x_j| = \varepsilon\right.\right\}$ brings down a factor
of $\varepsilon^{d-1}$, which is then eliminated by virtue of the
scaling $N \varepsilon^{d-1} = \ell^{-1}$.
\begin{equation*}
\begin{aligned}
& N \sum_{i \neq j = 1}^s \int_{\mathbb{R}^{ds}}
\int_{\Sigma_s (i,j)}
\left| n^{i,j} \cdot V_s \right|
\left| \phi_N^{(s-1)} (t,Z_s^{(i)})\right|
e^{-\beta(t) E_s (Z_s)} e^{-\mu(t) s}
d\sigma^{i,j}dV_s \leq \\
& \leq \ell^{-1} \sum_{i=1}^s \int_{\mathbb{R}^{ds}} 
\int_{\mathbb{R}^{d(s-1)}} \int_{\mathbb{S}^{d-1}}
\left( \sum_{j\neq i} \left| \omega \cdot (v_j - v_i)\right|\right)
\left| \phi_N^{(s-1)}(t,Z_s^{(i)})\right| \times\\
&\qquad \qquad 
\times e^{-\beta(t) E_s (Z_s)} e^{-\mu(t) s} d\omega dX_s^{(i)} dV_s \\
& \leq \ell^{-1} \sum_{i=1}^s \int_{\mathbb{R}^{ds}} 
\int_{\mathbb{R}^{d(s-1)}} \int_{\mathbb{S}^{d-1}} 
\left| \phi_N^{(s-1)}(t,Z_s^{(i)})\right|\times \\
&  \times \left(\sqrt{2} (s-1)^{\frac{1}{2}} E_{s-1}(Z_s^{(i)})^{\frac{1}{2}}
+ (s-1)|v_i| \right)
e^{-\beta(t) E_s (Z_s)} e^{-\mu(t) s} d\omega dX_s^{(i)} dV_s \\
\end{aligned}
\end{equation*}
\begin{equation*}
\begin{aligned}
& \leq  C_d \ell^{-1} e^{-\mu(t)} \beta(t)^{-\frac{d}{2}} s
\int_{\mathbb{R}^{d (s-1)}\times\mathbb{R}^{d(s-1)}}
\left|\phi_N^{(s-1)}(t,Z_{s-1})\right|\times\\
&\qquad \times \left( (s-1)^{\frac{1}{2}} E_{s-1}(Z_{s-1})^{\frac{1}{2}}
+ (s-1) \beta(t)^{-\frac{1}{2}} \right) \times \\
& \qquad \qquad \times
e^{-\beta(t) E_{s-1} (Z_{s-1})} e^{-\mu(t)(s-1)} dX_{s-1} dV_{s-1}  \\
\end{aligned}
\end{equation*}
We may sum over $s$ to obtain:
\begin{equation}
\label{eq:s6-bound2}
\begin{aligned}
& \frac{\partial}{\partial t}
\left\Vert \Phi_N (t)\right\Vert_{\mathcal{L}^1_{\beta(t),\mu(t)}} \leq \\
& \leq \sum_{s=2}^N \frac{1}{s!}
C_d \ell^{-1} e^{-\mu(t)} \beta(t)^{-\frac{d}{2}} s
\int_{\mathcal{D}_{s-1}}
\left| \phi_N^{(s-1)}(t,Z_{s-1})\right|\times \\
&  \times \left( (s-1)^{\frac{1}{2}} E_{s-1}(Z_{s-1})^{\frac{1}{2}}
+ \frac{(s-1)}{\beta(t)^{\frac{1}{2}}} \right)
e^{-\beta(t) E_{s-1} (Z_{s-1})} e^{-\mu(t)(s-1)} dZ_{s-1} +  \\
& + \sum_{s=1}^N \frac{1}{s!} \int_{\mathcal{D}_s}
\left| \phi_N^{(s)} (t,Z_s)\right| \left\{ -\beta^\prime (t) E_s (Z_s)
- \mu^\prime (t) s \right\} e^{-\beta(t) E_s (Z_s)} e^{-\mu(t) s} dZ_s
\end{aligned}
\end{equation}

We re-index the first term and combine; we furthermore assume that
$\beta^\prime (t), \mu^\prime(t) > 0$ (this is \emph{opposite} the
usual convention because of duality). Then we have:
\begin{equation}
\label{eq:s6-bound3}
\begin{aligned}
& \frac{\partial}{\partial t}
\left\Vert \Phi_N (t) \right\Vert_{\mathcal{L}^1_{\beta(t),\mu(t)}} \leq \\
& \leq \sum_{s=1}^{N-1} \frac{1}{s!} \int_{\mathcal{D}_s}
\left| \phi_N^{(s)} (t,Z_s) \right| \times \\
& \times
\left[ C_d \ell^{-1} e^{-\mu (t)} \beta(t)^{-\frac{d}{2}} \left(
s^{\frac{1}{2}} E_s (Z_s)^{\frac{1}{2}} + s \beta(t)^{-\frac{1}{2}}\right)
- \beta^\prime(t) E_s (Z_s) - \mu^\prime(t) s\right] \times \\
& \qquad \qquad \times e^{-\beta(t) E_s (Z_s)} e^{-\mu(t) s} dZ_s
\end{aligned}
\end{equation}
It is now apparent that $\Phi_N (t)$ is controlled as long as the
quantity in brackets is everywhere nonpositive, for $0 \leq t \leq T$
and $Z_s \in \mathcal{D}_s$. For instance, let us suppose
that $\beta_0 > 0$, $ \mu_0 \in \mathbb{R}$ are given. Then as long as
$T_L >0$ is chosen so that
\begin{equation}
\label{eq:s6-T-L}
T_L \leq C_d^\prime \ell e^{\mu_0} \beta_0^{\frac{d+1}{2}}
\end{equation}
then we shall have
\begin{equation}
\label{eq:s6-bound4}
\sup_{0 \leq t \leq T_L} \left\Vert\Phi_N (t)
\right\Vert_{\mathcal{L}^1_{\beta_0,\mu_0}}
\leq \left\Vert \Phi_N (0)
\right\Vert_{\mathcal{L}^1_{\frac{1}{2}\beta_0,(\mu_0-1)}}
\end{equation}
which implies by duality
\begin{equation}
\label{eq:s6-bound-F}
\sup_{0 \leq t \leq T_L}
\left| F_N (t) \right|_{L^\infty_{\frac{1}{2} \beta_0,
(\mu_0-1)}} \leq 
\left| F_N (0) \right|_{L^\infty_{\beta_0,\mu_0}}
\end{equation}
since the initial observable $\Phi_N (0)$ is arbitrary. 
Hence we obtain:
\begin{theorem}
\label{thm:s6-lanford}
Suppose $F_N (t) = \left\{ f_N^{(s)} (t)\right\}_{1\leq s \leq N}$
is a solution of the BBGKY hierarchy (\ref{eq:s4-def-bbgky}),
subject to the Boltzmann-Grad scaling $N \varepsilon^{d-1}=\ell^{-1}$,
and with each 
function $f_N^{(s)} (t,Z_s)$
 symmetric under particle interchange.
Further suppose that for some $\beta_0 > 0$, $\mu_0 \in \mathbb{R}$,
\begin{equation}
\label{eq:s6-bbgky1}
\sup_{1\leq s \leq N} \sup_{Z_s \in \mathcal{D}_s}
\left| f_N^{(s)} (0,Z_s)\right| 
e^{\beta_0 E_s (Z_s)} e^{\mu_0 s} \leq 1
\end{equation}
Then there is a constant $C_d>0$, depending only on $d$, such that if
 $T_L < C_d \ell e^{\mu_0} \beta_0^{\frac{d+1}{2}}$ then there holds
\begin{equation}
\label{eq:s6-bbgky2}
\sup_{0 \leq t \leq T_L} \sup_{1\leq s \leq N} \sup_{Z_s \in \mathcal{D}_s}
\left| f_N^{(s)} (t,Z_s)\right| 
e^{\frac{1}{2} \beta_0 E_s (Z_s)}
 e^{(\mu_0 - 1)s} \leq 1
\end{equation}
\end{theorem}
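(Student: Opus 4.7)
The plan is to prove (\ref{eq:s6-bbgky2}) by proving a dual bound on arbitrary observables $\Phi_N(t)$ solving the dual BBGKY hierarchy and then transferring via the duality pairing. Indeed (\ref{eq:s5-duality2}) together with the H\"older-type bound (\ref{eq:s6-duality1}) gives
\begin{equation*}
\langle \Phi_N(0), F_N(t)\rangle = \langle \Phi_N(t), F_N(0)\rangle \leq \|\Phi_N(t)\|_{\mathcal{L}^1_{\beta_0,\mu_0}}\, |F_N(0)|_{L^\infty_{\beta_0,\mu_0}},
\end{equation*}
so it suffices to show $\|\Phi_N(t)\|_{\mathcal{L}^1_{\beta_0,\mu_0}} \leq \|\Phi_N(0)\|_{\mathcal{L}^1_{\beta_0/2,\,\mu_0-1}}$ for $0\leq t \leq T_L$ and arbitrary initial data $\Phi_N(0)$; since $\Phi_N(0)$ is free, the $L^\infty$ bound on $F_N(t)$ follows by taking suprema over test observables in the dual space.

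To establish this dual bound I would introduce \emph{increasing} weight paths $\beta(t),\mu(t)$ with $\beta(0)=\beta_0$, $\mu(0)=\mu_0-1$, $\beta(T_L)\le 2\beta_0$, $\mu(T_L)\le \mu_0$, and differentiate $\|\Phi_N(t)\|_{\mathcal{L}^1_{\beta(t),\mu(t)}}$ in time. Because each $\phi_N^{(s)}$ is transported along free-streaming characteristics inside $\mathcal{D}_s$, the interior contribution reduces by the divergence theorem to a boundary flux on $\bigcup_{i<j}\Sigma_s(i,j)$; substituting the dual boundary condition (\ref{eq:s5-bc-dual-bbgky}) expresses the jump of $\phi_N^{(s)}$ across $\partial\mathcal{D}_s$ in terms of four values of $\phi_N^{(s-1)}$, and the triangle inequality together with particle-interchange symmetry collapses these into a single summand, producing the inequality (\ref{eq:s6-bound1}) (together with the weight-derivative decay term).

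Next I would integrate over the hypersurface $\Sigma_s(i,j)$ — the factor $\varepsilon^{d-1}$ brought down by the surface measure being absorbed against the prefactor $N$ via the Boltzmann-Grad scaling $N\varepsilon^{d-1}=\ell^{-1}$ — and then perform the Gaussian $v_i$-integration against $e^{-\beta(t)|v_i|^2/2}$. This generates the constant $C_d\,\ell^{-1}e^{-\mu(t)}\beta(t)^{-d/2}$ and two terms proportional respectively to $s^{1/2}E_{s-1}^{1/2}$ and $s\,\beta(t)^{-1/2}$, namely (\ref{eq:s6-bound2}); re-indexing $s\mapsto s+1$ and combining with the weight-derivative term yields the pointwise bracket in (\ref{eq:s6-bound3}). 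The proof then closes by calibration: choose $\beta(t)=\beta_0+\tfrac{\beta_0}{T_L}t$ and $\mu(t)=\mu_0-1+\tfrac{t}{T_L}$, so that $\beta',\mu'\sim T_L^{-1}$; applying Young's inequality $s^{1/2}E_s^{1/2}\leq \tfrac{1}{2}(E_s+s)$ shows that the bracket is pointwise non-positive provided $T_L^{-1} \gtrsim \ell^{-1}e^{-\mu_0}\beta_0^{-(d+1)/2}$, which is exactly the condition (\ref{eq:s6-T-L}).

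The main obstacle is the careful accounting of weights during the boundary estimate: one must split $E_s(Z_s)=E_{s-1}(Z_s^{(i)})+\tfrac12|v_i|^2$ so that the full Gaussian strength $\beta(t)/2$ remains available for the $v_i$-integration while the complementary factor $e^{-\tfrac12\beta(t) E_{s-1}}$ is preserved for re-absorption at level $s-1$; any loss here would destroy the $\beta(t)^{-d/2}$ scaling and degrade the time horizon. A secondary subtlety, specific to the dual approach, is that the weights must be \emph{increasing} in $t$ rather than decreasing as in the usual primal energy method — which is why the bound produced at $t=T_L$ reads with the weaker weights $(\beta_0/2,\mu_0-1)$ used at $t=0$, rather than the other way around.
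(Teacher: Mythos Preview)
Your approach is exactly the paper's: differentiate the weighted $\mathcal{L}^1$ norm of observables with time-dependent increasing weights, use the divergence theorem and the dual boundary condition (\ref{eq:s5-bc-dual-bbgky}) to produce a source at level $s-1$, integrate out the extra velocity against the Gaussian, re-index, and calibrate $\beta'(t),\mu'(t)$ so that the bracket in (\ref{eq:s6-bound3}) is pointwise nonpositive. Your closing remark that the weights must \emph{increase} (opposite to the primal convention) is precisely the point the paper makes.

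There is one calibration slip worth flagging. To land on $\|\Phi_N(t)\|_{\mathcal{L}^1_{\beta_0,\mu_0}} \leq \|\Phi_N(0)\|_{\mathcal{L}^1_{\beta_0/2,\,\mu_0-1}}$ you need the path to \emph{start} at $\beta(0)=\tfrac12\beta_0$ and \emph{end} at $\beta(T_L)=\beta_0$, not start at $\beta_0$ and rise to $2\beta_0$ as you wrote. With your choice $\beta(t)=\beta_0+\beta_0 t/T_L$ the differential inequality gives control of $\|\Phi_N(t)\|_{\mathcal{L}^1_{\beta(t),\mu(t)}}$, but since $\beta(t)\geq\beta_0$ this norm is \emph{smaller} than $\|\Phi_N(t)\|_{\mathcal{L}^1_{\beta_0,\mu_0}}$ (larger $\beta$ means stronger Gaussian decay in the weight $e^{-\beta E_s}$), so the inequality goes the wrong way for the duality step. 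Replacing your path by $\beta(t)=\tfrac12\beta_0(1+t/T_L)$ fixes this immediately; $\beta',\mu'$ are still of order $T_L^{-1}$ and the bracket closes under the same condition (\ref{eq:s6-T-L}), so the rest of your argument goes through verbatim.
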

\begin{remark}
Theorem \ref{thm:s6-lanford} does not require the 
functions $f_N^{(s)}$ to be non-negative, nor does it require that
they form a consistent sequence of marginals.
\end{remark}

The bound
(\ref{eq:s6-bound-F}) is just the classical \emph{a priori} bound
of Lanford \cite{L1975,GSRT2014}; note that the same argument based on
observables would have worked in a periodic domain as well. Moreover, for
any fixed initial datum, the Lanford time $T_L$ increases in direct
proportion to the mean free path, which is $\mathcal{O}(\ell)$.

\section{Global \emph{a priori} bounds on observables}
\label{sec:7}

Our goal is to extend the \emph{a priori} bounds from the previous section
to the entire time interval, $t\in [0,\infty)$, as soon as the
mean free path $\mathcal{O}(\ell)$
 is sufficiently large. The relevant
estimates were first proved by Illner \& Pulvirenti
\cite{IP1989}, using the dispersive inequalities we have stated in
Lemmas \ref{lemma:s3-ip1}, \ref{lemma:s3-ip3}. Our approach is
slightly different, in that we will be working with the \emph{dual}
hierarchy. Note that once the correct weights are chosen,
the rest amounts to a computation, plus one application of
Lemma \ref{lemma:s3-ip1}.

Let us be given a time $T>0$,
and smooth increasing functions $\beta(t):[0,T]\rightarrow \mathbb{R}^+$,
$\mu(t):[0,T]\rightarrow\mathbb{R}$. The spaces 
$\mathcal{L}^1_{\beta,\mu}$, $L^\infty_{\beta,\mu}$ are as defined
in the previous section. We are given functions
$\Phi_N (t) = \left\{ \phi_N^{(s)} (t)\right\}_{1\leq s \leq N}$, with
each $\phi_N^{(s)}:[0,T]\times\mathcal{D}_s \rightarrow \mathbb{R}$ 
symmetric under particle
interchange, such that $\Phi_N$ satisfies the dual hierarchy
(\ref{eq:s5-def-dual-bbgky}-\ref{eq:s5-bc-dual-bbgky}) for
$t\in[0,T]$. Define the functions 
$\tilde{\Phi}_N (t) = \left\{ \tilde{\phi}_N^{(s)} (t)
\right\}_{1\leq s \leq N}$ by the formula
\begin{equation}
\label{eq:s7-phi-tilde}
\tilde{\phi}_N^{(s)} (t,Z_s) = 
\phi_N^{(s)} (t,Z_s) e^{-\beta (t) I_s ((X_s - (T-t)V_s,V_s))}
\end{equation}
We will be estimating $\left\Vert \tilde{\Phi}_N (t)
\right\Vert_{\mathcal{L}^1_{\beta(t),\mu(t)}}$ for $t\in[0,T]$.

Observe first that $\left( \partial_t - V_s\cdot \nabla_{X_s}\right)
I_s ( (X_s - (T-t)V_s,V_s)) = 0$ on any open subset of $\mathcal{D}_s$.
On the other hand, for $Z_s = (X_s,V_s)\in \mathcal{D}_s$ we have
\begin{equation}
\label{eq:s7-I-s-1}
I_s ((X_s - (T-t)V_s,V_s)) = I_s (Z_s) 
- (T-t) \mathcal{Y}_s (Z_s) + (T-t)^2 E_s (Z_s)
\end{equation}
Clearly if $Z_s \in \partial \mathcal{D}_s$ then 
$I_s (Z_s^*) = I_s (Z_s)$, and $E_s (Z_s^*) = E_s (Z_s)$.
Hence by Lemma \ref{lemma:s3-ip1},
\begin{equation}
\label{eq:s7-I-s-2}
\begin{aligned}
& I_s ( (X_s - (T-t)V_s,V_s)) \geq  I_s ((X_s - (T-t)V_s^*,V_s^*)) \\
& \qquad  \textnormal{ whenever } t \in [0,T]
\textnormal{ and } Z_s =(X_s,V_s) \in \partial \mathcal{D}_s
\textnormal{ is \emph{pre}-collisional}
\end{aligned}
\end{equation}
The restriction $t \leq T$ in (\ref{eq:s7-I-s-2}) is crucial; without
this restriction the inequality could go the \emph{wrong way} where we
need it in the proof.

On any open subset of $\mathcal{D}_s$ we have
\begin{equation}
\label{eq:s7-transport1}
\left( \frac{\partial}{\partial t} - V_s\cdot \nabla_{X_s}\right)
\left| \phi_N^{(s)} (t,Z_s) \right| = 0
\end{equation}
and likewise
\begin{equation}
\label{eq:s7-transport2}
 \left( \frac{\partial}{\partial t} - V_s \cdot \nabla_{X_s}\right)
I_s ( (X_s - (T-t)V_s,V_s))  = 0
\end{equation}
Therefore by the divergence theorem we obtain the \emph{equality}:
\begin{equation}
\label{eq:s7-equality1}
\begin{aligned}
&\frac{\partial}{\partial t} \int_{\mathcal{D}_s}
 \left| \tilde{\phi}_N^{(s)} (t,Z_s)\right| e^{-\beta(t) E_s (Z_s)}
 e^{-\mu(t) s} dZ_s = \\
& = \frac{1}{2} \sum_{1 \leq i \neq j \leq s}\int_{\mathbb{R}^{ds}}
\int_{\Sigma_s (i,j)} n^{i,j} \cdot V_s
\left| \phi_N^{(s)} (t,Z_s) \right| \times \\
& \qquad \qquad \times
e^{-\beta (t) \left[ I_s( ( X_s-(T-t)V_s,V_s)) + E_s(Z_s) \right]}
e^{-\mu (t) s} d\sigma^{i,j} dV_s + \\
& + \int_{\mathcal{D}_s} \left| \tilde{\phi}_N^{(s)} (t,Z_s)\right|
e^{-\beta(t) E_s (Z_s)} e^{-\mu(t) s} \times \\
& \qquad \qquad \times
\left\{ -\beta^\prime (t) \left[ I_s((X_s-(T-t)V_s,V_s)) 
+ E_s(Z_s) \right] - \mu^\prime (t) s \right\} dZ_s
\end{aligned}
\end{equation}

The boundary term can be re-written as an integral over 
\emph{pre}-collisional configurations. Recall that, according to our
conventions, $n^{i,j}\cdot V_s = - \frac{x_j-x_i}{\varepsilon \sqrt{2}}
\cdot (v_j-v_i)$ along $\Sigma_s (i,j)\times \mathbb{R}^{ds}$;
therefore, $n^{i,j}\cdot V_s > 0$ for pre-collisional
configurations. We have:
\begin{equation}
\label{eq:s7-equality2}
\begin{aligned}
&\frac{\partial}{\partial t} \int_{\mathcal{D}_s}
 \left| \tilde{\phi}_N^{(s)} (t,Z_s)\right| e^{-\beta(t) E_s (Z_s)}
 e^{-\mu(t) s} dZ_s = \\
& = \frac{1}{2} \sum_{1 \leq i \neq j \leq s}\int_{\mathbb{R}^{ds}}
\int_{\Sigma_s^{\textnormal{inc}} (i,j)} \left| n^{i,j} \cdot V_s \right|
\left| \phi_N^{(s)} (t,Z_s) \right| \times \\
& \qquad \qquad \times
e^{-\beta (t) \left[ I_s( ( X_s-(T-t)V_s,V_s)) + E_s(Z_s) \right]}
e^{-\mu (t) s} d\sigma^{i,j} dV_s  \\
& - \frac{1}{2} \sum_{1 \leq i \neq j \leq s}\int_{\mathbb{R}^{ds}}
\int_{\Sigma_s^{\textnormal{inc}} (i,j)} \left| n^{i,j} \cdot V_s \right|
\left| \phi_N^{(s)} (t,Z_s^*) \right| \times \\
& \qquad \qquad \times
e^{-\beta (t) \left[ I_s( ( X_s-(T-t)V_s^*,V_s^*)) + E_s(Z_s^*) \right]}
e^{-\mu (t) s} d\sigma^{i,j} dV_s  \\
& + \int_{\mathcal{D}_s} \left| \tilde{\phi}_N^{(s)} (t,Z_s)\right|
e^{-\beta(t) E_s (Z_s)} e^{-\mu(t) s} \times \\
& \qquad \qquad \times
\left\{ -\beta^\prime (t) \left[ I_s((X_s-(T-t)V_s,V_s)) 
+ E_s(Z_s) \right] - \mu^\prime (t) s \right\} dZ_s
\end{aligned}
\end{equation} 

According to the boundary condition (\ref{eq:s5-bc-dual-bbgky}), for
any $Z_s \in \partial \mathcal{D}_s$,
\begin{equation}
\label{eq:s7-bc}
\begin{aligned}
\left| \phi_N^{(s)} (t,Z_s) \right| \leq
& \left| \phi_N^{(s)} (t,Z_s^*) \right| + 
N \left| \phi_N^{(s-1)} (t,Z_s^{(i)}) \right| +
N \left| \phi_N^{(s-1)} (t,Z_s^{(j)}) \right| + \\
& + N \left| \phi_N^{(s-1)} (t,(Z_s^*)^{(i)}) \right| +
N \left| \phi_N^{(s-1)} (t,(Z_s^*)^{(j)}) \right|
\end{aligned}
\end{equation}

Therefore,
\begin{equation}
\label{eq:s7-bound1}
\begin{aligned}
&\frac{\partial}{\partial t} \int_{\mathcal{D}_s}
 \left| \tilde{\phi}_N^{(s)} (t,Z_s)\right| e^{-\beta(t) E_s (Z_s)}
 e^{-\mu(t) s} dZ_s \leq \\
& \leq \frac{1}{2} \sum_{1 \leq i \neq j \leq s}\int_{\mathbb{R}^{ds}}
\int_{\Sigma_s^{\textnormal{inc}} (i,j)} \left| n^{i,j} \cdot V_s \right|
\left| \phi_N^{(s)} (t,Z_s^*) \right| \times \\
& \qquad \qquad \qquad \times
e^{-\beta (t) \left[ I_s( ( X_s-(T-t)V_s,V_s)) + E_s(Z_s) \right]}
e^{-\mu (t) s} d\sigma^{i,j} dV_s  \\
& + N \sum_{1 \leq i \neq j \leq s}\int_{\mathbb{R}^{ds}}
\int_{\Sigma_s^{\textnormal{inc}} (i,j)} \left| n^{i,j} \cdot V_s \right|
\left| \phi_N^{(s-1)} (t,Z_s^{(i)}) \right| \times \\
& \qquad \qquad \qquad \times
e^{-\beta (t) \left[ I_s( ( X_s-(T-t)V_s,V_s)) + E_s(Z_s) \right]}
e^{-\mu (t) s} d\sigma^{i,j} dV_s  \\
& + N \sum_{1 \leq i \neq j \leq s}\int_{\mathbb{R}^{ds}}
\int_{\Sigma_s^{\textnormal{inc}} (i,j)} \left| n^{i,j} \cdot V_s \right|
\left| \phi_N^{(s-1)} (t,(Z_s^*)^{(i)}) \right| \times \\
& \qquad \qquad \qquad \times
e^{-\beta (t) \left[ I_s( ( X_s-(T-t)V_s,V_s)) + E_s(Z_s) \right]}
e^{-\mu (t) s} d\sigma^{i,j} dV_s  \\
& - \frac{1}{2} \sum_{1 \leq i \neq j \leq s}\int_{\mathbb{R}^{ds}}
\int_{\Sigma_s^{\textnormal{inc}} (i,j)} \left| n^{i,j} \cdot V_s \right|
\left| \phi_N^{(s)} (t,Z_s^*) \right| \times \\
& \qquad \qquad \times 
e^{-\beta (t) \left[ I_s( ( X_s-(T-t)V_s^*,V_s^*)) + E_s(Z_s^*) \right]}
e^{-\mu (t) s} d\sigma^{i,j} dV_s  \\
& + \int_{\mathcal{D}_s} \left| \tilde{\phi}_N^{(s)} (t,Z_s)\right|
e^{-\beta(t) E_s (Z_s)} e^{-\mu(t) s} \times \\
& \qquad \qquad \times
\left\{ -\beta^\prime (t) \left[ I_s((X_s-(T-t)V_s,V_s)) 
+ E_s(Z_s) \right] - \mu^\prime (t) s \right\} dZ_s
\end{aligned}
\end{equation}

We apply (\ref{eq:s7-I-s-2}) to the first and third terms on the right
hand side, for $0\leq t \leq T$.
\begin{equation}
\label{eq:s7-bound2}
\begin{aligned}
&\frac{\partial}{\partial t} \int_{\mathcal{D}_s}
 \left| \tilde{\phi}_N^{(s)} (t,Z_s)\right| e^{-\beta(t) E_s (Z_s)}
 e^{-\mu(t) s} dZ_s \leq \\
& \leq \frac{1}{2} \sum_{1 \leq i \neq j \leq s}\int_{\mathbb{R}^{ds}}
\int_{\Sigma_s^{\textnormal{inc}} (i,j)} \left| n^{i,j} \cdot V_s \right|
\left| \phi_N^{(s)} (t,Z_s^*) \right| \times \\
& \qquad \qquad \times
e^{-\beta (t) \left[ I_s( ( X_s-(T-t)V_s^*,V_s^*)) + E_s(Z_s^*) \right]}
e^{-\mu (t) s} d\sigma^{i,j} dV_s  \\
& +  N  \sum_{1 \leq i \neq j \leq s}\int_{\mathbb{R}^{ds}}
\int_{\Sigma_s^{\textnormal{inc}} (i,j)} \left| n^{i,j} \cdot V_s \right|
\left| \phi_N^{(s-1)} (t,Z_s^{(i)}) \right| \times \\
& \qquad \qquad \times
e^{-\beta (t) \left[ I_s( ( X_s-(T-t)V_s,V_s)) + E_s(Z_s) \right]}
e^{-\mu (t) s} d\sigma^{i,j} dV_s  \\
& + N  \sum_{1 \leq i \neq j \leq s}\int_{\mathbb{R}^{ds}}
\int_{\Sigma_s^{\textnormal{inc}} (i,j)} \left| n^{i,j} \cdot V_s \right|
\left| \phi_N^{(s-1)} (t,(Z_s^*)^{(i)}) \right| \times \\
& \qquad \qquad \times
e^{-\beta (t) \left[ I_s( ( X_s-(T-t)V_s^*,V_s^*)) + E_s(Z_s^*) \right]}
e^{-\mu (t) s} d\sigma^{i,j} dV_s  \\
& - \frac{1}{2} \sum_{1 \leq i \neq j \leq s}\int_{\mathbb{R}^{ds}}
\int_{\Sigma_s^{\textnormal{inc}} (i,j)} \left| n^{i,j} \cdot V_s \right|
\left| \phi_N^{(s)} (t,Z_s^*) \right| \times \\
& \qquad \qquad \times
e^{-\beta (t) \left[ I_s( ( X_s-(T-t)V_s^*,V_s^*)) + E_s(Z_s^*) \right]}
e^{-\mu (t) s} d\sigma^{i,j} dV_s  \\
& + \int_{\mathcal{D}_s} \left| \tilde{\phi}_N^{(s)} (t,Z_s)\right|
e^{-\beta(t) E_s (Z_s)} e^{-\mu(t) s}\times \\
& \qquad \qquad \times
\left\{ -\beta^\prime (t) \left[ I_s((X_s-(T-t)V_s,V_s)) 
+ E_s(Z_s) \right] - \mu^\prime (t) s \right\} dZ_s
\end{aligned}
\end{equation}

Now the first term precisely cancels the fourth term, whereas the
second and third terms combine to yield an integral over all
of $\Sigma_s (i,j)$.
\begin{equation}
\label{eq:s7-bound3}
\begin{aligned}
&\frac{\partial}{\partial t} \int_{\mathcal{D}_s}
 \left| \tilde{\phi}_N^{(s)} (t,Z_s)\right| e^{-\beta(t) E_s (Z_s)}
 e^{-\mu(t) s} dZ_s \leq \\
& \leq N  \sum_{1 \leq i \neq j \leq s}\int_{\mathbb{R}^{ds}}
\int_{\Sigma_s (i,j)} \left| n^{i,j} \cdot V_s \right|
\left| \phi_N^{(s-1)} (t,Z_s^{(i)}) \right| \times \\
& \qquad \qquad \times
e^{-\beta (t) \left[ I_s( ( X_s-(T-t)V_s,V_s)) + E_s(Z_s) \right]}
e^{-\mu (t) s} d\sigma^{i,j} dV_s  \\
& + \int_{\mathcal{D}_s} \left| \tilde{\phi}_N^{(s)} (t,Z_s)\right|
e^{-\beta(t) E_s (Z_s)} e^{-\mu(t) s} \times \\
& \qquad \qquad \times
\left\{ -\beta^\prime (t) \left[ I_s((X_s-(T-t)V_s,V_s)) 
+ E_s(Z_s) \right] - \mu^\prime (t) s \right\} dZ_s
\end{aligned}
\end{equation}

The following inequality is immediate and
 holds for all $Z_s \in \mathbb{R}^{2ds}$ and $t\in\mathbb{R}$:
\begin{equation}
\label{eq:s7-I-s-3}
\begin{aligned}
& I_s ( (X_s - (T-t) V_s,V_s)) + E_s (Z_s) \geq \\
&  \qquad \qquad \qquad 
\geq \frac{1}{2} \left( \left| x_i - (T-t)v_i\right|^2 + |v_i|^2\right)
+ E_{s-1} (Z_s^{(i)}) 
\end{aligned}
\end{equation}
We may eliminate $x_i$ from the right-hand side of
(\ref{eq:s7-I-s-3}) whenever
$Z_s \in \Sigma_s (i,j) \times \mathbb{R}^{ds}$, due to the
condition $x_j = x_i + \varepsilon \omega$. Combining this fact with
the Boltzmann-Grad scaling $N\varepsilon^{d-1}=\ell^{-1}$, we obtain
the following from (\ref{eq:s7-bound3}):
\begin{equation}
\label{eq:s7-bound4}
\begin{aligned}
&\frac{\partial}{\partial t} \int_{\mathcal{D}_s}
 \left| \tilde{\phi}_N^{(s)} (t,Z_s)\right| e^{-\beta(t) E_s (Z_s)}
 e^{-\mu(t) s} dZ_s \leq \\
& \leq \ell^{-1}  \sum_{i=1}^s 
\int_{ \mathbb{R}^{2d(s-1)}}
 \left| \tilde{\phi}_N^{(s-1)} (t,Z_s^{(i)}) \right|\times \\ 
& \qquad \times
\left[  \sum_{\substack{j=1\\ j \neq i}}^s
\int_{\mathbb{R}^d \times \mathbb{S}^{d-1}}
\left| \omega \cdot (v_j - v_i )\right|
e^{-\frac{1}{2} \beta (t) 
\left[ |x_j-\varepsilon \omega-(T-t)v_i|^2 + |v_i|^2 \right]} 
d\omega dv_i \right] \times \\
& \qquad \times
e^{-\beta (t) E_{s-1} (Z_s^{(i)})}
e^{-\mu (t) s} dZ_s^{(i)} \\
& + \int_{\mathcal{D}_s} \left| \tilde{\phi}_N^{(s)} (t,Z_s)\right|
e^{-\beta(t) E_s (Z_s)} e^{-\mu(t) s}\times \\
& \qquad \qquad \times
\left\{ -\beta^\prime (t) \left[ I_s((X_s-(T-t)V_s,V_s)) 
+ E_s(Z_s) \right] - \mu^\prime (t) s \right\} dZ_s
\end{aligned}
\end{equation}
The integral in brackets is controlled using the classical dispersive
inequality \cite{BD1985}:
\begin{equation}
\label{eq:s7-decay}
\left\Vert \zeta ( x-vt,v) \right\Vert_{L^\infty_x L^1_v} \leq
|t|^{-d} \left\Vert \zeta(x,v) \right\Vert_{L^1_x L^\infty_v}
\end{equation}
Hence,
\begin{equation}
\label{eq:s7-bound5}
\begin{aligned}
&\frac{\partial}{\partial t} \int_{\mathcal{D}_s}
 \left| \tilde{\phi}_N^{(s)} (t,Z_s)\right| e^{-\beta(t) E_s (Z_s)}
 e^{-\mu(t) s} dZ_s \leq \\
& \leq \ell^{-1} s
\int_{ \mathbb{R}^{2d(s-1)}}
 \left| \tilde{\phi}_N^{(s-1)} (t,Z_{s-1}) \right|\times \\ 
& \times \left[ C_d [1+(T-t)]^{-d} \beta(t)^{-\frac{d}{2}}
\left( (s-1)^{\frac{1}{2}} E_{s-1}(Z_{s-1})^{\frac{1}{2}}
+ (s-1) \beta(t)^{-\frac{1}{2}} \right) \right] \times \\
& \qquad \qquad\times e^{-\beta (t) E_{s-1} (Z_{s-1})}
e^{-\mu (t) s} dZ_{s-1} \\
& + \int_{\mathcal{D}_s} \left| \tilde{\phi}_N^{(s)} (t,Z_s)\right|
e^{-\beta(t) E_s (Z_s)} e^{-\mu(t) s} \times \\
& \qquad \qquad \times
\left\{ -\beta^\prime (t) \left[ I_s((X_s-(T-t)V_s,V_s)) 
+ E_s(Z_s) \right] - \mu^\prime (t) s \right\} dZ_s
\end{aligned}
\end{equation}

We can sum over $s$ to obtain, for $0\leq t \leq T$,
\begin{equation}
\label{eq:s7-bound6}
\begin{aligned}
&\frac{\partial}{\partial t} \left\Vert \tilde{\Phi}_N (t) 
\right\Vert_{\mathcal{L}^1_{\beta(t),\mu(t)}} \leq \\
& \leq \sum_{s=1}^{N-1} \frac{1}{s!}
\int_{\mathcal{D}_s} \left| \tilde{\phi}_N^{(s)} (t,Z_s)\right|
e^{-\beta (t) E_s (Z_s)} e^{-\mu (t) s} \times \\
& \times \left[ \frac{C_d  e^{-\mu (t)} \beta(t)^{- \frac{d}{2}} }
{ \ell \left[1+(T-t)\right]^d}
\left( s^{\frac{1}{2}} E_s (Z_s)^{\frac{1}{2}}
+ s \beta(t)^{-\frac{1}{2}} \right) -
\beta^\prime (t) E_s (Z_s) - \mu^\prime (t) s \right] dZ_s
\end{aligned}
\end{equation}
Suppose $\beta_0 > 0$ and $\mu_0 \in \mathbb{R}$ are given.
Then fixing any $T>0$ we define
\begin{equation}
\label{eq:s7-weight1}
\beta (t) = \beta_0 -
\frac{1}{2} \beta_0 \left(1- \left[ 1 + (T-t) \right]^{-(d-1)}\right)
\end{equation}
\begin{equation}
\label{eq:s7-weight2}
\mu (t) = \mu_0 -
 \left(1-\left[1+(T-t)\right]^{-(d-1)}\right)
\end{equation}
We have $\beta (T)=\beta_0$, $\mu(T)=\mu_0$,
$\inf_{0 \leq t \leq T}\beta (t) \geq \frac{1}{2}\beta_0$,
$\inf_{0\leq t \leq T} \mu(t) \geq ( \mu_0 - 1 )$, and
\begin{equation}
\label{eq:s7-weight3}
\beta^\prime (t) = 
\frac{1}{2} \beta_0 (d-1) [1+(T-t)]^{-d}
\end{equation}
\begin{equation}
\label{eq:s7-weight4}
\mu^\prime (t) = 
 (d-1) [1+(T-t)]^{-d}
\end{equation}
Then if we assume further that
$\ell^{-1} e^{-\mu_0} \beta_0^{-\frac{d+1}{2}}$ is sufficiently
small (depending only on $d$), then
\begin{equation}
\label{eq:s7-bound7}
\sup_{0\leq t \leq T} \left\Vert
\tilde{\Phi}_N (t) \right\Vert_{\mathcal{L}^1_{\beta(t),\mu(t)}}
\leq \left\Vert \tilde{\Phi}_N (0) 
\right\Vert_{\mathcal{L}^1_{\frac{1}{2}\beta_0,(\mu_0 - 1)}}
\end{equation}
In particular,
\begin{equation}
\label{eq:s7-bound8}
\left\Vert \tilde{\Phi}_N (T) \right\Vert_{\mathcal{L}^1_{\beta_0,\mu_0}}
\leq \left\Vert \tilde{\Phi}_N (0)
\right\Vert_{\mathcal{L}^1_{\frac{1}{2} \beta_0,(\mu_0 - 1)}}
\end{equation}
Since $T>0$ is arbitrary, recalling the definition of
$\tilde{\Phi}_N$ and using duality we obtain:
\begin{theorem} (Illner \& Pulvirenti 1989)
\label{thm:s7-ip}
Suppose $F_N (t) = \left\{ f_N^{(s)} (t)\right\}_{1\leq s \leq N}$
is a solution of the BBGKY hierarchy (\ref{eq:s4-def-bbgky}),
subject to the Boltzmann-Grad scaling $N \varepsilon^{d-1}=\ell^{-1}$,
and with each 
function $f_N^{(s)} : [0,\infty) \times \mathbb{R}^{2ds} 
\rightarrow \mathbb{R}$ symmetric under particle interchange.
Further suppose that for some $\beta_0 > 0$, $\mu_0 \in \mathbb{R}$,
\begin{equation}
\label{eq:s7-bbgky1}
\sup_{1\leq s \leq N} \sup_{Z_s \in \mathcal{D}_s}
\left| f_N^{(s)} (0,Z_s)\right| 
e^{\beta_0 \left[ E_s (Z_s) + I_s (Z_s)\right]}
 e^{\mu_0 s} \leq 1
\end{equation}
Then if $\ell^{-1} e^{-\mu_0} \beta_0^{-\frac{d+1}{2}}$ is sufficiently
small (depending only on $d$) then we have
\begin{equation}
\label{eq:s7-bbgky2}
\sup_{t \geq 0} \sup_{1\leq s \leq N} \sup_{Z_s \in \mathcal{D}_s}
\left| f_N^{(s)} (t,Z_s)\right| 
e^{\frac{1}{2} \beta_0
 \left[ E_s (Z_s) + I_s ((X_s - V_s t,V_s))\right]}
 e^{(\mu_0 -1)s} \leq 1
\end{equation}
\end{theorem}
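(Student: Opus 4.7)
The plan is to mirror the local argument of Section \ref{sec:6} but to conjugate the dual observables by a pseudoconformal weight that exploits the free dispersion at late times. Concretely, given $T > 0$, I would work with $\tilde{\phi}_N^{(s)}(t,Z_s) = \phi_N^{(s)}(t,Z_s)\, e^{-\beta(t) I_s((X_s-(T-t)V_s,V_s))}$ and estimate $\|\tilde{\Phi}_N(t)\|_{\mathcal{L}^1_{\beta(t),\mu(t)}}$ for time-dependent weights $\beta(t),\mu(t)$ pinned at $\beta(T)=\beta_0,\mu(T)=\mu_0$. The point of the Galilean-shifted $I_s$ is that it is conserved under free transport (so it commutes with $\partial_t - V_s\cdot \nabla_{X_s}$) but, crucially, it \emph{increases} across collisions along the particle flow, which is exactly the content of Lemma \ref{lemma:s3-ip1} rewritten in the form (\ref{eq:s7-I-s-2}): for pre-collisional $Z_s$ and $t \leq T$, one has $I_s((X_s-(T-t)V_s,V_s)) \geq I_s((X_s-(T-t)V_s^*,V_s^*))$.

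Next I would differentiate in $t$ and use the divergence theorem to rewrite the transport term as a sum of boundary integrals over $\Sigma_s(i,j)$. Splitting into pre- and post-collisional pieces and applying the dual boundary condition (\ref{eq:s5-bc-dual-bbgky}) bounds $|\phi_N^{(s)}(t,Z_s)|$ on the outgoing part by $|\phi_N^{(s)}(t,Z_s^*)|$ plus four lower-order source terms in $\phi_N^{(s-1)}$. The essential step is that on the post-collisional term one may replace the weight at $Z_s$ by the weight at $Z_s^*$ using (\ref{eq:s7-I-s-2})---the inequality goes the right way precisely because $t\leq T$---so that this term cancels the naked $|\phi_N^{(s)}(t,Z_s^*)|$ boundary outflow \emph{exactly}. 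What remains is a source term proportional to $\int |\phi_N^{(s-1)}|$ integrated against $|\omega\cdot(v_j-v_i)| e^{-\frac{1}{2}\beta(t)(|x_j-\varepsilon\omega-(T-t)v_i|^2+|v_i|^2)}\,d\omega\, dv_i$.

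To close the estimate I would invoke the Bardos--Degond dispersive bound (\ref{eq:s7-decay}), which for the above Gaussian in the free-flow form yields the decisive decay factor $C_d[1+(T-t)]^{-d}\beta(t)^{-d/2}$. The Boltzmann--Grad relation $N\varepsilon^{d-1}=\ell^{-1}$ absorbs the $\varepsilon^{d-1}$ coming from the surface measure, so altogether the source term carries the prefactor $\ell^{-1}e^{-\mu(t)}\beta(t)^{-d/2}[1+(T-t)]^{-d}$. I would then pick $\beta(t),\mu(t)$ as in (\ref{eq:s7-weight1})--(\ref{eq:s7-weight2}), so that $\beta'(t)$ and $\mu'(t)$ are on the order of $[1+(T-t)]^{-d}$ with matched constants; the smallness assumption $\ell^{-1}e^{-\mu_0}\beta_0^{-(d+1)/2}\ll_d 1$ then allows the $-\beta'(t)E_s-\mu'(t)s$ gain from the weight to dominate the gain in the source, giving $\frac{\partial}{\partial t}\|\tilde{\Phi}_N(t)\|_{\mathcal{L}^1_{\beta(t),\mu(t)}}\leq 0$ for $t\in[0,T]$. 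Finally, since $T>0$ was arbitrary, dualising (\ref{eq:s6-duality1}) at time $t=T$ against arbitrary initial observables $\Phi_N(0)$ converts the $\mathcal{L}^1$ bound into the claimed $L^\infty$ bound on $F_N(T)$.

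The main obstacle I foresee is the boundary cancellation: one must make sure every piece of the post-collisional boundary integral is dominated using (\ref{eq:s7-I-s-2}), which is valid \emph{only} for pre-collisional $Z_s$ with $t \leq T$; losing the sign would destroy the estimate. Secondary bookkeeping issues are (i) arranging the combinatorics so the source terms from $\phi_N^{(s-1)}$ combine into a clean integral over all of $\Sigma_s(i,j)$ rather than only its pre-collisional half, and (ii) selecting $\beta(t),\mu(t)$ with derivatives integrable in $t$ that still satisfy the prescribed terminal values---the factor $1-[1+(T-t)]^{-(d-1)}$ is tuned precisely for this.
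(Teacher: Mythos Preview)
Your proposal is correct and follows essentially the same argument as the paper: the conjugation by the pseudoconformal weight $e^{-\beta(t)I_s((X_s-(T-t)V_s,V_s))}$, the use of Lemma~\ref{lemma:s3-ip1} via (\ref{eq:s7-I-s-2}) to effect the exact boundary cancellation for $t\leq T$, the dispersive estimate (\ref{eq:s7-decay}) producing the $[1+(T-t)]^{-d}$ decay, the choice of weights (\ref{eq:s7-weight1})--(\ref{eq:s7-weight2}), and the final duality step are all exactly as in Section~\ref{sec:7}. You have also correctly identified the delicate points (the one-sided inequality valid only for pre-collisional $Z_s$ with $t\leq T$, and the recombination of source terms over all of $\Sigma_s(i,j)$).
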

\begin{remark}
Theorem \ref{thm:s7-ip} does not require the 
functions $f_N^{(s)}$ to be non-negative, nor does it require that
they form a consistent sequence of marginals.
\end{remark}

\section{Representation of marginals via pseudo-trajectories}
\label{sec:8}

We recall that any solution $f_N^{(s)}(t)$ of the BBGKY hierarchy may
be decomposed in terms of the initial data propagated along 
``pseudo-trajectories.'' This technique is first due to Lanford,
and is now a standard tool in the analysis
of the Boltzmann-Grad limit for hard spheres.
To begin, observe that if 
$\left\{ f_N^{(s)} (t,Z_s) \right\}_{1 \leq s \leq N}$
solves (\ref{eq:s4-def-bbgky}), then by considering test functions
 which vanish along $[0,\infty)\times\partial \mathcal{D}_s$,
it follows that the densities $f_N^{(s)}$ solve the following equation
in the sense of distributions:
\begin{equation}
\label{eq:s8-bbgky1}
\left( \frac{\partial}{\partial t} + V_s \cdot \nabla_{X_s} \right)
f_N^{(s)} (t,Z_s) = (N-s) \varepsilon^{d-1} C_{s+1}
 f_N^{(s+1)} (t,Z_s)
\end{equation}
where $f_N^{(s)} (t,Z_s) = f_N^{(s)} (t,Z_s^*)$ a.e. 
$(t,Z_s) \in [0,\infty)\times\partial \mathcal{D}_s$, and $C_{s+1}$ 
is the collision operator
\begin{equation}
\label{eq:s8-coll1}
C_{s+1} = \sum_{i=1}^s C_{i,s+1}
\end{equation}
\begin{equation}
\label{eq:s8-coll2}
C_{i,s+1} = 
C_{i,s+1}^+ - C_{i,s+1}^-
\end{equation}
\begin{equation}
\label{eq:s8-coll3}
\begin{aligned}
& C_{i,s+1}^+ f_N^{(s+1)} (t,Z_s)  =\int_{\mathbb{R}^d}
\int_{\mathbb{S}^{d-1}}
\mathbf{1}_{Z_{s+1} \in \partial \mathcal{D}_{s+1}}
\left[\omega \cdot (v_{s+1}-v_i)\right]_{+} \times \\
& \;\; \qquad \times f_N^{(s+1)} (t,x_1,v_1,\dots,x_i,v_i^*,\dots,
x_s,v_s,x_i + \varepsilon \omega, v_{s+1}^*) d\omega dv_{s+1}
\end{aligned}
\end{equation}
\begin{equation}
\label{eq:s8-coll4}
\begin{aligned}
& C_{i,s+1}^- f_N^{(s+1)} (t,Z_s)  = \int_{\mathbb{R}^d}
\int_{\mathbb{S}^{d-1}}
\mathbf{1}_{Z_{s+1} \in \partial \mathcal{D}_{s+1}}
\left[\omega \cdot (v_{s+1}-v_i)\right]_{-} \times \\
& \;\; \qquad \times f_N^{(s+1)} (t,x_1,v_1,\dots,x_i,v_i,\dots,
x_s,v_s,x_i + \varepsilon \omega, v_{s+1}) d\omega dv_{s+1}
\end{aligned}
\end{equation}
where
\begin{equation}
\label{eq:s8-vstar}
\begin{cases}
v_i^* = v_i + \omega \omega \cdot \left( v_j-v_i\right)\\
v_j^* = v_j - \omega \omega \cdot \left( v_j-v_i\right) 
\end{cases}
\end{equation}

We can re-write (\ref{eq:s8-bbgky1}) by means of Duhamel's
formula, using the transport operator $T_s (t)$ defined by
$\left( T_s (t) g_s \right) (Z_s) = g_s (\psi_s^{-t} Z_s)$
for any $g_s \in L^1 (\mathcal{D}_s)$. The operators 
$T_s (t)$ form a strongly continuous semigroup on
$L^1 (\mathcal{D}_s)$, with generator given by
$-V_s \cdot \nabla_{X_s}$ and specular
reflection at the boundary $\partial \mathcal{D}_s$. We have
\begin{equation}
\label{eq:s8-bbgky2}
f_N^{(s)} (t) = T_s (t) f_N^{(s)} (0) +
(N-s)\varepsilon^{d-1} \int_0^t T_s (t-t_1)
 C_{s+1} f_N^{(s+1)} (t_1) dt_1
\end{equation}
Now by iterating this formula we can write the marginal $f_N^{(s)}(t)$
as a \emph{finite} sum of terms, each of which depends only
on the initial data:
\begin{equation}
\label{eq:s8-series1}
\begin{aligned}
& f_N^{(s)} (t) = \sum_{k=0}^{N-s} a_{N,k,s}\times \\
& \qquad \;\;
\times \int_0^t \int_0^{t_1} \dots \int_0^{t_{k-1}} T_s (t-t_1) 
C_{s+1}\dots T_{s+k} (t_k) f_N^{(s+k)} (0) dt_k \dots dt_1
\end{aligned}
\end{equation}
where
\begin{equation}
\label{eq:s8-a-N-k-s}
a_{N,k,s} = \frac{(N-s)!}{(N-s-k)!} \varepsilon^{k (d-1)}
\end{equation}
Since we enforce the Boltzmann-Grad scaling 
$N \varepsilon^{d-1} = \ell^{-1}$,
we have $0 \leq a_{N,k,s} \leq \ell^{-k}$ and
 $a_{N,k,s} \ell^k \rightarrow 1$ as $N \rightarrow \infty$ 
with $k,s$ fixed.

The Duhamel series (\ref{eq:s8-series1}) may be
interpreted as a way of describing the solution $F_N (t)$ in
terms of the data $F_N (0)$ propagated along a family of artificial
trajectories, or ``pseudo-trajectories.''
\cite{L1975,GSRT2014,PSS2014} 
Given $Z_s \in \mathcal{D}_s$, along with
times $0\leq t_k \leq \dots \leq t_1 \leq t$,
velocities $v_{s+1},\dots,v_{s+k}$, impact parameters
$\omega_1,\dots,\omega_k$, and indices $i_1,\dots,i_k$, we will define
\begin{equation}
\label{eq:s8-Z1}
Z_{s,s+k} \left[ Z_s , t ; t_1,\dots,t_k;v_{s+1},\dots,v_{s+k};
 \omega_1,\dots,\omega_k; i_1,\dots,i_k \right] \in
\mathcal{D}_{s+k}
\end{equation}
We assume $i_1 \in \left\{ 1,\dots,s\right\}$,
$i_2 \in \left\{ 1,\dots,s,s+1\right\}$, \dots, 
$i_j \in \left\{ 1,2,\dots,s+j-1\right\}$; we
will also need to assume that certain ``exclusion conditions''
are satisfied, as will become clear. To begin the induction,
for $Z_s \in \mathcal{D}_s$ and $t > 0$ we define
\begin{equation}
\label{eq:s8-Z2}
Z_{s,s} \left[ Z_s,t\right] = \psi_s^{-t} Z_s
\end{equation} 
More generally, if the symbol
\begin{equation}
\label{eq:s8-Z3}
Z_{s,s+k} \left[ Z_s , t ; t_1,\dots,t_k; v_{s+1},\dots,v_{s+k};
\omega_1,\dots,\omega_k;i_1,\dots,i_k \right] \in
\mathcal{D}_{s+k}
\end{equation}
is defined, then for $\tau > 0$ we define
\begin{equation}
\label{eq:s8-Z4}
\begin{aligned}
& Z_{s,s+k} \left[ Z_s , t+\tau ; t_1+\tau,\dots,t_k+\tau;
v_{s+1},\dots,v_{s+k};\omega_1,\dots,\omega_k;
i_1,\dots,i_k \right] = \\ 
& \qquad \;\;  = \psi_{s+k}^{-\tau} Z_{s,s+k} \left[ Z_s,t;
t_1,\dots,t_k;v_{s+1},\dots,v_{s+k};\omega_1,\dots,\omega_k;
i_1,\dots,i_k\right]
\end{aligned}
\end{equation}
Similarly, if the symbol
\begin{equation}
\label{eq:s8-Z5}
\begin{aligned}
& Z_{s,s+k} \left[ Z_s , t ; t_1,\dots,t_k; v_{s+1},\dots,v_{s+k};
\omega_1,\dots,\omega_k;i_1,\dots,i_k \right] = \\
& \qquad \qquad \qquad \qquad \qquad \qquad
\qquad \qquad \qquad \;\;  
 =\left( X_{s+k}^\prime,V_{s+k}^\prime \right) \in \mathcal{D}_{s+k}
\end{aligned}
\end{equation}
is defined (including the possibility $k=0$) then for
any given velocity $v_{s+k+1} \in \mathbb{R}^d$, any index
$i_{k+1}\in \left\{ 1,\dots,s,s+1,\dots,s+k\right\}$,
and any ``suitable'' choice of impact parameter
$\omega_{k+1}\in \mathbb{S}^{d-1}$, if 
$\omega_{k+1}\cdot\left(v_{s+k+1}-v_{i_{k+1}}^\prime\right)\leq 0$
then we define
\begin{equation}
\label{eq:s8-Z6}
\begin{aligned}
& Z_{s,s+k+1} \left[ Z_s , t ; t_1,\dots,t_k,0; 
v_{s+1},\dots,v_{s+k},v_{s+k+1};\right.\\
& \qquad \qquad \qquad \qquad  \qquad \qquad\left.
\omega_1,\dots,\omega_k,\omega_{k+1}; i_1,\dots,i_k,i_{k+1} \right] =\\
& \qquad =\left( 
x_1^\prime,v_1^\prime,\dots,x_{i_{k+1}}^\prime,v_{i_{k+1}}^\prime,
\dots,x_s^\prime,v_s^\prime,
x_{i_{k+1}}^\prime+\varepsilon \omega_{k+1},v_{s+k+1}\right)
\end{aligned}
\end{equation}
whereas if 
$\omega_{k+1}\cdot\left(v_{s+k+1}-v_{i_{k+1}}^\prime\right)>0$ then
we define
\begin{equation}
\label{eq:s8-Z7}
\begin{aligned}
& Z_{s,s+k+1} \left[ Z_s , t ; t_1,\dots,t_k,0; 
v_{s+1},\dots,v_{s+k},v_{s+k+1}; \right. \\
& \qquad \qquad \qquad \qquad \qquad \qquad \left.
\omega_1,\dots,\omega_k,\omega_{k+1}; i_1,\dots,i_k,i_{k+1} \right] =\\
& \qquad =\left( 
x_1^\prime,v_1^\prime,\dots,x_{i_{k+1}}^\prime,
v_{i_{k+1}}^\prime+
\omega_{k+1}\omega_{k+1}\cdot\left(v_{s+k+1}-v_{i_{k+1}}^\prime\right),
\right.\\
& \qquad \qquad \left. \dots,x_s^\prime,v_s^\prime,
x_{i_{k+1}}^\prime+\varepsilon \omega_{k+1},
v_{s+k+1}-
\omega_{k+1}\omega_{k+1}\cdot\left(v_{s+k+1}-v_{i_{k+1}}^\prime\right)\right)
\end{aligned}
\end{equation}
Here a ``suitable'' impact parameter $\omega$ is one
for which $\left| x_{i_{k+1}}^\prime + \varepsilon \omega
- x_j^\prime \right| > \varepsilon$ for each
$j \in \left\{ 1,\dots,s,s+1,\dots,s+k\right\} \backslash
\left\{ i_{k+1} \right\}$; note that
the set of suitable impact parameters may be empty.

\begin{remark}
The physical interpretation of the above construction is that $s$
particles begin in configuration $Z_s \in \mathcal{D}_s$ at time
$t$, then evolve under the \emph{backwards} hard sphere flow for a time
$t-t_1$; at time $t_1$, the $(s+1)$st particle is created adjacent to the
$i_1$st particle with velocity $v_{s+1}$. If the pair
$\left(i_1,s+1\right)$ is in a post-collisional configuration,
then we perform an instantaneous collision to place the particles
in a pre-collisional configuration. To continue the flow,
we push the system through the backwards flow of $(s+1)$ hard spheres for
a time $t_1-t_2$, and so forth. The state of
the process at time $0$ is then $Z_{s,s+k} \left[ Z_s,t;\left\{
t_j,v_{s+j},\omega_j,i_j\right\}_{j=1}^k \right]$.
\end{remark}

\begin{remark}
As a matter of convenience, we have enforced a convention whereby
particles are always in a pre-collisional configuration at the moment
that a new particle is created. Keep in mind, however, that
the backwards flow can subsequently place particles into
a post-collisional configuration, though this can only happen between
particle creations.
\end{remark}

We will also require an iterated collision kernel
\begin{equation}
\label{eq:s8-b1}
b_{s,s+k} \left[ Z_s,t;\left\{t_j,v_{s+j},\omega_j,i_j
\right\}_{j=1}^k \right]
\end{equation}
in order to account for
each added particle. First we define
\begin{equation}
\label{eq:s8-b2}
b_{s,s} \left[ Z_s,t\right] = \mathbf{1}_{Z_s \in \mathcal{D}_s}
\end{equation}
If we have defined
\begin{equation}
\label{eq:s8-b3}
b_{s,s+k} \left[ Z_s,t;t_1,\dots,t_k;
v_{s+1},\dots,v_{s+k};\omega_1,\dots,\omega_k;i_1,\dots,i_k\right] 
\end{equation}
then there are two cases: (i)
 $Z_{s,s+k}\left[Z_s,t;\left\{t_j,v_{s+j},\omega_j,i_j\right\}_{j=1}^k
\right] = \left( X_{s+k}^\prime,V_{s+k}^\prime \right)
\in \mathcal{D}_{s+k}$ is well-defined as above, in which
case
\begin{equation}
\label{eq:s8-b4}
\begin{aligned}
& b_{s,s+k} \left[
Z_s,t+\tau;t_1+\tau,\dots,t_k+\tau;v_{s+1},\dots,v_{s+k};
\omega_1,\dots,\omega_k;i_1,\dots,i_k\right] = \\
& \qquad \qquad \qquad
= b_{s,s+k} \left[ Z_s,t;t_1,\dots,t_k;v_{s+1},\dots,v_{s+k};
\omega_1,\dots,\omega_k;i_1,\dots,i_k\right]
\end{aligned}
\end{equation} 
\begin{equation}
\label{eq:s8-b5}
\begin{aligned}
& b_{s,s+k+1} \left[
Z_s,t;t_1,\dots,t_k,0; v_{s+1},\dots,v_{s+k},v_{s+k+1};\right. \\
& \qquad \qquad \qquad \qquad \qquad \qquad\left.
\omega_1,\dots,\omega_k,\omega_{k+1};
i_1,\dots,i_k,i_{k+1}\right] = \\
& = \omega_{k+1}\cdot \left( v_{s+k+1}-v_{i_{k+1}}^\prime\right)\times\\
&\qquad \qquad \times 
\left( \prod_{j\in \left\{ 1,\dots,s,s+1,\dots,s+k\right\}\backslash
\left\{i_{k+1}\right\}}
\mathbf{1}_{\left| x_{i_{k+1}}^\prime + \varepsilon \omega_{k+1}
-x_j^\prime\right| > \varepsilon} \right) \times \\
& \qquad \qquad \times b_{s,s+k} \left[ Z_s,t;t_1,\dots,t_k;
v_{s+1},\dots,v_{s+k};\omega_1,\dots,\omega_k;i_1,\dots,i_k\right]
\end{aligned}
\end{equation} 
(ii) otherwise,
\begin{equation}
\label{eq:s8-b6}
\begin{aligned}
& b_{s,s+k} \left[
Z_s,t+\tau;t_1+\tau,\dots,t_k+\tau;v_{s+1},\dots,v_{s+k};
\omega_1,\dots,\omega_k;i_1,\dots,i_k\right] = \\
& \qquad \;\; 
= b_{s,s+k} \left[ Z_s,t;t_1,\dots,t_k;v_{s+1},\dots,v_{s+k};
\omega_1,\dots,\omega_k;i_1,\dots,i_k\right] \;\; \left( = 0 \right)
\end{aligned}
\end{equation} 
\begin{equation}
\label{eq:s8-b7}
\begin{aligned}
& b_{s,s+k+1} \left[
Z_s,t;t_1,\dots,t_k,0;v_{s+1},\dots,v_{s+k},v_{s+k+1};\right.\\
&\qquad\qquad\qquad\qquad\qquad\qquad
 \left. \omega_1,\dots,\omega_k,\omega_{k+1};
i_1,\dots,i_k,i_{k+1}\right] = 0
\end{aligned}
\end{equation} 
Then the Duhamel series (\ref{eq:s8-series1}) becomes
\begin{equation}
\label{eq:s8-series2}
\begin{aligned}
& f_N^{(s)} (t,Z_s) = \sum_{k=0}^{N-s} a_{N,k,s}\times \\
& \times \sum_{i_1 = 1}^s \dots \sum_{i_k = 1}^{s+k-1}
\int_0^t \dots \int_0^{t_{k-1}}
\int_{\mathbb{R}^{dk}} \int_{\left(\mathbb{S}^{d-1}\right)^k} 
\left( \prod_{m=1}^k d\omega_m dv_{s+m} dt_m \right) \times \\
& \times \left( b_{s,s+k} \left[\cdot\right] f_N^{(s+k)} \left(0,
Z_{s,s+k}\left[\cdot\right]\right)\right)
\left[ Z_s,t;\left\{t_j,v_{s+j},\omega_j,i_j\right\}_{j=1}^k\right]
\end{aligned}
\end{equation}
\begin{remark}
The collision kernel $b_{s,s+k}\left[\dots\right]$ vanishes automatically
whenever $Z_{s,s+k}\left[\dots\right]$ fails to be well-defined. This
convention is convenient because it allows us to specify a fixed
$N$-independent domain of integration in (\ref{eq:s8-series2}).
\end{remark}

\section{Stability of pseudo-trajectories}
\label{sec:9}

The purpose of this section is to prove that typical pseudo-trajectories
are stable with respect to the creation of a new particle, at least
outside a small set of creation times, velocities, and impact parameters.
The main novelty of this stability
result, compared to previous results
\cite{GSRT2014}, is that we are able to allow particles to pass arbitrarily
close to each other in \emph{space} under the backwards flow, as long as
they do not collide. The price we pay for this improvement is that we must
make explicit use of the time integrals appearing in the Duhamel series 
(\ref{eq:s8-series2}), and employ an unusual cut-off for nearby velocities.
This proof is inspired in part by
the ideas from \cite{PSS2014}; note, however, that 
there the authors required more sophisticated
cut-offs to deal with rather general physical interactions.
 
We will require the following elementary geometrical fact
(the proof is trivial):
\begin{lemma}
\label{lemma:s9-sphere}
Fix $v\in \mathbb{R}^d\backslash \left\{ 0 \right\}$, and for
$\omega \in \mathbb{S}^{d-1}\subset \mathbb{R}^d$ (where $\mathbb{S}^{d-1}$
is the unit sphere centered on the origin) define
\begin{equation}
\label{eq:s9-u-omega}
u_\omega = |v|^{-1} \left( 2 \omega \omega \cdot v - v\right)
\end{equation}
then $u_\omega \in \mathbb{S}^{d-1}$ for each $\omega \in\mathbb{S}^{d-1}$.
If $\mathbb{S}^{d-1}_v = \left\{ \omega\in\mathbb{S}^{d-1}\left|
\;\omega\cdot v > 0\right.\right\}$ then the map
$\omega\mapsto u_\omega$ restricts to a diffeomorphism
$\mathbb{S}^{d-1}_v \rightarrow \mathbb{S}^{d-1}\backslash
\left\{-|v|^{-1} v\right\}$.
\end{lemma}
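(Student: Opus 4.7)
The plan is to verify the sphere condition by direct expansion, and then to exhibit an explicit smooth inverse of the restricted map. Let me abbreviate $\hat{v} = v/|v|$ throughout. For the first claim I would simply compute
\[
|u_\omega|^2 = |v|^{-2}\bigl|2(\omega\cdot v)\omega - v\bigr|^2 = |v|^{-2}\bigl(4(\omega\cdot v)^2 - 4(\omega\cdot v)^2 + |v|^2\bigr) = 1,
\]
using $|\omega|=1$, so $u_\omega\in\mathbb{S}^{d-1}$ for every $\omega\in\mathbb{S}^{d-1}$.

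For the diffeomorphism claim, the key algebraic observation is the identity
\[
u_\omega + \hat{v} = \frac{2(\omega\cdot v)}{|v|}\,\omega,
\]
which follows from the definition by inspection. On $\mathbb{S}^{d-1}_v$ the scalar coefficient $2(\omega\cdot v)/|v|$ is strictly positive, so $u_\omega \neq -\hat{v}$, and moreover $\omega$ is uniquely recovered as the unit vector parallel to $u_\omega + \hat{v}$ with positive multiplier. This suggests the candidate inverse
\[
\Psi: \mathbb{S}^{d-1}\setminus\{-\hat{v}\}\longrightarrow\mathbb{S}^{d-1}_v,\qquad
\Psi(u) = \frac{u+\hat{v}}{|u+\hat{v}|}.
\]

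Next I would verify that $\Psi$ actually lands in $\mathbb{S}^{d-1}_v$: for any unit vector $u\neq -\hat{v}$ one has $\Psi(u)\cdot v = (u\cdot v + |v|)/|u+\hat{v}|$, and the numerator is strictly positive since $u\cdot v > -|v|$ whenever $u\neq -\hat{v}$. Substituting $\Psi(u)$ back into the defining expression for $u_\omega$ (or, symmetrically, applying $\Psi$ to $u_\omega$ and using the key identity) yields $u_{\Psi(u)} = u$ and $\Psi(u_\omega) = \omega$, so the two maps invert one another.

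Finally, smoothness of $\omega\mapsto u_\omega$ is immediate since it is polynomial in $\omega$, while smoothness of $\Psi$ on its stated domain is clear because the denominator $|u+\hat{v}|$ does not vanish there. Hence the restricted map is a diffeomorphism, as claimed. As the author notes, there is no real obstacle in this proof; the only small point that requires care is the strict inequality $u\cdot v > -|v|$, which ensures $\Psi$ is well-defined on the punctured sphere and takes values in $\mathbb{S}^{d-1}_v$.
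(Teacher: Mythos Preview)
Your proof is correct. The paper itself omits the proof entirely, stating only that it is trivial; your argument via the explicit inverse $\Psi(u)=(u+\hat v)/|u+\hat v|$ is the natural way to fill in the details, and all verifications (the norm computation, the key identity $u_\omega+\hat v=2|v|^{-1}(\omega\cdot v)\,\omega$, and the check that $\Psi$ lands in $\mathbb{S}^{d-1}_v$) go through as you wrote them.
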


We will also need:
\begin{lemma}
\label{lemma:s9-cylinder}
Let $L\subset \mathbb{R}^d$ ($d\geq 2$)
 be a line, and for $\rho > 0$ consider the solid cylinder 
\begin{equation}
\label{eq:s9-K-rho}
\mathcal{C}_\rho = \left\{ \left. u\in\mathbb{R}^d \right|
\textnormal{dist}\left(u,L\right) \leq \rho \right\}
\end{equation}
 Then
\begin{equation}
\label{eq:s9-measure1}
\int_{\mathbb{S}^{d-1}} \mathbf{1}_{\omega \in \mathcal{C}_\rho} d\omega
\leq C_d \rho^{(d-1)/2}
\end{equation}
where the constant $C_d$ does not depend on the choice of line $L$.
\end{lemma}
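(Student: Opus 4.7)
The plan is to reduce Lemma~\ref{lemma:s9-cylinder} to an explicit integral in $(d-1)$ dimensions and estimate it in a few cases. First, by translating and rotating $\mathbb{R}^d$ (an isometry of $\mathbb{S}^{d-1}$), I may assume without loss of generality that $L = \{t e_1 + p : t \in \mathbb{R}\}$ with $p \perp e_1$, and write $d_0 := |p|$; all dependence of the final bound on $L$ will be through the single scalar $d_0$, yielding a constant $C_d$ uniform in $L$. The key geometric observation is that the distance from $u \in \mathbb{R}^d$ to $L$ equals $|u' - p|$, where $u' = (u_2, \ldots, u_d)$ is the projection onto $L^\perp$, so the condition $\omega \in \mathcal{C}_\rho$ becomes $|\omega' - p| \leq \rho$ in $\mathbb{R}^{d-1}$.

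Next I would parametrize each of the two hemispheres $\{\pm \omega_1 > 0\}$ of $\mathbb{S}^{d-1}$ as a graph over the open unit ball in $\mathbb{R}^{d-1}$; the surface measure pulls back to $(1 - |\omega'|^2)^{-1/2} d\omega'$. The problem then reduces to bounding
\begin{equation*}
I := \int_{B(p, \rho) \cap B(0, 1)} \frac{d\omega'}{\sqrt{1 - |\omega'|^2}}
\end{equation*}
in $\mathbb{R}^{d-1}$.

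Now I would split into three cases. If $d_0 > 1 + \rho$, then $I = 0$. If $d_0 + \rho \leq 3/4$ (say), then $|\omega'|$ is bounded away from $1$ on $B(p, \rho)$, so the integrand is bounded and $I \leq C_d \rho^{d-1} \leq C_d \rho^{(d-1)/2}$ for $\rho \leq 1$ (the regime $\rho > 1$ is trivial, since $\mathbb{S}^{d-1}$ has finite surface area). The remaining and main case is $d_0 \in [3/4, 1 + \rho]$, i.e.\ $L$ is close to being tangent to $\mathbb{S}^{d-1}$. In this regime I would introduce coordinates $\omega' = u \hat p + v$ with $\hat p = p/d_0$ and $v \in \mathbb{R}^{d-2}$ perpendicular to $\hat p$. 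The cylinder constraint becomes $(u - d_0)^2 + |v|^2 \leq \rho^2$, pinning $u \in [d_0 - \rho, d_0 + \rho]$, while the sphere constraint gives $|v|^2 \leq 1 - u^2$. For each fixed $u$, the inner radial integral in $(d-2)$-dimensional polar coordinates for $v$ is evaluated by the substitution $|v| = \sqrt{1 - u^2}\sin\alpha$, producing an estimate of order $(1 - u^2)^{(d-3)/2}$ uniformly in whether the cylinder or the sphere constraint is active (with minor $d$-dependent cosmetic adjustments for $d=2,3$).

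The final outer integral is therefore at most $\int_{\max(d_0 - \rho, -1)}^{\min(d_0 + \rho, 1)} (1 - u^2)^{(d-3)/2} du$. The hardest sub-case $d_0 = 1$ reduces via $u = 1 - t$ to $\int_0^\rho (2t - t^2)^{(d-3)/2} dt$, which is of order $\rho^{(d-1)/2}$; any other value of $d_0$, or the extra restriction $u \leq 1$, yields only a smaller integral by the same computation. The main obstacle is precisely this near-tangential case: the integrand $(1 - u^2)^{(d-3)/2}$ is integrably singular at $u = 1$ when $d = 2$ and the target exponent $\rho^{(d-1)/2}$ is sharp there, so one must track the near-tangent geometry carefully. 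Morally, it is the elongation of the cylinder-sphere intersection along the tangent direction (length $\sim \sqrt{\rho}$) combined with transverse width $\sim \rho$ that produces the characteristic half-integer exponent, and the uniformity of $C_d$ in $L$ is already built into the reduction at the outset.
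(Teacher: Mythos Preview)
Your reduction to the integral $I$ is correct, and the argument works for $d=2,3$. However, there is a gap for $d\geq 4$. Your claim that ``any other value of $d_0$ \dots\ yields only a smaller integral'' than $d_0=1$ is false in that range: for $d\geq 4$ the integrand $(1-u^2)^{(d-3)/2}$ is bounded, so
\[
\int_{d_0-\rho}^{d_0+\rho}(1-u^2)^{(d-3)/2}\,du
\]
is of order $\rho$ at $d_0=3/4$, which is \emph{larger} than the value of order $\rho^{(d-1)/2}$ you compute at $d_0=1$. The root cause is that bounding the inner integral by $C_d(1-u^2)^{(d-3)/2}$ discards the cylinder constraint $|v|\leq\rho$ entirely; this is harmless for $d\leq 3$ but too crude for $d\geq 4$. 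The repair is easy: either retain the cylinder constraint in the inner integral (which gives an alternative inner bound of order $\rho^{d-2}(1-u^2)^{-1/2}$ when $\rho\leq\sqrt{1-u^2}$, and hence an outer bound of order $\rho^{d-2}\leq\rho^{(d-1)/2}$ after integrating in $u$), or move the case threshold from $3/4$ to $1-C\rho$ so that your ``far'' case already covers $d_0=3/4$.

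The paper takes essentially the second route, in a form that avoids the explicit inner--outer computation altogether. It splits according to whether $\mathcal{C}_\rho$ contains a point within distance $1-3\rho$ of the origin. If so, one has $1-|\omega'|^2\geq c\rho$ on the relevant set, so the Jacobian is at most $C\rho^{-1/2}$, while the $(d{-}1)$-dimensional cylinder cross-section has volume at most $C\rho^{d-1}$; this yields $C\rho^{d-3/2}\leq C\rho^{(d-1)/2}$. If not, the intersection $\mathcal{C}_\rho\cap\mathbb{S}^{d-1}$ lies in a spherical cap of half-angle $O(\sqrt{\rho})$, whose surface area is $O(\rho^{(d-1)/2})$.
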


\begin{proof}
There are two cases: either $\mathcal{C}_\rho$ contains
a point which is within distance $1-3\rho$ of the sphere's center, or it does
not. In the first case, we clearly have
\begin{equation*}
\int_{\mathbb{S}^{d-1}} \mathbf{1}_{\omega \in \mathcal{C}_\rho}
d\omega \leq C_d \rho^{d-\frac{3}{2}}
\end{equation*}
In the second case, we can estimate the size of a spherical cap to
obtain that
\begin{equation*}
\int_{\mathbb{S}^{d-1}} \mathbf{1}_{\omega \in \mathcal{C}_\rho}
d\omega \leq C_d \rho^{(d-1)/2}
\end{equation*}
Since $d\geq 2$, we can take the maximum of these two bounds to
obtain (\ref{eq:s9-measure1}).
\qed
\end{proof}

We now turn to the main result for this section. To state
the proposition, we must fix a parameter $\eta > 0$ and introduce
the following sets:
\begin{equation}
\label{eq:s9-K-s}
\mathcal{K}_s = \left\{ Z_s = \left(X_s,V_s\right)
 \in \overline{\mathcal{D}_s} \left|
\psi_s^{-\tau} Z_s = \left( X_s-V_s \tau,V_s \right) \; \forall
\; \tau > 0 \right. \right\}
\end{equation}
\begin{equation}
\label{eq:s9-U-s-eta}
\mathcal{U}_s^\eta = \left\{ Z_s = \left(X_s,V_s\right)\in
\overline{\mathcal{D}_s} \left|
\inf_{1\leq i < j \leq s} \left| v_i - v_j \right| > \eta \right. \right\}
\end{equation}

\begin{remark}
The condition $Z_s \in \mathcal{U}_s^\eta$ is meant to force particles to
disperse away from each other under the action of the free flow.
\end{remark}

\begin{proposition}
\label{prop:s9-stability}
There is a constant $c_d > 0$, depending only on the dimension $d$,
such that all the following holds:
Assume that
\begin{equation}
\label{eq:s9-Z1}
\begin{aligned}
& Z_{s,s+k} \left[ Z_s,t;t_1,\dots,t_k;v_{s+1},\dots,v_{s+k};
\omega_1,\dots,\omega_k;i_1,\dots,i_k
\right] = \\
& \qquad \qquad \qquad \qquad \qquad \qquad \qquad \qquad 
= \left(X_{s+k}^\prime,V_{s+k}^\prime\right)
\in \mathcal{K}_{s+k}\cap \mathcal{U}_{s+k}^\eta
\end{aligned}
\end{equation}
and $E_{s+k} \left( Z_{s+k}^\prime \right) \leq 2 R^2$; then, \\
(i)  for all $\tau \geq 0$ we have
\begin{equation}
\label{eq:s9-Z2}
\begin{aligned}
& Z_{s,s+k} \left[Z_s,t+\tau;t_1+\tau,\dots,t_k+\tau;v_{s+1},\dots,v_{s+k};
\omega_1,\dots,\omega_k;i_1,\dots,i_k
\right] \\
& \qquad \qquad \qquad \qquad \qquad \qquad \qquad \qquad \qquad \qquad
\qquad  \;\; \in \mathcal{K}_{s+k}\cap \mathcal{U}_{s+k}^\eta
\end{aligned}
\end{equation}
(ii) for any $i_{k+1}\in\left\{1,\dots,s,s+1,\dots,s+k\right\}$,
and for any $\theta,\alpha,y > 0$ such that 
$\sin \theta > c_d y^{-1} \varepsilon$, there exists a measurable set 
$\mathcal{B} \subset [0,\infty)\times \mathbb{R}^d \times \mathbb{S}^{d-1}$,
which may depend on $Z_s$, $t$, and
$\left\{ t_j,v_{s+j},\omega_j,i_j\right\}_{j=1}^k$, such that
\begin{equation}
\label{eq:s9-measure2}
\begin{aligned}
& \forall \;\; \eta \;< \;R,\;\; \forall \;\; T \; > \; 0,\\
& \int_0^T \int_{B_{2R}^d} \int_{\mathbb{S}^{d-1}} 
 \mathbf{1}_{\left(\tau,v_{s+k+1},\omega_{k+1}\right) \in \mathcal{B}} 
\; d\omega_{k+1} dv_{s+k+1} d\tau \leq \\
& \qquad \qquad \qquad 
\leq C_d \left( s+k \right) T R^d \left[ \alpha +  \frac{y}{\eta T} +
 C_{d,\alpha} \left(\frac{\eta}{R}\right)^{d-1} +
 C_{d,\alpha}\theta^{(d-1)/2}\right]
\end{aligned}
\end{equation}
and
\begin{equation}
\label{eq:s9-Z3}
\begin{aligned}
Z_{s,s+k+1} & \left[ Z_s,t+\tau; t_1+\tau,\dots,t_k+\tau,0;
v_{s+1},\dots,v_{s+k},v_{s+k+1};\right.\\
& \qquad \qquad \qquad \qquad \qquad \qquad \left.
\omega_1,\dots,\omega_k,\omega_{k+1}; i_1,\dots,i_k,i_{k+1}\right] \\
& \qquad \qquad \qquad \qquad \qquad \qquad \qquad
 \qquad \qquad 
 \in \mathcal{K}_{s+k+1}\cap \mathcal{U}_{s+k+1}^\eta
\end{aligned}
\end{equation}
whenever
$\left( \tau,v_{s+k+1},\omega_{k+1}\right) \in
 \left[0,\infty\right)\times \mathbb{R}^d
\times\mathbb{S}^{d-1} \backslash\mathcal{B}$.
\end{proposition}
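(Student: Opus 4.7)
Part (i) is essentially a tautology: on $\mathcal{K}_{s+k}$ the backward flow is pure free transport, so $\psi_{s+k}^{-(\tau+\tau')} Z_{s+k}' = (X_{s+k}'-V_{s+k}'(\tau+\tau'), V_{s+k}')$ for all $\tau,\tau'\geq 0$, which keeps the shifted state in $\mathcal{K}_{s+k}$; velocities are preserved so $\mathcal{U}_{s+k}^\eta$ passes through unchanged.

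For part (ii), after the backward shift by $\tau$ the $s+k$ particles sit at $x_i^t := x_i'-v_i'\tau$ with velocities $v_i'$, and particle $s+k+1$ is introduced at $x_{s+k+1}^t := x_{i_{k+1}}^t + \varepsilon\omega_{k+1}$ with pre-reflection velocity $v_{s+k+1}$; if $\omega_{k+1}\cdot\Delta v_{k+1} > 0$ (with $\Delta v_{k+1} := v_{s+k+1}-v'_{i_{k+1}}$) a specular reflection replaces these by $\tilde v_{s+k+1}, \tilde v_{i_{k+1}}$. The enlarged configuration lies in $\mathcal{K}_{s+k+1}\cap \mathcal{U}_{s+k+1}^\eta$ iff, for every $j\in\{1,\dots,s+k\}\setminus\{i_{k+1}\}$: the new particle sits at distance $>\varepsilon$ from $j$ at creation; neither $(s+k+1,j)$ nor $(i_{k+1},j)$ collides under subsequent backward free flow; and all velocity pairs among $\{v'_m\}_{m\ne i_{k+1}}\cup\{\tilde v_{i_{k+1}},\tilde v_{s+k+1}\}$ differ by more than $\eta$. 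My plan is to build $\mathcal{B}$ as the union of four bad regions, each ruling out one failure mode, and to estimate each one separately before summing over $j$.

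The four components are as follows. (A) A small-time cutoff $\mathcal{B}_A = \{\tau < y/\eta\}$; combined with the $\mathcal{U}_{s+k}^\eta$-dispersion $|x_{i_{k+1}}^t-x_j^t|\geq c_d\eta|\tau-\tau^\star_j|$ near each minimum-distance time $\tau^\star_j$ (which is well-defined thanks to convexity of $\tau\mapsto |x_{i_{k+1}}^t-x_j^t|^2$ and $|v'_{i_{k+1}}-v'_j|>\eta$), excising a $2y/\eta$-neighborhood of each $\tau^\star_j\in[0,T]$ keeps the separation $\geq y/2$; the total $\tau$-measure is $\leq C_d(s+k)y/\eta$, which yields the $(s+k)(y/(\eta T))TR^d$ contribution. (B) A grazing cutoff $\mathcal{B}_B = \{|\omega_{k+1}\cdot\Delta v_{k+1}|<\alpha R\}$, of $(v_{s+k+1},\omega_{k+1})$-measure $\leq C_d\alpha R^d$, which gives the $\alpha TR^d$ term. (C) Velocity-proximity exclusions $|\tilde v_\bullet - v'_j|\leq \eta$ and $|\tilde v_{s+k+1}-\tilde v_{i_{k+1}}|\leq \eta$: on $\mathcal{B}_B^c$ the reflection map has Jacobian bounded by $C_{d,\alpha}$, and Lemma~\ref{lemma:s9-sphere} lets me reparametrize the $\omega$-dependence when necessary, reducing each exclusion to an $\eta$-ball of measure $\leq C_{d,\alpha}\eta^d\leq C_{d,\alpha}R^d(\eta/R)^{d-1}$. (D) Backward-collision exclusions: on $\mathcal{B}_A^c$ the separation $|x_{s+k+1}^t-x_j^t|\geq y/2$ forces the bad directions of the relative velocity $\tilde v_\bullet - v'_j$ into a spherical cylinder of radius $\leq c_d\varepsilon/y\leq \sin\theta$ about a fixed line, whose spherical measure is $\leq C_d\theta^{(d-1)/2}$ by Lemma~\ref{lemma:s9-cylinder}; integrating over radii in $[\eta,4R]$ and pulling back through the reflection yields $C_{d,\alpha}\theta^{(d-1)/2}R^d$. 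Summing over $j$ and combining the four terms gives the claimed bound.

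The main technical obstacle is the post-collisional case, where $\tilde v_{s+k+1}$ and $\tilde v_{i_{k+1}}$ are entangled functions of $(\omega_{k+1},v_{s+k+1})$. The grazing cutoff (parameter $\alpha$) is precisely the decoupling mechanism: it bounds the Jacobian of the reflection map uniformly away from degeneracy, and Lemma~\ref{lemma:s9-sphere} then lets me separate the constraints on $\tilde v_\bullet - v'_j$ from the raw impact parameter $\omega_{k+1}$. Once this is handled, components (C) and (D) can be pulled back from $(\tilde v_\bullet,\hat u)$-coordinates to $(v_{s+k+1},\omega_{k+1})$-coordinates with only $C_{d,\alpha}$-dependent losses, and the four contributions combine additively to produce the stated estimate.
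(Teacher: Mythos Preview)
Your four-part decomposition matches the paper's construction almost exactly: the paper splits explicitly into pre-collisional ($\mathcal{A}^-$) and post-collisional ($\mathcal{A}^+$) halves and builds $\mathcal{B}^\pm$ as the union of a spatial-concentration set ($\mathcal{B}_I^\pm$, your (A)), a grazing cutoff ($\mathcal{B}_{II}^+$, your (B)), velocity-proximity exclusions ($\mathcal{B}_{II}^-$ and $\mathcal{B}_{III,IV,V}^+$, your (C)), and cone-type recollision conditions ($\mathcal{B}_{III}^-$ and $\mathcal{B}_{VI,VII}^+$, your (D)), invoking Lemmas~\ref{lemma:s9-sphere} and~\ref{lemma:s9-cylinder} at the same places you do. Your small-time cutoff $\{\tau<y/\eta\}$ in (A) is harmless but redundant: the paper simply takes $\mathcal{B}_I$ to be the set of $\tau$ with $\min_{j\neq i_{k+1}}|x_{i_{k+1}}'-x_j'-\tau(v_{i_{k+1}}'-v_j')|\leq y$, which already handles negative $\tau_j^\star$.

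One point in (C) needs correcting for the post-collisional half. The intermediate bound $C_{d,\alpha}\eta^d$ is fine when $\tilde v_\bullet=v_{s+k+1}$ (pre-collisional), but not when $\tilde v_\bullet$ depends on $\omega_{k+1}$. The natural candidate map $(v_{s+k+1},\omega_{k+1})\mapsto(\tilde v_\bullet,\omega_{k+1})$ is singular in the $v$-variable (for fixed $\omega_{k+1}$ the linear part $v\mapsto v-\omega_{k+1}(\omega_{k+1}\cdot v)$ has rank $d-1$), so you cannot pull back an $\eta$-ball through it with a bounded factor. What the paper actually does---and what your invocation of Lemma~\ref{lemma:s9-sphere} should unfold to---is: fix $v_{s+k+1}$, change $\omega_{k+1}\to u_\omega$ with inverse Jacobian $\leq C_{d,\alpha}$ off the grazing set, observe that $\tilde v_\bullet$ lies on the sphere of radius $|\Delta v_{k+1}|/2$ about $(v_{s+k+1}+v_{i_{k+1}}')/2$ parametrized by $\pm u_\omega$, so that $|\tilde v_\bullet-v_j'|\leq\eta$ confines $u_\omega$ to a spherical cap of measure $\leq C_d(2\eta/|\Delta v_{k+1}|)^{d-1}$, and then integrate $\int_\eta^{4R}(\eta/r)^{d-1}r^{d-1}\,dr$ in the $v_{s+k+1}$-radius to obtain $C_{d,\alpha}R\eta^{d-1}$. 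This equals your claimed final bound $C_{d,\alpha}R^d(\eta/R)^{d-1}$, so the proof goes through; only the middle term $\eta^d$ should be replaced by the larger $R\eta^{d-1}$.
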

\begin{remark}
The important conclusion from (\ref{eq:s9-measure2}) is that
$\mathcal{B}$ is a  set of small measure; on the complement of this small-measure
set, the inductive hypothesis (``we are in $\mathcal{K}_{s}\cap \mathcal{U}_s^\eta$'')
 is
propagated due to (\ref{eq:s9-Z3}). To see why $\mathcal{B}$ is of small measure,
assume that $R$ is a large
velocity cut-off, either constant or diverging very gently as
$\varepsilon \rightarrow 0^+$. The parameter $\eta > 0$ represents the
minimal velocity between particles and therefore we will
always have $\eta \ll R$. Similarly $y$ is a minimal distance between
particles at any time of particle creation; since particles are moving
relatively with speed at least $\eta$, we will eventually require
$y \ll \eta T$ so that particles are only nearby for a short time.
The angle $\alpha$ is a technical cutoff on near-grazing collisions and
is therefore small. The small angle $\theta$ is the opening angle of a cone 
inside of which particles may ``recollide'' (recollisions are the
geometric mechanism by which correlations are generated). The purely
geometric condition $\sin \theta > c_d y^{-1} \varepsilon$ forces
particles to be widely separated (compared to their diameter) at the
time of particle creation.
\end{remark}
\begin{proof}
Claim \emph{(i)} is trivial. For claim \emph{(ii)}, we distinguish
between two possibilities for the added particle: either
$\left(\tau,v_{s+k+1},\omega_{k+1}\right)$ is such that
$\omega_{k+1}\cdot \left(v_{s+k+1}-v_{i_{k+1}}^\prime\right) > 0$,
or else $\omega_{k+1}\cdot \left(v_{s+k+1}-v_{i_{k+1}}^\prime\right)\leq 0$.
We introduce two sets,
\begin{equation}
\label{eq:s9-A-plus}
\mathcal{A}^+ =\left\{
\begin{aligned}
& \left(\tau,v_{s+k+1},\omega_{k+1}\right)\subset
[0,\infty)\times\mathbb{R}^d \times \mathbb{S}^{d-1}
\textnormal{ such that }\\
& \qquad \qquad \qquad \omega_{k+1} \cdot \left(v_{s+k+1}-v_{i_{k+1}}^\prime
\right)>0
\end{aligned}
\right\}
\end{equation}
\begin{equation}
\label{eq:s9-A-minus}
\mathcal{A}^- =\left\{
\begin{aligned}
& \left(\tau,v_{s+k+1},\omega_{k+1}\right)\subset
[0,\infty)\times\mathbb{R}^d\times\mathbb{S}^{d-1} \textnormal{ such that } \\
& \qquad \qquad \qquad
\omega_{k+1} \cdot \left(v_{s+k+1}-v_{i_{k+1}}^\prime\right)
\leq 0
\end{aligned}
\right\}
\end{equation}
then we write $\mathcal{B}=\mathcal{B}^+ \cup \mathcal{B}^-$ where
$\mathcal{B}^+ \subset \mathcal{A}^+$ and
$\mathcal{B}^- \subset \mathcal{A}^-$.

\emph{Construction of $\mathcal{B}^-$.} 
We first eliminate creation times $\tau$ which could result in
spatial concentrations of particles. This is where we use the
property that $Z_{s+k}^\prime \in \mathcal{U}_{s+k}^\eta$, since this
condition guarantees that two particles can only be close to each
other for a short time (as long as the $(s+k)$ particles evolve under 
the free flow). We introduce the set
\begin{equation}
\label{eq:s9-B-I-minus}
\mathcal{B}^-_I = \left\{
\begin{aligned}
& \left(\tau,v_{s+k+1},\omega_{k+1}\right)\in
\mathcal{A}^- \textnormal{ such that } \\
& \inf_{i\in \left\{1,\dots,s,s+1,\dots,s+k\right\}
\backslash \left\{i_{k+1}\right\}} \left|
\left(x_{i_{k+1}}^\prime - x_i^\prime\right) - 
\tau \left(v_{i_{k+1}}^\prime - v_i^\prime\right)\right| \leq y  
\end{aligned}
\right\}
\end{equation}
then we have
\begin{equation}
\label{eq:s9-measure3}
\begin{aligned}
\int_0^T \int_{B_{2R}^d} \int_{\mathbb{S}^{d-1}} 
 \mathbf{1}_{\left(\tau,v_{s+k+1},\omega_{k+1}\right) \in \mathcal{B}^-_I}
& d\omega_{k+1} dv_{s+k+1} d\tau \leq \\
& \qquad \leq C_d \left(s+k-1\right) R^d \eta^{-1} y
\end{aligned}
\end{equation}
As a technical matter, we must also guarantee that the $(s+k+1)$-particle
state lives in $\mathcal{U}_{s+k+1}^\eta$ at the time of particle
creation. Hence, we will define
\begin{equation}
\label{eq:s9-B-II-minus}
\mathcal{B}^-_{II} = \left\{ \left(\tau,v_{s+k+1},\omega_{k+1}\right)\in
\mathcal{A}^- \left| \inf_{1\leq i \leq s+k} \left|
v_{s+k+1} - v_i^\prime \right| \leq \eta
\right. \right\}
\end{equation}
then we have
\begin{equation}
\label{eq:s9-measure4}
\int_0^T \int_{B_{2R}^d} \int_{\mathbb{S}^{d-1}} 
 \mathbf{1}_{\left(\tau,v_{s+k+1},\omega_{k+1}\right) \in 
\mathcal{B}^-_{II}} \; d\omega_{k+1} dv_{s+k+1} d\tau \leq
C_d \left( s+k\right) T \eta^d
\end{equation}

Lastly, we will guarantee (with high probability) 
that the created particle does not ``recollide''
under the backwards flow; that is, the $(s+k+1)$-particle state must
live in $\mathcal{K}_{s+k+1}$ at the time of particle creation.
To this end, for $i \in \left\{ 1,\dots,s,s+1,\dots,s+k \right\}\backslash
\left\{ i_{k+1}\right\}$ we introduce the set 
\begin{equation}
\label{eq:s9-B-III-i-minus}
\mathcal{B}^-_{III,i} = \left\{ 
\begin{aligned}
& \left(\tau,v_{s+k+1},\omega_{k+1}\right)\in
\mathcal{A}^-  \textnormal{ such that } \\
& \frac{\left|\left(\left(x_{i_{k+1}}^\prime - x_i^\prime\right)-
\tau \left(v_{i_{k+1}}^\prime - v_i^\prime\right)\right)\cdot
\left(v_{s+k+1}-v_i^\prime\right)\right|}
{\left|\left(x_{i_{k+1}}^\prime - x_i^\prime\right)-
\tau\left(v_{i_{k+1}}^\prime - v_i^\prime \right)\right|
\left| v_{s+k+1}-v_i^\prime \right| } \geq \cos \theta
\end{aligned}
 \right\}
\end{equation}
and we let 
$\mathcal{B}^-_{III}=\bigcup_{i\in
\left\{ 1,\dots,s+k\right\}\backslash\left\{i_{k+1}\right\}}
\mathcal{B}^-_{III,i}$; 
then we have
\begin{equation}
\label{eq:s9-measure5}
\begin{aligned}
\int_0^T \int_{B_{2R}^d} \int_{\mathbb{S}^{d-1}} 
 \mathbf{1}_{\left(\tau,v_{s+k+1},\omega_{k+1}\right) \in 
\mathcal{B}^-_{III}} & d\omega_{k+1} dv_{s+k+1} d\tau \leq \\
& \qquad  \leq C_d \left( s+k-1 \right) T R^d \theta^{d-1}
\end{aligned}
\end{equation}
\begin{remark}
The vector
\begin{equation*}
\left(x_{i_{k+1}}^\prime - x_i^\prime \right) -
\tau \left(v_{i_{k+1}}^\prime - v_i^\prime\right)
\end{equation*}
is just the relative displacement between the 
$i_{k+1}$st particle and the $i$th particle at the time of the particle
creation. On the other hand, $\left(v_{s+k+1}-v_i^\prime\right)$ is
the relative velocity between the $(s+k+1)$st particle and the
$i$th particle at the time of particle creation. Note that the $(s+k+1)$st
particle is created at a distance of $\varepsilon$ from the
$i_{k+1}$st particle. Hence the formula defining
$\mathcal{B}^-_{III,i}$ is a ``cone condition'' whose complementary event
 prevents the newly created $(s+k+1)$st particle from colliding with the
$i$th particle under the backwards hard sphere flow, as long as
$\theta$ is not too small.
\end{remark}

To conclude, we let
$\mathcal{B}^- = \mathcal{B}^-_I \cup
\mathcal{B}^-_{II} \cup \mathcal{B}^-_{III}$; then we have
\begin{equation}
\label{eq:s9-measure6}
\begin{aligned}
\int_0^T \int_{B_{2R}^d} \int_{\mathbb{S}^{d-1}} 
 \mathbf{1}_{\left(\tau,v_{s+k+1},\omega_{k+1}\right) \in 
\mathcal{B}^-} & d\omega_{k+1} dv_{s+k+1} d\tau \leq \\
& \leq C_d \left( s+k \right) T R^d \left[
\frac{y}{\eta T} + \left(\frac{\eta}{R}\right)^d + \theta^{d-1}\right]
\end{aligned}
\end{equation}
Then again, by assumption, $\sin \theta > c_d y^{-1} \varepsilon$; by
choosing $c_d$ sufficiently large we may guarantee that 
\begin{equation}
\label{eq:s9-Z4}
\begin{aligned}
Z_{s,s+k+1} & \left[ Z_s,t+\tau; t_1+\tau,\dots,t_k+\tau,0;
v_{s+1},\dots,v_{s+k},v_{s+k+1}; \right. \\
& \qquad \qquad \qquad \qquad \qquad \qquad \left.
\omega_1,\dots,\omega_k,\omega_{k+1}; i_1,\dots,i_k,i_{k+1}\right] \\
& \qquad \qquad \qquad \qquad \qquad \qquad \qquad
\qquad \qquad  
 \in \mathcal{K}_{s+k+1}\cap \mathcal{U}_{s+k+1}^\eta
\end{aligned}
\end{equation}
whenever
$\left( \tau,v_{s+k+1},\omega_{k+1}\right) \in
 \mathcal{A}^- \backslash\mathcal{B}^-$.

\emph{Construction of $\mathcal{B}^+$.} 
The construction of $\mathcal{B}^+$ is very similar to the
construction of $\mathcal{B}^-$; the main difference is that we have
to account for the change of variables arising from one collision.
We will find it helpful to define the following notation:
\begin{equation}
\label{eq:s9-vstar1}
v_{s+k+1}^* = v_{s+k+1} -
\omega_{k+1} \omega_{k+1}\cdot\left(v_{s+k+1}-
v_{i_{k+1}}^\prime\right)
\end{equation}
\begin{equation}
\label{eq:s9-vstar2}
v_{i_{k+1}}^{\prime *} = v_{i_{k+1}}^\prime +
\omega_{k+1} \omega_{k+1}\cdot \left(v_{s+k+1}-
v_{i_{k+1}}^\prime\right)
\end{equation}
Note that $Z_{s+k}^\prime$ is \emph{fixed} as in the statement of
the proposition, whereas $\left( \tau,v_{s+k+1},\omega_{k+1}\right)
\in \mathcal{A}^+$ are considered free parameters.

We eliminate creation times $\tau$ for which particles are
too concentrated in space:
\begin{equation}
\label{eq:s9-B-I-plus}
\mathcal{B}^+_I = \left\{
\begin{aligned}
& \left(\tau,v_{s+k+1},\omega_{k+1}\right)\in
\mathcal{A}^+ \textnormal{ such that }\\
&  \inf_{i\in \left\{1,\dots,s,s+1,\dots,s+k\right\}
\backslash \left\{i_{k+1}\right\}} 
\left| \left(x_{i_{k+1}}^\prime - x_i^\prime\right) - 
\tau \left(v_{i_{k+1}}^\prime - v_i^\prime\right)\right| \leq y
\end{aligned}
\right\}
\end{equation}
then we have
\begin{equation}
\label{eq:s9-measure7}
\begin{aligned}
\int_0^T \int_{B_{2R}^d} \int_{\mathbb{S}^{d-1}} 
 \mathbf{1}_{\left(\tau,v_{s+k+1},\omega_{k+1}\right) \in \mathcal{B}^+_I}
& d\omega_{k+1} dv_{s+k+1} d\tau \leq \\
& \qquad \leq C_d \left(s+k-1\right) R^d \eta^{-1} y
\end{aligned}
\end{equation}
We find it convenient to eliminate collisions which are too
close to grazing; therefore, we define
\begin{equation}
\label{eq:s9-B-II-plus}
\mathcal{B}^+_{II} = \left\{
\begin{aligned}
& \left(\tau,v_{s+k+1},\omega_{k+1}\right)\in
\mathcal{A}^+ \textnormal{ such that } \\
& \left| \omega_{k+1}\cdot\left(v_{s+k+1}-v_{i_{k+1}}^\prime\right) \right|
\leq\left(\sin\alpha\right) \left| v_{s+k+1}-v_{i_{k+1}}^\prime \right|
\end{aligned}
\right\}
\end{equation}
then we have
\begin{equation}
\label{eq:s9-measure8}
\int_0^T \int_{B_{2R}^d} \int_{\mathbb{S}^{d-1}} 
\mathbf{1}_{\left(\tau,v_{s+k+1},\omega_{k+1}\right)\in\mathcal{B}^+_{II}}
\; d\omega_{k+1} dv_{s+k+1} d\tau \leq
C_d T R^d \alpha
\end{equation}
We introduce the next three sets to guarantee that the $(s+k+1)$-particle 
state lives in $\mathcal{U}_{s+k+1}^\eta$. In this instance we must impose
\emph{multiple} conditions, since both the $(s+k+1)$st particle and the
$i_{k+1}$st particle are modified by the collision.
Note that $\left| v_{s+k+1}^*-v_{i_{k+1}}^{\prime *} \right| =
\left| v_{s+k+1}-v_{i_{k+1}}^\prime\right|$.
\begin{equation}
\label{eq:s9-B-III-plus}
\mathcal{B}^+_{III} = \left\{
\begin{aligned}
& \left(\tau,v_{s+k+1},\omega_{k+1}\right)\in
\mathcal{A}^+\backslash\mathcal{B}^+_{II}\; \textnormal{ such that } \\
& \inf_{i\in\left\{1,\dots,s,s+1,\dots,s+k\right\}
\backslash \left\{i_{k+1}\right\}}\left|
v_{s+k+1}^* - v_i^\prime \right| \leq \eta
\end{aligned}
\right\}
\end{equation}
\begin{equation}
\label{eq:s9-B-IV-plus}
\mathcal{B}^+_{IV} = \left\{
\begin{aligned}
& \left(\tau,v_{s+k+1},\omega_{k+1}\right)\in
\mathcal{A}^+ \backslash \mathcal{B}^+_{II} \; \textnormal{ such that }\\
&  \inf_{i\in\left\{1,\dots,s,s+1,\dots,s+k\right\}
\backslash \left\{i_{k+1}\right\} } \left|
v_{i_{k+1}}^{\prime *} - v_i^\prime \right| \leq \eta
\end{aligned} \right\}
\end{equation}
\begin{equation}
\label{eq:s9-B-V-plus}
\mathcal{B}^+_V = \left\{ \left( \tau,v_{s+k+1},\omega_{k+1}\right)\in
\mathcal{A}^+ \left| \left|v_{s+k+1}-v_{i_{k+1}}^\prime\right|
\leq \eta\right.\right\}
\end{equation}
Then using Lemma \ref{lemma:s9-sphere} and the definition
of $\mathcal{B}^+_{II}$, we obtain:
\begin{equation}
\label{eq:s9-measure9}
\begin{aligned}
\int_0^T \int_{B_{2R}^d} \int_{\mathbb{S}^{d-1}} 
 \mathbf{1}_{\left(\tau,v_{s+k+1},\omega_{k+1}\right) \in 
\mathcal{B}^+_{III}} & d\omega_{k+1} dv_{s+k+1} d\tau \leq \\
& \qquad \leq C_{d,\alpha} (s+k-1) T R \eta^{d-1}
\end{aligned}
\end{equation}
\begin{equation}
\label{eq:s9-measure10}
\begin{aligned}
\int_0^T \int_{B_{2R}^d} \int_{\mathbb{S}^{d-1}} 
 \mathbf{1}_{\left(\tau,v_{s+k+1},\omega_{k+1}\right) \in 
\mathcal{B}^+_{IV}} & d\omega_{k+1} dv_{s+k+1} d\tau \leq \\
& \qquad \leq C_{d,\alpha} (s+k-1) T R \eta^{d-1}
\end{aligned}
\end{equation}
\begin{equation}
\label{eq:s9-measure11}
\int_0^T \int_{B_{2R}^d} \int_{\mathbb{S}^{d-1}} 
 \mathbf{1}_{\left(\tau,v_{s+k+1},\omega_{k+1}\right) \in 
\mathcal{B}^+_V} \; d\omega_{k+1} dv_{s+k+1} d\tau \leq
C_d T \eta^d
\end{equation}

We will now show that, with high probability, the particle creation yields 
an $(s+k+1)$-particle state in $\mathcal{K}_{s+k+1}$, hence the backwards
hard sphere flow coincides with the free flow.
For $i\in\left\{1,\dots,s,s+1,\dots,s+k\right\}\backslash
\left\{i_{k+1}\right\}$, we define
\begin{equation}
\label{eq:s9-B-VI-i-plus}
\mathcal{B}^+_{VI,i} = \left\{
\begin{aligned}
& \left(\tau,v_{s+k+1},\omega_{k+1}\right)\in
\mathcal{A}^+\backslash\mathcal{B}^+_{II} \textnormal{ such that } \\
& \frac{\left|\left(\left(x_{i_{k+1}}^\prime - x_i^\prime\right)-
\tau \left(v_{i_{k+1}}^\prime - v_i^\prime\right)\right)\cdot
\left(v_{s+k+1}^*-v_i^\prime\right)\right|}
{\left|\left(x_{i_{k+1}}^\prime - x_i^\prime\right)-
\tau\left(v_{i_{k+1}}^\prime - v_i^\prime \right)\right|
\left| v_{s+k+1}^*-v_i^\prime \right| } \geq \cos \theta
\end{aligned}
\right\}
\end{equation}
\begin{equation}
\label{eq:s9-B-VII-i-plus}
\mathcal{B}^+_{VII,i} = \left\{
\begin{aligned}
& \left(\tau,v_{s+k+1},\omega_{k+1}\right)\in
\mathcal{A}^+\backslash\mathcal{B}^+_{II} \textnormal{ such that } \\
& \frac{\left|\left(\left(x_{i_{k+1}}^\prime - x_i^\prime\right)-
\tau \left(v_{i_{k+1}}^\prime - v_i^\prime\right)\right)\cdot
\left(v_{i_{k+1}}^{\prime *}-v_i^\prime\right)\right|}
{\left|\left(x_{i_{k+1}}^\prime - x_i^\prime\right)-
\tau\left(v_{i_{k+1}}^\prime - v_i^\prime \right)\right|
\left| v_{i_{k+1}}^{\prime *}-v_i^\prime \right| }  \geq \cos \theta
\end{aligned}
\right\}
\end{equation}
\begin{equation}
\label{eq:s9-B-VI-plus}
\mathcal{B}^+_{VI}=\bigcup_{i\in\left\{1,\dots,s,s+1,
\dots,s+k\right\}\backslash\left\{i_{k+1}\right\}}\mathcal{B}^+_{VI,i}
\end{equation}
\begin{equation}
\label{eq:s9-B-VII-plus}
\mathcal{B}^+_{VII}=\bigcup_{i\in\left\{1,\dots,s,s+1,\dots,s+k\right\}
\backslash\left\{i_{k+1}\right\}}\mathcal{B}^+_{VII,i}
\end{equation}
Then using Lemmas \ref{lemma:s9-sphere} and 
\ref{lemma:s9-cylinder}, and the definition of
$\mathcal{B}^+_{II}$, we have
\begin{equation}
\label{eq:s9-measure12}
\begin{aligned}
\int_0^T \int_{B_{2R}^d} \int_{\mathbb{S}^{d-1}} 
 \mathbf{1}_{\left(\tau,v_{s+k+1},\omega_{k+1}\right) \in 
\mathcal{B}^+_{VI}} & d\omega_{k+1} dv_{s+k+1} d\tau \leq \\
& \qquad \leq C_{d,\alpha} \left( s+k-1 \right) T R^d \theta^{(d-1)/2}
\end{aligned}
\end{equation}
\begin{equation}
\label{eq:s9-measure13}
\begin{aligned}
\int_0^T \int_{B_{2R}^d} \int_{\mathbb{S}^{d-1}} 
 \mathbf{1}_{\left(\tau,v_{s+k+1},\omega_{k+1}\right) \in 
\mathcal{B}^+_{VII}} & d\omega_{k+1} dv_{s+k+1} d\tau \leq \\
& \qquad \leq C_{d,\alpha} \left( s+k-1 \right) T R^d \theta^{(d-1)/2}
\end{aligned}
\end{equation}

To conclude, we let
$\mathcal{B}^+ = \mathcal{B}^+_I \cup
\mathcal{B}^+_{II} \cup \mathcal{B}^+_{III} \cup
\mathcal{B}^+_{IV}\cup\mathcal{B}^+_V \cup
\mathcal{B}^+_{VI} \cup \mathcal{B}^+_{VII}$; then we have
\begin{equation}
\label{eq:s9-measure14}
\begin{aligned}
&\int_0^T \int_{B_{2R}^d} \int_{\mathbb{S}^{d-1}} 
 \mathbf{1}_{\left(\tau,v_{s+k+1},\omega_{k+1}\right) \in 
\mathcal{B}^+} \; d\omega_{k+1} dv_{s+k+1} d\tau \leq \\
& \qquad  \qquad \qquad  
\leq C_d \left( s+k \right) T R^d \left[\alpha +
\frac{y}{\eta T} + C_{d,\alpha}\left(\frac{\eta}{R}\right)^{d-1} +
C_{d,\alpha} \theta^{(d-1)/2}\right]
\end{aligned}
\end{equation}
Then again, by assumption, we have $\sin \theta > c_d y^{-1} \varepsilon$;
as long as $c_d$ is chosen sufficiently large, we always have
\begin{equation}
\label{eq:s9-Z5}
\begin{aligned}
& Z_{s,s+k+1} \left[ Z_s,t+\tau; t_1+\tau,\dots,t_k+\tau,0;
v_{s+1},\dots,v_{s+k},v_{s+k+1}; \right. \\
& \qquad \qquad \qquad \qquad \qquad \qquad \qquad \left.
\omega_1,\dots,\omega_k,\omega_{k+1}; i_1,\dots,i_k,i_{k+1}\right] \\
& \qquad \qquad \qquad \qquad \qquad \qquad
\qquad \qquad \qquad 
 \in \mathcal{K}_{s+k+1}\cap \mathcal{U}_{s+k+1}^\eta
\end{aligned}
\end{equation}
whenever
$\left( \tau,v_{s+k+1},\omega_{k+1}\right) \in
 \mathcal{A}^+ \backslash\mathcal{B}^+$.
\qed
\end{proof}

\section{The Boltzmann hierarchy}
\label{sec:10}

We will say that a sequence of continuous symmetric functions
$\left\{f_\infty^{(s)} (t,Z_s)\right\}_{s\in\mathbb{N}}$, with
$Z_s \in \mathbb{R}^{2ds}$, satisfies the
Boltzmann hierarchy if the following equation holds for each
$s$ in the sense of distributions:
\begin{equation}
\label{eq:s10-boltz1}
\left( \frac{\partial}{\partial t} + V_s \cdot \nabla_{X_s} \right)
f_\infty^{(s)} (t,Z_s) = \ell^{-1} C_{s+1}^0
 f_\infty^{(s+1)} (t,Z_s)
\end{equation}
The collision operators $C_{s,s+1}^0$ are defined as follows:
\begin{equation}
\label{eq:s10-coll1}
C_{s+1}^0 = \sum_{i=1}^s C_{i,s+1}^{0}
\end{equation}
\begin{equation}
\label{eq:s10-coll2}
C_{i,s+1}^0 = 
C_{i,s+1}^{0,+} - C_{i,s+1}^{0,-}
\end{equation}
\begin{equation}
\label{eq:s10-coll3}
\begin{aligned}
& C_{i,s+1}^{0,+} f_\infty^{(s+1)} (t,Z_s) =\int_{\mathbb{R}^d}
\int_{\mathbb{S}^{d-1}}
\left[\omega \cdot (v_{s+1}-v_i)\right]_{+} \times \\
& \;\; \qquad \times f_\infty^{(s+1)} (t,x_1,v_1,\dots,x_i,v_i^*,\dots,
x_s,v_s,x_i, v_{s+1}^*) d\omega dv_{s+1} 
\end{aligned}
\end{equation}
\begin{equation}
\label{eq:s10-coll4}
\begin{aligned}
& C_{i,s+1}^{0,-} f_\infty^{(s+1)} (t,Z_s) =\int_{\mathbb{R}^d}
\int_{\mathbb{S}^{d-1}}
\left[\omega \cdot (v_{s+1}-v_i)\right]_{-} \times \\
& \;\; \qquad \times f_\infty^{(s+1)} (t,x_1,v_1,
\dots,x_i,v_i,\dots,x_s,v_s,x_i, v_{s+1})
d\omega dv_{s+1}
\end{aligned}
\end{equation}
where
\begin{equation}
\label{eq:s10-vstar}
\begin{cases}
v_i^* = v_i + \omega \omega \cdot \left( v_j-v_i\right)\\
v_j^* = v_j - \omega \omega \cdot \left( v_j-v_i\right) 
\end{cases}
\end{equation}
We also define the free transport operators $T_s^0 (t)$, which act
on functions $f_\infty^{(s)}:\mathbb{R}^{2ds}\rightarrow\mathbb{R}$ 
as follows:
\begin{equation}
\label{eq:s10-T-s-0}
\left(T_s^0 (t) f_\infty^{(s)}\right) (X_s,V_s) = 
f_\infty^{(s)} (X_s-V_s t,V_s)
\end{equation}
Just as for the BBGKY hierarchy, the Boltzmann hierarchy admits
a \emph{formal} Duhamel series expressing the solution 
in terms of the data,
\begin{equation}
\label{eq:s10-series1}
\begin{aligned}
f_\infty^{(s)} (t) & = \sum_{k=0}^\infty \ell^{-k} \\
& \times
\int_0^t \int_0^{t_1} \dots \int_0^{t_{k-1}} T_s^0 (t-t_1) C_{s+1}^0\dots
T_{s+k}^0 (t_k) f_\infty^{(s+k)} (0) dt_k \dots dt_1
\end{aligned}
\end{equation}
The convergence of this series (for small data) follows from the
well-posedness theorem which is proven in the following section.

\begin{remark}
If $f_t (x,v)$ is a sufficiently smooth solution of the Boltzmann
equation then the sequence $\left\{f_t^{\otimes s}\right\}_{s\in\mathbb{N}}$
is a solution of the Boltzmann hierarchy.
\end{remark}

We will now construct pseudo-trajectories
for the Boltzmann hierarchy, directly analogous to those we have
constructed for the BBGKY hierarchy. \cite{L1975,GSRT2014,PSS2014}
Given $Z_s \in \mathbb{R}^{2ds}$, along with
times $0\leq t_k \leq \dots \leq t_1 \leq t$, velocities 
$v_{s+1},\dots,v_{s+k}$, impact parameters
$\omega_1,\dots,\omega_k$, and indices $i_1,\dots,i_k$, we will define
\begin{equation}
\label{eq:s10-Z1}
Z_{s,s+k}^0 \left[ Z_s , t ; t_1,\dots,t_k;v_{s+1},\dots,v_{s+k};
 \omega_1,\dots,\omega_k;i_1,\dots,i_k \right] \in
\mathbb{R}^{2d(s+k)}
\end{equation}
We assume $i_1 \in \left\{ 1,\dots,s\right\}$,
$i_2 \in \left\{ 1,\dots,s,s+1\right\}$, \dots,
$i_j \in \left\{ 1,2,\dots,s+j-1\right\}$.
 To begin the induction,
for $Z_s = \left(X_s,V_s\right) \in \mathbb{R}^{2ds}$ and $t > 0$
 we define
\begin{equation}
\label{eq:s10-Z2}
Z_{s,s}^0 \left[ Z_s,t\right] = \left(X_s-V_s t,V_s\right)
\end{equation} 
More generally, if the symbol
\begin{equation}
\label{eq:s10-Z3}
\begin{aligned}
& Z_{s,s+k}^0 \left[ Z_s , t ; t_1,\dots,t_k;v_{s+1},\dots,v_{s+k};
 \omega_1,\dots,\omega_k;i_1,\dots,i_k \right] = \\
& \qquad \qquad \qquad \qquad \qquad \qquad \qquad \qquad \;\;
=\left( X_{s+k}^\prime,V_{s+k}^\prime\right) \in \mathbb{R}^{2d(s+k)}
\end{aligned}
\end{equation}
is defined, then for $\tau > 0$ we define
\begin{equation}
\label{eq:s10-Z4}
\begin{aligned}
& Z_{s,s+k}^0 \left[ Z_s , t+\tau ; t_1+\tau,\dots,t_k+\tau;
v_{s+1},\dots,v_{s+k};\omega_1,\dots,\omega_k;
i_1,\dots,i_k \right] = \\
& \qquad \qquad \qquad \qquad \qquad \qquad \qquad \qquad \qquad \qquad
= \left( X_{s+k}^\prime - V_{s+k}^\prime \tau,V_{s+k}^\prime \right)
\end{aligned}
\end{equation}
Similarly, if the symbol
\begin{equation}
\label{eq:s10-Z5}
\begin{aligned}
& Z_{s,s+k}^0 \left[ Z_s , t ; t_1,\dots,t_k;v_{s+1},\dots,v_{s+k};
 \omega_1,\dots,\omega_k; i_1,\dots,i_k \right]  = \\
& \qquad \qquad \qquad \qquad \qquad \qquad \qquad \qquad
= \left( X_{s+k}^\prime,V_{s+k}^\prime \right) \in \mathbb{R}^{2d(s+k)}
\end{aligned}
\end{equation}
is defined (including the possibility $k=0$) then for
any given velocity $v_{s+k+1} \in \mathbb{R}^d$, any index
$i_{k+1}\in \left\{ 1,\dots,s,s+1,\dots,s+k\right\}$,
and any choice of impact parameter $\omega_{k+1}\in \mathbb{S}^{d-1}$, 
if $\omega_{k+1} \cdot \left( v_{s+k+1}-v_{i_{k+1}}^\prime\right)\leq 0$
 we define
\begin{equation}
\label{eq:s10-Z6}
\begin{aligned}
& Z_{s,s+k+1}^0 \left[ Z_s , t ; t_1,\dots,t_k,0; 
v_{s+1},\dots,v_{s+k},v_{s+k+1}; \right.\\
& \qquad \qquad \qquad \qquad \qquad \qquad
\left.\omega_1,\dots,\omega_k,\omega_{k+1};
 i_1,\dots,i_k,i_{k+1} \right] =\\
& \qquad =\left( x_1^\prime,v_1^\prime,\dots,
x_{i_{k+1}}^\prime,v_{i_{k+1}}^\prime,\dots,x_s^\prime,v_s^\prime,
x_{i_{k+1}}^\prime,v_{s+k+1}\right)
\end{aligned}
\end{equation}
whereas if $\omega_{k+1}\cdot\left(v_{s+k+1}-v_{i_{k+1}}^\prime\right)>0$ 
then we define
\begin{equation}
\label{eq:s10-Z7}
\begin{aligned}
& Z_{s,s+k+1}^0 \left[ Z_s , t ; t_1,\dots,t_k,0; 
v_{s+1},\dots,v_{s+k},v_{s+k+1}; \right. \\
& \qquad \qquad \qquad \qquad \qquad \qquad
\left. \omega_1,\dots,\omega_k,\omega_{k+1};
 i_1,\dots,i_k,i_{k+1} \right] =\\
& \qquad =\left( x_1^\prime,v_1^\prime,\dots,
x_{i_{k+1}}^\prime,v_{i_{k+1}}^\prime+\omega_{k+1}
 \omega_{k+1}\cdot\left(v_{s+k+1}-v_{i_{k+1}}^\prime\right),\right. \\ 
&\qquad \qquad \left.\dots,x_s^\prime,v_s^\prime,
x_{i_{k+1}}^\prime,v_{s+k+1}-\omega_{k+1} 
\omega_{k+1}\cdot\left(v_{s+k+1}-v_{i_{k+1}}^\prime\right)\right)
\end{aligned}
\end{equation}

Now we construct the collision kernel
$b_{s,s+k}^0 \left[ Z_s,t;\left\{t_j,v_{s+j},\omega_j,i_j
\right\}_{j=1}^k \right]$. First we define
\begin{equation}
\label{eq:s10-b1}
b_{s,s}^0 \left[ Z_s,t\right] = 1
\end{equation}
If we have defined
\begin{equation}
\label{eq:s10-b2}
b_{s,s+k}^0 \left[ Z_s,t;t_1,\dots,t_k;v_{s+1},\dots,v_{s+k};
\omega_1,\dots,\omega_k;i_1,\dots,i_k\right] 
\end{equation}
then for any $\tau > 0$ we define
\begin{equation}
\label{eq:s10-b3}
\begin{aligned}
& b_{s,s+k}^0 \left[
Z_s,t+\tau;t_1+\tau,\dots,t_k+\tau;v_{s+1},\dots,v_{s+k};
\omega_1,\dots,\omega_k;i_1,\dots,i_k\right] = \\
& \qquad  \qquad \;\;
= b_{s,s+k}^0 \left[ Z_s,t;t_1,\dots,t_k;v_{s+1},\dots,v_{s+k};
\omega_1,\dots,\omega_k;i_1,\dots,i_k\right]
\end{aligned}
\end{equation} 
and we also define
\begin{equation}
\label{eq:s10-b4}
\begin{aligned}
& b_{s,s+k+1}^0 \left[
Z_s,t;t_1,\dots,t_k,0;v_{s+1},\dots,v_{s+k},v_{s+k+1};\right. \\
& \qquad \qquad \qquad \qquad \qquad \qquad
\left. \omega_1,\dots,\omega_k,\omega_{k+1};i_1,\dots,i_k,i_{k+1}\right] = \\
& \qquad = \omega_{k+1}\cdot \left( v_{s+k+1}-v_{i_{k+1}}^\prime\right)
 \times \\
& \qquad \qquad \times b_{s,s+k}^0 \left[ Z_s,t;t_1,\dots,t_k;
v_{s+1},\dots,v_{s+k};\omega_1,\dots,\omega_k;i_1,\dots,i_k\right]
\end{aligned}
\end{equation} 
Then the formal
 Duhamel series (\ref{eq:s10-series1}) becomes
\begin{equation}
\label{eq:s10-series2}
\begin{aligned}
& f_\infty^{(s)} (t,Z_s) = \sum_{k=0}^\infty \ell^{-k} \times \\
& \times \sum_{i_1 = 1}^s \dots \sum_{i_k = 1}^{s+k-1}
\int_0^t \dots \int_0^{t_{k-1}}
\int_{\mathbb{R}^{dk}} \int_{\left(\mathbb{S}^{d-1}\right)^k} 
\left( \prod_{m=1}^k d\omega_m dv_{s+m} dt_m \right) \times \\
& \times \left( b_{s,s+k}^0 \left[\cdot\right]
f_\infty^{(s+k)} (0,Z_{s,s+k}^0 \left[\cdot\right])\right)
\left[ Z_s,t;\left\{t_j,v_{s+j},\omega_j,i_j
\right\}_{j=1}^k \right] 
\end{aligned}
\end{equation}

\section{Small solutions of the Boltzmann hierarchy}
\label{sec:11}

We will prove a global well-posedness result for the Boltzmann hierarchy
with small data 
$F_\infty(0)=\left\{f_\infty^{(s)}(0)\right\}_{s\in\mathbb{N}}$
 in vacuum. The proof is based on a fixed point iteration and
a dispersive estimate. \cite{IP1986,BD1985}
 If, in addition to the hypotheses of the theorem,
we have $f_\infty^{(s)} (0)=f_0^{\otimes s}$ for some smooth
 function $f_0(x,v)$, then it is well-known that the Boltzmann equation has 
a unique non-negative smooth solution $f_t$ \cite{CIP1994,BD2000}, 
and $\left\{ f_t^{\otimes s}\right\}_{s\in\mathbb{N}}$
solves the Boltzmann hierarchy. Then the uniqueness part of the
following theorem implies that 
$F_\infty(t)=\left\{ f_t^{\otimes s}\right\}_{s\in\mathbb{N}}$, i.e.,
the Boltzmann hierarchy propagates chaoticity.

\begin{theorem} 
\label{thm:s11-ip1}
(Illner \& Pulvirenti 1986)
Suppose $F_\infty(0) = 
\left\{ f_\infty^{(s)} (0)\right\}_{s\in\mathbb{N}}$ is a
sequence of functions such that each 
$f_\infty^{(s)}(0):\mathbb{R}^{2ds}\rightarrow\mathbb{R}$
is continuous and symmetric, and for some $\beta_0>0,\mu_0\in\mathbb{R}$,
\begin{equation}
\label{eq:s11-boltz1}
\sup_{s\in\mathbb{N}} \sup_{Z_s\in\mathbb{R}^{2ds}}
\left| f_\infty^{(s)} (0,Z_s)\right| 
e^{\beta_0 \left[ E_s(Z_s)+I_s(Z_s)\right]}e^{\mu_0 s} \leq 1
\end{equation}
Then if $d\geq 3$ and
 $\ell^{-1} e^{-\mu_0} \beta_0^{-\frac{d+1}{2}}$ is sufficiently
small (depending only on $d$), then there exists a unique sequence
$F_\infty (t) = \left\{ f_\infty^{(s)} (t)\right\}_{s\in\mathbb{N}}$, 
with each 
$f_\infty^{(s)}(t,Z_s):[0,\infty)\times\mathbb{R}^{2ds}\rightarrow\mathbb{R}$
continuous and symmetric, such that
\begin{equation}
\label{eq:s11-boltz2}
\sup_{t\geq 0} \sup_{s\in\mathbb{N}} \sup_{Z_s\in\mathbb{R}^{2ds}}
\left| f_\infty^{(s)} (t,Z_s)\right| 
e^{\frac{1}{2} \beta_0
 \left[ E_s(Z_s)+I_s((X_s-V_s t,V_s))\right]}e^{(\mu_0-1) s} \leq 2
\end{equation}
and for each $s\in\mathbb{N}$ there holds
\begin{equation}
\label{eq:s11-boltz3}
\left(\frac{\partial}{\partial t} + V_s\cdot \nabla_{X_s}\right)
f_\infty^{(s)} (t,Z_s) = \ell^{-1} C_{s+1}^0 f_\infty^{(s+1)} (t,Z_s)
\end{equation}
in the sense of distributions.
\end{theorem}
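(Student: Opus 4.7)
The plan is a classical global Picard contraction for the Duhamel formulation of the Boltzmann hierarchy, set in a weighted $L^\infty$ space built directly from the target estimate (\ref{eq:s11-boltz2}), with the collision operator controlled by the same dispersive mechanism that drove the proof of Theorem \ref{thm:s7-ip}. Concretely, for $T > 0$ (eventually sent to $\infty$), I would introduce the Banach space $\mathcal{X}_T$ of sequences $F = \{f^{(s)}\}_{s\in\mathbb{N}}$ of continuous symmetric functions on $[0,T]\times \mathbb{R}^{2ds}$, equipped with the norm
\begin{equation*}
\|F\|_{\mathcal{X}_T} = \sup_{0\leq t\leq T}\sup_{s\in\mathbb{N}}\sup_{Z_s\in\mathbb{R}^{2ds}} |f^{(s)}(t,Z_s)| e^{\frac{1}{2}\beta_0[E_s(Z_s) + I_s((X_s - V_s t, V_s))]} e^{(\mu_0-1)s}.
\end{equation*}
The key property of this weight is that it is preserved by the free flow $T_s^0$, so the Duhamel map
\begin{equation*}
(\Psi F)^{(s)}(t) = T_s^0(t) f_\infty^{(s)}(0) + \ell^{-1}\int_0^t T_s^0(t-\tau) C_{s+1}^0 f^{(s+1)}(\tau)\,d\tau
\end{equation*}
sends the free-transport solution $\{T_s^0(t) f_\infty^{(s)}(0)\}$, which has $\mathcal{X}_\infty$-norm $\leq 1$ by (\ref{eq:s11-boltz1}), plus a contractive correction, and any fixed point solves (\ref{eq:s11-boltz3}) distributionally.

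The heart of the argument is the estimate $\|\Psi F - \Psi G\|_{\mathcal{X}_T} \leq C_d \ell^{-1} e^{-\mu_0} \beta_0^{-(d+1)/2} \|F-G\|_{\mathcal{X}_T}$, uniformly in $T$. Substituting the weight bound on $f^{(s+1)}(\tau)$ into each $C_{i,s+1}^0$ and performing the involutive change of variables $(v_i,v_{s+1})\mapsto (v_i^*,v_{s+1}^*)$ for the gain term, the new particle at position $x_i$ contributes a Gaussian factor $e^{-\frac{1}{4}\beta_0(|x_i - v_{s+1}\tau|^2 + |v_{s+1}|^2)}$ which is genuinely free of the pre-existing $s$-particle weight. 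The resulting $(v_{s+1},\omega)$-integral is precisely of the form estimated in (\ref{eq:s7-bound4})--(\ref{eq:s7-bound5}), and the dispersive inequality (\ref{eq:s7-decay}) bounds it by $C_d (1+\tau)^{-d} \beta_0^{-d/2}(s^{1/2} E_s(Z_s)^{1/2} + s \beta_0^{-1/2})$. The $s$-dependence is absorbed into the gap between $e^{(\mu_0-1)(s+1)}$ and $e^{(\mu_0-1)s}$ together with the spare fraction of the $E_s$-weight, via $s^a x^b e^{-x} \leq C_{a,b}$ applied with $x = \frac{1}{4}\beta_0 E_s$, while the $\tau$-integral converges because $(1+\tau)^{-d}$ is integrable for $d \geq 3$. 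This yields a contraction of factor $\leq 1/2$ under the smallness hypothesis, hence a unique fixed point in the ball of radius $2$ about the free solution; sending $T\to \infty$ produces the claimed global solution, and symmetry and continuity of the Picard iterates pass to the limit.

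The principal obstacle, by far, is the weight bookkeeping during the $(v_i, v_{s+1}) \mapsto (v_i^*, v_{s+1}^*)$ substitution in the gain term: one has to verify carefully that the $(s+1)$-weight evaluated at the collisional configuration factors cleanly into the target $s$-weight times a genuine free Gaussian in $(x_i, v_{s+1}^*)$ suitable for the application of (\ref{eq:s7-decay}). This is the cleaner analog, with no boundary and no pre/post-collisional asymmetry at fixed phase points, of the telescoping cancellation (\ref{eq:s7-equality1})--(\ref{eq:s7-bound3}) that drove the dual-BBGKY proof of Theorem \ref{thm:s7-ip}; once this accounting is in place, everything else is a standard contraction-mapping exercise.
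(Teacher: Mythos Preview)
Your plan contains a genuine gap in the absorption of the factor $s$ arising from $\sum_{i=1}^s C_{i,s+1}^0$. After your dispersive estimate you are left with a bound of the form
\[
C_d\,\ell^{-1}\beta_0^{-d/2}\bigl(s^{1/2}E_s(Z_s)^{1/2}+s\,\beta_0^{-1/2}\bigr)\int_0^t(1+\tau)^{-d}\,d\tau
\]
multiplied by the target weight at level $s$. The $\mu$-gap you invoke is $e^{(\mu_0-1)(s+1)}/e^{(\mu_0-1)s}=e^{\mu_0-1}$, a constant independent of $s$, and there is no ``spare fraction of the $E_s$-weight'' because your norm uses the \emph{same} exponent $\tfrac{1}{2}\beta_0$ on both the source $(s+1)$-level and the target $s$-level. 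The inequality $s^a x^b e^{-x}\leq C_{a,b}$ with $x=\tfrac{1}{4}\beta_0 E_s$ can control $E_s^{1/2}$ but cannot kill the naked factor $s$ (take $E_s=0$: the term $s\,\beta_0^{-1/2}$ survives). Hence your contraction constant blows up in $s$ and $\Psi$ does not even act boundedly on $\mathcal{X}_T$.

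The paper resolves this precisely by introducing \emph{time-dependent} weights $\beta(t),\mu(t)$, decreasing along a piecewise-linear profile (equations (\ref{eq:s11-weight1})--(\ref{eq:s11-weight2})). The resulting gap $e^{-(\beta(\tau)-\beta(t))E_s}e^{-(\mu(\tau)-\mu(t))s}$ inside the $\tau$-integral produces, after integration (equation (\ref{eq:s11-tau})), a denominator $s+\tfrac{\beta_0}{2}E_s$ that exactly cancels the numerator $s^{1/2}E_s^{1/2}+s\,\beta_0^{-1/2}$. The profile slopes $r_k$ are chosen so that $\sum_k r_k=1$ (bounding the total weight loss) while $\sum_k r_k^{-1}(1+k)^{-d}<\infty$; it is this \emph{simultaneous} requirement that forces $d\geq 3$, not mere integrability of $(1+\tau)^{-d}$ as you suggest. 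Your stated ``principal obstacle''---the weight bookkeeping in the gain term---is real but secondary: the paper dispatches it via the exact identity (\ref{eq:s11-equality}), which shows the free-flight $I_s$-weight is collision-invariant in the required sense.
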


\begin{proof} 
Recall the free evolution 
$\left(T_{s}^{0}\left(t\right)f_\infty^{\left(s\right)}\right)
\left(Z_{s}\right)
=f_\infty^{\left(s\right)}\left(X_{s}-V_{s}t,V_{s}\right)$, where
$Z_s \in \mathbb{R}^{2ds}$.
Subject to the estimates stated in the theorem, and the continuity
of $f_\infty^{(s)}(t,Z_s)$, the weak form of the Boltzmann hierarchy is
equivalent to the following mild form:
\begin{equation}
\label{eq:s11-boltz4}
f_\infty^{\left(s\right)}\left(t\right)= 
T_{s}^{0}\left(t\right)f_\infty^{\left(s\right)}\left(0\right)+
\ell^{-1} \int_{0}^{t}T_{s}^{0}\left(t-\tau\right)
C_{s+1}^0 f_\infty^{\left(s+1\right)}\left(\tau\right)d\tau
\end{equation}
At this point it is convenient to change the coordinates. Let us define 
$G_\infty \left(t\right)=
\left\{ g_\infty^{\left(s\right)}\left(t\right)\right\} _{s\geq1}$
by $g_\infty^{\left(s\right)}\left(t\right)=T_{s}^{0}
\left(-t\right)f_\infty^{\left(s\right)}\left(t\right)$,
and write 
\begin{equation}
\label{eq:s11-V1}
V_{s+1}^{0}\left(\tau\right)
=T_{s}^{0}\left(-\tau\right)C_{s+1}^0 T_{s+1}^{0}\left(\tau\right)
\end{equation}
Then we have
\begin{equation}
\label{eq:s11-boltz5}
g_\infty^{\left(s\right)}\left(t\right)=
g_\infty^{\left(s\right)}\left(0\right)+
\ell^{-1} \int_{0}^{t}V_{s+1}^{0}\left(\tau\right)
g_\infty^{\left(s+1\right)} \left(\tau\right)d\tau
\end{equation}
We record an explicit formula for the action of the
operator $V_{s,s+1}^{0}\left(\tau\right)$:
\begin{equation}
\label{eq:s11-V2}
V_{s+1}^0 (\tau) = V_{s+1}^{0,+} (\tau) - V_{s+1}^{0,-} (\tau)
\end{equation}
\begin{equation}
\label{eq:s11-V3}
\begin{aligned}
&\left(V_{s+1}^{0,+}\left(\tau\right)g_\infty^{\left(s+1\right)}(t)
\right)\left(Z_{s}\right) =\sum_{i=1}^{s}\int_{\mathbb{R}^{d}}
\int_{\mathbb{S}^{d-1}} d\omega dv_{s+1}
\left[\omega\cdot\left(v_{s+1}-v_{i}\right)\right]_+ \times \\
&\qquad \times g_\infty^{\left(s+1\right)}\left(t,x_{1},v_{1},\dots,
x_{i}-\left(v_{i}^{*}-v_{i}\right)\tau,v_{i}^{*},\dots, \right. \\
& \qquad \qquad \qquad \qquad \qquad \qquad \qquad \left. \dots,
x_{s},v_{s},x_{i}-\left(v_{s+1}^{*}-v_{i}\right)\tau,v_{s+1}^{*} \right)
\end{aligned}
\end{equation}
\begin{equation}
\label{eq:s11-V4}
\begin{aligned}
&\left(V_{s+1}^{0,-}\left(\tau\right)g_\infty^{\left(s+1\right)}(t)\right)
\left(Z_{s}\right) =\sum_{i=1}^{s}
\int_{\mathbb{R}^{d}}\int_{\mathbb{S}^{d-1}} d\omega dv_{s+1}
\left[\omega\cdot\left(v_{s+1}-v_{i}\right)\right]_- \times\\
 & \qquad \times g_\infty^{\left(s+1\right)}
\left(t,x_{1},v_{1},\dots,x_{i},v_{i},\dots,x_{s},v_{s},
x_{i}-\left(v_{s+1}-v_{i}\right)\tau,v_{s+1}\right)
\end{aligned}
\end{equation}

We will prove pointwise bounds for the operators
$V_{s+1}^{0,\pm}(\tau)$. If $0<\beta^\prime <\beta$,
$\mu^\prime < \mu$, $t,\tau \geq 0$, then we have:
\begin{equation*}
\begin{aligned}
& \left| \left( e^{\mu^\prime s} e^{\beta^\prime (E_s(Z_s)+I_s(Z_s))}
V_{s+1}^{0,+} (\tau) g_\infty^{(s+1)}(t)\right) (Z_s) \right| \leq \\
& \leq \sum_{i=1}^s \int_{\mathbb{R}^d} \int_{\mathbb{S}^{d-1}}
d\omega dv_{s+1} |v_{s+1}-v_i|  
e^{-(\beta-\beta^\prime)E_s(Z_s)} e^{-(\mu-\mu^\prime)s}\times \\
& \times e^{-\frac{1}{2}\beta |v_{s+1}|^2} 
e^{\frac{1}{2} \beta \left( |x_i|^2 -|x_i-(v_i^*-v_i)\tau|^2 - 
|x_i-(v_{s+1}^*-v_i)\tau|^2\right)}e^{-\mu}\times \\
& \times
 e^{\mu (s+1)} e^{\frac{1}{2} \beta \sum_{i=1}^{s+1} |v_i|^2}
e^{\frac{1}{2} \beta \left( |x_1|^2 + \dots + 
|x_i - (v_i^* - v_i)\tau|^2 + \dots + |x_s|^2 +
|x_i-(v_{s+1}^*-v_i)\tau|^2\right)}\times \\
& \times\left| g_\infty^{(s+1)}\left(t,x_1,v_1,\dots,
x_i-(v_i^*-v_i)\tau,v_i^*,\dots \right.\right. \\
& \qquad \qquad \qquad \qquad \qquad
\left.\left.\dots,x_s,v_s,x_i-(v_{s+1}^*-v_i)\tau,v_{s+1}^*\right)\right| \\
& \leq \sum_{i=1}^s \int_{\mathbb{R}^d} \int_{\mathbb{S}^{d-1}}
d\omega dv_{s+1}
|v_{s+1}-v_i| e^{-(\beta-\beta^\prime)E_s(Z_s)} e^{-(\mu-\mu^\prime)s}
\times \\
& \times e^{-\frac{1}{2}\beta |v_{s+1}|^2} 
e^{\frac{1}{2} \beta \left( |x_i|^2 -|x_i-(v_i^*-v_i)\tau|^2 - 
|x_i-(v_{s+1}^*-v_i)\tau|^2\right)} e^{-\mu} \times \\
& \times \left\Vert e^{\mu (s+1)} 
e^{\beta \left(E_{s+1} (Z_{s+1}^\prime)+I_{s+1}(Z_{s+1}^\prime)\right)}
 g_\infty^{(s+1)}\left(t,Z_{s+1}^\prime\right)
\right\Vert_{L^\infty_{Z_{s+1}^\prime}}  \\
\end{aligned}
\end{equation*}
and similarly
\begin{equation*}
\begin{aligned}
& \left| \left( e^{\mu^\prime s} e^{\beta^\prime (E_s(Z_s)+I_s(Z_s))}
V_{s+1}^{0,-} (\tau) g_\infty^{(s+1)}(t)\right) (Z_s) \right| \leq \\
& \leq \sum_{i=1}^s \int_{\mathbb{R}^d} \int_{\mathbb{S}^{d-1}}
d\omega dv_{s+1} |v_{s+1}-v_i| 
e^{-(\beta-\beta^\prime)E_s(Z_s)} e^{-(\mu-\mu^\prime)s} \times \\
& \qquad \times e^{-\frac{1}{2}\beta |v_{s+1}|^2} 
e^{-\frac{1}{2} \beta |x_i-(v_{s+1}-v_i)\tau|^2} e^{-\mu} \times \\
& \qquad \times e^{\mu (s+1)} e^{\frac{1}{2} \beta \sum_{i=1}^{s+1} |v_i|^2}
e^{\frac{1}{2} \beta \left( |x_1|^2 + \dots + 
|x_i|^2 + \dots + |x_s|^2 +
|x_i-(v_{s+1}-v_i)\tau|^2\right)}\times \\
& \qquad \times\left| g_\infty^{(s+1)}\left(t,x_1,v_1,\dots,x_i,v_i,
\dots,x_s,v_s,x_i-(v_{s+1}-v_i)\tau,v_{s+1}\right)\right|  \\
& \leq \sum_{i=1}^s \int_{\mathbb{R}^d} \int_{\mathbb{S}^{d-1}}
d\omega dv_{s+1} |v_{s+1}-v_i| 
 e^{-(\beta-\beta^\prime)E_s(Z_s)} e^{-(\mu-\mu^\prime)s} \times \\
& \qquad \times e^{-\frac{1}{2}\beta |v_{s+1}|^2} 
e^{-\frac{1}{2} \beta |x_i-(v_{s+1}-v_i)\tau|^2} e^{-\mu} \times \\
& \qquad \times
\left\Vert e^{\mu (s+1)} 
e^{\beta\left(E_{s+1} (Z_{s+1}^\prime)+I_{s+1}(Z_{s+1}^\prime)\right)}
g_\infty^{(s+1)}\left(t,Z_{s+1}^\prime\right)
\right\Vert_{L^\infty_{Z_{s+1}^\prime}}  \\
\end{aligned}
\end{equation*}
The following identity follows from elementary manipulation:
\begin{equation}
\label{eq:s11-equality}
|x_i|^2 + |x_i-(v_{s+1}-v_i)\tau|^2 -
|x_i-(v_i^*-v_i)\tau|^2 - |x_i-(v_{s+1}^*-v_i)\tau|^2 = 0
\end{equation}

Therefore we obtain a bound on the full operator
$V_{s+1}^0 (\tau)$,
\begin{equation}
\label{eq:s11-V5}
\begin{aligned}
& \left| \left( e^{\mu^\prime s} e^{\beta^\prime (E_s(Z_s)+I_s(Z_s))}
V_{s+1}^{0} (\tau) g_\infty^{(s+1)}(t)\right) (Z_s) \right| \leq \\
& \leq 2 \sum_{i=1}^s \int_{\mathbb{R}^d} \int_{\mathbb{S}^{d-1}}
d\omega dv_{s+1} |v_{s+1}-v_i| 
 e^{-(\beta-\beta^\prime)E_s(Z_s)} e^{-(\mu-\mu^\prime)s} \times \\
& \qquad \qquad \times e^{-\frac{1}{2}\beta |v_{s+1}|^2} 
e^{-\frac{1}{2} \beta |x_i-(v_{s+1}-v_i)\tau|^2} e^{-\mu} \times \\
& \qquad \qquad \times  \left\Vert e^{\mu (s+1)} 
e^{\beta\left( E_{s+1} (Z_{s+1}^\prime)+I_{s+1}(Z_{s+1}^\prime)\right)}
g_\infty^{(s+1)}\left(t,Z_{s+1}^\prime\right)
\right\Vert_{L^\infty_{Z_{s+1}^\prime}} 
\end{aligned}
\end{equation}
We use the following dispersive inequality \cite{BD1985}:
\begin{equation}
\label{eq:s11-decay}
\left\Vert \zeta ( x-vt,v) \right\Vert_{L^\infty_x L^1_v} \leq
|t|^{-d} \left\Vert \zeta(x,v) \right\Vert_{L^1_x L^\infty_v}
\end{equation}
which implies the \emph{pointwise} bound
\begin{equation}
\label{eq:s11-V6}
\begin{aligned}
& \left| \left( e^{\mu^\prime s} e^{\beta^\prime (E_s(Z_s)+I_s(Z_s))}
V_{s+1}^{0} (\tau) g_\infty^{(s+1)}(t)\right) (Z_s) \right| \leq \\
& \;\; \leq C_d e^{-\mu} \beta^{-\frac{d}{2}} \left(1+\tau\right)^{-d}
\left( s^{\frac{1}{2}} E_s (Z_s)^{\frac{1}{2}} +
s \beta^{-\frac{1}{2}} \right) 
 e^{-(\beta-\beta^\prime)E_s(Z_s)} e^{-(\mu-\mu^\prime)s}\times \\
& \qquad \;\; \times \left\Vert e^{\mu (s+1)} 
e^{\beta \left(E_{s+1} (Z_{s+1}^\prime)+I_{s+1}(Z_{s+1}^\prime)\right)}
g_\infty^{(s+1)}\left(t,Z_{s+1}^\prime\right)
\right\Vert_{L^\infty_{Z_{s+1}^\prime}}
\end{aligned}
\end{equation}
and therefore also implies
\begin{equation}
\label{eq:s11-V7}
\begin{aligned}
& \left\Vert \left( e^{\mu^\prime s} e^{\beta^\prime (E_s(Z_s)+I_s(Z_s))}
V_{s+1}^{0} (\tau) g_\infty^{(s+1)}(t)\right) (Z_s)
 \right\Vert_{L^\infty_{Z_s}} \leq \\
& \qquad \leq C_d e^{-\mu} \beta^{-\frac{d}{2}} \left(1+\tau\right)^{-d}
\left( \frac{1}{\sqrt{\beta-\beta^\prime}\cdot\sqrt{\mu-\mu^\prime}} +
\frac{\beta^{-\frac{1}{2}}}{\mu-\mu^\prime} \right) \times \\
&\qquad \qquad \times \left\Vert e^{\mu (s+1)} 
e^{\beta \left(E_{s+1} (Z_{s+1}^\prime)+I_{s+1}(Z_{s+1}^\prime)\right)}
g_\infty^{(s+1)}\left(t,Z_{s+1}^\prime\right)
\right\Vert_{L^\infty_{Z_{s+1}^\prime}}
\end{aligned}
\end{equation}

Fix a sequence of positive numbers $r_0,r_1,r_2,\dots$
such that $0<r_{k+1}<r_{k}$ and 
$\sum_{k=0}^{\infty}r_k=1$. We define continuous decreasing 
functions $\beta\left(t\right)$, $\mu\left(t\right)$,
for $t\geq0$:
\begin{equation}
\label{eq:s11-weight1}
\beta\left(t\right)=\beta_{0}\cdot
\left[1-\frac{1}{2}\sum_{0\leq k<n}r_k-
\frac{1}{2}r_n \left(t-n\right)\right]
\qquad\forall\qquad t\in\left[n,n+1\right)
\end{equation}
\begin{equation}
\label{eq:s11-weight2}
\mu\left(t\right)=\mu_{0}-\sum_{0\leq k<n}r_k -
r_n \left(t-n\right)
\qquad\forall\qquad t\in\left[n,n+1\right)
\end{equation}
Using the \emph{pointwise} bound
 (\ref{eq:s11-V5}), we obtain
\begin{equation}
\label{eq:s11-V8}
\begin{aligned}
& \left| e^{\mu(t) s} e^{\beta(t) \left(E_s(Z_s)+I_s(Z_s)\right)}
\ell^{-1} \int_0^t
 \left(V_{s+1}^{0} (\tau) g_\infty^{(s+1)}(\tau)\right)(Z_s)d\tau \right|
 \leq \\
& \leq C_d \ell^{-1} e^{-(\mu_0 - 1)}
\left(\frac{ \beta_0}{2}\right)^{-\frac{d}{2}} 
\left( s^{\frac{1}{2}} E_s (Z_s)^{\frac{1}{2}} +
s \left(\frac{\beta_0}{2}\right)^{-\frac{1}{2}} \right) \times \\
& \qquad \;\; \times \int_0^t \left(1+\tau\right)^{-d}
e^{-\left(\beta(\tau)-\beta(t)\right) E_s (Z_s)}
e^{-\left( \mu(\tau)-\mu(t)\right) s} d\tau \times \\
& \qquad \;\; \times \left\Vert e^{\mu(t^\prime) (s+1)} 
e^{\beta(t^\prime) \left(E_{s+1} (Z_{s+1}^\prime)+I_{s+1}
(Z_{s+1}^\prime)\right)}
g_\infty^{(s+1)}\left(t^\prime,Z_{s+1}^\prime\right)
\right\Vert_{L^\infty_{t^\prime} L^\infty_{Z_{s+1}^\prime}}
\end{aligned}
\end{equation}
Then by a straightforward computation we have
\begin{equation}
\label{eq:s11-tau}
\int_0^t \left(1+\tau\right)^{-d}
e^{-\left(\beta(\tau)-\beta(t)\right) E_s (Z_s)}
e^{-\left(\mu(\tau)-\mu(t)\right) s} d\tau \leq
\frac{\sum_{k=0}^\infty r_k^{-1} \left(1+k\right)^{-d}}
{s+\frac{\beta_0}{2} E_s(Z_s)}
\end{equation}
Observe that if $d\geq 3$ then we may choose $r_k$ such that
$r_k \sim k^{-d+\frac{3}{2}}$ as $k\rightarrow \infty$,
and $\sum_{k=0}^\infty r_k = 1$; then, we will also have
$\sum_{k=0}^\infty r_k^{-1} \left(1+k\right)^{-d}<\infty$.
Hence for $d\geq 3$ there holds
\begin{equation}
\label{eq:s11-V9}
\begin{aligned}
& \left\Vert e^{\mu(t) s} e^{\beta(t) \left(E_s(Z_s)+I_s(Z_s)\right)}
\ell^{-1} \int_0^t
 \left(V_{s+1}^{0} (\tau) g_\infty^{(s+1)}(\tau)\right)(Z_s)d\tau 
\right\Vert_{L^\infty_t L^\infty_{Z_s}} \leq \\
&\leq C_d^\prime \ell^{-1} e^{-\mu_0} \beta_0^{-\frac{d+1}{2}}  \times \\
& \qquad \times  \left\Vert e^{\mu(t) (s+1)} 
e^{\beta(t) \left(E_{s+1} (Z_{s+1})+I_{s+1}
(Z_{s+1})\right)}
g_\infty^{(s+1)}\left(t,Z_{s+1}\right)
\right\Vert_{L^\infty_{t} L^\infty_{Z_{s+1}}}
\end{aligned}
\end{equation}

The Boltzmann hierarchy can be written in the following vector form:
\begin{equation}
\label{eq:s11-boltz6}
G_\infty\left(t\right)=G_\infty\left(0\right)+
\ell^{-1} \int_{0}^{t}V^{0}\left(\tau\right)
G_\infty \left(\tau\right)d\tau
\end{equation}
where $V^0 (\tau) G_\infty(t) = \left\{
V_{s+1}^0 (\tau) g_\infty^{(s+1)} (t)\right\}_{s\in\mathbb{N}}$.
We work in the Banach space 
$\left(\mathcal{X},\left\Vert\cdot\right\Vert\right)$
 of sequences $G_\infty(t)=
\left\{g_\infty^{(s)} (t)\right\}_{s\in\mathbb{N}}$
with each function
$g_\infty^{(s)} (t):[0,\infty)\times\mathbb{R}^{2ds}\rightarrow\mathbb{R}$
continuous and symmetric, and with norm
\begin{equation}
\label{eq:s11-norm1}
\left\Vert G_\infty \right\Vert =
\sup_{t \geq 0} \sup_{s\in\mathbb{N}} \sup_{Z_s \in \mathbb{R}^{2ds}}
e^{\mu (t) s} e^{\beta (t) \left( E_s (Z_s) + I_s (Z_s)\right)}
\left| g_\infty^{(s)} (t,Z_s)\right|
\end{equation}
Then we may define the operator 
$\mathcal{V}:\mathcal{X}\rightarrow\mathcal{X}$,
\begin{equation}
\label{eq:s11-VV}
\left( \mathcal{V} G_\infty \right)(t) =
\ell^{-1} \int_0^t V^0 (\tau) G_\infty (\tau) d\tau
\end{equation}
We may view the data $G_\infty (0)$ as an element of $\mathcal{X}$ which
simply does not depend on time. Then the Boltzmann
hierarchy may be written as
\begin{equation}
\label{eq:s11-boltz7}
G_\infty = G_\infty (0) + \mathcal{V} G_\infty
\end{equation}
Since $\left\Vert \mathcal{V} \right\Vert_{\textnormal{op}} \leq
C_d^\prime \ell^{-1} e^{-\mu_0} \beta_0^{-\frac{d+1}{2}}$, as soon
as $\ell^{-1} e^{-\mu_0} \beta_0^{-\frac{d+1}{2}}$ is sufficiently
small we can invert this equation to give
\begin{equation}
\label{eq:s11-boltz8}
G_\infty = \left(\mathcal{I}-\mathcal{V}\right)^{-1} G_\infty (0)
= \sum_{j=0}^\infty \mathcal{V}^j G_\infty (0)
\end{equation}
which is the unique solution of the Boltzmann hierarchy.
\qed
\end{proof}

\begin{remark}
We cannot apply the above argument, as written, in the case $d=2$;
this is due to the failure of integrability at large times.
However, this is a technical restriction since 
Theorem \ref{thm:s7-ip}
gives us \emph{a priori} bounds for the BBGKY hierarchy, independent
of $N$, for all $d\geq 2$.
Indeed, a slightly different argument from the one above
 actually implies that Theorem \ref{thm:s11-ip1} 
holds when $d=2$ (see \cite{IP1986}); note that the only
difference in their proof was that while they could not show that
$\sum_j \left\Vert \mathcal{V}\right\Vert_{\textnormal{op}}^j < \infty$,
they could at least prove that
$\sum_j \left\Vert \mathcal{V}^j G_\infty (0)\right\Vert < \infty$,
under the same assumptions. Alternatively, for chaotic data, we can use 
the solvability of the Boltzmann equation near vacuum 
(see \cite{CIP1994}), combined with the local well-posedness 
of the Boltzmann hierarchy; this line of reasoning would still be 
completely sufficient to reach the conclusions of
Theorem \ref{thm:s13-conv1} in the case $d=2$.
\end{remark}

To conclude this section, we quote a couple of
 local-in-time well-posedness results for the Boltzmann hierarchy. 
The proofs are well-known and similar to the proof presented above.

\begin{theorem}
\label{thm:s11-ip2}
Suppose $F_\infty (0) = 
\left\{ f_\infty^{(s)} (0)\right\}_{s\in\mathbb{N}}$ is a
sequence of functions such that each 
$f_\infty^{(s)}(0):\mathbb{R}^{2ds}\rightarrow\mathbb{R}$
is continuous and symmetric, and for some $\beta_0>0,\mu_0\in\mathbb{R}$,
\begin{equation}
\label{eq:s11-boltz9}
\sup_{s\in\mathbb{N}} \sup_{Z_s\in\mathbb{R}^{2ds}}
\left| f_\infty^{(s)} (0,Z_s)\right| 
e^{\beta_0 \left[ E_s(Z_s)+I_s(Z_s)\right]}e^{\mu_0 s} \leq 1
\end{equation}
Then there is a constant $C_d > 0$, depending only on $d$, such
that if $T_L < C_d \ell e^{\mu_0} \beta_0^{\frac{d+1}{2}}$, then
there exists a unique sequence
$F_\infty (t) = \left\{ f_\infty^{(s)} (t)\right\}_{s\in\mathbb{N}}$, 
with each
$f_\infty^{(s)}(t,Z_s):[0,T_L]\times\mathbb{R}^{2ds}\rightarrow\mathbb{R}$
continuous and symmetric, such that
\begin{equation}
\label{eq:s11-boltz10}
\sup_{0\leq t \leq T_L} \sup_{s\in\mathbb{N}} \sup_{Z_s\in\mathbb{R}^{2ds}}
\left| f_\infty^{(s)} (t,Z_s)\right| 
e^{\frac{1}{2} \beta_0
 \left[ E_s(Z_s)+I_s((X_s-V_s t,V_s))\right]}
e^{(\mu_0-1) s} \leq 2
\end{equation}
and for each $s\in\mathbb{N}$ there holds
\begin{equation}
\label{eq:s11-boltz11}
\left(\frac{\partial}{\partial t} + V_s\cdot \nabla_{X_s}\right)
f_\infty^{(s)} (t,Z_s) = \ell^{-1} C_{s+1}^0 f_\infty^{(s+1)} (t,Z_s)
\end{equation}
in the sense of distributions, for $0\leq t \leq T_L$.
\end{theorem}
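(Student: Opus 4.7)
The plan is to adapt the fixed point argument from the proof of Theorem \ref{thm:s11-ip1}, replacing the carefully tuned weights needed to guarantee global-in-time dispersive decay with simpler linearly decreasing weights that suffice on a short time interval. First, I would pass to the mild formulation (\ref{eq:s11-boltz5}) via Duhamel and the change of variables $g_\infty^{(s)}(t)=T_s^0(-t)f_\infty^{(s)}(t)$, so that the hierarchy takes the form $G_\infty = G_\infty(0)+\mathcal{V}G_\infty$ with $\mathcal{V}$ defined as in (\ref{eq:s11-VV}). The initial datum $G_\infty(0)$ is regarded as a time-independent element of a weighted Banach space $(\mathcal{X},\|\cdot\|)$ of the form (\ref{eq:s11-norm1}), but now with time-dependent weights chosen to suit the short-time setting rather than to enforce global-in-time integrability.

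Next, I would choose linearly decreasing weights $\beta(t)=\beta_0\bigl(1-\tfrac{t}{2T_L}\bigr)$ and $\mu(t)=\mu_0-\tfrac{t}{T_L}$ on $[0,T_L]$, so that $\beta(T_L)=\tfrac{1}{2}\beta_0$ and $\mu(T_L)=\mu_0-1$, matching the weights in the target inequality (\ref{eq:s11-boltz10}). The pointwise bound (\ref{eq:s11-V6}), which was established in the course of proving Theorem \ref{thm:s11-ip1} and does not require the time interval to be infinite, gives for $t\in[0,T_L]$
\begin{align*}
&\Bigl|e^{\mu(t)s}e^{\beta(t)(E_s(Z_s)+I_s(Z_s))}\ell^{-1}\!\int_0^t\! (V_{s+1}^0(\tau)g_\infty^{(s+1)}(\tau))(Z_s)\,d\tau\Bigr| \\
&\qquad \leq C_d\,\ell^{-1}e^{-(\mu_0-1)}\beta_0^{-d/2}\bigl(s^{1/2}E_s(Z_s)^{1/2}+s\beta_0^{-1/2}\bigr) \\
&\qquad\qquad \times \int_0^t e^{-(\beta(\tau)-\beta(t))E_s(Z_s)}e^{-(\mu(\tau)-\mu(t))s}\,d\tau \cdot \|G_\infty\|_{\mathcal{X}},
\end{align*}
where the $(1+\tau)^{-d}$ decay factor from (\ref{eq:s11-V6}) can simply be bounded by $1$. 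Since $\beta(\tau)-\beta(t)=\tfrac{\beta_0(t-\tau)}{2T_L}\geq 0$ and $\mu(\tau)-\mu(t)=\tfrac{t-\tau}{T_L}\geq 0$ for $0\leq\tau\leq t$, the time integral is bounded by $\int_0^t e^{-(t-\tau)[\beta_0 E_s/(2T_L)+s/T_L]}\,d\tau \leq T_L\bigl/\bigl(\tfrac{1}{2}\beta_0 E_s+s\bigr)$; combining this with the $(s^{1/2}E_s^{1/2}+s\beta_0^{-1/2})$ prefactor via the elementary inequality $s^{1/2}E_s^{1/2}\leq \beta_0^{-1/2}\bigl(\tfrac{1}{2}\beta_0 E_s+s\bigr)$ yields an operator-norm bound $\|\mathcal{V}\|_{\mathrm{op}}\leq C_d\,\ell^{-1}e^{-\mu_0}\beta_0^{-(d+1)/2}T_L$.

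Finally, choosing $T_L<C_d\,\ell e^{\mu_0}\beta_0^{(d+1)/2}$ with a sufficiently small dimensional constant ensures $\|\mathcal{V}\|_{\mathrm{op}}\leq\tfrac{1}{2}$, so $\mathcal{I}-\mathcal{V}$ is invertible via the Neumann series and furnishes the unique fixed point $G_\infty=\sum_{j\geq 0}\mathcal{V}^j G_\infty(0)\in\mathcal{X}$. Transporting back through $f_\infty^{(s)}(t)=T_s^0(t)g_\infty^{(s)}(t)$ produces the desired solution of the Boltzmann hierarchy satisfying (\ref{eq:s11-boltz10})--(\ref{eq:s11-boltz11}) in the sense of distributions, and continuity in $(t,Z_s)$ follows from the uniform convergence of the Neumann series together with continuity at each iterate. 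The key qualitative difference from Theorem \ref{thm:s11-ip1} is that we no longer need the dispersive factor $(1+\tau)^{-d}$ to render a time integral over $[0,\infty)$ convergent, so no dimensional restriction like $d\geq 3$ enters. The main obstacle I anticipate is purely one of bookkeeping: arranging the dimensional constants so that the smallness condition comes out in precisely the claimed form $T_L<C_d\,\ell e^{\mu_0}\beta_0^{(d+1)/2}$, which is nonetheless routine given the already-proven pointwise bound (\ref{eq:s11-V6}).
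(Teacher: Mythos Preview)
Your proposal is correct and is precisely the argument the paper has in mind: the paper does not supply a separate proof of Theorem~\ref{thm:s11-ip2} but simply states that the proof is ``well-known and similar to the proof presented above,'' i.e.\ the proof of Theorem~\ref{thm:s11-ip1}. Your choice of linearly decreasing weights $\beta(t),\mu(t)$ in place of the dispersion-adapted weights (\ref{eq:s11-weight1})--(\ref{eq:s11-weight2}), together with the reuse of the pointwise estimate (\ref{eq:s11-V6}) and the elementary bound on the time integral, is exactly the standard short-time adaptation.
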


\begin{theorem} 
\label{thm:s11-lwp}
Suppose $F_\infty(0) = 
\left\{ f_\infty^{(s)} (0)\right\}_{s\in\mathbb{N}}$ is a 
sequence of functions such that each 
$f_\infty^{(s)}(0):\mathbb{R}^{2ds}\rightarrow\mathbb{R}$
is continuous and symmetric, and for some $\beta_0>0,\mu_0\in\mathbb{R}$,
\begin{equation}
\label{eq:s11-boltz12}
\sup_{s\in\mathbb{N}} \sup_{Z_s\in\mathbb{R}^{2ds}}
\left| f_\infty^{(s)} (0,Z_s)\right| 
e^{\beta_0 E_s(Z_s) }e^{\mu_0 s} \leq 1
\end{equation}
Then there is a constant $C_d > 0$, depending only on $d$, such that
if $T_L < C_d \ell e^{\mu_0} \beta_0^{\frac{d+1}{2}}$, then
there exists a unique sequence
$F_\infty (t) = \left\{ f_\infty^{(s)} (t)\right\}_{s\in\mathbb{N}}$, 
with each
$f_\infty^{(s)}(t,Z_s):[0,T_L]\times\mathbb{R}^{2ds}\rightarrow\mathbb{R}$
continuous and symmetric, such that
\begin{equation}
\label{eq:s11-boltz13}
\sup_{0\leq t \leq T_L} \sup_{s\in\mathbb{N}} \sup_{Z_s\in\mathbb{R}^{2ds}}
\left| f_\infty^{(s)} (t,Z_s)\right| 
e^{\frac{1}{2} \beta_0 E_s(Z_s)} e^{(\mu_0-1) s} \leq 2
\end{equation}
and for each $s\in\mathbb{N}$ there holds
\begin{equation}
\label{eq:s11-boltz14}
\left(\frac{\partial}{\partial t} + V_s\cdot \nabla_{X_s}\right)
f_\infty^{(s)} (t,Z_s) = \ell^{-1} C_{s+1}^0 f_\infty^{(s+1)} (t,Z_s)
\end{equation}
in the sense of distributions, for $0\leq t \leq T_L$.
\end{theorem}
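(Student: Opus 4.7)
The plan is to run essentially the same contraction mapping argument as in the proof of Theorem \ref{thm:s11-ip1}, but with purely velocity-weighted norms and with short-time smallness replacing the dispersive gain in $\tau$. First I would change to transported coordinates $g_\infty^{(s)}(t) = T_s^0(-t) f_\infty^{(s)}(t)$, so that the Boltzmann hierarchy is equivalent (for continuous sequences satisfying the stated estimates) to the mild equation
\begin{equation*}
g_\infty^{(s)}(t) = g_\infty^{(s)}(0) + \ell^{-1} \int_0^t V_{s+1}^0(\tau) g_\infty^{(s+1)}(\tau) \, d\tau,
\end{equation*}
with $V_{s+1}^0(\tau) = T_s^0(-\tau) C_{s+1}^0 T_{s+1}^0(\tau)$ given explicitly by (\ref{eq:s11-V3})--(\ref{eq:s11-V4}).

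The main step is to prove a pointwise bound on $V_{s+1}^0(\tau)$ in weighted $L^\infty$ with weight $e^{\mu s} e^{\beta E_s(Z_s)}$ (no $I_s$ factor). Using the Maxwellian bound on the velocity integral together with the fact that the change of variables to $(v_i^*, v_{s+1}^*)$ preserves $|v_i|^2 + |v_{s+1}|^2$, one obtains, for $0 < \beta' < \beta$ and $\mu' < \mu$, the estimate
\begin{equation*}
\bigl| e^{\mu' s} e^{\beta' E_s(Z_s)} V_{s+1}^0(\tau) g^{(s+1)}(Z_s) \bigr| \leq C_d e^{-\mu} \beta^{-\frac{d}{2}} \bigl( s^{1/2} E_s(Z_s)^{1/2} + s \beta^{-1/2} \bigr) e^{-(\beta-\beta') E_s(Z_s)} e^{-(\mu-\mu') s} \| g^{(s+1)} \|_{\beta,\mu}.
\end{equation*}
Crucially, no factor of $(1+\tau)^{-d}$ appears here (compared to (\ref{eq:s11-V6})), since we have given up the spatial weight that made the dispersive inequality (\ref{eq:s11-decay}) usable; the short-time restriction will compensate for this loss.

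Next I would introduce linearly decreasing weights $\beta(t) = \beta_0 (1 - t/(2 T_L))$ and $\mu(t) = \mu_0 - t/T_L$ on $[0, T_L]$, so that $\beta(T_L) = \beta_0/2$ and $\mu(T_L) = \mu_0 - 1$, and work in the Banach space $\mathcal{X}$ of continuous symmetric sequences with norm
\begin{equation*}
\| G \| = \sup_{0 \leq t \leq T_L} \sup_{s \in \mathbb{N}} \sup_{Z_s \in \mathbb{R}^{2ds}} e^{\mu(t) s} e^{\beta(t) E_s(Z_s)} | g^{(s)}(t, Z_s) |.
\end{equation*}
Applying the pointwise bound with $\beta = \beta(\tau)$, $\beta' = \beta(t)$, $\mu = \mu(\tau)$, $\mu' = \mu(t)$ and using that $(\beta(\tau) - \beta(t)) E_s + (\mu(\tau) - \mu(t)) s$ dominates the polynomial factor $s^{1/2} E_s^{1/2} + s \beta^{-1/2}$ (an elementary Young-type estimate with a uniform constant), the fixed point operator $(\mathcal{V} G)(t) = \ell^{-1} \int_0^t V^0(\tau) G(\tau) d\tau$ satisfies $\| \mathcal{V} \|_{\mathrm{op}} \leq C_d T_L \ell^{-1} e^{-\mu_0} \beta_0^{-(d+1)/2}$. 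Choosing the constant in the hypothesis $T_L < C_d \ell e^{\mu_0} \beta_0^{(d+1)/2}$ so that this operator norm is at most $1/2$, the Neumann series $G_\infty = \sum_{j \geq 0} \mathcal{V}^j G_\infty(0)$ converges, gives the unique solution with $\| G_\infty \| \leq 2$, and yields (\ref{eq:s11-boltz13}) after transporting back. The mild form then implies the distributional equation (\ref{eq:s11-boltz14}). The only real obstacle is the bookkeeping for absorbing $s^{1/2} E_s^{1/2}$ and $s$ into the exponential decreases of $\beta$ and $\mu$ with a constant independent of $s$ and $Z_s$; this is routine but is where the scaling $T_L \sim \ell e^{\mu_0} \beta_0^{(d+1)/2}$ arises.
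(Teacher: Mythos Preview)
Your proposal is correct and matches what the paper intends: the paper does not write out a proof of this theorem, stating only that ``the proofs are well-known and similar to the proof presented above'' (i.e., the proof of Theorem~\ref{thm:s11-ip1}), and your argument is precisely that adaptation --- dropping the $I_s$ weight and the dispersive factor $(1+\tau)^{-d}$, using linearly decreasing $\beta(t),\mu(t)$, and trading global-in-time integrability for a short-time smallness factor $T_L$ in the operator norm of $\mathcal{V}$.

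One small clarification on the ``Young-type'' step: the way the polynomial $s^{1/2}E_s^{1/2} + s\beta^{-1/2}$ is absorbed is not by comparing it directly to $(\beta(\tau)-\beta(t))E_s + (\mu(\tau)-\mu(t))s$, but by first integrating $\int_0^t e^{-(t-\tau)T_L^{-1}(\tfrac{\beta_0}{2}E_s+s)}\,d\tau \leq T_L/(\tfrac{\beta_0}{2}E_s+s)$, and then using $s^{1/2}E_s^{1/2} + s\beta_0^{-1/2} \leq C\beta_0^{-1/2}(s + \tfrac{\beta_0}{2}E_s)$ to cancel the denominator. This is exactly parallel to (\ref{eq:s11-tau}) in the global proof, with $T_L$ replacing $\sum_k r_k^{-1}(1+k)^{-d}$, and it yields your stated bound $\|\mathcal{V}\|_{\mathrm{op}} \leq C_d T_L \ell^{-1} e^{-\mu_0}\beta_0^{-(d+1)/2}$ cleanly.
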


\section{Construction of the initial data}
\label{sec:12}

We introduce the $N$-particle density $f_N$ 
\begin{equation}
\label{eq:s12-data}
f_N ( 0,Z_N) = \mathcal{Z}_N^{-1}
\mathbf{1}_{Z_N \in \mathcal{D}_N} f_0^{\otimes N} (Z_N)
\end{equation}
where $\mathcal{Z}_N$ is the partition function,
\begin{equation}
\label{eq:s12-ZZ1}
\mathcal{Z}_N = \int_{\mathbb{R}^{2dN}}
\mathbf{1}_{Z_N \in \mathcal{D}_N} f_0^{\otimes N} (Z_N) dZ_N
\end{equation}
We also use the notation $\mathcal{Z}_s$ for $1\leq s \leq N$
(note carefully the implicit dependence on $\varepsilon$),
\begin{equation}
\label{eq:s12-ZZ2}
\mathcal{Z}_s = \int_{\mathbb{R}^{2ds}}
\mathbf{1}_{Z_s \in \mathcal{D}_s} f_0^{\otimes s} (Z_s) dZ_s
\end{equation}
The proofs in this section are almost identical to those in the
literature; we include them for the sake of completeness. \cite{GSRT2014}

\begin{lemma}
\label{lemma:s12-ZZ3}
For $1\leq s < N$, and any probability density $f_0 (x,v)$ on
$\mathbb{R}^{2d}$ with
$ f_0 \in L^\infty_x L^1_v$, in the
Boltzmann-Grad scaling $N\varepsilon^{d-1}=\ell^{-1}$ there holds
\begin{equation}
\label{eq:s12-ZZ4}
\mathcal{Z}_{s+1} \geq \mathcal{Z}_s \left( 1 - \ell^{-1} |B_1^d| 
\left\Vert f_0 \right\Vert_{L^\infty_x L^1_v} \varepsilon \right)
\end{equation} 
where $B_1^d$ is the unit ball in $\mathbb{R}^d$ and $\mathcal{Z}_s$
is given by (\ref{eq:s12-ZZ2}).
\end{lemma}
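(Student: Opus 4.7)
The plan is to write $\mathcal{Z}_{s+1}$ as an integral against the measure induced by $\mathcal{Z}_s$, and then bound the conditional probability that the $(s+1)$st particle lands in the excluded region around the previous particles. First I would factor the indicator function using the identity
\begin{equation*}
\mathbf{1}_{Z_{s+1}\in\mathcal{D}_{s+1}} = \mathbf{1}_{Z_s\in\mathcal{D}_s}\cdot \prod_{i=1}^s \mathbf{1}_{|x_{s+1}-x_i|>\varepsilon},
\end{equation*}
so that by Fubini
\begin{equation*}
\mathcal{Z}_{s+1} = \int \mathbf{1}_{Z_s\in\mathcal{D}_s} f_0^{\otimes s}(Z_s) \left[\int f_0(z_{s+1})\prod_{i=1}^s \mathbf{1}_{|x_{s+1}-x_i|>\varepsilon}\, dz_{s+1}\right] dZ_s.
\end{equation*}

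Next I would use that $f_0$ is a probability density and apply the union bound to rewrite the inner bracket as $1$ minus a remainder controlled by
\begin{equation*}
\sum_{i=1}^s \int f_0(x_{s+1},v_{s+1})\, \mathbf{1}_{|x_{s+1}-x_i|\leq\varepsilon}\, dx_{s+1}\, dv_{s+1}.
\end{equation*}
Performing the $v_{s+1}$ integral first produces $(\int f_0(x_{s+1},v_{s+1})\, dv_{s+1})$, whose $L^\infty_{x_{s+1}}$ norm is exactly $\|f_0\|_{L^\infty_x L^1_v}$. Integrating this bound over a ball of radius $\varepsilon$ in $x_{s+1}$ produces a factor $|B_1^d|\varepsilon^d$, so each summand is at most $|B_1^d|\|f_0\|_{L^\infty_x L^1_v}\varepsilon^d$, and the whole bracket is at least $1 - s\,|B_1^d|\|f_0\|_{L^\infty_x L^1_v}\varepsilon^d$.

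Finally I would use the Boltzmann--Grad scaling $N\varepsilon^{d-1}=\ell^{-1}$: since $s\leq N-1<N$, we have $s\varepsilon^d < \ell^{-1}\varepsilon$, and hence
\begin{equation*}
\mathcal{Z}_{s+1} \geq \mathcal{Z}_s\Bigl(1 - \ell^{-1}|B_1^d|\|f_0\|_{L^\infty_x L^1_v}\varepsilon\Bigr),
\end{equation*}
which is the desired estimate. This argument is entirely routine; there is no real obstacle — the only subtlety is keeping track of the $L^\infty_x L^1_v$ norm in the pointwise bound on the pair-overlap contribution, and invoking the scaling at the last step to trade $\varepsilon^d$ for $\ell^{-1}\varepsilon$.
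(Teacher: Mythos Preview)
Your proposal is correct and follows essentially the same argument as the paper: factor the indicator, apply Fubini, use the union bound $\prod \geq 1 - \sum$, estimate each overlap term by $|B_1^d|\|f_0\|_{L^\infty_x L^1_v}\varepsilon^d$, and then invoke $s\leq N$ together with $N\varepsilon^{d-1}=\ell^{-1}$.
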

\begin{proof}
For $1 \leq s < N$, we have
\begin{equation*}
\begin{aligned}
\mathcal{Z}_{s+1} 
& = \int_{\mathbb{R}^{2d(s+1)}}
\mathbf{1}_{Z_{s+1} \in \mathcal{D}_{s+1}}
f_0^{\otimes (s+1)} (Z_{s+1}) dZ_{s+1} \\
& = \int_{\mathbb{R}^{2d(s+1)}}
\mathbf{1}_{Z_{s} \in \mathcal{D}_s}
\left( \prod_{i=1}^s \mathbf{1}_{|x_i - x_{s+1}|>\varepsilon} \right)
f_0^{\otimes (s+1)} (Z_{s+1}) dZ_{s+1} \\
& = \int_{\mathbb{R}^{2ds}}
\mathbf{1}_{Z_{s} \in \mathcal{D}_s} 
 \left[\int_{\mathbb{R}^{2d}}
f_0 (z_{s+1})  \left(\prod_{i=1}^s 
\mathbf{1}_{|x_i - x_{s+1}|>\varepsilon}\right) dz_{s+1} \right]
f_0^{\otimes s} (Z_s) dZ_s \\
\end{aligned}
\end{equation*}
We bound the quantity in brackets from below, uniformly
in $Z_s$.
\begin{equation*}
\begin{aligned}
& \int_{ \mathbb{R}^{2d}} 
f_0 (z_{s+1})\left(\prod_{i=1}^s 
\mathbf{1}_{|x_i - x_{s+1}|>\varepsilon}\right) dz_{s+1}  \\
&\qquad \qquad \geq  \int_{\mathbb{R}^{2d}}
f_0 (z_{s+1}) \left(1-\sum_{i=1}^s 
\mathbf{1}_{|x_i - x_{s+1}|\leq \varepsilon}\right) dz_{s+1}  \\
& \qquad \qquad \geq  1 -
 s  \varepsilon^d |B_1^d|
 \left\Vert f_0 \right\Vert_{L^\infty_x L^1_v} \\
& \qquad \qquad \geq  1 - N \varepsilon^{d-1} |B_1^d|
 \left\Vert f_0 \right\Vert_{L^\infty_x L^1_v} \varepsilon \\
& \qquad \qquad = 1 - \ell^{-1} |B_1^d|
 \left\Vert f_0 \right\Vert_{L^\infty_x L^1_v} \varepsilon \\
\end{aligned}
\end{equation*}
We have used the Boltzmann-Grad scaling
$N \varepsilon^{d-1} =\ell^{-1}$
in the last step. Finally we are able to conclude, for $1 \leq s < N$,
\begin{equation}
\label{eq:s12-ZZ5}
\mathcal{Z}_{s+1} \geq \mathcal{Z}_s \left( 1 - \ell^{-1} 
|B_1^d|\left\Vert f_0 \right\Vert_{L^\infty_x L^1_v} \varepsilon \right)
\end{equation} 
as claimed.
\qed
\end{proof}

\begin{lemma}
\label{lemma:s12-ZZ6}
For $1\leq s < N$, and any probability density $f_0 (x,v)$ on
$\mathbb{R}^{2d}$ with $f_0 \in L^\infty_x L^1_v$, in the
Boltzmann-Grad scaling $N\varepsilon^{d-1}=\ell^{-1}$ there holds
\begin{equation}
\label{eq:s12-ZZ7}
1 \leq  \mathcal{Z}_N^{-1} \mathcal{Z}_{N-s} \leq
\left( 1 - \ell^{-1} |B_1^d|
\left\Vert f_0 \right\Vert_{L^\infty_x L^1_v} \varepsilon \right)^{-s}
\end{equation}
where $B_1^d$ is the unit ball in $\mathbb{R}^d$ and $\mathcal{Z}_s$
is given by (\ref{eq:s12-ZZ2}).
\end{lemma}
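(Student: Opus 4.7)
The plan is to deduce both inequalities by iterating the one-step comparison already furnished by Lemma \ref{lemma:s12-ZZ3}, together with the trivial observation that $\mathcal{Z}_s$ is monotone decreasing in $s$.

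First I would establish the lower bound $\mathcal{Z}_N^{-1}\mathcal{Z}_{N-s}\geq 1$. This amounts to showing that the sequence $\{\mathcal{Z}_s\}$ is non-increasing. Indeed, writing $\mathcal{Z}_{s+1}$ as in the proof of Lemma \ref{lemma:s12-ZZ3} we have
\begin{equation*}
\mathcal{Z}_{s+1}=\int_{\mathbb{R}^{2ds}}\mathbf{1}_{Z_s\in\mathcal{D}_s}f_0^{\otimes s}(Z_s)\left[\int_{\mathbb{R}^{2d}}f_0(z_{s+1})\prod_{i=1}^{s}\mathbf{1}_{|x_i-x_{s+1}|>\varepsilon}\,dz_{s+1}\right]dZ_s,
\end{equation*}
and the bracketed quantity is bounded above by $\int f_0(z_{s+1})\,dz_{s+1}=1$ since $f_0$ is a probability density, giving $\mathcal{Z}_{s+1}\leq \mathcal{Z}_s$. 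Iterating from $N-s$ up to $N$ yields $\mathcal{Z}_N\leq \mathcal{Z}_{N-s}$, hence the lower bound.

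For the upper bound, I would iterate Lemma \ref{lemma:s12-ZZ3}. Applying the inequality $\mathcal{Z}_{k+1}\geq \mathcal{Z}_k\bigl(1-\ell^{-1}|B_1^d|\,\|f_0\|_{L^\infty_xL^1_v}\,\varepsilon\bigr)$ successively for $k=N-s,\,N-s+1,\dots,N-1$, one obtains
\begin{equation*}
\mathcal{Z}_N\;\geq\;\mathcal{Z}_{N-s}\bigl(1-\ell^{-1}|B_1^d|\,\|f_0\|_{L^\infty_xL^1_v}\,\varepsilon\bigr)^{s}.
\end{equation*}
Dividing through (the factor on the right is positive for $\varepsilon$ small, which is implicit in the Boltzmann-Grad scaling regime we care about) gives the claimed bound
\begin{equation*}
\mathcal{Z}_N^{-1}\mathcal{Z}_{N-s}\;\leq\;\bigl(1-\ell^{-1}|B_1^d|\,\|f_0\|_{L^\infty_xL^1_v}\,\varepsilon\bigr)^{-s}.
\end{equation*}

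There is no real obstacle here: the lemma is a purely combinatorial consequence of the previous one, and the only mildly delicate point is noting that the base factor $1-\ell^{-1}|B_1^d|\,\|f_0\|_{L^\infty_xL^1_v}\,\varepsilon$ is positive so that the inversion in the last step is justified; this is automatic as $\varepsilon\to 0$ along the Boltzmann-Grad scaling with $\ell$ fixed.
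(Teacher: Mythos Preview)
Your proof is correct and essentially identical to the paper's. The only cosmetic difference is in the lower bound: the paper observes $\mathcal{Z}_N \leq \mathcal{Z}_s \mathcal{Z}_{N-s}$ (from $\mathbf{1}_{Z_N \in \mathcal{D}_N} \leq \mathbf{1}_{Z_s \in \mathcal{D}_s}\mathbf{1}_{Z_{(s+1):N} \in \mathcal{D}_{N-s}}$) together with $\mathcal{Z}_s \leq 1$, whereas you iterate the one-step monotonicity $\mathcal{Z}_{s+1} \leq \mathcal{Z}_s$; these are the same fact.
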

\begin{proof}
For the first inequality, we note that clearly
$\mathcal{Z}_N \leq \mathcal{Z}_s \mathcal{Z}_{N-s}$, then
use the fact that $\mathcal{Z}_s \leq 1$.
 The second inequality follows directly
from Lemma \ref{lemma:s12-ZZ3} by induction on $s$.
\qed
\end{proof}

\begin{lemma}
\label{lemma:s12-ZZ8}
For $1\leq s \leq N$, and any probability density $f_0 (x,v)$ on
$\mathbb{R}^{2d}$ with $f_0 \in L^\infty_x L^1_v$, in the
Boltzmann-Grad scaling $N\varepsilon^{d-1}=\ell^{-1}$ there holds
\begin{equation}
\label{eq:s12-f0-1}
f_N^{(s)} (0,Z_s) \leq
\mathbf{1}_{Z_s \in \mathcal{D}_s} f_0^{\otimes s} (Z_s)
\left( 1 - \ell^{-1} |B_1^d|
\left\Vert f_0 \right\Vert_{L^\infty_x L^1_v} \varepsilon \right)^{-s} 
\end{equation}
where $B_1^d$ is the unit ball in $\mathbb{R}^d$ and $f_N^{(s)}(0)$ is
the marginal of the data $f_N (0)$ given by (\ref{eq:s12-data}).
\end{lemma}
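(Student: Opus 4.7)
The plan is to bound the marginal $f_N^{(s)}(0,Z_s)$ directly from its definition as an integral of $f_N(0,Z_N)$, by factoring the hard-sphere exclusion indicator and then invoking the previous lemma to control the ratio of partition functions.

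First I would write
\begin{equation*}
f_N^{(s)}(0,Z_s) = \mathcal{Z}_N^{-1} f_0^{\otimes s}(Z_s) \int_{\mathbb{R}^{2d(N-s)}}
\mathbf{1}_{Z_N \in \mathcal{D}_N} f_0^{\otimes (N-s)}(Z_{(s+1):N}) \, dZ_{(s+1):N}
\end{equation*}
and then factor the indicator according to the three types of excluded pairs:
\begin{equation*}
\mathbf{1}_{Z_N \in \mathcal{D}_N} = \mathbf{1}_{Z_s \in \mathcal{D}_s} \cdot \mathbf{1}_{Z_{(s+1):N} \in \mathcal{D}_{N-s}} \cdot \prod_{1 \leq i \leq s < j \leq N} \mathbf{1}_{|x_i - x_j| > \varepsilon}
\end{equation*}
The key observation is that the cross-pair product in the last factor is bounded above by $1$, so dropping it yields an inequality going in the right direction.

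After this step, the remaining integral over $Z_{(s+1):N}$ is precisely $\mathcal{Z}_{N-s}$ as defined in (\ref{eq:s12-ZZ2}), so that
\begin{equation*}
f_N^{(s)}(0,Z_s) \leq \mathbf{1}_{Z_s \in \mathcal{D}_s} f_0^{\otimes s}(Z_s) \cdot \mathcal{Z}_N^{-1} \mathcal{Z}_{N-s}
\end{equation*}
To finish, I would apply Lemma \ref{lemma:s12-ZZ6} (which was set up precisely to give a one-sided bound on $\mathcal{Z}_N^{-1}\mathcal{Z}_{N-s}$) to conclude with the stated constant $(1 - \ell^{-1}|B_1^d|\|f_0\|_{L^\infty_x L^1_v}\varepsilon)^{-s}$.

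There is no real obstacle here: once the factorization of the exclusion indicator is observed, the argument reduces to two lines plus the invocation of Lemma \ref{lemma:s12-ZZ6}. The only mild care needed is to make sure the cross-pair indicators are dropped rather than retained, since keeping them would give a lower bound on $\mathcal{Z}_{N-s}$-type quantities but what we want is an upper bound on $f_N^{(s)}$. The Boltzmann-Grad scaling enters only implicitly through Lemma \ref{lemma:s12-ZZ6}, so no additional scaling computation is needed at this stage.
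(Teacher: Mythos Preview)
Your proposal is correct and follows exactly the same approach as the paper: factor the exclusion indicator, drop the cross-pair terms to get an upper bound, recognize the remaining integral as $\mathcal{Z}_{N-s}$, and invoke Lemma~\ref{lemma:s12-ZZ6}. The only difference is cosmetic---you spell out the three-factor decomposition of $\mathbf{1}_{Z_N \in \mathcal{D}_N}$ explicitly, whereas the paper writes the inequality in one step.
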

\begin{proof}
We proceed by computation.
\begin{equation*}
\begin{aligned}
f_N^{(s)} (0,Z_s) & = \int_{\mathbb{R}^{2d(N-s)}}
\mathcal{Z}_N^{-1} \mathbf{1}_{Z_N \in \mathcal{D}_N}
f_0^{\otimes N} (0,Z_N) dZ_{(s+1):N} \\
& \leq \int_{\mathbb{R}^{2d(N-s)}}
\mathcal{Z}_N^{-1}
\mathbf{1}_{Z_s \in \mathcal{D}_s}
\mathbf{1}_{Z_{(s+1):N} \in \mathcal{D}_{N-s}}
f_0^{\otimes N} (0,Z_N) dZ_{(s+1):N} \\
& = \mathcal{Z}_N^{-1} \mathcal{Z}_{N-s} 
\mathbf{1}_{Z_s \in \mathcal{D}_s} f_0^{\otimes s} (Z_s)
\end{aligned}
\end{equation*}
Then the result follows from Lemma \ref{lemma:s12-ZZ6}.
\qed
\end{proof}

\begin{lemma}
\label{lemma:s12-ZZ9}
For $1\leq s \leq N$, and any probability density $f_0 (x,v)$ on
$\mathbb{R}^{2d}$ with $f_0 \in L^\infty_x L^1_v$, in the
Boltzmann-Grad scaling $N\varepsilon^{d-1}=\ell^{-1}$ there holds
\begin{equation}
\label{eq:s12-f0-2}
f_N^{(s)} (0,Z_s) \geq
\mathbf{1}_{Z_s \in \mathcal{D}_s}
f_0^{\otimes s} (Z_s) \left( 1 - (s+1) \ell^{-1} |B_1^d|
\left\Vert f_0 \right\Vert_{L^\infty_x L^1_v} \varepsilon \right)
\end{equation}
where $B_1^d$ is the unit ball in $\mathbb{R}^d$ and $f_N^{(s)}(0)$ is
the marginal of the data $f_N (0)$ given by (\ref{eq:s12-data}).
\end{lemma}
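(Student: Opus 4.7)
The plan is to mirror the proof of Lemma \ref{lemma:s12-ZZ8}, but replace the trivial relaxation of the cross-overlap indicators with a first-order inclusion--exclusion lower bound. Fix $Z_s \in \mathcal{D}_s$ (the inequality is trivial otherwise, since $f_N^{(s)} \geq 0$), and write $f_N^{(s)}(0,Z_s) = \mathcal{Z}_N^{-1} f_0^{\otimes s}(Z_s)\, I(Z_s)$, where $I(Z_s)$ is the integral of $\mathbf{1}_{Z_{(s+1):N}\in\mathcal{D}_{N-s}} \prod_{1\leq i \leq s < j \leq N}\mathbf{1}_{|x_i-x_j|>\varepsilon}$ against $f_0^{\otimes(N-s)}\,dZ_{(s+1):N}$. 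The key step is to apply the elementary inequality $\prod_\alpha (1-a_\alpha) \geq 1 - \sum_\alpha a_\alpha$, valid for $a_\alpha \in [0,1]$, to the $s(N-s)$ cross-overlap indicators, which yields
\begin{equation*}
I(Z_s) \geq \mathcal{Z}_{N-s} - \sum_{1\leq i \leq s < j \leq N} A_{i,j}(Z_s),\quad A_{i,j}(Z_s) = \int \mathbf{1}_{Z_{(s+1):N}\in\mathcal{D}_{N-s}} \mathbf{1}_{|x_i-x_j|\leq \varepsilon}\, f_0^{\otimes(N-s)}\, dZ_{(s+1):N}.
\end{equation*}

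To control each $A_{i,j}(Z_s)$, I would pull out the $z_j$ integration after relaxing $\mathbf{1}_{Z_{(s+1):N}\in\mathcal{D}_{N-s}} \leq \mathbf{1}_{Z_{(s+1):N}^{(j)}\in\mathcal{D}_{N-s-1}}$ so that the constraints on the remaining coordinates decouple from $z_j$. The inner integral contributes $\int \mathbf{1}_{|x_i-x_j|\leq \varepsilon}\, f_0(z_j)\, dz_j \leq |B_1^d|\,\varepsilon^d\, \|f_0\|_{L^\infty_x L^1_v}$, while the outer integral is exactly $\mathcal{Z}_{N-s-1}$. Summing the $s(N-s)$ pairs and invoking $(N-s)\varepsilon^{d-1} \leq N\varepsilon^{d-1} = \ell^{-1}$, I obtain
\begin{equation*}
\sum_{1\leq i \leq s < j \leq N} A_{i,j}(Z_s) \leq s\,\ell^{-1}\,|B_1^d|\,\|f_0\|_{L^\infty_x L^1_v}\, \varepsilon \cdot \mathcal{Z}_{N-s-1}.
\end{equation*}

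In the final step I would divide by $\mathcal{Z}_N$ and apply both preceding lemmas: Lemma \ref{lemma:s12-ZZ6} gives $\mathcal{Z}_{N-s}/\mathcal{Z}_N \geq 1$, while Lemma \ref{lemma:s12-ZZ3} supplies $\mathcal{Z}_{N-s-1} \leq (1-\alpha)^{-1}\mathcal{Z}_{N-s}$ for $\alpha = \ell^{-1}|B_1^d|\|f_0\|_{L^\infty_x L^1_v}\varepsilon$. Assembling these gives $f_N^{(s)}(0,Z_s) \geq f_0^{\otimes s}(Z_s)\left(1 - \frac{s\alpha}{1-\alpha}\right)$, and the advertised bound follows from the algebraic observation $\frac{s\alpha}{1-\alpha} \leq (s+1)\alpha$ whenever $(s+1)\alpha \leq 1$; the conclusion is trivially true in the complementary regime $(s+1)\alpha > 1$, since the right-hand side of the lemma is then nonpositive. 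I expect the main obstacle to be the bookkeeping needed to retain the factor $\mathcal{Z}_{N-s-1}$ in the error term rather than the cruder $1$: otherwise one would be forced to bound $\mathcal{Z}_N^{-1}$ directly, but the Boltzmann--Grad scaling makes $\mathcal{Z}_N$ potentially as small as $\exp(-c\,\varepsilon^{2-d})$ for $d\geq 3$, which would destroy the $O(\varepsilon)$ error rate.
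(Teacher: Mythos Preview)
Your proposal is correct and follows essentially the same route as the paper: the same decomposition of $f_N^{(s)}(0,Z_s)$, the same first-order inclusion--exclusion lower bound on the cross-overlap product, the same estimate $A_{i,j}\leq \mathcal{Z}_{N-s-1}\,|B_1^d|\,\varepsilon^d\,\|f_0\|_{L^\infty_x L^1_v}$, and the same appeal to Lemmas \ref{lemma:s12-ZZ3} and \ref{lemma:s12-ZZ6} to finish. Your final algebraic step $\frac{s\alpha}{1-\alpha}\leq (s+1)\alpha$ (valid precisely when $(s+1)\alpha\leq 1$, with the complementary case trivial) is slightly more explicit than the paper's terse ``use Lemma \ref{lemma:s12-ZZ3}, Lemma \ref{lemma:s12-ZZ6}, and the Boltzmann--Grad scaling to conclude,'' but the content is identical.
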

\begin{proof}
We proceed by computation.
\begin{equation*}
\begin{aligned}
&f_N^{(s)} (0,Z_s) =
\int_{\mathbb{R}^{2d(N-s)}} \mathcal{Z}_N^{-1}
\mathbf{1}_{Z_N \in \mathcal{D}_N} f_0^{\otimes N} (Z_N) dZ_{(s+1):N} \\
& = \int_{\mathbb{R}^{2d(N-s)}} \mathcal{Z}_N^{-1}
\mathbf{1}_{Z_s \in \mathcal{D}_s}
\mathbf{1}_{Z_{(s+1):N} \in \mathcal{D}_{N-s} }  \times \\
& \qquad \qquad \qquad
 \times \left( \prod_{1\leq i \leq s} \prod_{s < j \leq N}
\mathbf{1}_{|x_i-x_j|>\varepsilon} \right) f_0^{\otimes N}(Z_N) dZ_{(s+1):N}\\
& = \mathcal{Z}_N^{-1} \mathbf{1}_{Z_s \in \mathcal{D}_s}
f_0^{\otimes s} (Z_s) \int_{\mathbb{R}^{2d(N-s)}}
\mathbf{1}_{Z_{(s+1):N}\in\mathcal{D}_{N-s}} \times \\
& \qquad \qquad \qquad 
\times \left(\prod_{1\leq i \leq s}\prod_{s < j \leq N}
\mathbf{1}_{|x_i-x_j|>\varepsilon}\right)
f_0^{\otimes (N-s)} (Z_{(s+1):N}) dZ_{(s+1):N}
\end{aligned}
\end{equation*}
Now observe that
\begin{equation}
\label{eq:s12-exclusion}
\prod_{1\leq i \leq s} \prod_{s < j \leq N}
\mathbf{1}_{|x_i-x_j|>\varepsilon} \geq
1 - \sum_{1\leq i \leq s} \sum_{s < j \leq N} 
\mathbf{1}_{|x_i-x_j| \leq \varepsilon}
\end{equation}
Then again, for $1 \leq i \leq s$, $s < j \leq N$, we have
\begin{equation}
\label{eq:s12-f0-3}
\begin{aligned}
\int_{\mathbb{R}^{2d(N-s)}}
\mathbf{1}_{Z_{(s+1):N}\in\mathcal{D}_{N-s}}
\mathbf{1}_{|x_i-x_j|\leq \varepsilon} 
& f_0^{\otimes (N-s)} (Z_{(s+1):N}) dZ_{(s+1):N} \leq \\
& \qquad  \leq  \mathcal{Z}_{N-s-1} \varepsilon^d
|B_1^d| \left\Vert f_0 \right\Vert_{L^\infty_x L^1_v}
\end{aligned}
\end{equation}
Therefore,
\begin{equation}
\label{eq:s12-f0-4}
\begin{aligned}
f_N^{(s)} (0,Z_s) \geq
\mathcal{Z}_N^{-1} & \mathbf{1}_{Z_s \in \mathcal{D}_s}
 f_0^{\otimes s}(Z_s) \times \\
& \times \left[ \mathcal{Z}_{N-s} - 
s (N-s) \mathcal{Z}_{N-s-1} \varepsilon^d
|B_1^d| \left\Vert f_0 \right\Vert_{L^\infty_x L^1_v} \right]
\end{aligned}
\end{equation}
We use Lemma \ref{lemma:s12-ZZ3},
 Lemma \ref{lemma:s12-ZZ6}, and the Boltzmann-Grad scaling
$N\varepsilon^{d-1} = \ell^{-1}$ to conclude
\begin{equation}
\label{eq:s12-f0-5}
f_N^{(s)} (0,Z_s) \geq
\mathbf{1}_{Z_s \in \mathcal{D}_s}
f_0^{\otimes s} (Z_s) \left( 1 - (s+1) \ell^{-1} |B_1^d|
\left\Vert f_0 \right\Vert_{L^\infty_x L^1_v} \varepsilon \right)
\end{equation}
\qed
\end{proof}

\begin{corollary}
\label{cor:s12-f0-6}
For any probability density $f_0 (x,v)>0$
on $\mathbb{R}^{2d}$ with $f_0 \in L^\infty_x L^1_v$,
 in the Boltzmann-Grad scaling $N\varepsilon^{d-1}=\ell^{-1}$,
if $N$ is sufficiently large, then simultaneously
for all $1\leq s \leq N$ there holds
\begin{equation}
\label{eq:s12-f0-7}
 \left\Vert \mathbf{1}_{Z_s \in \mathcal{D}_s} \left(
\frac{f_N^{(s)} (0,Z_s)}{f_0^{\otimes s}(Z_s)} - 1 \right)
\right\Vert_{L^\infty_{Z_s}} \leq
\left[\left(1-\ell^{-1} |B_1^d|
\left\Vert f_0 \right\Vert_{L^\infty_x L^1_v} \varepsilon\right)^{-(s+1)}-1
\right]
\end{equation}
where $f_N^{(s)}(0)$ is the marginal of the data $f_N (0)$ given
by (\ref{eq:s12-data}).
\end{corollary}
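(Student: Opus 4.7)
The plan is to deduce the corollary directly from the two-sided control on $f_N^{(s)}(0,Z_s)$ in terms of $f_0^{\otimes s}(Z_s)$ provided by Lemmas \ref{lemma:s12-ZZ8} and \ref{lemma:s12-ZZ9}, combined with an elementary convexity/Bernoulli estimate to match the symmetric bound stated in the conclusion. The hypothesis $f_0 > 0$ is used only so that division by $f_0^{\otimes s}(Z_s)$ is legal pointwise on $\mathcal{D}_s$, and the requirement ``$N$ sufficiently large'' is used only to guarantee that the small parameter $a := \ell^{-1}|B_1^d|\|f_0\|_{L^\infty_x L^1_v}\varepsilon$ lies in $(0,1)$, since $\varepsilon \to 0$ as $N\to\infty$ under the Boltzmann--Grad scaling.

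First I would fix $Z_s \in \mathcal{D}_s$ and $f_0 > 0$, and divide both inequalities through by $f_0^{\otimes s}(Z_s)$. From Lemma \ref{lemma:s12-ZZ8} we get the upper estimate
\begin{equation*}
\frac{f_N^{(s)}(0,Z_s)}{f_0^{\otimes s}(Z_s)} - 1 \;\leq\; (1-a)^{-s} - 1,
\end{equation*}
while from Lemma \ref{lemma:s12-ZZ9} we get the lower estimate
\begin{equation*}
\frac{f_N^{(s)}(0,Z_s)}{f_0^{\otimes s}(Z_s)} - 1 \;\geq\; -(s+1)a.
\end{equation*}
The task is then to verify that both bounds are dominated in absolute value by $(1-a)^{-(s+1)}-1$.

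The upper side is immediate: since $a \in (0,1)$ we have $(1-a)^{-s} \leq (1-a)^{-(s+1)}$, so $(1-a)^{-s}-1 \leq (1-a)^{-(s+1)}-1$. For the lower side, I would expand
\begin{equation*}
(1-a)^{-(s+1)} \;=\; \sum_{k=0}^{\infty} \binom{s+k}{k} a^k \;\geq\; 1 + (s+1)a,
\end{equation*}
(keeping only the $k=0$ and $k=1$ terms and discarding the rest, which are nonnegative). This rearranges to $(s+1)a \leq (1-a)^{-(s+1)} - 1$, which together with the lower estimate above gives exactly the claimed bound. The only mildly delicate point, and really the only thing that has to be checked carefully, is ensuring that $a < 1$ uniformly in $s \leq N$; but since $a$ is independent of $s$ and tends to $0$ with $\varepsilon$, this is guaranteed by taking $N$ large enough, which is precisely the hypothesis of the corollary. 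Taking the supremum over $Z_s \in \mathcal{D}_s$ then yields the stated $L^\infty$ estimate, uniformly in $1\leq s \leq N$.
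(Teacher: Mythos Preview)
Your proof is correct and follows exactly the approach implicit in the paper: the corollary is stated without proof because it is meant to follow immediately from combining Lemmas~\ref{lemma:s12-ZZ8} and~\ref{lemma:s12-ZZ9}, together with the elementary observation that both $(1-a)^{-s}-1$ and $(s+1)a$ are dominated by $(1-a)^{-(s+1)}-1$. Your handling of the two sides and the use of the binomial series for the lower bound are exactly what is needed.
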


\begin{corollary}
\label{cor:s12-f0-8}
Let $f_0$ be a probability density on $\mathbb{R}^{2d}$ with 
\begin{equation}
\label{eq:s12-f0-9}
\left\Vert f_0 (x,v) e^{\mu} e^{\frac{1}{2} \beta |v|^2}
\right\Vert_{L^\infty_{x,v}} \leq 1
\end{equation}
 for some 
$\beta > 0 , \mu \in \mathbb{R}$. Then for any $\mu^\prime < \mu$ we
have for all sufficiently large $N$ in the Boltzmann-Grad scaling
$N\varepsilon^{d-1} = \ell^{-1}$ the estimate
\begin{equation}
\label{eq:s12-f0-10}
\sup_{1\leq s \leq N} \sup_{Z_s \in \mathcal{D}_s}
\left| f_N^{(s)} (0,Z_s) \right| e^{\beta E_s (Z_s)} e^{\mu^\prime s}
\leq 1
\end{equation}
where $f_N^{(s)}(0)$ is the marginal of the data $f_N (0)$ given
by (\ref{eq:s12-data}).
\end{corollary}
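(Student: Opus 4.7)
The plan is to combine the pointwise upper bound on $f_N^{(s)}(0)$ from Lemma \ref{lemma:s12-ZZ8} with the Gaussian weight hypothesis, then absorb the factor $(1-c\varepsilon)^{-s}$ into the slight loss $e^{-(\mu-\mu')s}$ by taking $\varepsilon$ small.

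First I would observe that the Gaussian bound $f_0(x,v) \leq e^{-\mu} e^{-\frac{1}{2}\beta|v|^2}$ immediately implies $\|f_0\|_{L^\infty_x L^1_v} \leq e^{-\mu}(2\pi/\beta)^{d/2} < \infty$, so Lemma \ref{lemma:s12-ZZ8} applies. Setting $a = \ell^{-1}|B_1^d|\,\|f_0\|_{L^\infty_x L^1_v}$, that lemma gives, for every $1\leq s\leq N$ and every $Z_s\in\mathcal{D}_s$,
\begin{equation*}
f_N^{(s)}(0,Z_s) \leq f_0^{\otimes s}(Z_s)\,(1-a\varepsilon)^{-s}.
\end{equation*}
The tensorization of the pointwise Gaussian hypothesis yields
\begin{equation*}
f_0^{\otimes s}(Z_s) \leq e^{-\mu s}\,e^{-\beta E_s(Z_s)},
\end{equation*}
so combining the two estimates produces
\begin{equation*}
f_N^{(s)}(0,Z_s)\,e^{\beta E_s(Z_s)}\,e^{\mu' s} \leq \left(\frac{e^{-(\mu-\mu')}}{1-a\varepsilon}\right)^{s}.
\end{equation*}

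The key observation is that the right-hand side is controlled uniformly in $s$ by a condition on $\varepsilon$ alone: the quantity in parentheses is $\leq 1$ precisely when $a\varepsilon \leq 1-e^{-(\mu-\mu')}$. Since $\mu' < \mu$ is fixed and $a$ does not depend on $N$ (only on $\ell$, $d$, and $f_0$), and since the Boltzmann-Grad scaling forces $\varepsilon = (N\ell)^{-1/(d-1)} \to 0$ as $N\to\infty$, this inequality holds for all $N$ beyond a threshold $N_0 = N_0(d,\ell,\beta,\mu-\mu',f_0)$. For such $N$, the entire right-hand side is bounded above by $1$ uniformly in $s\in\{1,\ldots,N\}$, which is the desired conclusion.

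There is no real obstacle here; the mild subtlety is simply recognizing that although the exponent $s$ can reach $N$ (which is enormous in the Boltzmann-Grad scaling), the ratio $\frac{e^{-(\mu-\mu')}}{1-a\varepsilon}$ being $\leq 1$ is an $s$-independent condition, so raising to the $s$-th power only makes things smaller rather than amplifying a small error. This is exactly where the loss $\mu \to \mu'$ in the chemical potential is used: without that slack, one would only obtain a bound like $e^{a\varepsilon s}$ which diverges since $N\varepsilon\to\infty$ when $d\geq 3$.
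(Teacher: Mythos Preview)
Your proof is correct and is exactly the intended derivation: the paper states this as a corollary without proof, and the natural route is precisely to feed the pointwise Gaussian bound into Lemma~\ref{lemma:s12-ZZ8} and absorb the factor $(1-a\varepsilon)^{-s}$ using the slack $\mu-\mu'>0$, uniformly in $s$. There is nothing to add.
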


\section{Local-in-time convergence proof}
\label{sec:13}

The main result of this section is a local-in-time propagation of
chaos result for the BBGKY hierarchy. We will use the
stability result from Section \ref{sec:9} in order to prove
uniform convergence on a set of ``good'' phase points. 

\begin{theorem} 
\label{thm:s13-conv1}
Suppose $F_N (t) = \left\{ f_N^{(s)} (t)\right\}_{1\leq s \leq N}$
is a solution of the BBGKY hierarchy (\ref{eq:s4-def-bbgky}),
subject to the Boltzmann-Grad scaling $N \varepsilon^{d-1}=\ell^{-1}$,
and with each 
function $f_N^{(s)} : [0,\infty) \times \mathbb{R}^{2ds} 
\rightarrow \mathbb{R}$ symmetric under particle interchange.
Further suppose $F_\infty (0) = \left\{ f_\infty^{(s)} (0)
\right\}_{s\in\mathbb{N}}$ is a
sequence of functions such that each 
$f_\infty^{(s)}(0):\mathbb{R}^{2ds}\rightarrow\mathbb{R}$
is continuous and symmetric. Assume that
for some $\beta_0 > 0$, $\mu_0 \in \mathbb{R}$,
\begin{equation}
\label{eq:s13-bbgky1}
\sup_{1\leq s \leq N} \sup_{Z_s \in \mathcal{D}_s}
\left| f_N^{(s)} (0,Z_s)\right| 
e^{\beta_0 E_s (Z_s)} e^{\mu_0 s} \leq 1
\end{equation}
\begin{equation}
\label{eq:s13-boltz1}
\sup_{s\in\mathbb{N}} \sup_{Z_s\in\mathbb{R}^{2ds}}
\left| f_\infty^{(s)} (0,Z_s)\right| 
e^{\beta_0 E_s(Z_s)}e^{\mu_0 s} \leq 1
\end{equation}
Then there is a constant $C_d > 0$, depending only on $d$, such that
if  $T_L < C_d \ell e^{\mu_0} \beta_0^{\frac{d+1}{2}}$, then all of
the following are true: \\
(i) $F_N (t)$ satisfies the bound
\begin{equation}
\label{eq:s13-bbgky2}
\sup_{0\leq t \leq T_L} \sup_{1\leq s \leq N} \sup_{Z_s \in \mathcal{D}_s}
\left| f_N^{(s)} (t,Z_s)\right| 
e^{\frac{1}{2} \beta_0 E_s (Z_s)} e^{(\mu_0 -1)s} \leq 1
\end{equation}
(ii) the Boltzmann hierarchy has a unique continuous symmetric solution 
$F_\infty (t)$, $t\in [0,T_L]$, satisfying the bound
\begin{equation}
\label{eq:s13-boltz2}
\sup_{0 \leq t \leq T_L} \sup_{s\in\mathbb{N}} 
\sup_{Z_s \in \mathbb{R}^{2ds}}
\left| f_\infty^{(s)} (t,Z_s)\right| 
e^{\frac{1}{2} \beta_0 E_s (Z_s)} e^{(\mu_0 -1)s} \leq 2
\end{equation}
(iii) if $f_\infty^{(s)} (0) = f_0^{\otimes s} \;
\forall \; s\in\mathbb{N} $ for some
Lipschitz-continuous probability density $f_0 (x,v)$, and likewise
$\left\{ \left\{ f_N^{(s)} (0) \right\}_{1\leq s \leq N}
\right\}_{N\in\mathbb{N}}$ is nonuniformly $f_0$-chaotic (see
Section \ref{sec:2}), then
$f_\infty^{(s)} (t) = f_t^{\otimes s} \; \forall \; s\in\mathbb{N}$
for $t\in [0,T_L]$ where $f_t$ solves Boltzmann's equation, and 
$\left\{ \left\{ f_N^{(s)} (t) \right\}_{1\leq s \leq N}\right\}_{N\in\mathbb{N}}$
is nonuniformly $f_t$-chaotic for $t\in [0,T_L]$.
\end{theorem}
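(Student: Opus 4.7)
The plan is to treat parts (i)--(iii) separately. Part (i) is an immediate application of Theorem \ref{thm:s6-lanford} with parameters $\beta_0,\mu_0$ (the theorem does not require non-negativity or consistency, so it applies verbatim). Part (ii) is an immediate application of Theorem \ref{thm:s11-lwp} with the same parameters. All the substantive work is in part (iii), which I outline below.

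For part (iii) I would expand $f_N^{(s)}(t,Z_s)$ and $f_\infty^{(s)}(t,Z_s)=f_t^{\otimes s}(Z_s)$ using the Duhamel series (\ref{eq:s8-series2}) and (\ref{eq:s10-series2}), and compare term by term. Fix $Z_s\in\mathcal{K}_s\cap\mathcal{U}_s^{\eta(\varepsilon)}$ with $E_s(Z_s)\leq R^2$. First I would truncate both series at some level $K=K(\varepsilon)$; the tails are controlled by the a priori bounds from parts (i) and (ii), each Duhamel iterate carrying a factor $O(T_L/\ell)$ so that the remainder is geometric in $K$. Inside each term with $k\leq K$, I restrict the integration variables to bounded velocities $|v_{s+j}|\leq 2R$ and to the complement of the exceptional set $\mathcal{B}$ produced by iterated application of Proposition \ref{prop:s9-stability}, with tuned parameters $\eta=\varepsilon^\kappa$, $\alpha,y,\theta$ satisfying $\sin\theta>c_d y^{-1}\varepsilon$. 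The contributions from the removed "bad" sets are absorbed into the weighted $L^\infty$ norm of $f_N^{(s+k)}(0)$ via the smallness estimate (\ref{eq:s9-measure2}).

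On the good set, iterated application of Proposition \ref{prop:s9-stability} guarantees that the endpoint $Z_{s,s+k}[\,\cdot\,]$ of the BBGKY pseudo-trajectory lies in $\mathcal{K}_{s+k}\cap\mathcal{U}_{s+k}^{\eta(\varepsilon)}$ with $E_{s+k}\leq 2R^2$, so the nonuniform $f_0$-chaoticity hypothesis yields
\[
\bigl| f_N^{(s+k)}\bigl(0,Z_{s,s+k}[\,\cdot\,]\bigr)-f_0^{\otimes(s+k)}\bigl(Z_{s,s+k}[\,\cdot\,]\bigr) \bigr| \longrightarrow 0
\]
uniformly in the remaining integration variables. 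On this same set the BBGKY trajectory differs from the Boltzmann trajectory $Z_{s,s+k}^0[\,\cdot\,]$ only by $O(k\varepsilon)$ spatial shifts localized at particle creation times, and the collision kernels $b_{s,s+k}$ and $b_{s,s+k}^0$ coincide because the exclusion indicator in (\ref{eq:s8-b5}) is automatically equal to $1$ whenever the pre-existing particles are spatially separated by at least $y\gg\varepsilon$. The Lipschitz continuity of $f_0$ then absorbs the $O(k\varepsilon)$ shift between $Z_{s,s+k}[\,\cdot\,]$ and $Z_{s,s+k}^0[\,\cdot\,]$ at the cost of an $O(k\varepsilon\,\|\nabla f_0\|_\infty)$ error per term, and the $k$th Boltzmann term reconstitutes $f_t^{\otimes s}(Z_s)$ once we sum and pass $K\to\infty$.

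The main obstacle is the joint tuning of parameters. Each inductive use of Proposition \ref{prop:s9-stability} contributes a loss of the form $\alpha+y/(\eta T_L)+C_{d,\alpha}(\eta/R)^{d-1}+C_{d,\alpha}\theta^{(d-1)/2}$, constrained by the geometric inequality $\sin\theta>c_d\varepsilon/y$; the losses compound in $k$, but the factorial $1/k!$ in the series together with the exponential energy weights leaves plenty of room to take $K=K(\varepsilon)\to\infty$ (e.g.\ $K\sim |\log\varepsilon|$). With $\eta=\varepsilon^\kappa$ dictated by the hypothesis and $R$ eventually arbitrary, the remaining freedom is to choose $y,\theta,\alpha$ as appropriate fractional powers of $\varepsilon$ so that every exceptional contribution vanishes as $\varepsilon\to 0$; this is a power-counting exercise in the spirit of \cite{GSRT2014,PSS2014}, but carried out pointwise rather than in an averaged norm. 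The resulting pointwise convergence, uniform on $\mathcal{K}_s\cap\mathcal{U}_s^{\eta(\varepsilon)}\cap\{E_s\leq R^2\}$, is precisely nonuniform $f_t$-chaoticity with the same $\kappa$.
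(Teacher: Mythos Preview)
Your approach for (i)--(ii) matches the paper, and your overall strategy for (iii) --- Duhamel expansion, truncation, bad-set excision via Proposition~\ref{prop:s9-stability}, then Lipschitz control and the chaoticity hypothesis on the good set --- is the same as the paper's. Two execution details differ and are worth flagging.

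First, the paper truncates at the level of \emph{initial data} rather than the series: it sets $f_{N,n,R}^{(s)}(0)=f_N^{(s)}(0)\,\mathbf{1}_{s\leq n}\,\chi(E_s/R^2)$ (and similarly for $f_\infty$) and evolves the truncated data through the hierarchies. By linearity and the a priori bounds of Theorems~\ref{thm:s6-lanford} and~\ref{thm:s11-lwp} applied to the \emph{difference}, the truncation error is $O(e^{-\beta_0 R^2/2}+e^{-n})$ uniformly in $N$, and the relevant Duhamel series terminate at $k=n-s$, so no separate tail-convergence argument is needed.

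Second, the paper keeps $n,R,\alpha$ \emph{fixed} while sending $N\to\infty$, and only afterward lets $\alpha\to 0$, then $n\to\infty$, then $R\to\infty$. This matters because the paper's crude bound on the iterated integrals carries factors like $\exp[C_d\ell^{-1}nR^{d+1}T_L]$ (see (\ref{eq:s13-conv5-1}), (\ref{eq:s13-conv10})) and the constant $C_{d,\alpha}$ in Proposition~\ref{prop:s9-stability} diverges as $\alpha\to 0$; your simultaneous limit $K(\varepsilon)\to\infty$, $\alpha(\varepsilon)\to 0$ would require the sharper weighted Lanford estimates rather than these crude ones. Relatedly, your ``factorial $1/k!$'' remark is misleading: the time-ordered integral contributes $t^k/k!$ but the sum over $i_1,\dots,i_k$ contributes $s(s+1)\cdots(s+k-1)$, so the net decay in $k$ is only geometric. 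Finally, you omitted the (easy) replacement of $a_{N,k,s}$ by $\ell^{-k}$, which the paper handles via the intermediate function $\tilde f_{N,n,R}^{(s)}$ in (\ref{eq:s13-series3})--(\ref{eq:s13-conv5-1}).
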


\begin{proof}
The local well-posedness of the Boltzmann hierarchy, and the bounds
(\ref{eq:s13-bbgky2}-\ref{eq:s13-boltz2}), are
direct consequences of Theorem \ref{thm:s6-lanford}
and Theorem \ref{thm:s11-lwp}. 

We introduce a smooth cut-off function 
$\chi : [0,\infty)\rightarrow \mathbb{R}$, decreasing, with 
$0\leq \chi \leq 1$, $\chi (z) = 1$ for $0\leq z \leq 1$, 
$\left\Vert \chi^\prime \right\Vert_\infty \leq 2$,
 and $\chi (z) = 0$ for $z\geq 2$.
Given parameters $R>0$ and $n\in \mathbb{N}$, we define
\begin{equation}
\label{eq:s13-bbgky3}
f_{N,n,R}^{(s)} (0,Z_s) = 
f_N^{(s)} (0,Z_s) \mathbf{1}_{1 \leq s \leq n}
\chi \left( \frac{1}{R^2} E_s (Z_s)\right)
\end{equation}
and let 
$F_{N,n,R}(0) = \left\{ f_{N,n,R}^{(s)}(0)\right\}_{1\leq s \leq N}$.
We let $F_{N,n,R}(t)$ be the solution of the BBGKY hierarchy
(\ref{eq:s4-def-bbgky}) with initial data $F_{N,n,R}(0)$.
Similarly, given initial data 
$F_\infty (0) = \left\{ f_\infty^{(s)} (0)\right\}_{s\in\mathbb{N}}$,
define
\begin{equation}
\label{eq:s13-boltz3}
f_{\infty,n,R}^{(s)} (0,Z_s) = f_\infty^{(s)} (0,Z_s)
\mathbf{1}_{1\leq s \leq n} \chi\left(\frac{1}{R^2} E_s (Z_s)\right)
\end{equation}
and let 
$F_{\infty,n,R} (0)=\left\{f_{\infty,n,R}^{(s)}(0)\right\}_{s\in\mathbb{N}}$.
We let $F_{\infty,n,R}(t)$ be the solution of the Boltzmann hierarchy
with data $F_{\infty,n,R}(0)$. Using Theorem
\ref{thm:s6-lanford} and Theorem \ref{thm:s11-lwp},
and the linearity of the BBGKY and Boltzmann hierarchies,
and dividing $C_d$ by $e\cdot 2^{\frac{d+1}{2}}$ in the 
statement of the theorem, we immediately obtain the following estimates:
\begin{equation}
\label{eq:s13-conv3}
\underset{\substack{1 \leq s \leq N \\
t \in [0,T_L] \\ Z_s \in \mathcal{D}_s}}{\sup}
\left| \left( f_N^{(s)}- f_{N,n,R}^{(s)}\right) (t,Z_s) \right|
e^{\frac{1}{4} \beta_0 E_s (Z_s)} e^{(\mu_0 - 2)s} \leq
e^{-\frac{1}{2} \beta_0 R^2} + e^{-n}
\end{equation}
\begin{equation}
\label{eq:s13-conv4}
\underset{\substack{s\in\mathbb{N} \\
t \in [0,T_L] \\ Z_s \in \mathbb{R}^{2ds}}}{\sup}
 \left| \left( f_\infty^{(s)} - f_{\infty,n,R}^{(s)} \right)(t,Z_s) \right|
e^{\frac{1}{4} \beta_0 E_s (Z_s)} e^{(\mu_0 - 2)s} \leq
2 \left( e^{-\frac{1}{2} \beta_0 R^2} + e^{-n} \right)
\end{equation}
The remainder of the proof consists of comparing the two
functions $f_{N,n,R}^{(s)}(t)$ and $f_{\infty,n,R}^{(s)}(t)$.

We have the following Duhamel series:
\begin{equation}
\label{eq:s13-series1}
\begin{aligned}
& f_{N,n,R}^{(s)} (t,Z_s) = \sum_{k=0}^{n-s} a_{N,k,s}\times \\
& \times \sum_{i_1 = 1}^s \dots \sum_{i_k = 1}^{s+k-1}
\int_0^t \dots \int_0^{t_{k-1}}
\int_{\mathbb{R}^{dk}} \int_{\left(\mathbb{S}^{d-1}\right)^k} 
\left( \prod_{m=1}^k d\omega_m dv_{s+m} dt_m \right) \times \\
& \times \left( b_{s,s+k}\left[\cdot\right]
f_{N,n,R}^{(s+k)} (0,Z_{s,s+k}\left[\cdot\right])\right)
 \left[ Z_s,t;\left\{t_j,v_{s+j},\omega_j,i_j
\right\}_{j=1}^k \right]
\end{aligned}
\end{equation}
\begin{equation}
\label{eq:s13-series2}
\begin{aligned}
& f_{\infty,n,R}^{(s)} (t,Z_s) = \sum_{k=0}^{n-s} \ell^{-k} \times \\
& \times \sum_{i_1 = 1}^s \dots \sum_{i_k = 1}^{s+k-1}
\int_0^t \dots \int_0^{t_{k-1}}
\int_{\mathbb{R}^{dk}} \int_{\left(\mathbb{S}^{d-1}\right)^k} 
\left( \prod_{m=1}^k d\omega_m dv_{s+m} dt_m \right) \times \\
& \times \left(b_{s,s+k}^0 \left[\cdot\right] f_{\infty,n,R}^{(s+k)} (0,
Z_{s,s+k}^0 \left[\cdot\right])\right)\left[ Z_s,t;
\left\{t_j,v_{s+j},\omega_j,i_j\right\}_{j=1}^k\right]
\end{aligned}
\end{equation}
where
\begin{equation}
\label{eq:s13-a-N-k-s}
a_{N,k,s} = \frac{(N-s)!}{(N-s-k)!} \varepsilon^{k (d-1)}
\end{equation}
It is not hard to show that all terms appearing in the \emph{finite} series 
(\ref{eq:s13-series1}-\ref{eq:s13-series2})
are finite for all $t\geq 0$. Note that the expression
(\ref{eq:s13-series1}) is meaningful as a measurable
function if the data is integrable and compactly supported
(see \cite{IP1986} for a detailed proof of this fact), whereas the expression
(\ref{eq:s13-series2}) makes sense due to
the continuity of the data $F_{\infty,n,R} (0)$.

Let us now define a new function, $\tilde{f}_{N,n,R}^{(s)}(t)$, which is
closely related to $f_{N,n,R}^{(s)} (t)$.
\begin{equation}
\label{eq:s13-series3}
\begin{aligned}
& \tilde{f}_{N,n,R}^{(s)} (t,Z_s) = \sum_{k=0}^{n-s} \ell^{-k}\times \\
& \times \sum_{i_1 = 1}^s \dots \sum_{i_k = 1}^{s+k-1}
\int_0^t \dots \int_0^{t_{k-1}}
\int_{\mathbb{R}^{dk}} \int_{\left(\mathbb{S}^{d-1}\right)^k} 
\left( \prod_{m=1}^k d\omega_m dv_{s+m} dt_m\right) \times \\
& \times \left( b_{s,s+k} \left[\cdot\right] f_{N,n,R}^{(s+k)} (0,
Z_{s,s+k} \left[ \cdot \right])\right)
\left[ Z_s,t;
\left\{t_j,v_{s+j},\omega_j,i_j\right\}_{j=1}^k\right]
\end{aligned}
\end{equation}
Note that $\left| a_{N,k,s}-\ell^{-k}\right| \leq
\left[ 1 - \left(1-\frac{n}{N}\right)^n\right] \ell^{-k}$ for
$0\leq k \leq n-s$; therefore,
\begin{equation}
\label{eq:s13-conv5}
\begin{aligned}
& \left| \tilde{f}_{N,n,R}^{(s)} (t,Z_s) - f_{N,n,R}^{(s)}(t,Z_s)\right|
 \leq \left[1-\left(1-\frac{n}{N}\right)^n\right]\times \\
& \sum_{k=0}^{n-s} \sum_{i_1 = 1}^s \dots \sum_{i_k = 1}^{s+k-1}
\ell^{-k} \int_0^t \dots \int_0^{t_{k-1}}
\int_{\mathbb{R}^{dk}} \int_{\left(\mathbb{S}^{d-1}\right)^k} 
\left( \prod_{m=1}^k d\omega_m dv_{s+m} dt_m\right) \times \\
& \times \left(\left| b_{s,s+k} \left[\cdot\right] \right| 
\left| f_{N,n,R}^{(s+k)} (0,Z_{s,s+k} [\cdot])\right| \right)
\left[ Z_s,t;\left\{t_j,v_{s+j},\omega_j,i_j\right\}_{j=1}^k\right]
\end{aligned}
\end{equation}
To estimate the series in (\ref{eq:s13-conv5}), we recall that
$f_{N,n,R}^{(s+k)} (0)$ is absolutely bounded by
$e^{-\mu_0 (s+k)}$ and is supported in the set $E_{s+k} (Z_{s+k}) \leq 2 R^2$. 
Hence, due to energy conservation, all the iterated integrals appearing
in (\ref{eq:s13-conv5}) range over compact sets and we
can evaluate the maximum possible contributions explicitly.
Note that this is a significant over-estimate since we are not using
the exponential decay of $f_{N,n,R}^{(s+k)} (0)$ at large energies;
nevertheless, this crude estimate will suffice for the proof. We obtain
\begin{equation}
\label{eq:s13-conv5-1}
\begin{aligned}
& \left| \tilde{f}_{N,n,R}^{(s)} (t,Z_s) - f_{N,n,R}^{(s)}(t,Z_s)\right|
 \leq \\
& \qquad \qquad \qquad
\leq \left[1-\left(1-\frac{n}{N}\right)^n\right] e^{-\mu_0 s}
\exp\left[ C_d \ell^{-1} n R^{d+1} e^{-\mu_0} t\right]
\end{aligned}
\end{equation}
Observe that the right-hand side of (\ref{eq:s13-conv5-1}) tends to
zero as $N\rightarrow \infty$ when $n,R,Z_s,t$ are all held fixed.

Let us now fix $Z_s \in \mathcal{K}_s \cap \mathcal{U}_s^\eta$,
$t\in [0,T_L]$, with $E_s (Z_s) \leq 2 R^2$. Let us pick parameters
$\eta,\theta,\alpha,y>0$ such that $R>\eta$ and
$\sin \theta > c_d y^{-1} \varepsilon$, where $c_d$ is as in the
statement of Proposition \ref{prop:s9-stability}. Let us define
\begin{equation}
\label{eq:s13-A-n-R}
\mathcal{A}_{n,R}
=\sum_{k=0}^n C_d^k \ell^{-k} R^{k(d+1)} n^k e^{-\mu_0 k} T_L^k
\end{equation}
where the constant $C_d$ is to be chosen in the next step.
Then, by repeated
application of Proposition \ref{prop:s9-stability}, we can construct
sets $\left\{ \mathcal{B}_k \right\}_{k=0}^{n-s}$, dependent on
$(Z_s,t)$, with
\begin{equation}
\label{eq:s13-B-k}
\mathcal{B}_k \subset \left( [0,\infty) \times \mathbb{R}^d
\times \mathbb{S}^{d-1} \times \mathbb{N}\right)^k
\end{equation} 
such that
\begin{equation}
\label{eq:s13-conv7}
\begin{aligned}
& \sum_{k=0}^{n-s} \sum_{i_1 = 1}^s \dots \sum_{i_k = 1}^{s+k-1}
\ell^{-k} \int_0^t \dots \int_0^{t_{k-1}}
\int_{\left(B_{2R}^d\right)^k} \int_{\left(\mathbb{S}^{d-1}\right)^k} 
\mathbf{1}_{\mathcal{B}_k}
\prod_{m=1}^k d\omega_m dv_{s+m} dt_m  \times \\
& \times \left(\left| b_{s,s+k} \left[\cdot\right] \right| 
\left| f_{N,n,R}^{(s+k)} (0,Z_{s,s+k} [\cdot] \right| \right)
\left[ Z_s,t;
\left\{t_j,v_{s+j},\omega_j,i_j\right\}_{j=1}^k\right] \leq \\
& \leq e^{-\mu_0 s} n^2 \mathcal{A}_{n,R} \left[ \alpha + \frac{y}{\eta T_L}
+ C_{d,\alpha} \left( \left(\frac{\eta}{R}\right)^{d-1} +
 \theta^{(d-1)/2}\right) \right]
\end{aligned}
\end{equation}
\begin{equation}
\label{eq:s13-conv8}
\begin{aligned}
& \sum_{k=0}^{n-s} \sum_{i_1 = 1}^s \dots \sum_{i_k = 1}^{s+k-1}
\ell^{-k} \int_0^t \dots \int_0^{t_{k-1}}
\int_{\left(B_{2R}^d\right)^k} \int_{\left(\mathbb{S}^{d-1}\right)^k} 
\mathbf{1}_{\mathcal{B}_k}
 \prod_{m=1}^k d\omega_m dv_{s+m} dt_m \times \\
& \times \left(\left| b_{s,s+k}^0 \left[\cdot\right]\right|
\left| f_{\infty,n,R}^{(s+k)} (0,Z_{s,s+k}^0 [\cdot])\right|\right)
\left[ Z_s,t;
\left\{t_j,v_{s+j},\omega_j,i_j\right\}_{j=1}^k\right] \leq \\
& \leq e^{-\mu_0 s} n^2 \mathcal{A}_{n,R} \left[ \alpha + \frac{y}{\eta T_L}
+ C_{d,\alpha} \left( \left(\frac{\eta}{R}\right)^{d-1} +
\theta^{(d-1)/2} \right)\right]
\end{aligned}
\end{equation}
and such that whenever
\begin{equation}
\label{eq:s13-not-B-k}
\begin{aligned}
& \left\{t_j,v_{s+j},\omega_j,i_j\right\}_{j=1}^k \\
& \qquad \;\; \in\left(\left([0,T_L]\times B_{2R}^d \times \mathbb{S}^{d-1}
\times \mathbb{N}\right)^k \backslash \mathcal{B}_k\right)\bigcap
\left\{0\leq t_k \leq \dots \leq t_1 \leq t\right\}
\end{aligned}
\end{equation}
there holds
\begin{equation}
\label{eq:s13-perturb1}
\left| \left(Z_{s,s+k} \left[\cdot\right]
-Z_{s,s+k}^0 \left[\cdot\right]\right)
\left[Z_s,t;\left\{t_j,v_{s+j},\omega_j,i_j\right\}_{j=1}^k\right]
\right|_\infty \leq k \varepsilon
\end{equation}
\begin{equation}
\label{eq:s13-perturb2}
b_{s,s+k}\left[Z_s,t;\left\{t_j,v_{s+j},\omega_j,i_j\right\}_{j=1}^k\right]
= b_{s,s+k}^0
\left[Z_s,t;\left\{t_j,v_{s+j},\omega_j,i_j\right\}_{j=1}^k \right]
\end{equation}
\begin{equation}
\label{eq:s13-perturb3}
Z_{s,s+k} \left[ Z_s,t;\left\{t_j,v_{s+j},\omega_j,i_j\right\}_{j=1}^k\right]
\in \mathcal{K}_{s+k} \cap \mathcal{U}_{s+k}^\eta
\end{equation}
Here $\left| Z_j \right|_\infty = \sup_{i=1,\dots,j}
\max\left(|x_i|,|v_i|\right)$.
\begin{remark}
The sets $\mathcal{B}_k$ collect all integration points for which
the Duhamel series (\ref{eq:s13-series2})
and (\ref{eq:s13-series3}) fail to agree. At the
remaining points, the pseudo-trajectories
$Z_{s,s+k} \left[ \dots \right]$ and $Z_{s,s+k}^0 \left[ \dots \right]$
are identical, up to $\mathcal{O}(\varepsilon)$ perturbations of
the particles' spatial positions. These perturbations are harmless
because the Boltzmann hierarchy propagates smoothness forwards in time.
\end{remark}

As long as we are away from $\mathcal{B}_k$, we can use the 
triangle inequality:
\begin{equation}
\label{eq:s13-conv9}
\begin{aligned}
&\left|\left( f_{\infty,n,R}^{(s+k)}( 0,
Z_{s,s+k}^0 \left[\cdot\right]) -
f_{N,n,R}^{(s+k)} ( 0,Z_{s,s+k}\left[\cdot\right])\right)
\left[Z_s,t;\left\{t_j,v_{s+j},\omega_j,i_j\right\}_j\right]\right|\\
& \leq 
\left| \left(f_{\infty,n,R}^{(s+k)} (0,
Z_{s,s+k}^0 \left[\cdot\right]) -
f_{\infty,n,R}^{(s+k)} ( 0,Z_{s,s+k}\left[\cdot\right])\right)
\left[Z_s,t;\left\{t_j,v_{s+j},\omega_j,i_j\right\}_j\right]\right|\\
& +
\left|\left(f_{\infty,n,R}^{(s+k)} ( 0,
Z_{s,s+k} \left[\cdot\right]) -
f_{N,n,R}^{(s+k)} ( 0,Z_{s,s+k}\left[\cdot\right]) \right)
\left[Z_s,t;\left\{t_j,v_{s+j},\omega_j,i_j\right\}_j\right]\right|\\
\end{aligned}
\end{equation}
We can easily control the first term using the regularity
assumption on $f_\infty^{(j)} (0)$ combined with the
stability estimate (\ref{eq:s13-perturb1}).
On the other hand, due to (\ref{eq:s13-perturb3}),
in order to control the second term, we only need to
estimate $\left| f_\infty^{(s+k)} - f_N^{(s+k)}\right|$ on 
$\mathcal{K}_{s+k}\cap \mathcal{U}_{s+k}^\eta$.
\begin{remark}
Carefully observe that it is entirely possible that
$Z_{s,s+k}^0 \left[ \dots \right] \notin
\mathcal{K}_{s+k} \cap \mathcal{U}_{s+k}^\eta$, even away
from $\mathcal{B}_k$. This is because in the construction of
$\mathcal{B}_k$, we never ruled out events wherein two particles
only ``barely'' miss each other under the backwards flow. 
\end{remark}

Now we easily obtain
\begin{equation}
\label{eq:s13-conv10}
\begin{aligned}
&\underset{\substack{0 \leq t \leq T_L \\ Z_s \in \mathbb{R}^{2ds}}}{\sup}
\left| \left(
f_N^{(s)}-f_\infty^{(s)}\right)(t,Z_s)\right|
\mathbf{1}_{Z_s \in \mathcal{K}_s \cap \mathcal{U}_s^\eta}
\mathbf{1}_{E_s (Z_s) \leq 2 R^2} \\
& \leq
3 e^{-(\mu_0 - 2)s} \left( e^{-\frac{1}{2} \beta_0 R^2} + e^{-n}\right)+\\
& + \left[1-\left(1-\frac{n}{N}\right)^n\right] e^{-\mu_0 s}
e^{ C_d \ell^{-1} n R^{d+1} e^{-\mu_0} T_L} + \\
& + 2 e^{-\mu_0 s} n^2 \mathcal{A}_{n,R} \left[
\alpha + \frac{y}{\eta T_L} + C_{d,\alpha} \left(\frac{\eta}{R}\right)^{d-1}
+ C_{d,\alpha} \theta^{(d-1)/2} \right] + \\
& + C_d n^{\frac{5}{2}} R^{-1} e^{|\mu_0|n}  \varepsilon
e^{C_d \ell^{-1} n R^{d+1} e^{-\mu_0} T_L}+ \\
&  + C_d n^2 \varepsilon 
e^{C_d \ell^{-1} n R^{d+1} e^{-\mu_0} T_L}
\underset{\substack{1\leq j \leq n \\ Z_j \in \mathbb{R}^{2dj}}}{\sup}
\left| \nabla_{Z_j} f_\infty^{(j)} (0,Z_j)\right|_2 
\mathbf{1}_{E_j (Z_j) \leq 2 R^2}  + \\
&  + C_d e^{C_d \ell^{-1} n R^{d+1} e^{-\mu_0} T_L}
\underset{\substack{1\leq j \leq n \\ Z_j \in \mathbb{R}^{2dj}}}{\sup}
\left|\left( f_N^{(j)}-f_\infty^{(j)}\right)(0,Z_j) \right|
\mathbf{1}_{Z_j \in \mathcal{K}_j \cap \mathcal{U}_j^\eta}
\mathbf{1}_{E_j (Z_j) \leq 2 R^2}
\end{aligned}
\end{equation}
where $\left| \nabla_{Z_s} f^{(s)} \right|_2^2 =
\sum_{i=1}^s \left( \left| \nabla_{x_i} f^{(s)} \right|^2 +
\left| \nabla_{v_i} f^{(s)} \right|^2 \right)$.
According to the definition of nonuniform $f_0$-chaoticity, we may
let $\eta = \varepsilon^\kappa$ for some fixed $\kappa \in (0,1)$.
We will then let $y = \varepsilon^{(1+\kappa)/2}$ and
$\theta \sim \varepsilon^{(1-\kappa)/4}$; in particular, the constraint
$\sin \theta \geq c_d y^{-1} \varepsilon$ is satisfied. Now let
$N\rightarrow \infty$ and $\varepsilon \rightarrow 0$ simultaneously in the
Boltzmann-Grad scaling, $N\varepsilon^{d-1}=\ell^{-1}$, and use the fact that
$f_\infty^{(j)} (0) = f_0^{\otimes j}$ and that 
$\left\{ \left\{ f_N^{(j)} (0) \right\}_{1\leq j \leq N}\right\}_{N\in
\mathbb{N}}$ is nonuniformly $f_0$-chaotic.
\begin{equation}
\label{eq:s13-conv11}
\begin{aligned}
& \limsup_{N\rightarrow \infty}
\underset{\substack{0 \leq t \leq T_L \\ Z_s \in \mathbb{R}^{2ds}}}{\sup}
\left| \left(
f_N^{(s)}-f_\infty^{(s)}\right)(t,Z_s)\right|
\mathbf{1}_{Z_s \in \mathcal{K}_s \cap \mathcal{U}_s^{\eta(\varepsilon)}}
\mathbf{1}_{E_s (Z_s) \leq 2 R^2} \\
& \qquad \qquad \qquad \leq
3 e^{-(\mu_0 - 2)s} \left( e^{-\frac{1}{2} \beta_0 R^2} + e^{-n}\right)
 + 2 e^{-\mu_0 s} n^2 \mathcal{A}_{n,R} \alpha
\end{aligned}
\end{equation}
Since $\alpha > 0$ is arbitrary we have
\begin{equation}
\label{eq:s13-conv12}
\begin{aligned}
& \limsup_{N\rightarrow \infty}
\underset{\substack{0 \leq t \leq T_L \\ Z_s \in \mathbb{R}^{2ds}}}{\sup}
\left| \left(
f_N^{(s)}-f_\infty^{(s)}\right)(t,Z_s)\right|
\mathbf{1}_{Z_s \in \mathcal{K}_s \cap \mathcal{U}_s^{\eta(\varepsilon)}}
\mathbf{1}_{E_s (Z_s) \leq 2 R^2} \\
& \qquad \qquad \qquad
\leq 3 e^{-(\mu_0 - 2)s} \left( e^{-\frac{1}{2} \beta_0 R^2} + e^{-n}\right)\\
\end{aligned}
\end{equation}
Since $n$ is arbitrary, the second term on the right-hand side can be
thrown away. On the other hand, the left-hand side only increases as
$R$ increases, so we can throw away the first term on the right-hand
side as well. Since the Boltzmann hierarchy propagates chaoticity,
we have $f_\infty^{(s)} (t) = f_t^{\otimes s}$ for $t\in [0,T_L]$; hence,
\begin{equation}
\label{eq:s13-conv13}
\begin{aligned}
& \limsup_{N\rightarrow \infty}
\underset{\substack{0 \leq t \leq T_L \\ Z_s \in \mathbb{R}^{2ds}}}{\sup}
\left| f_N^{(s)}(t,Z_s) - f_t^{\otimes s}(Z_s)\right|
\mathbf{1}_{Z_s \in \mathcal{K}_s \cap \mathcal{U}_s^{\eta(\varepsilon)}}
\mathbf{1}_{E_s (Z_s) \leq 2 R^2} = 0\\
\end{aligned}
\end{equation}
We conclude that $\left\{ \left\{ f_N^{(s)} (t) \right\}_{1\leq s \leq N}
\right\}_{N\in\mathbb{N}}$ is nonuniformly $f_t$-chaotic for $t\in[0,T_L]$.
\qed
\end{proof}

\begin{remark}
We can deduce part \emph{(i)} of Theorem \ref{thm:s2-chaos} directly from
Theorem \ref{thm:s13-conv1} by splitting the time interval
$[0,T]$ into smaller intervals $[0,T_L]$, $[T_L,2T_L]$, etc.,
for some sufficiently small time $T_L$.
\end{remark}

\appendix

\section{Proof of Part \emph{(ii)} of Theorem
\ref{thm:s2-chaos}}
\label{sec:AppA}

The proof consists of three parts. The first part is the introduction of
an unsymmetric Boltzmann-Enskog hierarchy; we show that this auxiliary 
hierarchy propagates \emph{partial} factorization. The second part is
to show that a certain class of pseudo-trajectories for the BBGKY
dynamics coincide (with high probability) with the corresponding 
pseudo-trajectories for the unsymmetric Boltzmann-Enskog hierarchy.
The third part is to add up all the sources of error pointwise, as in
Section \ref{sec:13}. We outline the proof of the first step, provide
full technical estimates for the
 second step, and
skip the third step (which is tedious yet straightforward).
We remark that a much more general version of the same result (accounting
for correlations of any finite number of particles) is currently under
investigation.\footnote{To appear, \emph{JMP Vol 58 Issue 12} -- the result and
proof presented
in this Appendix is special to two particles ($m-1=2$).}
(The  proof in the case of general $m$ is signficantly more difficult
than the two-particle case and therefore deserves a separate treatment.)
 This result and the proof were largely inspired by
the techniques of M. Pulvirenti and S. Simonella. \cite{PS2014}

\subsection{An Unsymmetric Boltzmann-Enskog Hierarchy}

We are going to construct an infinite hierarchy of equations which
tracks correlations between the first $m-1$ labeled particles
while ignoring \emph{all} correlations between the remaining
particles. Clearly, such a hierarchy cannot preserve symmetry between
all particles. Nevertheless, we will be able to prove a partial
factorization property which will be the key to part \emph{(ii)} of 
Theorem \ref{thm:s2-chaos}.
The factorization property we will prove for the resulting
hierarchy is that if $s \geq m \geq 2$ then
\begin{equation}
g_\varepsilon^{(s)} (t) =
g_\varepsilon^{(m-1)} (t) \otimes
g_{\varepsilon}(t)^{\otimes (s - m + 1)}
\end{equation}
if such factorization holds at the initial time; here $g_{\varepsilon}(t)$
is the solution to a Boltzmann-Enskog type equation.

Let us introduce the unsymmetric $s$-particle phase space, where
$m \geq 2$ is fixed and  $s\geq m-1$:
\begin{equation}
\begin{aligned}
& \tilde{\mathcal{D}}_s = \left\{ \left. Z_s = 
(X_s,V_s) \in
\mathbb{R}^{ds}\times\mathbb{R}^{ds}
 \right| \; \forall 1\leq i < j \leq m-1,
 |x_i-x_j| > \varepsilon \right\} \\
\end{aligned}
\end{equation}
Observe that in the definition of
$\tilde{\mathcal{D}}_s$, we only enforce an exclusion condition
between the first $m-1$ particles. We do \emph{not} have exclusion for
any pair of particles for which \emph{at least one} particle index is
greater than $m-1$. We define the collision operators,
\begin{equation}
\label{eq:AppA-coll-1}
\tilde{C}_{s+1} = \sum_{i=1}^s \left( 
\tilde{C}_{i,s+1}^+ - \tilde{C}_{i,s+1}^- \right)
\end{equation}
where
\begin{equation}
\label{eq:AppA-coll-2}
\begin{aligned}
& \tilde{C}_{i,s+1}^+ g_\varepsilon^{(s+1)}(t,Z_s)
 = \int_{\mathbb{R}^d} \int_{\mathbb{S}^{d-1}}
\left[ \omega \cdot (v_{s+1}-v_i)\right]_+ \times \\
&\qquad\quad \times g_\varepsilon^{(s+1)} \left(t,x_1,v_1,\dots,x_i,
v_i^*,\dots,x_{s},v_{s},x_i+\varepsilon \omega,
v_{s+1}^*\right) d\omega dv_{s+1}
\end{aligned}
\end{equation}
\begin{equation}
\label{eq:AppA-coll-3}
\begin{aligned}
& \tilde{C}_{i,s+1}^- g_\varepsilon^{(s+1)}(t,Z_s)
 = \int_{\mathbb{R}^d} \int_{\mathbb{S}^{d-1}}
\left[ \omega \cdot (v_{s+1}-v_i)\right]_- \times \\
&\qquad\quad \times g_\varepsilon^{(s+1)} \left(t,x_1,v_1,\dots,x_i,
v_i,\dots,x_s,v_s,x_i+\varepsilon \omega,
v_{s+1}\right) d\omega dv_{s+1}
\end{aligned}
\end{equation}
and
\begin{equation}
\begin{aligned}
v_i^* & = v_i + \omega \omega \cdot (v_{s+1} - v_i) \\
v_{s+1}^* & = v_{s+1} - \omega \omega \cdot 
(v_{s+1}-v_i)
\end{aligned}
\end{equation}

The function $g_\varepsilon^{(s)} (t,Z_s)$ is defined for
$0\leq t < T$ and $Z_s \in \tilde{\mathcal{D}}_s$,
$s \geq m-1$, as the solution to the following hierarchy of equations:
\begin{equation}
\label{eq:AppA-UBEH}
\left( \partial_t + V_s \cdot \nabla_{X_s}\right)
g_\varepsilon^{(s)} (t) = 
\ell^{-1} \tilde{C}_{s+1} 
g_\varepsilon^{(s+1)} (t)
\qquad \textnormal{(if } s \geq m-1 \textnormal{)}
\end{equation}
with boundary condition
\begin{equation}
\label{eq:AppA-UBEH-bc}
g_\varepsilon^{(s)} (t,Z_s^*) = 
g_\varepsilon^{(s)} (t,Z_s)
\qquad \textnormal{ a.e. } (t,Z_s) \in [0,T)\times
\partial \tilde{\mathcal{D}}_s
\end{equation}
and initial conditions $g_\varepsilon^{(s)} (0,Z_s)$
defined for $s\geq m-1$ and $Z_s \in \tilde{\mathcal{D}}_s$.
We also introduce the function
$g_{\varepsilon} (t,x,v)$ ($t\geq 0$, $x,v \in \mathbb{R}^d$)
which is defined to be the solution to the equation
\begin{equation}
\label{eq:AppA-BE}
\left( \partial_t + v\cdot\nabla_x\right) g_{\varepsilon} (t) =
\ell^{-1} \tilde{C}_2 \left( g_{\varepsilon} (t) \otimes g_{\varepsilon} (t)
 \right)
\end{equation}
with prescribed initial data $g_{\varepsilon} (0)$.

We now introduce a mild form for
(\ref{eq:AppA-UBEH}-\ref{eq:AppA-UBEH-bc}).
For any $s\geq m-1$, let $\tilde{T}_s (t)$ denote the strongly
continuous semigroup on $L^2 \left(\tilde{\mathcal{D}}_s\right)$
with generator $-V_s \cdot \nabla_{X_s}$ and specular
reflection boundary conditions along $\partial \tilde{\mathcal{D}}_s$.
The operators $\tilde{T}_s(t)$ extend to other functional
spaces by standard density arguments. Then, under sufficiently strong
regularity conditions, the hierarchy 
(\ref{eq:AppA-UBEH}-\ref{eq:AppA-UBEH-bc})
is equivalent to the following hierarchy written in mild form:
\begin{equation}
\label{eq:AppA-UBEH-mild}
g_\varepsilon^{(s)} (t) =
\tilde{T}_s (t) g_\varepsilon^{(s)} (0) +
\ell^{-1} \int_0^t \tilde{T}_s (t-\tau)
\tilde{C}_{s+1} g_\varepsilon^{(s+1)} (\tau) d\tau \qquad
(s\geq m-1)
\end{equation}
Following Lanford's fixed point argument, we are able to prove existence
and uniqueness of solutions to (\ref{eq:AppA-UBEH-mild})
on a short time interval. However, under the conditions of Lanford's
proof, the distributional form (\ref{eq:AppA-UBEH}-\ref{eq:AppA-UBEH-bc})
and the mild form (\ref{eq:AppA-UBEH-mild}) are equivalent, so we are free to
work with either formulation for our computations. Note that, in a similar
fashion, we can define mild solutions for (\ref{eq:AppA-BE}), and solutions
can be constructed on a short time interval by a fixed point argument.

We will state a well-posedness theorem for
(\ref{eq:AppA-UBEH}-\ref{eq:AppA-UBEH-bc}) so that we can refer to the
result later. The proof follows Lanford's fixed point argument so we omit it.

\begin{proposition}
\label{prop:AppA-UBEH-Lanford}
Fix an integer $m\geq 2$.
Let $\left\{ g_\varepsilon^{(s)}(0) \right\}_{s\geq m-1}$
be a sequence of functions, with each
$g_\varepsilon^{(s)} (0)$ defined for 
$Z_s \in \tilde{\mathcal{D}}_s$. Furthermore,
suppose that there exists $\beta_0 > 0$ and $\mu_0 \in \mathbb{R}$
such that
\begin{equation}
\sup_{s\geq m-1} \sup_{Z_s \in 
\tilde{\mathcal{D}}_s} e^{\mu_0 s} 
e^{\beta_0 E_s (Z_s)} \left|
g_\varepsilon^{(s)} (0,Z_s) \right| \leq 1
\end{equation}
Then there exists a constant $C_d > 0$ such that the following is
true: If $T_L < C_d \ell e^{\mu_0} \beta_0^{\frac{d+1}{2}}$ then
there exists a unique sequence of functions 
$\left\{ g_\varepsilon^{(s)}(t) \right\}_{s \geq m-1}$
defined for $t\in [0,T_L]$ such that (i), (ii), and (iii) below all hold. \\
(i) For any bounded open set $\mathcal{O} \subset
[0,T_L] \times \tilde{\mathcal{D}}_s$,
we have $\left( \partial_t + V_s \cdot \nabla_{X_s}\right)
g_\varepsilon^{(s)} \in L^1 (\mathcal{O})$. \\
(ii) We have the bound:
\begin{equation}
\sup_{s\geq m-1} \sup_{t\in [0,T_L]}
\sup_{Z_s \in \tilde{\mathcal{D}}_s}
e^{(\mu_0 - 1)s} e^{\frac{1}{2} \beta_0 E_s (Z_s)} 
\left| g_\varepsilon^{(s)} (t,Z_s) \right| \leq 2
\end{equation}
(iii) The sequence $\left\{ g_\varepsilon^{(s)} (t)
\right\}_{s \geq m-1}$ solves (\ref{eq:AppA-UBEH}-\ref{eq:AppA-UBEH-bc})
in the sense of distributions on $[0,T_L]$ with initial data
$\left\{g_\varepsilon^{(s)}(0)\right\}_{s \geq m-1}$;
note that the equation is well-defined thanks to (i) and (ii).
\end{proposition}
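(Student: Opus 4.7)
The plan is to follow Lanford's classical fixed point argument applied to the mild formulation (\ref{eq:AppA-UBEH-mild}), with only cosmetic modifications to accommodate the asymmetric phase space $\tilde{\mathcal{D}}_s$. First I would verify that the transport semigroup $\tilde{T}_s(t)$ is well-defined on $L^\infty(\tilde{\mathcal{D}}_s)$: the only specular reflections occur between the first $m-1$ particles (by the form of $\partial \tilde{\mathcal{D}}_s$), while particles indexed $\geq m$ simply undergo free transport and may pass through each other and through the first $m-1$ particles freely. Since specular reflection among the first $m-1$ particles preserves energy and the remaining particles evolve by free transport, the semigroup conserves the total kinetic energy $E_s(Z_s)$ pointwise, so $\tilde{T}_s(t)$ is an isometry on every weighted $L^\infty_{\beta,\mu}$ space.

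Next I would introduce time-dependent weights $\beta(t)$ decreasing linearly from $\beta_0$ to $\tfrac{1}{2}\beta_0$ on $[0,T_L]$ and $\mu(t)$ decreasing linearly from $\mu_0$ to $\mu_0 - 1$, and define the Banach space $\mathcal{X}$ of sequences $G_\varepsilon = \{g_\varepsilon^{(s)}(t,Z_s)\}_{s\geq m-1}$ with norm
\begin{equation*}
\Vert G_\varepsilon \Vert_{\mathcal{X}} = \sup_{s \geq m-1} \sup_{0\leq t\leq T_L} \sup_{Z_s \in \tilde{\mathcal{D}}_s} e^{\mu(t) s} e^{\beta(t) E_s(Z_s)} \left| g_\varepsilon^{(s)}(t,Z_s) \right|.
\end{equation*}
On this space I would bound the collision operator $\tilde{C}_{s+1}$: since $\tilde{C}_{i,s+1}^{\pm}$ has exactly the same form as the standard BBGKY collision operator (it integrates against $|\omega \cdot (v_{s+1}-v_i)|$ against a Maxwellian weight in $v_{s+1}$), the classical estimate from Section \ref{sec:6} gives pointwise
\begin{equation*}
\left| \tilde{C}_{s+1} g_\varepsilon^{(s+1)} (t,Z_s) \right| \leq C_d e^{-\mu(t)} \beta(t)^{-d/2} \left( s^{1/2} E_s(Z_s)^{1/2} + s\, \beta(t)^{-1/2} \right) e^{-\beta(t) E_s(Z_s)} e^{-\mu(t)(s+1)} \Vert G_\varepsilon \Vert_{\mathcal{X}},
\end{equation*}
and the factor $s^{1/2} E_s^{1/2} + s \beta^{-1/2}$ is absorbed by the time derivative of the weights, exactly as in the proof of Theorem \ref{thm:s6-lanford} (carried out there in the dual formulation, but the primal computation is standard and gives the same threshold).

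With these ingredients, I would define the Duhamel map $\Psi : \mathcal{X} \to \mathcal{X}$ by
\begin{equation*}
(\Psi G_\varepsilon)^{(s)}(t) = \tilde{T}_s(t) g_\varepsilon^{(s)}(0) + \ell^{-1} \int_0^t \tilde{T}_s(t-\tau) \tilde{C}_{s+1} g_\varepsilon^{(s+1)}(\tau) d\tau,
\end{equation*}
and verify using the above bounds that $\Psi$ is a contraction once $T_L < C_d \ell e^{\mu_0} \beta_0^{(d+1)/2}$ for a suitable $C_d$; the Banach fixed point theorem then produces a unique solution satisfying property (ii). Property (i) follows because $\tilde{C}_{s+1} g_\varepsilon^{(s+1)}$ is bounded by the weighted $L^\infty$ estimates above, so the Duhamel integrand is $L^1_{\mathrm{loc}}$ in $(t,Z_s)$. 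Property (iii)---equivalence of mild and distributional formulations---is routine: one differentiates the mild formula under the integral (justified by the $L^1_{\mathrm{loc}}$ bound) and checks that the specular boundary condition on $\partial\tilde{\mathcal{D}}_s$ holds a.e., which is built into the definition of $\tilde{T}_s(t)$.

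The main genuine obstacle---if any---is a careful construction of $\tilde{T}_s(t)$ as a measure-preserving flow on $\tilde{\mathcal{D}}_s$ away from a null set of pathological initial configurations, including verifying that simultaneous or grazing collisions among the first $m-1$ particles do not accumulate. This step is not harder than the standard construction of Alexander \cite{Al1975} used in Section \ref{sec:3}, since the exclusion conditions defining $\tilde{\mathcal{D}}_s$ are a \emph{subset} of those defining $\mathcal{D}_s$, so the measure-theoretic arguments transfer verbatim after ignoring the trivial extra free coordinates of particles $m,\dots,s$. All other steps reduce to an exact repetition of the Lanford argument, which is why we are content to omit them.
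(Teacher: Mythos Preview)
Your proposal is correct and follows exactly the approach the paper indicates: the paper states that ``the proof follows Lanford's fixed point argument so we omit it,'' and your sketch is precisely a Lanford-type contraction argument in weighted $L^\infty$ applied to the mild form (\ref{eq:AppA-UBEH-mild}), with the only adaptations being the replacement of $\mathcal{D}_s$ by $\tilde{\mathcal{D}}_s$ and $T_s$ by $\tilde{T}_s$. Your observation that the construction of $\tilde{\psi}_s^t$ reduces to Alexander's argument for the first $m-1$ particles, with the remaining particles in free transport, is the right way to handle the one nontrivial point.
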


We now turn to the main result of this section:

\begin{proposition}
\label{prop:AppA-factorize}
Fix an integer $m\geq 2$.
Let $\left\{ g_\varepsilon^{(s)}(t)\right\}_{s\geq m-1}$
be a sequence of functions, with each $g_\varepsilon^{(s)}
(t,Z_s)$ defined for $(t,Z_s) \in
[0,T)\times \tilde{\mathcal{D}}_s$.
Let $g_{\varepsilon}(t,x,v)$ be defined for $t\in [0,T)$ and
$x,v \in \mathbb{R}^d$.
Further suppose that there exists $\beta_T > 0$ and $\mu_T \in \mathbb{R}$
such that
\begin{equation}
\sup_{s \geq m-1} \sup_{t\in [0,T)}
\sup_{Z_s \in \tilde{\mathcal{D}}_s}
e^{\mu_T s}
e^{\beta_T E_s (Z_s)}
\left| g_\varepsilon^{(s)} (t,Z_s) \right|
\leq 1
\end{equation}
and
\begin{equation}
\sup_{t\in [0,T)} \sup_{x,v \in \mathbb{R}^d}
e^{\mu_T} 
e^{\frac{1}{2} \beta_T |v|^2} \left| g_{\varepsilon} (t,x,v) \right|
\leq 1
\end{equation}
and that $\left( \partial_t + V_s \cdot \nabla_{X_s}\right)
g_\varepsilon^{(s)}\in L^1 \left(\mathcal{O}\right)$ for any bounded
open set $\mathcal{O} \subset [0,T)\times \tilde{\mathcal{D}}_s$.
Then, if $\left\{ g_\varepsilon^{(s)}(t)\right\}_{s \geq m-1}$
solve (\ref{eq:AppA-UBEH}-\ref{eq:AppA-UBEH-bc}), $g_{\varepsilon}(t)$ solves
(\ref{eq:AppA-BE}), and
\begin{equation}
g_\varepsilon^{(s)} (0) = 
g_\varepsilon^{(m-1)} (0) \otimes
g_{\varepsilon}(0)^{\otimes (s-m+1)}
\end{equation}
for all $s\geq m$, then for $t\in [0,T)$ there holds
\begin{equation}
g_\varepsilon^{(s)} (t) =
 g_\varepsilon^{(m-1)} (t) \otimes
g_{\varepsilon}(t)^{\otimes (s-m+1)}
\end{equation}
for all $s \geq m$.
\end{proposition}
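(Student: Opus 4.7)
I would prove the factorization by an ansatz-plus-uniqueness argument. The direct attempt --- plugging $g_\varepsilon^{(m-1)}(t) \otimes g_\varepsilon(t)^{\otimes(s-m+1)}$ into the UBEH --- fails at level $m-1$, since the hierarchy at that level is closed by the actual $m$-particle density $g_\varepsilon^{(m)}$, and verifying the ansatz requires $g_\varepsilon^{(m)}$ to coincide with $g_\varepsilon^{(m-1)}\otimes g_\varepsilon$, which is precisely what must be proved. To break the circularity I would first construct an auxiliary pair $(h^{(m-1)}, h)$ solving the closed \emph{reduced} system
\begin{align*}
(\partial_t + V_{m-1}\cdot\nabla_{X_{m-1}}) h^{(m-1)} & = \ell^{-1}\tilde{C}_m[h^{(m-1)}\otimes h], \\
(\partial_t + v\cdot\nabla_x) h & = \ell^{-1}\tilde{C}_2(h\otimes h),
\end{align*}
with initial data $h^{(m-1)}(0) = g_\varepsilon^{(m-1)}(0)$ and $h(0) = g_\varepsilon(0)$. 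Since the second equation is closed, solve first for $h$ by a standard Lanford-style fixed-point argument on a short interval $[0, T_L]$, then for $h^{(m-1)}$, which is a linear transport-plus-source equation driven by the known $h$; the same weighted $L^\infty$ bounds propagate.

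\textbf{Ansatz and UBEH verification.} With $(h^{(m-1)}, h)$ in hand, define $\hat{g}^{(s)}(t) := h^{(m-1)}(t) \otimes h(t)^{\otimes(s-m+1)}$ for $s\geq m$, and $\hat{g}^{(m-1)}:=h^{(m-1)}$. I would verify that $\{\hat{g}^{(s)}\}_{s\geq m-1}$ solves UBEH in the distributional sense. Applying the product rule to $\hat{g}^{(s)}$ and substituting the reduced equations gives the LHS as $\ell^{-1}\tilde{C}_m[h^{(m-1)}\otimes h]\otimes h^{\otimes(s-m+1)}$ plus a Boltzmann-Enskog contribution on each uncorrelated coordinate. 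On the RHS, split $\tilde{C}_{s+1}\hat{g}^{(s+1)} = \sum_{i=1}^{m-1}\tilde{C}_{i,s+1}\hat{g}^{(s+1)} + \sum_{i=m}^{s}\tilde{C}_{i,s+1}\hat{g}^{(s+1)}$. For $i\leq m-1$ the uncorrelated factors $h(z_m)\cdots h(z_s)$ do not depend on $(\omega, v_{s+1})$ and pull outside the integral; the remaining integrand is exactly the $i$-th contribution to $\tilde{C}_m$ acting on $h^{(m-1)}\otimes h$, so summing over $i$ reproduces the first LHS term. For $i\geq m$ both colliding particles are drawn from $h$ and all other factors pull out, leaving $\tilde{C}_2(h\otimes h)(z_i)$; summing reproduces the second LHS term. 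The boundary condition (\ref{eq:AppA-UBEH-bc}) for $\hat{g}^{(s)}$ follows from the corresponding one for $h^{(m-1)}$ since $\partial\tilde{\mathcal{D}}_s$ only couples pairs $i,j\leq m-1$.

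\textbf{Conclusion via two uniqueness theorems.} The initial-factorization hypothesis gives $\hat{g}^{(s)}(0) = g_\varepsilon^{(m-1)}(0)\otimes g_\varepsilon(0)^{\otimes(s-m+1)} = g_\varepsilon^{(s)}(0)$, and the tensor-product estimate places $\hat{g}^{(s)}$ in the uniqueness class of Proposition \ref{prop:AppA-UBEH-Lanford}, so that proposition forces $\hat{g}^{(s)}(t) = g_\varepsilon^{(s)}(t)$ on $[0,T_L]$. Reading off $s=m-1$ yields $h^{(m-1)}(t) = g_\varepsilon^{(m-1)}(t)$; uniqueness for the Boltzmann-Enskog equation (applied to $h$ and $g_\varepsilon$, which share initial data and the same weighted $L^\infty$ bounds) yields $h(t) = g_\varepsilon(t)$. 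Combining,
\[
g_\varepsilon^{(s)}(t) = g_\varepsilon^{(m-1)}(t)\otimes g_\varepsilon(t)^{\otimes(s-m+1)} \qquad \forall\, s\geq m,\; t\in[0,T_L].
\]
Since this factorization is inherited at $t=T_L$ as new initial data, and the uniform bounds on $[0,T)$ permit a $T_L$ independent of the base time, iterating finitely many times extends the factorization to all of $[0,T)$.

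\textbf{Main obstacle.} The conceptual novelty is the interplay between two uniqueness theorems (reduced system and full UBEH); neither alone suffices, precisely because the naive ansatz is self-referential. The only technically delicate computation is the splitting of $\tilde{C}_{s+1}$ in the second paragraph: one must recognize that creating the $(s+1)$-st particle adjacent to a correlated-block particle $i\leq m-1$ is, up to relabeling, structurally identical to creating the $m$-th particle in the reduced hierarchy, so that summing over $i\leq m-1$ recovers $\tilde{C}_m$ applied to the factored density $h^{(m-1)}\otimes h$. Everything else is bookkeeping.
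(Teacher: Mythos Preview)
Your proposal is correct and follows essentially the same ansatz-plus-uniqueness strategy as the paper. The only difference is cosmetic: the paper uses the given $g_\varepsilon(t)$ directly in place of your auxiliary $h$ (since $g_\varepsilon$ is already assumed to solve (\ref{eq:AppA-BE})), which spares you the separate construction of $h$ and the final appeal to Boltzmann--Enskog uniqueness.
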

\begin{proof}
We proceed by constructing a solution of the unsymmetric
Boltzmann-Enskog hierarchy (\ref{eq:AppA-UBEH}-\ref{eq:AppA-UBEH-bc})
 with the desired property; then, the conclusion follows by uniqueness.
Let $T_L < C_d \ell e^{\mu_T} \beta_T^{\frac{d+1}{2}}$, where
$C_d$ is the constant appearing in 
Proposition \ref{prop:AppA-UBEH-Lanford}.

Recall that $g_{\varepsilon}(t)$ is  the solution to
(\ref{eq:AppA-BE}), with initial data $g_{\varepsilon}(0)$. Let us
now define $u_\varepsilon^{(m-1)} (t)$ to be the solution of
the following equation, for $0\leq t \leq T_L$:
\begin{equation}
\label{eq:AppA-u-epsilon}
\left( \partial_t + V_{m-1} \cdot \nabla_{X_{m-1}}\right) 
u_\varepsilon^{(m-1)} (t)
= \ell^{-1} \tilde{C}_m \left( 
u_\varepsilon^{(m-1)} (t) \otimes g_{\varepsilon}(t)\right)
\end{equation}
with boundary condition $u_\varepsilon^{(m-1)} 
(t,Z_{m-1}^*) = u_\varepsilon^{(m-1)} 
(t,Z_{m-1})$ along $[0,T_L]\times
\partial \tilde{\mathcal{D}}_{m-1}$, and initial
data $g_\varepsilon^{(m-1)} (0)$.
The existence and uniqueness for (\ref{eq:AppA-u-epsilon}) on a time
interval of size $T_L$ follows from a modified version of Lanford's
fixed point argument; moreover, the solution obeys the following bound:
\begin{equation}
\sup_{t\in [0,T_L]}
\sup_{Z_{m-1} \in \tilde{\mathcal{D}}_{m-1}}
e^{2(\mu_T-1)}
e^{\frac{1}{2} \beta_T E_{m-1} (Z_{m-1})} 
\left| u_\varepsilon^{(m-1)} (t,Z_{m-1}) \right|
\leq 2
\end{equation}
Having defined
$u_\varepsilon^{(m-1)} (t)$,
let us define, for $s \geq m$,
\begin{equation}
u_\varepsilon^{(s)} (t) = u_\varepsilon^{(m-1)} (t)
\otimes g_{\varepsilon}(t)^{\otimes (s-m+1)}
\end{equation}
Now it is straightforward to verify that the sequence
$\left\{ u_\varepsilon^{(s)}(t) \right\}_{s \geq m-1}$
satisfies (\ref{eq:AppA-UBEH}-\ref{eq:AppA-UBEH-bc}) for $t\in [0,T_L]$; by
uniqueness, we conclude that $g_\varepsilon^{(s)} (t) =
 u_\varepsilon^{(s)} (t)$
for all $s \geq m-1$ and $t\in [0,T_L]$.

We can iterate the same argument on the time intervals $[T_L,2T_L]$,
$[2T_L,3T_L]$, etc., until we have covered the full time interval
$[0,T)$.
\qed
\end{proof}

\subsection{Series Solution for the Unsymmetric Boltzmann-Enskog Hierarchy}

We will develop a series expansion and corresponding pseudo-trajectories
for the unsymmetric Boltzmann-Enskog hierarchy 
(\ref{eq:AppA-UBEH}-\ref{eq:AppA-UBEH-bc}). The main differences between the
unsymmetric Boltzmann-Enskog hierarchy and the BBGKY hierarchy are twofold:
first, the former is an infinite hierarchy, whereas the latter is finite;
and second, the former tracks correlations between $m-1$ particles,
whereas the latter tracks correlations between \emph{all} particles.
Since the two hierarchies are so similar, the developments in this section
will be almost identical to those of Section \ref{sec:8}. Nevertheless,
there are a few subtle differences which are important in our proof, so
in the interest of completeness we repeat the construction in this case.

The main point we wish to emphasize is that there is a new dynamics,
given by a measurable measure-preserving map
$\tilde{\psi}_s^t : \tilde{\mathcal{D}}_s \rightarrow
\tilde{\mathcal{D}}_s$, with the property that
\begin{equation}
\left(\tilde{T}_s (t) g^{(s)} \right) (Z_s) =
g^{(s)} \left( \tilde{\psi}_s^{-t} Z_s\right)
\end{equation}
where $g^{(s)}(Z_s)$ is an arbitrary measurable function with finite
integral, and $\tilde{T}_s$ is the transport operator appearing in
the mild form (\ref{eq:AppA-UBEH-mild}) of the unsymmetric Boltzmann-Enskog
hierarchy. The dynamics $\tilde{\psi}_s^t$ 
forces collisions between the first $m-1$ labeled particles, whereas any
pair of particles with
$(i,j)$ with $1\leq i \leq s$ and $m \leq j \leq s$ may pass through each
other without colliding.
We will need to use $\tilde{\psi}_s^t$ in place of $\psi_s^t$ in the
construction of pseudo-trajectories for the unsymmetric Boltzmann-Enskog
hierarchy.

Similar to the BBGKY hierarchy, we can write down an iterated Duhamel
series for the unsymmetric Boltzmann-Enskog hierarchy 
(\ref{eq:AppA-UBEH-mild}), like so: 
\begin{equation}
\begin{aligned}
& g_\varepsilon^{(s)} (t) = \sum_{k=0}^\infty
\ell^{-k} \times \\
& \qquad  \times  \int_0^t \int_0^{t_1} \dots \int_0^{t_{k-1}}
\tilde{T}_s (t-t_1) \tilde{C}_{s+1} \dots
\tilde{T}_{s+k} (t_k) g_\varepsilon^{(s+k)} (0) 
dt_k \dots dt_1 \\
& \qquad \qquad \qquad \qquad \qquad \qquad
\qquad \qquad \qquad \qquad \qquad \qquad
\textnormal{(if } s \geq m-1 \textnormal{)}
\end{aligned}
\end{equation}
Notice that the collision operators $\tilde{C}_{s+1}$ have replaced the
collision operators $C_{s+1}$ which appear in the Duhamel series for the
BBGKY hierarchy, and the transport operators $\tilde{T}_s$ have replaced
$T_s$. The collision operators $\tilde{C}_{s+1}$ \emph{do not} enforce
any exclusion condition, as can be seen from
(\ref{eq:AppA-coll-1}-\ref{eq:AppA-coll-3}); this fact will have to be
reflected in the construction of pseudo-trajectories.

Fix an integer $m\geq 2$ and let $s\geq m-1$.
We will be defining the symbols
\begin{equation}
\tilde{Z}_{s,s+k}
 \left[ Z_s,t;\left\{t_j,v_{s+j},\omega_j,i_j\right\}_{j=1}^k
\right]
\end{equation}
where $Z_s \in \overline{\tilde{\mathcal{D}}}_s$,
$0 \leq t_k < \dots < t_2 < t_1 < t$, 
$i_1 \in \left\{ 1,2,\dots,s\right\}$, $i_2 \in
\left\{ 1,2,\dots,s+1\right\}$, \dots, 
$i_k \in \left\{ 1,2,\dots,s+k-1\right\}$,
$v_{s+j} \in \mathbb{R}^d$, and $\omega_j \in \mathbb{S}^{d-1}$.
Given $Z_s \in \overline{\tilde{\mathcal{D}}}_s$ and $t>0$ we define
\begin{equation}
\tilde{Z}_{s,s} \left[ Z_s,t \right] = \tilde{\psi}_s^{-t} Z_s
\end{equation}
and $\tilde{Z}_{s,s} \left[ Z_s,0\right] = Z_s$. If the symbol
\begin{equation}
\tilde{Z}_{s,s+k} 
\left[ Z_s,t;\left\{ t_j,v_{s+j},\omega_j,i_j\right\}_{j=1}^k\right]
\in \overline{\tilde{\mathcal{D}}}_{s+k}
\end{equation}
is defined, then for all $\tau > 0$ we define
\begin{equation}
\begin{aligned}
& \tilde{Z}_{s,s+k} \left[ Z_s,t+\tau;
\left\{ t_j + \tau, v_{s+j},\omega_j,i_j \right\}_{j=1}^k \right]  =\\
& \qquad \qquad \qquad \qquad \qquad
 = \tilde{\psi}_{s+k}^{-\tau} \tilde{Z}_{s,s+k} \left[ Z_s,t;
\left\{ t_j,v_{s+k},\omega_j,i_j \right\}_{j=1}^k \right]
\end{aligned}
\end{equation}
Now suppose that the symbol
\begin{equation}
\tilde{Z}_{s,s+k} \left[ Z_s, t;
\left\{ t_j,v_{s+j},\omega_j,i_j\right\}_{j=1}^k\right] =
\left( X_{s+k}^\prime, V_{s+k}^\prime\right) \in 
\tilde{\mathcal{D}}_{s+k}
\end{equation}
is defined, $t_{k+1}=0$, $v_{s+k+1}\in\mathbb{R}^d$,
$\omega_{k+1}\in \mathbb{S}^{d-1}$, and $i_{k+1} \in
\left\{ 1,2,\dots,s+k\right\}$. Further suppose that
$\omega_{k+1} \cdot \left( v_{s+k+1}-v_{i_{k+1}}^\prime\right) \leq 0$. Then
we define
\begin{equation}
\tilde{Z}_{s,s+k+1}
\left[ Z_s,t;\left\{ t_j,v_{s+j},\omega_j,i_j \right\}_{j=1}^{k+1}
\right] =
\left( X_{s+k}^\prime,V_{s+k}^\prime,
x_{i_{k+1}}^\prime+\varepsilon \omega_{k+1},v_{s+k+1}\right)
\end{equation}
Similarly, suppose that the symbol
\begin{equation}
\tilde{Z}_{s,s+k}
\left[ Z_s,t;\left\{ t_j,v_{s+j},\omega_j,i_j \right\}_{j=1}^k
\right] = \left( X_{s+k}^\prime,V_{s+k}^\prime\right) \in
\tilde{\mathcal{D}}_{s+k}
\end{equation}
is defined, $t_{k+1} = 0$, $v_{s+k+1} \in \mathbb{R}^d$,
$\omega_{k+1} \in \mathbb{S}^{d-1}$, and
$i_{k+1} \in \left\{ 1,2,\dots,s+k\right\}$. Further suppose
that $\omega \cdot \left( v_{s+k+1} - v_{i_{k+1}}^\prime\right) > 0$.
Then we define
\begin{equation}
\begin{aligned}
& \tilde{Z}_{s,s+k+1}
\left[ Z_s,t; \left\{ t_j,v_{s+j},\omega_j,i_j\right\}_{j=1}^{k+1}
\right] = \\
& = \left( x_1^\prime,v_1^\prime,\dots,
x_{i_{k+1}}^\prime,
v_{i_{k+1}}^\prime+\omega_{k+1} \omega_{k+1}\cdot
\left( v_{s+k+1}-v_{i_{k+1}}^\prime\right), \dots, x_s^\prime,v_s^\prime,
 \right. \\
& \left. 
\qquad \qquad \qquad \qquad x_{i_{k+1}}^\prime +\varepsilon \omega_{k+1},
v_{s+k+1} - \omega_{k+1} \omega_{k+1} \cdot
\left( v_{s+k+1}-v_{i_{k+1}}^\prime \right)\right)
\end{aligned}
\end{equation}

Now we define the iterated collision kernel, again using induction.
If $Z_s \in \overline{\tilde{\mathcal{D}}}_s$ and $t\geq 0$ we define
\begin{equation}
\tilde{b}_{s,s} \left[ Z_s,t\right] = 1
\end{equation}
If the symbol
\begin{equation}
\tilde{b}_{s,s+k}
\left[ Z_s,t;\left\{ t_j,v_{s+j},\omega_j,i_j\right\}_{j=1}^k
\right]
\end{equation}
is defined and $\tau > 0$ then we define
\begin{equation}
\begin{aligned}
& \tilde{b}_{s,s+k} \left[ Z_s,t+\tau;\left\{t_j+\tau,
v_{s+j},\omega_j,i_j\right\}_{j=1}^k \right] = \\
& \qquad \qquad \qquad \qquad \qquad \qquad
= \tilde{b}_{s,s+k}
\left[ Z_s,t;\left\{ t_j,v_{s+j},\omega_j,i_j\right\}_{j=1}^k
\right]
\end{aligned}
\end{equation}
If the symbol
\begin{equation}
\tilde{b}_{s,s+k} \left[ Z_s,t;\left\{t_j,v_{s+j},\omega_j,i_j\right\}_{j=1}^k
\right]
\end{equation}
is defined, and $t_{k+1}=0$, $v_{s+k+1}\in\mathbb{R}^d$,
$\omega_{k+1} \in \mathbb{S}^{d-1}$, and
$i_{k+1} \in \left\{ 1,2,\dots,s+k\right\}$ then we define
\begin{equation}
\begin{aligned}
& \tilde{b}_{s,s+k+1} \left[ Z_s, t ;
\left\{ t_j,v_{s+j},\omega_j,i_j \right\}_{j=1}^{k+1}\right] =\\
&\qquad \qquad = \tilde{b}_{s,s+k} \left[ Z_s,t;
\left\{ t_j,v_{s+j},\omega_j,i_j\right\}_{j=1}^k \right] 
\times \omega_{k+1}\cdot \left( v_{s+k+1}-v_{i_{k+1}}^\prime\right) 
\end{aligned}
\end{equation}

We now have the following identity which holds pointwise for
$Z_s \in \overline{\tilde{\mathcal{D}}}_s$ and $t\geq 0$:
\begin{equation}
\begin{aligned}
& g_\varepsilon^{(s)} (t,Z_s) =  \sum_{k=0}^\infty \ell^{-k}\times \\
& \times \sum_{i_1=1}^s \dots \sum_{i_k=1}^{s+k-1} \int_0^t \dots
\int_0^{t_{k-1}} \int_{\mathbb{R}^{dk}}
\int_{\left(\mathbb{S}^{d-1}\right)^k} \left( \prod_{m=1}^k
d\omega_m dv_{s+m} dt_m \right) \times \\
& \times \left( \tilde{b}_{s,s+k}
 [\cdot] g_\varepsilon^{(s+k)} 
\left(0,\tilde{Z}_{s,s+k} [\cdot]\right)\right)
\left[ Z_s,t;\left\{t_j,v_{s+j},\omega_j,i_j\right\}_{j=1}^k\right]
\end{aligned}
\end{equation}

\subsection{Stability of pseudo-trajectories}

This subsection is concerned purely with the psuedo-trajectories
generated by the BBGKY hierarchy.
We are going to show that, if $Z_s$ is such that all but the
first two particles have free trajectories under the backwards
particle flow\footnote{including the possibilities that the first
two particles
collide \emph{or} ``pass through'' each other, or miss entirely},
then adding a particle preserves this property with
high probability. 
 This result is important because it allows us to
compare pseudo-trajectories for the BBGKY hierarchy with those
of the unsymmetric Boltzmann-Enskog hierarchy. Then it is 
straightforward to conclude that partial factorization is propagated by the
BBGKY hierarchy, because by Proposition \ref{prop:AppA-factorize},
partial factorization is propagated by the unsymmetric Boltzmann-Enskog
hierarchy. 

\begin{remark}
We fix $m=3$ in order to justify the
result $f_N^{(s)} (t) \approx f_N^{(2)} (t) \otimes f_t^{\otimes (s-2)}$
on the set
$\mathcal{G}_s \cap \hat{\mathcal{U}}_s^{\eta(\varepsilon)}$,
introduced in Section \ref{sec:2}, under the assumption that
the entire sequence $\left\{ F_N (0) \right\}_N$ is
2-nonuniformly $f_0$-chaotic.
\end{remark}

We will require the following sets:
\begin{equation}
\label{eq:AppA-G-s}
\mathcal{G}_s = \left\{ Z_s = (X_s,V_s) \in \overline{\mathcal{D}_s}
\left| 
\begin{aligned}
& \forall \tau > 0,\; \forall 3 \leq i \leq s,\\
& \qquad \qquad \qquad
\left( \psi_s^{-\tau} Z_s \right)_i = (x_i - v_i \tau, v_i) \\
& \textnormal{and, } \forall \tau > 0, \;
\forall 1 \leq i \leq 2, \; \forall 3 \leq j \leq s, \\
& \qquad \qquad \qquad
|(x_i - x_j) - (v_i - v_j)\tau| > \varepsilon
\end{aligned}
\right. \right\}
\end{equation} 
\begin{equation}
\mathcal{V}_s^\eta = \left\{
(Z_s,Z_s^\prime) \in \overline{\mathcal{D}_s} \times
\overline{\mathcal{D}_s} \left|
\begin{aligned}
& \inf_{1 \leq i \neq j \leq s} |v_i-v_j^\prime| > \eta  \\
& \qquad \qquad \textnormal{ and } \\
& \inf_{1 \leq i \leq s \; : \; |v_i - v_i^\prime|\neq 0}
|v_i - v_i^\prime|>\eta
\end{aligned}
\right. \right\}
\end{equation}
\begin{equation}
\label{eq:AppA-U-hat-s-eta}
\hat{\mathcal{U}}_s^\eta = \left\{ Z_s = (X_s,V_s) \in
\mathcal{U}_s^\eta \left| \forall \tau,\tau^\prime > 0,
(\psi_s^{-\tau} Z_s,\psi_s^{-\tau^\prime} Z_s) \in
\mathcal{V}_s^\eta  \right. \right\}
\end{equation}
Note carefully that
 $Z_s \in \mathcal{G}_{s}$ \emph{does not} guarantee
that the backwards trajectory $\left\{
\psi_s^{-t} Z_s \right\}_{t\geq 0}$ is free.

We are ready to state the main result for this section.
\begin{proposition}
\label{prop:AppA-bad-set}
There is a constant $c_d > 0$ such that all the following holds:
Assume that
\begin{equation}
\begin{aligned}
& Z_{s,s+k} \left[ Z_s,t; t_1,\dots,t_k;
v_{s+1},\dots,v_{s+k}; \omega_1,\dots,\omega_k;
i_1,\dots,i_k\right] = \\
&\qquad \qquad \qquad \qquad \qquad
 = (X_{s+k}^\prime,V_{s+k}^\prime) \in
\mathcal{G}_{s+k} \cap
\hat{\mathcal{U}}^\eta_{s+k}
\end{aligned}
\end{equation}
and $E_{s+k} (Z_{s+k}^\prime) \leq 2 R^2$ with $\eta < R$; then,\\
(i) for all $\tau \geq 0$ we have
\begin{equation}
\begin{aligned}
& Z_{s,s+k} \left[ Z_s,t+\tau; t_1+\tau,\dots,t_k+\tau;
v_{s+1},\dots,v_{s+k}; \omega_1,\dots,\omega_k;
i_1,\dots,i_k\right]  \\
& \qquad \qquad \qquad \qquad
\qquad \qquad \qquad \qquad \qquad \qquad \qquad
\in
\mathcal{G}_{s+k} \cap
\hat{\mathcal{U}}^\eta_{s+k}
\end{aligned}
\end{equation}
(ii) for any $i_{k+1} \in \left\{ 1,2,\dots,s+k\right\}$, and for
any $\alpha, y >  0$ and $\theta \in \left(0,\frac{\pi}{2}\right)$ such that
$\sin \theta > c_d y^{-1} \varepsilon$, there exists a measurable
set $\mathcal{B} \subset [0,\infty) \times \mathbb{R}^d \times
\mathbb{S}^{d-1}$, which may depend on 
$Z_s$, $t$, and $\left\{ t_j,v_{s+j},\omega_j,i_j\right\}_{j=1}^k$,
such that
\begin{equation}
\begin{aligned}
& \forall T > 0, \\
& \int_0^T \int_{B_{2R}^d} \int_{\mathbb{S}^{d-1}}
\mathbf{1}_{(\tau,v_{s+k+1},\omega_{k+1}) \in \mathcal{B}}
d\omega_{k+1} dv_{s+k+1} d\tau \leq \\
& \qquad \qquad \leq
C_{d,s,k} TR^d \left[ \alpha + \frac{y}{\eta T} + C_{d,\alpha}
\left( \frac{\eta}{R}\right)^{d-1} + C_{d,\alpha}
\theta^{(d-1)/2} \right]
\end{aligned}
\end{equation}
and
\begin{equation}
\begin{aligned}
& Z_{s,s+k+1} \left[ Z_s,t+\tau;t_1+\tau,\dots,t_k+\tau,0;
v_{s+1},\dots,v_{s+k},v_{s+k+1};\right.\\
& \left. \qquad\qquad \qquad \qquad \qquad \qquad \qquad
\omega_1,\dots,\omega_k,\omega_{k+1};i_1,\dots,i_k,i_{k+1}\right] \\
& \qquad \qquad \qquad \qquad \qquad \qquad \qquad \qquad \qquad \qquad
\in \mathcal{G}_{s+k+1} \cap \hat{\mathcal{U}}_{s+k+1}^\eta
\end{aligned}
\end{equation}
whenever $(\tau,v_{s+k+1},\omega_{k+1}) \in \left(
[0,\infty)\times \mathbb{R}^d \times \mathbb{S}^{d-1}\right)
\backslash \mathcal{B}$.
\end{proposition}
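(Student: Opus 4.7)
The plan is to mimic the proof of Proposition \ref{prop:s9-stability}, upgrading the geometric exclusions so that they protect the pair $\mathcal{G}_{s+k+1} \cap \hat{\mathcal{U}}_{s+k+1}^\eta$ in place of $\mathcal{K}_{s+k+1} \cap \mathcal{U}_{s+k+1}^\eta$. Claim \emph{(i)} is essentially formal: each condition defining $\mathcal{G}_{s+k}$ and $\hat{\mathcal{U}}_{s+k}^\eta$ is a ``for all $\tau > 0$'' statement and is therefore invariant under pre-composition with $\psi_{s+k}^{-\tau_0}$ for any $\tau_0 \geq 0$. The one subtlety is that if particles $1$ and $2$ collide under the backwards flow, their post-collisional velocities must still be $\eta$-separated from all other velocities; this is precisely what the pair definition of $\mathcal{V}_{s+k}^\eta$ (embedded in $\hat{\mathcal{U}}_{s+k}^\eta$) is designed to guarantee, so the invariance goes through.

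For claim \emph{(ii)}, I would split $(\tau, v_{s+k+1}, \omega_{k+1})$ according to the sign of $\omega_{k+1} \cdot (v_{s+k+1} - v_{i_{k+1}}^\prime)$ and build $\mathcal{B} = \mathcal{B}^+ \cup \mathcal{B}^-$ as a finite union of small-measure subsets, in direct analogy with $\mathcal{B}^\pm_I, \mathcal{B}^\pm_{II}, \ldots$ in the proof of Proposition \ref{prop:s9-stability}. Four types of exclusion are needed: \emph{(a)} a spatial-concentration exclusion forbidding the created particle from lying within $y$ of any other particle at the creation time, which contributes $\leq C_d (s+k) R^d \eta^{-1} y$; \emph{(b)} a velocity-gap exclusion forbidding $v_{s+k+1}$ (and, in the post-collisional case, $v_{s+k+1}^*$ and $v_{i_{k+1}}^{\prime *}$) from coming within $\eta$ of any existing velocity or its starred image, which contributes $\leq C_d T \eta^d + C_{d,\alpha} (s+k) T R \eta^{d-1}$; \emph{(c)} cone conditions in the spirit of $\mathcal{B}^-_{III,i}$, $\mathcal{B}^+_{VI,i}$, $\mathcal{B}^+_{VII,i}$ that prevent recollisions of the new particle with the existing $s+k$ particles, contributing $\leq C_{d,\alpha} (s+k) T R^d \theta^{(d-1)/2}$; and \emph{(d)} a new cylindrical exclusion, specific to $\mathcal{G}_{s+k+1}$, which forbids the \emph{free backwards} trajectory of the created particle from ever entering an $\varepsilon$-neighborhood of the free trajectories of particles $1$ and $2$. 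Exclusion \emph{(d)} is the main new ingredient compared to Proposition \ref{prop:s9-stability}, but its measure is controlled via Lemma \ref{lemma:s9-cylinder} and absorbed into the $\theta^{(d-1)/2}$ contribution after imposing $\sin \theta > c_d y^{-1} \varepsilon$.

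The main obstacle is the interplay between the two-body backwards dynamics of the privileged pair $\{1,2\}$ and the pair condition $\mathcal{V}_{s+k+1}^\eta$ inside $\hat{\mathcal{U}}_{s+k+1}^\eta$. Because particles $1$ and $2$ may collide at a single backwards time $\tau_*$, the newly created particle must remain $\eta$-separated from \emph{both} $v_1^\prime, v_2^\prime$ and their starred images $v_1^{\prime *}, v_2^{\prime *}$, so that $(\psi_{s+k+1}^{-\tau} Z^\prime_{s+k+1}, \psi_{s+k+1}^{-\tau^\prime} Z^\prime_{s+k+1}) \in \mathcal{V}_{s+k+1}^\eta$ for all $\tau, \tau^\prime > 0$; the cone conditions in \emph{(c)} must similarly be enlarged to include cones aligned with the post-collisional relative velocities. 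Each such enlargement at most doubles the corresponding measure contribution, preserving the form of the bound in \emph{(ii)}. Once all exclusion sets are assembled and the constant $c_d$ is chosen large enough that $\sin \theta > c_d y^{-1} \varepsilon$ rules out any residual near-recollision, the complement of $\mathcal{B}$ consists exactly of creation parameters for which the new configuration lies in $\mathcal{G}_{s+k+1} \cap \hat{\mathcal{U}}_{s+k+1}^\eta$, closing the induction.
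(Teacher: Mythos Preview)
Your proposal is correct and follows essentially the same approach as the paper. Both proofs decompose $\mathcal{B}$ into spatial-concentration, near-grazing, velocity-gap, and cone-type exclusions, with the key upgrade over Proposition~\ref{prop:s9-stability} being that each exclusion must account for \emph{both} line segments that particles $1$ and $2$ can trace out under the backwards two-body flow. The paper packages this via a single time-projection device $Z_{s+k}^\prime(\tau;t^\prime) = \hat{\psi}_{s+k}^{t^\prime}\bigl(\psi_{s+k}^{-(\tau+t^\prime)} Z_{s+k}^\prime\bigr)$, so that every exclusion is simply quantified over $t^\prime \geq 0$; you achieve the same effect by explicitly doubling each exclusion to include the ``starred'' velocities $v_1^{\prime *}, v_2^{\prime *}$. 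One cosmetic point: your item (d) is already subsumed by the enlarged cone conditions in (c), since $\sin\theta > c_d y^{-1}\varepsilon$ forces the free trajectories to miss by more than $\varepsilon$; and your description of (a) should, strictly speaking, compare $x_{i_{k+1}}^\prime$ at the creation time against the time-projected positions $x_i^\prime(\tau;t^\prime)$ of the other particles (not just their positions at the single slice $t^\prime=0$), as the paper does in its set $\mathcal{B}_I$ --- but this is the same enlargement you already flag in your ``main obstacle'' paragraph, and it only multiplies the measure bound by a harmless combinatorial constant.
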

\begin{proof}
Claim \emph{(i)} is trivial so we turn to claim \emph{(ii)}.
The first step is to delete particle addition times for which particles
are too concentrated in space. However, it will not be enough to delete
times for which particles at a \emph{single} time-slice are nearby.
Instead, in order to eventually control \emph{all} possible recollisions
arising from the collisional dynamics, we gather together \emph{all}
line segments generated by the flow prior to time $\tau$ and project
them via free flight to land on the $\tau$ time-slice. Then we ask that
the entire set of phase points generated in this way does not
concentrate in space.
For $t^\prime \in \mathbb{R}$ and $Z_s^0 = (X_s^0,V_s^0) 
\in \mathbb{R}^{ds} \times
\mathbb{R}^{ds}$ we define
\begin{equation}
\hat{\psi}_s^{t^\prime} Z_s^0 = (X_s^0 + V_s^0 t^\prime, V_s^0)
\end{equation}
Also, we define $Z_{s+k}^\prime (\tau;t^\prime) =
\hat{\psi}_{s+k}^{t^\prime} \left( \psi_{s+k}^{-(\tau+t^\prime)}
Z_{s+k}^\prime\right)$. If
$Z_s^0 , Z_s^1 \in \mathbb{R}^{2ds}$ then we define
\begin{equation}
d_X(Z_s^0,Z_s^1) = \min\left(
\inf_{1\leq i \neq j \leq s}
|x_i^0 - x_j^1|,
\inf_{1\leq i \leq s \; : \;
(x_i^0,v_i^0) \neq (x_i^1,v_i^1) }
|x_i^0 - x_i^1| \right)
\end{equation}
\begin{equation}
\label{eq:AppA-B-I}
\mathcal{B}_I = 
\left\{
\begin{aligned}
& (\tau,v_{s+k+1},\omega_{k+1}) \in [0,\infty) \times 
\mathbb{R}^d \times \mathbb{S}^{d-1} \textnormal{ such that }
\tau = 0 \textnormal{ or } \\
& \qquad \qquad \qquad \qquad 
\exists t^\prime, t^{\prime \prime} \geq 0 \; : \;
d_X \left( Z_{s+k}^\prime (\tau; t^\prime),
Z_{s+k}^\prime (\tau;t^{\prime \prime}) \right) \leq y
\end{aligned}
\right\}
\end{equation}
We can easily estimate the measure of $\mathcal{B}_I$ due to the
condition $Z_{s+k}^\prime \in \mathcal{G}_{s+k}$; it suffices to
consider (at most) 
two possible line segments for each of the first two particles
(corresponding to whether the particles are allowed to collide,
or pass through each other, or miss entirely),
and for each $3 \leq i \leq s+k$, the \emph{unique} backwards
line segment
available to one of the $i$th particle.
 Distinct line segments are compared
pairwise to find collisions or near-collisions.
Since $Z_{s+k}^\prime \in
\hat{\mathcal{U}}_{s+k}^\eta$, any two line segments  can
only be within a distance $y$ (along a fixed time slice $\tau$)
 for a time $\Delta \tau$ of order
$y\eta^{-1}$ (this is where we explicitly use the integral in
the creation time $\tau$). We have
\begin{equation}
\begin{aligned}
& \int_0^T \int_{B_{2R}^d} \int_{\mathbb{S}^{d-1}}
\mathbf{1}_{(\tau,v_{s+k+1},\omega_{k+1}) \in \mathcal{B}_I}
d\omega_{k+1} dv_{s+k+1} d\tau \leq \\
& \qquad \qquad \qquad \qquad \qquad \qquad \qquad \qquad
\leq C_{d,s,k} R^d \eta^{-1} y
\end{aligned}
\end{equation}

At this point it is useful to distinguish between pre-collisional
and post-collisional configurations for the added particle.
Therefore we introduce two sets,
\begin{equation}
\mathcal{A}^+ = \left\{
\begin{aligned}
& (\tau,v_{s+k+1},\omega_{k+1}) \in
[0,\infty) \times \mathbb{R}^d \times \mathbb{S}^{d-1} 
\textnormal{ such that } \\
& \qquad \qquad \qquad
\omega_{k+1} \cdot \left(
v_{s+k+1} - v_{i_{k+1}}^\prime (\tau;0) \right) > 0
\end{aligned}
\right\}
\end{equation} 
\begin{equation}
\mathcal{A}^- = \left\{
\begin{aligned}
& (\tau,v_{s+k+1},\omega_{k+1}) \in
[0,\infty) \times \mathbb{R}^d \times \mathbb{S}^{d-1} 
\textnormal{ such that } \\
& \qquad \qquad \qquad
\omega_{k+1} \cdot \left(
v_{s+k+1} - v_{i_{k+1}}^\prime (\tau;0) \right) \leq 0
\end{aligned}
\right\}
\end{equation}
We also delete collisions which are close to grazing:
\begin{equation}
\mathcal{B}_{II} = \left\{
\begin{aligned}
& (\tau,v_{s+k+1},\omega_{k+1}) \in [0,\infty) \times
\mathbb{R}^d \times \mathbb{S}^{d-1} \textnormal{ such that } \\
& \left| \omega_{k+1} \cdot \left(v_{s+k+1} - v_{i_{k+1}}^\prime
(\tau;0) \right) \right| \leq (\sin \alpha)
\left| v_{s+k+1} - v_{i_{k+1}}^\prime ( \tau;0 ) \right|
\end{aligned}
\right\}
\end{equation} 
We have
\begin{equation}
\begin{aligned}
& \int_0^T \int_{B_{2R}^d} \int_{\mathbb{S}^{d-1}}
\mathbf{1}_{(\tau,v_{s+k+1},\omega_{k+1}) \in \mathcal{B}_{II}}
d\omega_{k+1} dv_{s+k+1} d\tau 
\leq C_d T R^d \alpha
\end{aligned}
\end{equation}
The pre-collisional configurations and post-collisional configurations
are dealt with separately.

\textbf{Pre-collisional configurations.}
We must guarantee that the $(s+k+1)$-particle state is in
$\hat{\mathcal{U}}_{s+k+1}^\eta$ at the time of particle creation.
Let us define
\begin{equation}
\mathcal{B}_{III}^- = \left\{
\begin{aligned}
& (\tau,v_{s+k+1},\omega_{k+1}) \in \mathcal{A}^- \textnormal{ such that }\\
& \exists \; i \in \left\{ 1,2,\dots,s+k\right\},\;
t^\prime \geq 0 \; : \; 
\left| v_{s+k+1} - v_i^\prime (\tau;t^\prime) \right| \leq \eta
\end{aligned}
\right\}
\end{equation} 
If $(\tau,v_{s+k+1},\omega_{k+1}) \notin \mathcal{B}_{III}^-$, and
the $(s+k+1)$ particle's backwards trajectory is free (which follows
from the next step), we can be sure that the $(s+k+1)$-particle state
is in $\hat{\mathcal{U}}_{s+k+1}^\eta$. We have
\begin{equation}
\begin{aligned}
& \int_0^T \int_{B_{2R}^d} \int_{\mathbb{S}^{d-1}}
\mathbf{1}_{(\tau,v_{s+k+1},\omega_{k+1}) \in \mathcal{B}_{III}^-}
d\omega_{k+1} dv_{s+k+1} d\tau 
\leq C_{d,s,k} T \eta^d
\end{aligned}
\end{equation}

Finally we need to make sure that the backwards trajectory of the
added particle is free (accounting for all possible histories
of particles $1$ and $2$). Let us define
\begin{equation}
\mathcal{B}_{IV}^- = \left\{
\begin{aligned}
& (\tau,v_{s+k+1},\omega_{k+1}) \in \mathcal{A}^- \textnormal{ such that }\\
& \exists i \in \left\{ 1,2,\dots,s+k\right\},\;
t^\prime \geq 0 \; : \; \\
& \frac{\left( x_{i_{k+1}}^\prime (\tau;0)+\varepsilon \omega
- x_i^\prime (\tau;t^\prime) \right)\cdot
\left(v_{s+k+1} - v_i^\prime (\tau; t^\prime)\right)}
{\left| x_{i_{k+1}}^\prime (\tau;0)+\varepsilon \omega -
x_i^\prime (\tau;t^\prime) \right| \left|
v_{s+k+1} - v_i^\prime (\tau;t^\prime) \right|} \geq \cos \theta
\end{aligned}
\right\}
\end{equation} 
We have
\begin{equation}
\begin{aligned}
& \int_0^T \int_{B_{2R}^d} \int_{\mathbb{S}^{d-1}}
\mathbf{1}_{(\tau,v_{s+k+1},\omega_{k+1}) \in \mathcal{B}_{IV}^-}
d\omega_{k+1} dv_{s+k+1} d\tau \leq \\
& \qquad \qquad \qquad \qquad \qquad \qquad \qquad \qquad
\leq C_{d,s,k} T R^d \theta^{d-1}
\end{aligned}
\end{equation}

To conclude, we let $\mathcal{B}^- = \mathcal{B}_I \cup
\mathcal{B}_{II} \cup \mathcal{B}_{III}^- \cup
\mathcal{B}_{IV}^-$; then we have
\begin{equation}
\begin{aligned}
& \int_0^T \int_{B_{2R}^d} \int_{\mathbb{S}^{d-1}}
\mathbf{1}_{(\tau,v_{s+k+1},\omega_{k+1}) \in \mathcal{B}^-}
d\omega_{k+1} dv_{s+k+1} d\tau \leq \\
& \qquad \qquad \qquad \qquad \qquad
\leq C_{d,s,k} T R^d \left[\alpha +
\frac{y}{\eta T} + 
\left(\frac{\eta}{R}\right)^d + \theta^{d-1}\right]
\end{aligned}
\end{equation}
Then again, by assumption, $\sin \theta > c_d y^{-1} \varepsilon$;
by choosing $c_d$ sufficiently large we may guarantee that
\begin{equation}
\begin{aligned}
& Z_{s,s+k+1} \left[ Z_s,t+\tau;t_1+\tau,\dots,t_k+\tau,0;
v_{s+1},\dots,v_{s+k},v_{s+k+1};\right.\\
& \left. \qquad\qquad \qquad \qquad \qquad \qquad \qquad
\omega_1,\dots,\omega_k,\omega_{k+1};i_1,\dots,i_k,i_{k+1}\right] \\
& \qquad \qquad \qquad \qquad \qquad \qquad \qquad \qquad \qquad \qquad
\in \mathcal{G}_{s+k+1} \cap \hat{\mathcal{U}}_{s+k+1}^\eta
\end{aligned}
\end{equation}
whenever $(\tau,v_{s+k+1},\omega_{k+1}) \in
\mathcal{A}^- \backslash \mathcal{B}^-$.

\textbf{Post-collisional configurations.} The post-collisional case is very
similar to the pre-collisional case; the only difference is that we must
account for the collsional change of variables. We define
\begin{equation}
\begin{aligned}
& v_{s+k+1}^* = v_{s+k+1} - \omega_{k+1} \omega_{k+1} \cdot
\left( v_{s+k+1} - v_{i_{k+1}}^\prime (\tau;0) \right) \\
& v_{i_{k+1}}^{\prime *} = v_{i_{k+1}}^\prime (\tau;0) +
\omega_{k+1} \omega_{k+1} \cdot \left(
v_{s+k+1} - v_{i_{k+1}}^\prime (\tau;0) \right)
\end{aligned}
\end{equation}

We remove particle creations with velocities being too close to some other
particle's velocity:
\begin{equation}
\mathcal{B}_{III}^+ = \left\{
\begin{aligned}
& (\tau,v_{s+k+1},\omega_{k+1}) \in \mathcal{A}^+ \backslash
\mathcal{B}_{II} \textnormal{ such that }\\
& \exists \; i \in \left\{ 1,2,\dots,s+k\right\},\;
t^\prime \geq 0 \; : \; 
\left| v_{s+k+1}^* - v_i^\prime (\tau;t^\prime) \right| \leq \eta
\end{aligned}
\right\}
\end{equation} 
\begin{equation}
\mathcal{B}_{IV}^+ = \left\{
\begin{aligned}
& (\tau,v_{s+k+1},\omega_{k+1}) \in \mathcal{A}^+ \backslash
\mathcal{B}_{II} \textnormal{ such that }\\
& \exists \; i \in \left\{ 1,2,\dots,s+k\right\}\backslash \left\{ i_{k+1} \right\},
\;
t^\prime \geq 0 \; : \; 
 \left| v_{i_{k+1}}^{\prime *} - v_i^\prime (\tau;t^\prime) \right| \leq \eta
\end{aligned}
\right\}
\end{equation} 
\begin{equation}
\mathcal{B}_{V}^+ = \left\{
(\tau,v_{s+k+1},\omega_{k+1}) \in \mathcal{A}^+ \textnormal{ such that }
 \left| v_{s+k+1} - v_{i_{k+1}}^\prime (\tau;0) \right| \leq \eta
\right\}
\end{equation} 
Using Lemma \ref{lemma:s9-sphere} we have
\begin{equation}
\begin{aligned}
& \int_0^T \int_{B_{2R}^d} \int_{\mathbb{S}^{d-1}}
\mathbf{1}_{(\tau,v_{s+k+1},\omega_{k+1}) \in \mathcal{B}_{III}^+}
d\omega_{k+1} dv_{s+k+1} d\tau 
\leq C_{d,s,k} C_{d,\alpha} T R \eta^{d-1}
\end{aligned}
\end{equation}
\begin{equation}
\begin{aligned}
& \int_0^T \int_{B_{2R}^d} \int_{\mathbb{S}^{d-1}}
\mathbf{1}_{(\tau,v_{s+k+1},\omega_{k+1}) \in \mathcal{B}_{IV}^+}
d\omega_{k+1} dv_{s+k+1} d\tau 
\leq C_{d,s,k} C_{d,\alpha} T R \eta^{d-1}
\end{aligned}
\end{equation}
\begin{equation}
\begin{aligned}
& \int_0^T \int_{B_{2R}^d} \int_{\mathbb{S}^{d-1}}
\mathbf{1}_{(\tau,v_{s+k+1},\omega_{k+1}) \in \mathcal{B}_{V}^+}
d\omega_{k+1} dv_{s+k+1} d\tau 
\leq C_{d} T \eta^{d}
\end{aligned}
\end{equation}

The last estimate will remove possible recollisions; define the sets
\begin{equation}
\mathcal{B}_{VI}^+ = \left\{
\begin{aligned}
& (\tau,v_{s+k+1},\omega_{k+1}) \in \mathcal{A}^+ \backslash
\mathcal{B}_{II} \textnormal{ such that }\\
& \exists i \in \left\{ 1,2,\dots,s+k\right\} \backslash
\left\{ i_{k+1} \right\},\;
t^\prime \geq 0 \; : \; \\
& \frac{\left( x_{i_{k+1}}^\prime (\tau;0)+\varepsilon \omega
- x_i^\prime (\tau;t^\prime) \right)\cdot
\left(v_{s+k+1}^* - v_i^\prime (\tau; t^\prime)\right)}
{\left| x_{i_{k+1}}^\prime (\tau;0)+\varepsilon \omega -
x_i^\prime (\tau;t^\prime) \right| \left|
v_{s+k+1}^* - v_i^\prime (\tau;t^\prime) \right|} \geq \cos \theta
\end{aligned}
\right\}
\end{equation}
\begin{equation}
\mathcal{B}_{VII}^+ = \left\{
\begin{aligned}
& (\tau,v_{s+k+1},\omega_{k+1}) \in \mathcal{A}^+ \backslash
\mathcal{B}_{II} \textnormal{ such that }\\
& \exists i \in \left\{ 1,2,\dots,s+k\right\} \backslash
\left\{ i_{k+1} \right\},\;
t^\prime \geq 0 \; : \; \\
& \frac{\left( x_{i_{k+1}}^\prime (\tau;0)+\varepsilon \omega
- x_i^\prime (\tau;t^\prime) \right)\cdot
\left(v_{i_{k+1}}^{\prime *} - v_i^\prime (\tau; t^\prime)\right)}
{\left| x_{i_{k+1}}^\prime (\tau;0)+\varepsilon \omega -
x_i^\prime (\tau;t^\prime) \right| \left|
v_{i_{k+1}}^{\prime *}
 - v_i^\prime (\tau;t^\prime) \right|} \geq \cos \theta
\end{aligned}
\right\}
\end{equation}
Using Lemma \ref{lemma:s9-sphere} and Lemma \ref{lemma:s9-cylinder},
we have
\begin{equation}
\begin{aligned}
& \int_0^T \int_{B_{2R}^d} \int_{\mathbb{S}^{d-1}}
\mathbf{1}_{(\tau,v_{s+k+1},\omega_{k+1}) \in \mathcal{B}_{VI}^+}
d\omega_{k+1} dv_{s+k+1} d\tau 
\leq C_{d,s,k} C_{d,\alpha} T R^d \theta^{(d-1)/2}
\end{aligned}
\end{equation}
\begin{equation}
\begin{aligned}
& \int_0^T \int_{B_{2R}^d} \int_{\mathbb{S}^{d-1}}
\mathbf{1}_{(\tau,v_{s+k+1},\omega_{k+1}) \in \mathcal{B}_{VII}^+}
d\omega_{k+1} dv_{s+k+1} d\tau 
\leq C_{d,s,k} C_{d,\alpha} T R^d \theta^{(d-1)/2}
\end{aligned}
\end{equation}

To conclude, we let
$\mathcal{B}^+ = \mathcal{B}_I \cup \mathcal{B}_{II} \cup
\mathcal{B}_{III}^+ \cup \mathcal{B}_{IV}^+ \cup
\mathcal{B}_V^+ \cup \mathcal{B}_{VI}^+ \cup \mathcal{B}_{VII}^+$; then we have
\begin{equation}
\begin{aligned}
& \int_0^T \int_{B_{2R}^d} \int_{\mathbb{S}^{d-1}}
\mathbf{1}_{(\tau,v_{s+k+1},\omega_{k+1}) \in \mathcal{B}^+}
d\omega_{k+1} dv_{s+k+1} d\tau \leq \\
& \qquad \qquad \qquad \qquad \qquad
\leq C_{d,s,k} T R^d \left[\alpha +
\frac{y}{\eta T} + 
C_{d,\alpha} \left( \frac{\eta}{R} \right)^{d-1}
+ C_{d,\alpha} \theta^{(d-1)/2} \right]
\end{aligned}
\end{equation}
Then again, by assumption, $\sin \theta > c_d y^{-1} \varepsilon$; by choosing
$c_d$ sufficiently large we may guarantee that
\begin{equation}
\begin{aligned}
& Z_{s,s+k+1} \left[ Z_s,t+\tau;t_1+\tau,\dots,t_k+\tau,0;
v_{s+1},\dots,v_{s+k},v_{s+k+1};\right.\\
& \left. \qquad\qquad \qquad \qquad \qquad \qquad \qquad
\omega_1,\dots,\omega_k,\omega_{k+1};i_1,\dots,i_k,i_{k+1}\right] \\
& \qquad \qquad \qquad \qquad \qquad \qquad \qquad \qquad \qquad \qquad
\in \mathcal{G}_{s+k+1} \cap \hat{\mathcal{U}}_{s+k+1}^\eta
\end{aligned}
\end{equation}
whenever $(\tau,v_{s+k+1},\omega_{k+1}) \in
\mathcal{A}^+ \backslash \mathcal{B}^+$.
\qed
\end{proof}

\section{Acknowledgements}
\label{sec:acknowledgements}
This paper is an expanded version of 
 the author's dissertation at New York University.
I would like to thank my PhD advisor, Nader Masmoudi,
for valuable guidance and immeasurable patience. I would also like to
thank Laure Saint-Raymond, for reading multiple drafts of the paper and
providing feedback which has improved the presentation significantly.
I would also like to thank Cl{\'e}ment Mouhot,
Pierre Germain, and Pierre-Emmanuel Jabin, for delightful discussions and
many insights into the mathematical universe. Finally, I would like to
thank the anonymous reviewers for 
pointing out a number of necessary corrections and clarifications,
which have improved the overall quality of the manuscript.

\bibliography{bbgky-preprint-3} 

\end{document}